\newtheorem{definition}{Definition}
\newtheorem{theorem}[definition]{Theorem}
\newtheorem{corollary}[definition]{Corollary}
\newtheorem{lemma}[definition]{Lemma}
\newtheorem*{remark}{Remark}
\newtheorem{proposition}[definition]{Proposition}
\begin{document}

\pagenumbering{roman}

\thispagestyle{empty}
\begin{center}
\begin{minipage}{0.75\linewidth}
    \centering
    {\uppercase{University of Nottingham\par}}
    \vspace{0.5cm}
    \includegraphics[width=0.4\linewidth]{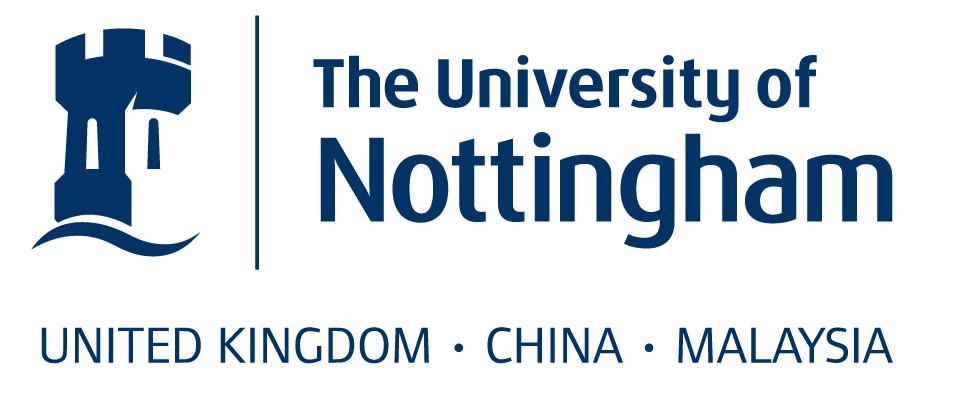}\par
    \vspace{0cm}
    {\uppercase{School of Mathematical Sciences\par}}
    \vspace{0.5cm}
    {\bf \Huge On the Right Nucleus of Petit Algebras\par}
    \vspace{3cm}
    {\Large Adam Owen\par}
    \vspace{3cm}
    {\Large A thesis submitted to the University of Nottingham for the degree of \\ DOCTOR OF PHILOSOPHY \par}
    \vspace{3cm}
    {\Large August 2021}
\end{minipage}
\end{center}
\clearpage

\begin{abstract}
Let $D$ be division algebra over its center $C$, let $\sigma$ be an endormorphism of $D$, let $\delta$ be a left $\sigma$-derivation of $D$, and let $R=D[t;\sigma,\delta]$ be a skew polynomial ring. We study the structure of a class of nonassociative algebras, denoted by $S_f$, whose construction canonically generalises that of the associative quotient algebras $R/Rf$ where $f\in R$ is right-invariant. \\
\\
We determine the structure of the right nucleus of $S_f$ when the polynomial $f$ is bounded and not right invariant and either $\delta = 0$, or $\sigma = {\rm id}_D$. As a by-product, we obtain a new proof on the size of the right nuclei of the cyclic (Petit) semifields $\mathbb{S}_f$.\\
\\
We look at subalgebras of the right nucleus of $S_f$, generalising several of Petit's results \cite{petit1966certains} and introduce the notion of semi-invariant elements of the coefficient ring $D$. The set of semi-invariant elements is shown to be equal to the nucleus of $S_f$ when $f$ is not right-invariant. Moreover, we compute the right nucleus of $S_f$ for certain $f$.\\
\\
In the final chapter of this thesis we introduce and study a special class of polynomials in $R$ called generalised A-polynomials. In a differential polynomial ring over a field of characteristic zero,  A-polynomials were originally introduced by Amitsur \cite{amitsur1954differential}. We find examples of polynomials whose eigenring  is a central simple algebra over the field $C \cap {\rm Fix}(\sigma) \cap {\rm Const}(\delta)$. 
\end{abstract}

\chapter*{Acknowledgements}
My full and sincere gratitude goes out to my supervisor Susanne Pumpl{\" u}n. You have been a wonderful mentor for the last three and a half years and your infinite patience and wisdom got me through the highs and lows of this process.\\
Thank you to Dan for being an excellent travel companion to conferences, coffee shops, and bars, and for always having time to listen to my ramblings.\\
I would also like to extend my thanks to John Sheekey and Jos{\' e} G{\' o}mez-Torrecillas whose correspondence proved to be extremely helpful.\\
Finally, I would like to say a special thank you to Holly and my parents for putting up with me and the maths and for your unconditional love and support.

\chapter*{Introduction}
Let $D$ be a unital associative ring, let $\sigma$ and $\delta$ be an injective endomorphism and a left $\sigma$-derivation of $D$, respectively, and let $R=D[t;\sigma,\delta]$ be a skew polynomial ring in the single indeterminate $t$. The ring $R$ was introduced by Ore in 1933 \cite{ore1933theory}. Since then it has been widely studied and its properties are well understood as a result (e.g. see \cite{mcconnell2001noncommutative} and \cite{goodearl2004introduction} for surveys). We exclusively consider the case that $D$ is a unital associative division ring, in which case any endomorphism of $D$ is necessarily injective.\\
\\
Ore showed that $R$ possesses a right division algorithm, that is for any given skew polynomials $f,g \in R$ (with $f \neq 0$), there exists a unique quotient $q \in R$, and a unique remainder $r \in R$, such that ${\rm deg}(r) < {\rm deg}(f)$ and $g=qf+r$. Consequently, every left ideal of $R$ is principal, i.e. has the form $Rg$ for some $g \in R$. Moreover, if $\sigma$ is an automorphism, then $R$ also possesses the analogous left division algorithm, hence every right ideal of $R$ is also principal, i.e. of the form $gR$ for some $g \in R$, and $R$ is a principal ideal domain \cite{jacobson2009finite}. We note that what we call a right division algorithm, Jacobson calls left and vice versa.\\
\\
For each skew polynomial $f \in R$ of degree $m \geq 1$, the eigenring of $f$ is defined by the set $$\mathcal{E}(f) = \{ g \in R : {\rm deg}(g) < m \text{ and } fg \in Rf \}$$ which is an associative algebra, and if $f$ is an irreducible polynomial, then $\mathcal{E}(f)$ is a division algebra. For bounded polynomials $f \in R$ the irreducible factors of $f$ in the ring $R$ are in one to one correspondence with the nontrivial zero divisors in $\mathcal{E}(f)$, in particular if $f$ is bounded and $\mathcal{E}(f)$ is a division algebra, then $f$ is irreducible \cite{gomez2012basic,gomez2013computing}. Therefore the eigenring of a skew polynomial $f$ often appears whenever a factorisation of $f$ in $R$ is investigated.\\
\\
In 1966 Petit introduced a new class of unital nonassociative algebras, whose construction canonically generalises the construction of the associative quotient algebras $R/Rf$ when factoring by a right-invariant (i.e. two-sided) polynomial $f \in R$ in \cite{petit1966certains}. Let $f \in R$ have degree $m \geq 1$ and consider the additive subgroup $$R_m = \{g \in R: {\rm deg}(g) < m \},$$ of $R$. When endowed with the multiplication $$g \circ h = gh \, {\rm mod}_r f,$$ where ${\rm mod}_r f$ denotes the remainder upon right division by $f$ in $R$, $R_m$ forms a unital nonassociative algebra over the field $F=C(D) \cap {\rm Fix}(\sigma) \cap {\rm Const}(\delta)$. These so-called {\it Petit algebras} are denoted by $S_f$ where $f$ is the polynomial used in the construction, or often by $R/Rf$ due to the $R$-module structure of $S_f$. Petit algebras were studied in detail in \cite{petit1966certains} and more recently in \cite{brown2018}, \cite{brown2018nonassociative} for example, and in \cite{lavrauw2013semifields}, \cite{sheekey2018new} over fields of characteristic $p>0$ (in which case $\delta = 0$ w.l.o.g.). Petit's method can be used to construct unital nonassociative division algebras, for example the Petit algebra $S_f$ with $f(t) = t^2 - i \in \mathbb{C}[t;\overline{\phantom{x}}]$, where $\overline{\phantom{x}}$ denotes complex conjugation, was described earlier by Dickson and forms the earliest example of a nonassociative division algebra \cite{dickson1906linear}.\\
\\
Constructions of codes via classical algebraic methods are well known. In recent years, the nonassociative algebras $S_f$ have been employed in the construction of space-time block codes, $(f,\sigma,\delta)$-codes, and wire tap codes, to name a few, when $S_f$ is an algebra over a number field (e.g.~ \cite{pumplun2015finite,pumpluen2015note,berhuy2013introduction,ducoat2015skew,ducoat2015lattice,oggier2012quotients}. In particular, we note their use in the construction of fast-decodable fully diverse space-time block codes in \cite{pumpluen2015fast,pumplun2015nonassociative,steele2012mido}, due to the fact that the right nucleus of $S_f$ is large. This motivates our investigation into the structure of ${\rm Nuc}_r(S_f)$ from a purely algebraic point of view, since a better understanding of the algebras $S_f$ and their nuclei may lead to new developments in coding theory.\\
\\
For $f$ of degree at least 2, Petit showed that the eigenring of $f$ is equal to the right nucleus of $S_f$ \cite{petit1966certains}. Thus we are able to analyse the structure of the nonassociative algebra $S_f$ and its right nucleus via classical methods from the theory of $R$-modules and the theory of associative algebras. Alternatively, one may say that this identification of the right nucleus of $S_f$ as the eigenring of $f$ allows us to take the novel approach to investigate the structure of the eigenring of $f$ in the context of nonassociative algebras. \\
\\
The structure of this thesis is as follows:\\
\\
Chapter 1 introduces the necessary notation, definitions and background theory used throughout. In particular, we recall some properties of the skew polynomial ring $R$, recount the theory of bounded polynomials in $R$, and describe Petit's construction of the algebras $S_f$. In Chapter 2, we focus on the case that $R=D[t;\sigma]$, where $\sigma$ is an automorphism of $D$ of finite inner order $n$. For any bounded skew polynomial $f \in R$ such that its bound $f^*$ is a maximal two-sided element of $R$, i.e. the left ideal $Rf^*$ is a two-sided maximal ideal, the ring $A=R/Rf^*$ is a simple Artinian ring, hence the left $A$-module $R/Rf$ is semisimple, i.e. is the direct sum of simple modules.  Motivated by this we employ methods from the Artin-Wedderburn theory of semisimple Artinian rings and algebras (e.g.~\cite{lang2004algebra}), and Jacobson's theory of noncommutative principal ideal domains \cite{jacobson1943theory}, to study both the $A$-module and $R$-module structure of $S_f$. Moreover, the endomorphism ring ${\rm End}_R(R/Rf)$ is known to be equal to the eigenring of $f$ (e.g.~\cite{gomez2013computing}), hence we obtain a description of the right nucleus of the Petit algebra $S_f$ via classical methods.\\
\\
Chapter 3 follows the same format as Chapter 2, however, this time we employ the classical methods described above in order to analyse the module structure of $S_f$ when $R=D[t;\delta]$ for $\delta$ an inner derivation of $D$ (when ${\rm Char}(D)=0$) or an algebraic derivation of $D$ of finite degree (when ${\rm Char}(D)=p>0$). Chapter 4 consists of a generalisation of the results of Chapters 2 and 3, under the assumption that $f^*$ completely factorises into a product of irreducibles in the center of $R$
, which are not equal to one another up to scalar multiplication by elements in $D^{\times}$. We focus here on the case $R=D[t;\sigma]$, however, the results of Chapter 4 also hold analogously in the case $R=D[t;\delta]$. We discard the case $R=D[t;\sigma,\delta]$ in Chapters 2 through 4, since we require that $D$ has finite degree as a central division algebra over $C$, hence $R$ is either a twisted polynomial ring or a differential polynomial ring \cite[Theorem 1.1.21]{jacobson2009finite}. \\
\\
Moving on, we study the algebraic properties of $S_f$ and its nuclei. We introduce the notion of (right) semi-invariant elements in the coefficient ring $D$ with respect to a skew polynomial $f \in R$. An element $c \in D$ is {\it (right) semi-invariant} with respect to $f \in R$ if $fc \in Rf$. We show that the set of such elements $L=L^{(\sigma,\delta,f)}$ is equal to ${\rm Nuc}(S_f)$ whenever $f$ is not right-invariant, and if $f$ is right-invariant, then $L = D$. Moreover, we determine sufficient conditions for the indeterminate $t$ and its powers to lie in the right nucleus of $S_f$, and we generalise Petit's result \cite[(16)]{petit1966certains} providing  necessary and sufficient conditions for $t$ to lie in ${\rm Nuc}_r(S_f)$ when $R=D[t;\sigma]$. We make use of these new structural results to generate subalgebras of the right nucleus. In the special case $f(t)=t^m-a \in D[t;\sigma]$ we show that $f(t)$ is right-invariant in $L[t;\sigma]$ and that ${\rm Nuc}_r(S_f)$ is equal to the associative quotient algebra $L[t;\sigma]/L[t;\sigma]f(t)$. \\
\\
The eigenring appears implicitly in classical papers by Amitsur \cite{amitsur1953noncommutative, amitsur1954differential, amitsur1955generic}, wherein Amitsur shows that any central simple algebra that is split by some field $K$ is isomorphic to the eigenring of some skew polynomial in $K[t;\delta]$, where $(K,\delta)$ is a suitable differential field. Any such polynomial is called an {\it A-polynomial}, however, the property of being an A-polynomial has been somewhat ignored since its introduction. Motivated by Amitsur's work we introduce the notion of {\it generalised A-polynomials} in $R=D[t;\sigma,\delta]$ in Chapter 7. These are non-constant polynomials in $R$ whose eigenrings are central simple algebras over $F$. We utilise the results of the previous chapters to provide necessary and sufficient conditions of $f \in R$ to be a generalised A-polynomial in a variety of special cases. 

\tableofcontents

\thispagestyle{empty}

\chapter{Preliminaries and Notation}
\pagenumbering{arabic}
Although we have tried to make this thesis as comprehensive as possible, we assume that the reader has some knowledge of the basic theory of rings, modules over rings, and algebras over fields, as well as some elementary linear algebra, and the Galois theory of fields. 

\section{Modules, Endomorphism Rings and Annihilators}
Let $R$ denote a unital, associative ring and $M$ and $M^{\prime}$ be left $R$-modules.
Throughout this thesis, all rings and modules are assumed to be unital and we take $R$-module to mean a left $R$-module, unless stated otherwise\footnote{All of the notions/results in this section apply analogously if we replace all left $R$-modules with right $R$-modules, via symmetric arguments.}. \\
\\
The set of $R$-module homomorphisms from $M$ to $M^{\prime}$ forms an abelian group under addition of maps, denoted by ${\rm Hom}_R(M,M^{\prime})$. In fact, the set of endomorphisms of $M$ 
is a ring with multiplication defined by the composition of maps, and is denoted by ${\rm End}_R(M)$. The structure of ${\rm End}_R(M)$ encodes many of the properties of the module $M$, for example, Schur's Lemma states that if $M$ is a simple $R$-module, then ${\rm End}_R(M)$ is a division ring, see for example \cite[pg.~396]{adkins2012algebra}.\\
\\
It will be useful to recall some of the basic properties of ${\rm End}_R(M)$, especially when $M$ is a direct sum of finitely many left $R$-modules, or when $M$ is a simple left $R$-module.
\begin{lemma}\cite[Exercise 6.7.2]{sullivan2004}\label{Intro: Lemma 1}
Suppose that $M \cong M^{\prime}$. Then ${\rm End}_R(M) \cong {\rm End}_R(M^{\prime})$.
\end{lemma}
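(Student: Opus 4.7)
The plan is to use the isomorphism $\varphi : M \to M'$ to define a conjugation map between the two endomorphism rings. Specifically, fix an $R$-module isomorphism $\varphi : M \to M'$ (which exists by hypothesis) and consider
\[
\Phi : {\rm End}_R(M) \longrightarrow {\rm End}_R(M'), \qquad \Phi(f) = \varphi \circ f \circ \varphi^{-1}.
\]
The strategy is then to verify in turn that $\Phi$ is well-defined, that it is a ring homomorphism, and that it is bijective.

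For well-definedness, I would note that $\varphi$ and $\varphi^{-1}$ are both $R$-module homomorphisms, and ${\rm End}_R(M)$ is closed under composition, so $\Phi(f)$ is a composition of three $R$-module homomorphisms $M' \to M \to M \to M'$ and therefore lies in ${\rm End}_R(M')$. For the ring homomorphism property, additivity follows immediately from the distributivity of composition over pointwise addition of module maps, and multiplicativity follows from the cancellation $\varphi^{-1} \circ \varphi = {\rm id}_M$ in the middle of $\Phi(f) \circ \Phi(g) = \varphi \circ f \circ \varphi^{-1} \circ \varphi \circ g \circ \varphi^{-1} = \varphi \circ (f \circ g) \circ \varphi^{-1} = \Phi(f \circ g)$; preservation of the identity is immediate.

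For bijectivity, the natural candidate for an inverse is the analogous conjugation by $\varphi^{-1}$, namely $\Psi : {\rm End}_R(M') \to {\rm End}_R(M)$ with $\Psi(g) = \varphi^{-1} \circ g \circ \varphi$. A direct check shows $\Psi \circ \Phi = {\rm id}_{{\rm End}_R(M)}$ and $\Phi \circ \Psi = {\rm id}_{{\rm End}_R(M')}$, so $\Phi$ is a ring isomorphism.

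There is no real obstacle here; this is a routine verification and the only thing to be a little careful about is making sure composition is performed in a consistent order so that multiplicativity holds with the ring conventions adopted in the thesis (where the product in ${\rm End}_R(M)$ is composition of maps, as stated just before the lemma).
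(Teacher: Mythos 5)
Your conjugation argument $\Phi(f)=\varphi\circ f\circ\varphi^{-1}$ is correct and complete; the thesis itself gives no proof for this lemma (it only cites it as an exercise), and your argument is precisely the standard one any source would use, with the well-definedness, ring-homomorphism, and bijectivity checks all handled properly.
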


\begin{lemma}\cite[Lemma 6.7.5]{sullivan2004}\label{Intro: Lemma 2}
Let $M_1, M_2, \dots, M_k, M_1^{\prime}, M_2^{\prime}, \dots, M_l^{\prime}$ be left $R$-modules, and let $M = M_1 \oplus M_2 \oplus \cdots \oplus M_k$ and $M^{\prime}= M_1^{\prime} \oplus M_2^{\prime} \oplus \cdots \oplus M_l^{\prime}$. 
\begin{enumerate}[(i)]
\item ${\rm Hom}_R(M,M^{\prime}) \cong \left(\begin{array}{ccc}
   {\rm Hom}_R(M_1,M_1^{\prime}) & \cdots & {\rm Hom}_R(M_k,M_1^{\prime}) \\
   \vdots & \ddots & \vdots \\
   {\rm Hom}_R(M_1,M_l^{\prime}) & \cdots & {\rm Hom}_r(M_k,M_l^{\prime})\\
\end{array}\right)$ as Abelian groups. 
\item If $M_1=M_2=\cdots =M_k$, then $${\rm End}_R(M) = {\rm End}_R(M_1^{\oplus k}) \cong M_k({\rm End}_R(M_1))$$ as rings.
\end{enumerate}
\end{lemma}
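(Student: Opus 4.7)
The plan is to establish both parts of the lemma by means of the canonical inclusions and projections associated with a finite direct sum. For each $j \in \{1,\ldots,k\}$ let $\iota_j : M_j \hookrightarrow M$ be the canonical inclusion and $\pi_j : M \twoheadrightarrow M_j$ the canonical projection, and analogously define $\iota'_i$ and $\pi'_i$ for the decomposition of $M'$. These maps satisfy the standard relations $\pi_j \circ \iota_{j'} = \delta_{jj'}\,\mathrm{id}_{M_j}$ and $\sum_{j=1}^k \iota_j \circ \pi_j = \mathrm{id}_M$, and similarly for the primed maps, and each is an $R$-module homomorphism.

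For part (i), I would define a map
\[
\Phi : \mathrm{Hom}_R(M, M') \longrightarrow \prod_{i,j} \mathrm{Hom}_R(M_j, M'_i), \qquad \varphi \longmapsto \bigl(\pi'_i \circ \varphi \circ \iota_j\bigr)_{i,j},
\]
and its inverse
\[
\Psi : (\varphi_{ij})_{i,j} \longmapsto \sum_{i,j} \iota'_i \circ \varphi_{ij} \circ \pi_j.
\]
A direct verification using the two identities above shows that $\Psi \circ \Phi = \mathrm{id}$ and $\Phi \circ \Psi = \mathrm{id}$; additivity of $\Phi$ is clear from the additivity of composition in each slot. Displaying the target group as a $l \times k$ matrix of Hom-groups then gives the required isomorphism of abelian groups.

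For part (ii), when all $M_j$ coincide with a single module $N := M_1$, the target of $\Phi$ becomes the set of $k \times k$ matrices with entries in the ring $E := \mathrm{End}_R(N)$. Here $M = M'$, so $\Phi$ becomes a bijection $\mathrm{End}_R(N^{\oplus k}) \to M_k(E)$. It remains to check that $\Phi$ respects multiplication, i.e. that $\Phi(\varphi \circ \psi) = \Phi(\varphi)\,\Phi(\psi)$ in $M_k(E)$. Using $\sum_\ell \iota_\ell \circ \pi_\ell = \mathrm{id}$ we compute
\[
\pi_i \circ (\varphi \circ \psi) \circ \iota_j = \sum_{\ell=1}^k (\pi_i \circ \varphi \circ \iota_\ell)\circ(\pi_\ell \circ \psi \circ \iota_j),
\]
which is exactly the $(i,j)$ entry of the matrix product. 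Compatibility with the identity is immediate since $\pi_i \circ \mathrm{id}_M \circ \iota_j = \delta_{ij}\mathrm{id}_N$. Thus $\Phi$ is a ring isomorphism.

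There is no real obstacle here: the entire proof rests on unpacking the universal property of the direct sum via $(\iota_j,\pi_j)$. The only place that requires a little care is the multiplicativity in part (ii), where one must insert the resolution of the identity $\sum_\ell \iota_\ell \circ \pi_\ell = \mathrm{id}$ at the correct spot to reproduce the definition of matrix multiplication. Everything else is a routine bookkeeping exercise.
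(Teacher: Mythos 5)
Your proof is correct. The paper does not actually prove this lemma --- it is quoted from the cited reference without argument --- so there is nothing to compare against; your argument via the canonical inclusions $\iota_j$ and projections $\pi_j$, using the relations $\pi_j\iota_{j'}=\delta_{jj'}\mathrm{id}$ and $\sum_j\iota_j\pi_j=\mathrm{id}$, is the standard and complete proof of this standard fact, including the correct insertion of the resolution of the identity to verify multiplicativity in part (ii).
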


\begin{lemma}\label{Intro: Lemma 3}
Let $S_1,S_2,\dots,S_k$ be mutually non-isomorphic simple left $R$-modules. 
\begin{enumerate}[(i)]
\item If $i \neq j$, then $${\rm Hom}_R(S_i,S_j) = 0.$$
\item If $i \neq j$ and $m,n \in \mathbb{N}$, then $${\rm Hom}_R(S_i^{\oplus m},S_j^{\oplus n}) = 0.$$
\item Let $m_1,m_2,\dots,m_k \in \mathbb{N}$, then $${\rm End}_R(\bigoplus\limits_{i=1}^k S_i^{\oplus m_i}) \cong \bigoplus\limits_{i=1}^k {\rm End}_R(S_i^{\oplus m_i}).$$
\end{enumerate}   
\end{lemma}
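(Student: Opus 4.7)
The plan is to prove all three parts in order, using Schur-style arguments in (i) and then bootstrapping up to (ii) and (iii) via Lemma \ref{Intro: Lemma 2}.

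For part (i), I would take an arbitrary $\phi \in \mathrm{Hom}_R(S_i,S_j)$ and analyse its kernel and image. Since $\ker(\phi)$ is a submodule of the simple module $S_i$, it is either $0$ or $S_i$; likewise $\mathrm{im}(\phi)$ is a submodule of the simple module $S_j$, so it is either $0$ or $S_j$. A case split then forces $\phi$ to be either the zero map or an isomorphism, but the latter is excluded by the hypothesis $S_i \not\cong S_j$, so $\phi = 0$. This is essentially a slight variant of Schur's Lemma and is the only place where actual module theory (rather than bookkeeping) is invoked.

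For part (ii), I would apply Lemma \ref{Intro: Lemma 2}(i) to $M = S_i^{\oplus m}$ and $M' = S_j^{\oplus n}$, which identifies $\mathrm{Hom}_R(S_i^{\oplus m}, S_j^{\oplus n})$ with the abelian group of $n \times m$ matrices whose entries all lie in $\mathrm{Hom}_R(S_i,S_j)$. By part (i) every such entry vanishes, so the whole $\mathrm{Hom}$-group is zero.

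For part (iii), set $M = \bigoplus_{i=1}^{k} S_i^{\oplus m_i}$ and apply Lemma \ref{Intro: Lemma 2}(i) to $M' = M$. This presents $\mathrm{End}_R(M)$ as a $k \times k$ block matrix whose $(i,j)$-entry is $\mathrm{Hom}_R(S_j^{\oplus m_j}, S_i^{\oplus m_i})$. By part (ii), every off-diagonal block is $0$, so the matrix is block-diagonal with diagonal blocks $\mathrm{End}_R(S_i^{\oplus m_i})$. Block-diagonal matrices multiply componentwise, so the additive isomorphism supplied by Lemma \ref{Intro: Lemma 2}(i) is in fact a ring isomorphism onto $\bigoplus_{i=1}^{k} \mathrm{End}_R(S_i^{\oplus m_i})$.

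The only genuinely non-formal step is part (i); parts (ii) and (iii) are purely an exercise in citing Lemma \ref{Intro: Lemma 2} and collecting zeros. The one subtlety worth flagging in (iii) is the upgrade from an isomorphism of abelian groups to an isomorphism of rings, which I would handle by explicitly observing that composition of block-diagonal endomorphisms respects the decomposition, giving the componentwise multiplication of the direct-sum ring on the right-hand side.
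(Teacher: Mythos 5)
Your proposal is correct and follows essentially the same route as the paper's proof: a Schur-type kernel/image argument for (i), then Lemma \ref{Intro: Lemma 2} to reduce (ii) and (iii) to matrices of Hom-groups whose off-diagonal entries vanish. Your explicit remark in (iii) that the block-diagonal structure upgrades the additive isomorphism to a ring isomorphism is a point the paper leaves implicit, but otherwise the two arguments coincide.
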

\begin{proof}
\begin{enumerate}[(i)]
\item Let $\phi \in {\rm Hom}_R(S_i,S_j)$ be a nonzero map. Since ${\rm ker}(\phi)$ is a submodule of $S_i$, ${\rm ker}(\phi) = 0$ (as $S_i$ is simple). Similarly, ${\rm im}(\phi)$ is a submodule of $S_j$, thus we are forced to take ${\rm im}(\phi)=S_j$. Hence $\phi$ is a bijection, i.e. $S_i \cong S_j$. This is a contradiction, and so $\phi = 0$. Therefore ${\rm Hom}_R(S_i,S_j) = 0$ as claimed.
\item By Lemma \ref{Intro: Lemma 2}, $${\rm Hom}_R(S_i^{\oplus m},S_j^{\oplus n}) \cong \left(\begin{array}{ccc}
   {\rm Hom}_R(S_i,S_j) & \cdots & {\rm Hom}_R(S_i,S_j) \\
   \vdots & \ddots & \vdots \\
   {\rm Hom}_R(S_i,S_j) & \cdots & {\rm Hom}_R(S_i,S_j)\\
\end{array}\right),$$ is the ring of $m \times n$ matrices with entries in ${\rm Hom}_R(S_i,S_j)$; and by (i), ${\rm Hom}_R(S_i,S_j) = 0$. We conclude that ${\rm Hom}_R(S_i^{\oplus m},S_j^{\oplus n}) = 0.$
\item By Lemma \ref{Intro: Lemma 2}, $${\rm End}_R(\bigoplus\limits_{i=1}^k S_i^{\oplus m_i}) \cong 
\left(\begin{array}{ccc}
   {\rm Hom}_R(S_1^{\oplus m_1},S_1^{\oplus m_1}) & \cdots & {\rm Hom}_R(S_k^{\oplus m_k},S_1^{\oplus m_1}) \\
   \vdots & \ddots & \vdots \\
   {\rm Hom}_R(S_1^{\oplus m_1}, S_k^{\oplus m_k}) & \cdots & {\rm Hom}_R(S_k^{\oplus m_k},S_k^{\oplus m_k})\\
\end{array}\right).$$
By (ii) and the fact that ${\rm End}_R(S_i^{\oplus m_i}) = {\rm Hom}_R(S_i^{\oplus m_i},S_i^{\oplus m_i})$, this becomes
\begin{align*}
{\rm End}_R(\bigoplus\limits_{i=1}^k S_i^{\oplus m_i}) &\cong \left(\begin{array}{cccc}
   {\rm End}_R(S_1^{\oplus m_1}) & 0 & \cdots & 0 \\
   0 & {\rm End}_R(S_2^{\oplus m_2}) & \cdots & \vdots \\
   \vdots & \cdots & \ddots & 0 \\
   0 & \cdots & 0 & {\rm End}_R(S_k^{\oplus m_k})\\
\end{array}\right)
\\ &\cong \bigoplus\limits_{i=1}^k{\rm End}_R(S_i^{\oplus m_i}).
\end{align*} 
\end{enumerate}
\end{proof}

Let $N$ be a subset of $M$. The annihilator of $N$ is defined by ${\rm Ann}_R(N) = \{ r \in R : ra = 0,\forall a \in N\}$, and it is a left ideal of $R$. If $N$ is a submodule of $M$, then ${\rm Ann}_R(N)$ is a two-sided ideal of $R$. In particular, the annihilator $M^0 = {\rm Ann}_R(M)$ is a two-sided ideal of $R$. For lack of a proper reference, a proof of the following well known result is included.

\begin{lemma}\label{Intro: Lemma 4}
If $I$ is a two-sided ideal of $R$ contained in $M^0$, then $M$ is also an $R/I$-module, and $${\rm End}_R(M) = {\rm End}_{R/I}(M).$$ In particular $M$ is an $R/M^0$-module and $${\rm End}_R(M) = {\rm End}_{R/M^0}(M).$$
\begin{proof}
Let $\cdot: R\times M \longrightarrow M$ denote the action of $R$ on $M$. It is well known that we can view the left $R$-module $M$ as a left $R/I$-module via a new action $\star_I : R/I \times M \longrightarrow M$, defined by
\begin{equation}\label{Equation 1}
\overline{r} \star_I m = (r+I) \star_I m = r \cdot m
\end{equation} where $\overline{r}$ denotes the coset $r+I$ for any $r \in R$.\\
\\
Let $\phi$ be an endomorphism of $M$. Then $\phi$ is $R$-linear if and only if $$r\cdot \phi(m) = \phi(r \cdot m)$$ for all $m \in M$, and all $r \in R$, which is true if and only if $$\overline{r}\star_I \phi(m) = \phi(\overline{r}\star_I m)$$ for all $m \in M$ and all $\overline{r} \in R/I$ by (\ref{Equation 1}). We conclude that $\phi$ is $R$-linear if and only if $\phi$ is $R/I$-linear, hence ${\rm End}_R(M)={\rm End}_{R/I}(M)$. The second claim follows by choosing $I=M^0$.
\end{proof}
\end{lemma}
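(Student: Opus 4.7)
The plan is to establish the claim in two stages: first, to equip $M$ with a well-defined $R/I$-module structure that is compatible with its $R$-module structure via the canonical surjection $\pi:R\to R/I$; second, to observe that, because the $R$-action factors through this quotient, the notions of $R$-linearity and $R/I$-linearity on $M$ coincide, giving equality of endomorphism rings.

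For the first stage, I would define the action of $R/I$ on $M$ by $\overline{r}\cdot m := r\cdot m$, where $\overline{r}=r+I$. The only nontrivial point is well-definedness: if $\overline{r}=\overline{r'}$, then $r-r'\in I\subseteq M^0$, so $(r-r')\cdot m=0$, giving $r\cdot m=r'\cdot m$. The module axioms (distributivity, associativity of scalar multiplication, and unitality) then transfer immediately from those of the $R$-action, since they are checked pointwise on representatives.

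For the second stage, note that the $R$-action on $M$ factors as $r\cdot m = \pi(r)\cdot m$ by the very definition of the $R/I$-action. Hence for any additive endomorphism $\phi$ of $M$, the condition $\phi(r\cdot m)=r\cdot \phi(m)$ for all $r\in R$, $m\in M$ is equivalent to $\phi(\overline{r}\cdot m)=\overline{r}\cdot\phi(m)$ for all $\overline{r}\in R/I$, $m\in M$. Thus $\phi$ is $R$-linear if and only if it is $R/I$-linear, so ${\rm End}_R(M)$ and ${\rm End}_{R/I}(M)$ coincide as sets; since addition and composition of maps are defined without reference to the scalar ring, this is an equality of rings. The final assertion is obtained by specialising to $I=M^0$, which, as already noted in the paragraph preceding the lemma, is a two-sided ideal of $R$.

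I do not anticipate any serious obstacle: the entire argument is a standard piece of module-theoretic bookkeeping, and the single substantive check — well-definedness of the $R/I$-action — is an immediate consequence of the hypothesis $I\subseteq M^0$.
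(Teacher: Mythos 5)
Your proposal is correct and follows essentially the same route as the paper: define the $R/I$-action by $\overline{r}\cdot m = r\cdot m$ (you make the well-definedness check via $I\subseteq M^0$ explicit, which the paper leaves as "well known"), then observe that $R$-linearity and $R/I$-linearity are equivalent conditions on an additive endomorphism, giving equality of endomorphism rings. No gaps.
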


\section{Nonassociative Algebras and Associative Central Simple Algebras}
Let $F$ be a field 
and $A$ be a vector space over $F$. We call $A$ a {\it (nonassociative) algebra} over $F$ (alternatively an {\it $F$-algebra}) if there exists a bilinear map $A \times A \longrightarrow A$, $(a,b) \mapsto ab$ which we call the {\it multiplication} of $A$. We assume throughout that any $F$-algebra $A$ is {\it unital}, i.e. there exists a multiplicative identity $1 \in A$ such that $1a=a1=a$ for all $a \in A$.\\
\\
The {\it associator} of $A$ is the bracket $[x,y,z] = (xy)z - x(yz)$. If $[x,y,z]=0$ for all elements $x,y,z \in A$, then $A$ is an {\it associative} algebra.  We define the {\it left nucleus} of $A$ by ${\rm Nuc}_l(A) = \lbrace x \in A : [x,A,A]=0 \rbrace$, the {\it middle nucleus} of $A$ by ${\rm Nuc}_m(A) = \lbrace x \in A : [A,x,A]=0 \rbrace$, and the {\it right nucleus} of $A$ by ${\rm Nuc}_r(A) = \lbrace x \in A : [A,A,x]=0 \rbrace$. The intersection ${\rm Nuc}(A) = {\rm Nuc}_l(A) \cap {\rm Nuc}_m(A) \cap {\rm Nuc}_r(A)$ is called the {\it nucleus} of $A$, and the sets ${\rm Nuc}_l(A)$, ${\rm Nuc}_m(A)$, ${\rm Nuc}_r(A)$, and ${\rm Nuc}(A)$ are associative subalgebras of $A$.\\
\\
 The {\it commutator bracket} on $A$ is defined by $[x,y]=xy-yx$ for all $x,y \in A$. Let $B \subset A$ be a nonempty subset of $A$, then the {\it centraliser of $B$ in $A$} is the set ${\rm {Cent_A}}(B) = \lbrace x \in A : [x,y]=0 \text{ for all } y \in B \rbrace$. We call the centraliser of $A$ in $A$ the {\it commutator} of $A$, and write ${\rm{Comm}}(A)={\rm {Cent_A}}(A)$. The intersection ${\rm C}(A) = {\rm Comm}(A) \cap {\rm Nuc}(A)$ consisting of all elements in $A$ which both commute and associate with all others is called the {\it center} of $A$. The center of any algebra $A$ is an associative, unital,  commutative subalgebra of $A$ containing the field $F$, since we can identify $F$ with $F1 \subseteq {\rm C}(A)$. \\
\\
 For $a \in A$ we define the {\it left (resp. right) multiplication map} $L_a : A \longrightarrow A$ (resp. $R_a: A \longrightarrow A$) by $L_a(x) = ax$ (resp. $R_a(x) = xa$) for all $x \in A$. For $a \neq 0$, if $L_a$ (resp. $R_a$) is bijective, then $a$ is called {\it left (resp. right) invertible}, and when $a$ is both left and right invertible then $a$ is said to be {\it invertible}. When all nonzero elements in $A$ are left (resp., right) invertible, then $A$ is called a {\it left (resp., right) division algebra}. We call $A$ a {\it division algebra} if it is both a left and right division algebra. If $A$ has finite dimension over $F$, then $A$ is a division algebra if and only if $A$ has no non-trivial zero divisors. Unital nonassociative division algebras with a finite number of elements are also known as {\it (finite) semifields}.\\
\\
An algebra $A$ is called {\it simple} if it contains no proper nontrivial two-sided ideals. If $A$ is a unital simple $F$-algebra, then ${\rm C}(A)$ is a field extension of $F$, and we may regard $A$ as an algebra over $C(A)$. If additionally $[A:C(A)]$ is finite, then we call $A$ a {\it central simple algebra} over $C(A)$. Any division ring $D$ contains no nontrivial two-sided ideals other than itself, and additionally if $D$ is assumed to be unital, then its center ${\rm C}(D)$ is a field. Thus any unital division ring $D$ such that $[D:C(D)] < \infty$ is a central simple algebra over $C(D)$. In this case, $D$ is also called a {\it central division algebra} over $C(D)$ (e.g. \cite{albert1952nonassociative}). It is well known that if $A$ is an associative central simple algebra over $C(A)$, then $[A:C(A)]$ is equal to the square of a positive integer, and $A \cong M_k(D)$ for $k$ a positive integer. Moreover, then $D$ is an associative central division algebra over $C(A)$, with $D$ unique up to isomorphism. The {\it degree} of $A$ is ${\rm deg}(A) = \sqrt{[A:C(A)]}$, and the {\it index} of $A$ is ${\rm ind}(A) = \sqrt{[D:C(A)]} = {\rm deg}(D)$ (e.g. \cite[Chapter VIII]{bourbaki2003elements}). When referring to a central simple algebra in this thesis, we assume that it is associative unless otherwise stated.

\section{Skew Polynomial Rings}
Let $D$ be a unital, associative division algebra over its center $C$, let $\sigma$ be an endomorphism of $D$, and let $\delta$ be a left $\sigma$-derivation of $D$, i.e. $\delta$ is an additive map on $D$ satisfying the Leibniz product rule $$\delta(xy) = \sigma(x)\delta(y) + \delta(x)y,$$ for all $x,y \in D$. An element $a \in D$ is said to be {\it fixed} by $\sigma$ if it satisfies $\sigma(a) = a$, and the set ${\rm Fix}(\sigma)$ of such elements is a division subring of $D$. If $\sigma$ is an automorphism, then  $\sigma$ is said to be an {\it inner automorphism} if there exists $u \in D^{\times}$ such that $\sigma(a) = uau^{-1}$ for all $a \in D$, otherwise $\sigma$ is said to be an {\it outer automorphism}. For $u \in D^{\times}$ we write $\iota_u$ to denote the inner automorphism of $D$ defined by $a \mapsto uau^{-1}$. If there exists $n \in \mathbb{Z}^+$ such that $\sigma^n = \iota_u$ for some $u \in D^{\times}$, and $\sigma^i$ is an outer automorphism for $1 \leq i < n$, then $\sigma$ is said to have \emph{finite inner order} $n$. If no such integer $n$ exists, then $\sigma$ is said to have \emph{infinite inner order}.
We say that $a \in D$  is a {\it $\delta$-constant} if $\delta(a) = 0$. The set of all $\delta$-constants also forms a division subring of $D$ denoted by ${\rm Const}(\delta)$. The derivation $\delta$ is said to be an {\it inner derivation} if there exists $c \in D$ such that $\delta(a) = [c,a] = ca - ac$ for all $a \in D$, otherwise $\delta$ is called an {\it outer derivation}. For $c \in D$, $\delta_c$ denotes the inner derivation of $D$ defined by $\delta_c(a) = [c,a]$ for all $a \in D$.
 \\
\\
The {\it skew polynomial ring} $R=D[t;\sigma,\delta]$ is the set of polynomials $$a_mt^m + a_{m-1}t^{m-1} + \dots + a_1t + a_0$$ in the indeterminate $t$, with coefficients $a_i \in D$, endowed with term-wise addition, and multiplication defined by $$ta = \sigma(a)t + \delta(a),$$ for all $a \in D$. Under this addition and multiplication, $R$ forms a unital, associative ring. A simple induction yields $$(at^j)(bt^k) = \sum\limits_{i=0}^j a \Delta_{j,i}(b) t^{i+k}$$ for all $a,b \in D$ and $j,k \in \mathbb{N}$, where the maps $\Delta_{j,i}:D \longrightarrow D$ are defined recursively by the relation $$\Delta_{j,i} = \delta(\Delta_{j-1,i}) + \sigma(\Delta_{j-1,i-1})$$ with $\Delta_{0,0} = {\rm id}_D$, $\Delta_{1,0} = \delta$, and $\Delta_{1,1} = \sigma$. If $\delta = 0$, then $R$ is called a {\it twisted polynomial ring}, denoted $R = D[t;\sigma]$, and we have $\Delta_{j,i} = 0$ for $j \neq i$, and $\Delta_{j,j} = \sigma^j$. If $\sigma = {\rm  id}_D$, then $R$ is called a {\it differential polynomial ring}, denoted $R = D[t;\delta]$, and $\Delta_{j,i}= \binom{j}{i}\delta^{j-i}$. Finally, if both $\sigma = {\rm id}$ and $\delta = 0$, then $R = D[t]$ is the ring of left polynomials with coefficients in $D$, and $\Delta_{j,i} = 0$ for all $i \neq j$, and $\Delta_{j,j} = {\rm id}_D$; in particular $ta = at$ for all $a \in D$.\\
\\
We refer the reader to \cite{mcconnell2001noncommutative}, \cite{goodearl2004introduction}, or \cite{ore1933theory} for an introduction to the theory of skew polynomial rings.\\
\\
For $f(t) = a_mt^m + a_{m-1}t^{m-1} + \dots + a_1t + a_0$ with $a_m \neq 0$, the {\it degree} of $f$, denoted by ${\rm deg}(f)$, is $m$, and by convention ${\rm deg}(0) = - \infty$. In the particular case that $a_m = 1$, we call $f$ {\it monic}. The degree of polynomials in $R$ satisfies ${\rm deg}(fg) = {\rm deg}(f) + {\rm deg}(g)$ and ${\rm deg}(f+g) \leq {\rm max}({\rm deg}(f),{\rm deg}(g))$ for all $f,g \in R$. A polynomial $f \in R$ is called {\it reducible} if $f=gh$ for some $g,h \in R$ such that ${\rm deg}(g),{\rm deg}(h) < {\rm deg}(f)$, otherwise we call $f$ {\it irreducible}. A polynomial $f \in R$ is called {\it right (resp. left) invariant} if $fR \subseteq Rf$ (resp. $Rf \subseteq fR$), i.e. $Rf$ (resp. $fR$) is a two-sided ideal of $R$. We call $f$ {\it invariant} if it is both right and left invariant. \\
\\
The ring $R=D[t;\sigma,\delta]$ is a left Euclidean domain, and as such there is a right  Euclidean division algorithm in $R$: for any $f,g \in R$ with $f \neq 0$, then there exist unique polynomials $q,r \in R$ such that $$g = qf + r,$$ with ${\rm deg}(r) < {\rm deg}(f)$. As a result $R$ is a left principal ideal domain. If $\sigma$ is an automorphism of $R$, then $R$ is also a right Euclidean domain, hence there is also a left Euclidean division algorithm in $R$, and $R$ is also a right principal ideal domain, i.e. $R$ is a 
principal ideal domain. Henceforth we write PID to mean principal ideal domain, and left (resp. right) PID to mean left (resp. right) principal ideal domain.\\
\\
Now, since $R$ is a left PID, every left ideal of $R$ is generated by a single skew polynomial in $R$, that is, for any left ideal $I$, there exists a polynomial $f \in R$, such that $I=Rf$. The {\it left idealiser} of a polynomial $f$ in $R$ is defined by the set $$\mathcal{I}(f) = \lbrace g \in R : fg \in Rf \rbrace,$$ which is the largest subring of $R$ within which $Rf$ is a two-sided ideal. We define the {\it eigenring} of $f$ to be the associative quotient ring $$\mathcal{E}(f) = \frac{\mathcal{I}(f)}{Rf}= \lbrace g \in R : {\rm deg}(g) < m \text{ and } fg \in Rf \rbrace.$$ A nonzero skew polynomial $f \in R$ is said to be {\it bounded} if there exists another nonzero skew polynomial $f^{\star} \in R$, called a {\it bound} of $f$, such that $Rf^{\star}$ is the unique largest two-sided ideal of $R$ contained in the left ideal $Rf$. Equivalently, a nonzero polynomial in $f \in R$ is said to be bounded, if there exists a right-invariant polynomial $f^{\star} \in R$, which is called a bound of $f$, such that $$Rf^{\star} = {\rm Ann}_R(R/Rf) \neq \{ 0 \}.$$ Recall that the annihilator ${\rm Ann}_R(R/Rf)$ of the left $R$-module $R/Rf$ is always a two-sided ideal of $R$. When the skew polynomial $f$ is bounded, it is well known that nontrivial zero divisors in the eigenspace of $f$ are in one-to-one correspondence with proper right factors of $f$ in $R$:

\begin{theorem}\cite[Lemma 3, Proposition 4]{gomez2012basic}\label{Intro: Theorem 1}
Let $f \in R$ have positive degree. Then the following are satisfied:
\begin{enumerate}[(i)]
\item If $f$ is bounded and $\sigma$ is an automorphism of $D$, then $f$ is irreducible if and only if $\mathcal{E}(f)$ has no non-trivial zero divisors.
\item Each non-trivial zero divisor $q$ of $f$ in $\mathcal{E}(f)$ gives a proper factor ${\rm gcrd}(q,f)$ of $f$.
\end{enumerate}
\end{theorem}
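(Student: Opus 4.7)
The plan hinges on two facts: $R$ is a left principal ideal domain under the given hypotheses (so greatest common right divisors exist and satisfy a Bezout identity), and $\mathcal{E}(f) \cong \mathrm{End}_R(R/Rf)^{\mathrm{op}}$ via $q \mapsto (\bar r \mapsto \overline{rq})$, a correspondence well defined exactly because $fq\in Rf$. I would prove (ii) first, because the forward implication of (i) then drops out as an easy corollary.

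For (ii), suppose $q\in\mathcal{E}(f)$ is a nontrivial zero divisor and (by symmetry) choose nonzero $q'\in\mathcal{E}(f)$ with $qq'\in Rf$. Since $R$ is a left PID, $Rq+Rf = Rd$ for $d=\mathrm{gcrd}(q,f)$, and $d$ right-divides $f$ by construction. I rule out both extreme cases. If $Rd = Rf$, then $Rq\subseteq Rf$, i.e.\ $q=0$ in $\mathcal{E}(f)$, a contradiction. If $Rd = R$, Bezout supplies $u,v\in R$ with $uq+vf = 1$; right-multiplying by $q'$ yields
$$q' = u(qq') + v(fq'),$$
and both summands lie in $Rf$ because $qq'\in Rf$ by assumption and $fq'\in Rf$ by the defining property of $\mathcal{E}(f)$. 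Hence $q'\in Rf$, contradicting $q'\neq 0$ in $\mathcal{E}(f)$. So $d$ must be a proper right factor of $f$.

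The forward direction of (i) is then immediate by contrapositive: a nontrivial zero divisor in $\mathcal{E}(f)$ would produce a proper right factor of $f$ by (ii), contradicting irreducibility. Note this direction does not require boundedness.

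The reverse direction of (i) is the real content, and this is where I expect the main obstacle to lie. Given a proper factorisation $f = gh$, the submodule $Rh/Rf$ of $R/Rf$ is nontrivial (isomorphic to $R/Rg$ under $\bar r_{Rg} \mapsto \overline{rh}_{Rf}$), so $R/Rf$ fails to be simple. The plan is to promote non-simplicity to an actual zero divisor in $\mathrm{End}_R(R/Rf)$. Boundedness enters through the bound $f^*$: since $Rf^* = \mathrm{Ann}_R(R/Rf)$ is two-sided, Lemma \ref{Intro: Lemma 4} identifies $\mathrm{End}_R(R/Rf)$ with $\mathrm{End}_{R/Rf^*}(R/Rf)$, and $R/Rf^*$ is left Artinian because $\sigma$ being an automorphism makes $R$ a two-sided PID. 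In this Artinian setting I would construct a nonzero endomorphism $\phi$ of $R/Rf$ whose image is contained in $Rh/Rf$ and whose kernel contains $Rh/Rf$; then $\phi^2 = 0$ with $\phi\neq 0$, giving the desired zero divisor. Producing $\phi$ reduces to exhibiting a nonzero homomorphism $R/Rh \to R/Rg$. The hard case is when $R/Rf$ is uniserial with pairwise non-isomorphic composition factors, where such a homomorphism need not exist for an arbitrary finite-length module; the resolution is to exploit that $f^*$ is two-sided and that the PID property of $R$ forces the composition factors of $R/Rf$ to be controlled by the factorisation of the central part of $f^*$, ruling out that pathological uniserial configuration whenever $f$ itself admits a proper factorisation.
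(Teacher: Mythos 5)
First, a point of order: the paper does not prove this theorem --- it is quoted from \cite{gomez2012basic} without proof --- so your argument has to stand on its own.

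Your proof of (ii) and of the forward implication of (i) is essentially right, with one caveat: the Bezout computation $q' = u(qq') + v(fq')$ only disposes of the case where $q$ is a \emph{left} zero divisor ($qq'\in Rf$). If instead $q'q\in Rf$, the identity $1=uq+vf$ cannot be pushed past $q'$, and ``by symmetry'' is not available, since left and right zero divisors are genuinely different notions in a noncommutative ring. That case still works, but by a different short argument: writing $\phi_q$ for your endomorphism $\bar r\mapsto \overline{rq}$, the relation $\phi_q\circ\phi_{q'}=\phi_{q'q}=0$ with $\phi_{q'}\neq 0$ shows $\phi_q$ is not injective, hence (as $R/Rf$ has finite length) not surjective, so its image $(Rq+Rf)/Rf=R\,{\rm gcrd}(q,f)/Rf$ is a proper nonzero submodule and ${\rm gcrd}(q,f)$ is a proper factor.

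The genuine gap is in the reverse implication of (i). You propose to build a nonzero $\phi$ with $\phi^2=0$, image inside $Rh/Rf$ and kernel containing $Rh/Rf$, and you reduce this to producing a nonzero homomorphism $R/Rh\to R/Rg$. Take $f=gh$ with $g,h$ irreducible, bounded and \emph{not similar}: then ${\rm Hom}_R(R/Rh,R/Rg)=0$, so your $\phi$ does not exist; worse, the primary decomposition of the bounded module $R/Rf$ gives $R/Rf\cong R/Rg'\oplus R/Rh''$ with $g'\sim g$ and $h''\sim h$, so ${\rm End}_R(R/Rf)\cong\mathcal{E}(g')\times\mathcal{E}(h'')$ is a product of two division rings and contains \emph{no} nonzero square-zero element whatsoever. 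Thus the square-zero strategy is not merely delicate in the problematic case --- it is impossible --- and your proposed repair (boundedness rules out the uniserial configuration) is true but does not rescue it: it tells you the module decomposes, and the zero divisor you must then exhibit is the nontrivial idempotent projecting onto a summand, which your construction cannot produce. A complete argument needs two cases according to whether the irreducible factors of $f$ lie in one similarity class or several: with several classes, the primary decomposition (this is where boundedness and the two-sidedness of $f^{\star}$ genuinely enter) yields a nontrivial idempotent; with a single class and length at least two, the top and socle of $R/Rf$ share an isomorphic simple factor, and the composite $R/Rf\twoheadrightarrow S\hookrightarrow R/Rf$ is either square-zero or splits off a direct summand, giving a nilpotent or an idempotent respectively.
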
 

Carcanague determines sufficient conditions for all non-zero polynomials in the skew polynomial ring $R$ to be bounded \cite[Theorem IV]{carcanague1969quelques}. The following result appears in \cite[Proposition 21]{brown2018nonassociative}, and is an immediate Corollary to \cite[Theorem IV]{carcanague1969quelques}:

\begin{proposition}\cite[Proposition 21]{brown2018nonassociative}\label{Intro: Proposition 1}
If $\sigma$ is an automorphism of $D$, and $R=D[t;\sigma,\delta]$, then the following are equivalent:
\begin{enumerate}
\item $R$ has finite rank over its center;
\item $D$ has finite rank over the field $F_0 =  C \cap {\rm Fix}(\sigma) \cap {\rm Const}(\delta)$.
\end{enumerate}
Moreover, if (1) and (2) are satisfied, then all non-zero polynomials in $R$ are bounded, and if $f$ is irreducible, then $S_f$ is a division algebra.
\end{proposition}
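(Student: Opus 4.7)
The plan is to verify the proposition in three steps, corresponding to (a) the equivalence (1)$\iff$(2), (b) the universal boundedness under (1), and (c) the division-algebra property for irreducible $f$.

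For (a), I would compute the centre $Z(R)$ directly. The commutation relation $ta = \sigma(a)t + \delta(a)$ immediately forces $Z(R) \cap D = C \cap \mathrm{Fix}(\sigma)\cap \mathrm{Const}(\delta) = F_0$. Since $\sigma$ is an automorphism, $R$ is a two-sided principal ideal domain, and a standard structural argument (via the theory of bounded two-sided elements in \cite{jacobson2009finite}) shows that either $Z(R) = F_0$, or $Z(R) = F_0[z_0]$ for a central polynomial $z_0$ of minimal positive $t$-degree $k$. In the latter case, choosing an $F_0$-basis $\{b_\alpha\}_{\alpha\in A}$ of $D$, the collection $\{b_\alpha t^j : \alpha \in A,\, 0 \le j < k\}$ forms a $Z(R)$-basis of $R$, so that
\[
[R:Z(R)] \;=\; k\cdot[D:F_0],
\]
and (1)$\iff$(2) reads off immediately. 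In the degenerate case $Z(R) = F_0$, the monomials $\{t^j\}_{j\ge 0}$ are already $F_0$-linearly independent in $R$, so $R$ has infinite rank over $Z(R)$ and (1) fails; this subcase arises precisely when $\sigma$ has infinite inner order (in the twisted part) or $\delta$ is not algebraic (in the differential part), and a short Galois/derivation-theoretic argument shows that the same hypothesis forces $[D:F_0] = \infty$, so (2) also fails, maintaining the equivalence.

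For (b), Carcanague's Theorem IV \cite[Theorem IV]{carcanague1969quelques} asserts that when $R$ has finite rank over its centre, every non-zero element of $R$ is bounded. Hence under (1), every non-zero polynomial in $R$ is bounded, giving the intermediate claim of the proposition.

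For (c), let $f \in R$ be irreducible. By (b), $f$ is bounded. Irreducibility is equivalent to $Rf$ being a maximal left ideal, so $S_f = R/Rf$ is a simple left $R$-module. For each $b \in R_m\setminus\{0\}$, the left $R$-linear map $\phi_b \colon R \to S_f$, $r \mapsto rb + Rf$, has non-zero image (containing $b + Rf$), hence by simplicity is surjective; its kernel $L_b = \{r \in R : rb \in Rf\}$ is therefore a maximal left ideal. Writing $L_b = Rf_b$ by the PID property, the isomorphism $R/Rf_b \cong R/Rf$ forces $f_b$ to be similar to $f$ in the sense of Jacobson and thus $\deg f_b = \deg f = m$. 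Now if $a, b \in R_m\setminus\{0\}$ with $a\circ b = 0$ in $S_f$, then $a \in L_b = Rf_b$, so $\deg a \ge m$, contradicting $\deg a < m$. Hence right multiplication by any non-zero $b$ is injective on $S_f$, and by finite-dimensionality under (2) it is bijective, so $S_f$ has no non-trivial zero divisors. The preliminary characterisation in Section 1.2 then upgrades this to the statement that $S_f$ is a division algebra.

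The main obstacle is step (a): controlling the degenerate subcase $Z(R) = F_0$ uniformly for both the twisted and differential flavours of $R$, so that the failure of (1) is matched by the failure of (2). Once the structural description of $Z(R)$ is pinned down, the remaining steps are essentially applications of Carcanague's theorem and of Theorem \ref{Intro: Theorem 1}(i) combined with the Petit-style simple-module argument above.
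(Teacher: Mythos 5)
The paper offers no proof of this proposition: it is imported verbatim from \cite[Proposition 21]{brown2018nonassociative} and described as an immediate corollary of \cite[Theorem IV]{carcanague1969quelques}, so there is no argument of the author's to compare against step by step. Judged on its own, your reconstruction is essentially sound. Step (b) is exactly the attribution the paper itself makes (Carcanague's theorem applied under hypothesis (1)), and step (c) is Petit's classical argument: $Rf$ maximal, $R/Rf$ simple, the annihilator-ideal $L_b=Rf_b$ with $f_b$ similar to $f$ forcing $\deg f_b=m$, hence no zero divisors, and finite dimension over $F_0$ (which is where hypothesis (2) enters) upgrading injectivity of the multiplication maps to bijectivity. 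That part is complete and correct.

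The one place where you should be more careful is step (a). Your identity $[R:Z(R)]=k\,[D:F_0]$ rests on the claim that $Z(R)=F_0[z_0]$ for a single central element $z_0$ of minimal positive degree $k$; but as Proposition \ref{Intro: Proposition 2}(iv) records, when $\sigma^n=\iota_u$ with $u\notin{\rm Fix}(\sigma)$ the centre consists of those $\sum\gamma_i u^{-i}t^{in}$ with $\gamma_iu^{-i}\in{\rm Fix}(\sigma)$, which need not be a polynomial ring on one generator. Fortunately your argument only needs the two inequalities it actually uses: if some central $z_0$ of positive degree $k$ exists then $F_0[z_0]\subseteq Z(R)$ gives $[R:Z(R)]\le[R:F_0[z_0]]=k[D:F_0]$, and the $Z(R)$-linear independence of $\{b_\alpha t^j\}$ (checked by comparing leading coefficients, using that $\sigma$ fixes $F_0$ so that $\sigma^{kq}$ permutes $F_0$-bases of $D$) gives $[R:Z(R)]\ge k[D:F_0]$ whenever $Z(R)=F_0[z_0]$; in the remaining degenerate case $Z(R)=F_0$ you still owe the reader the "short Galois/derivation-theoretic argument" that $[D:F_0]=\infty$. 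That argument does exist — if $[D:F_0]<\infty$ then $\sigma|_C$ has finite order, so some power of $\sigma$ is a $C$-automorphism of the finite-dimensional $D$ and hence inner by Skolem--Noether, producing a central element of positive degree (and similarly $\delta$ is forced to be algebraic) — but as written it is the genuine gap in your sketch, and it is precisely the content that Carcanague and Brown--Pumpl\"un supply and that the thesis chooses to cite rather than reprove.
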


For $\sigma = {\rm id}$, this is \cite[Proposition 3]{pumpluen2018algebras}. In many cases, the skew polynomial ring $R=D[t;\sigma,\delta]$ is isomorphic to either a twisted polynomial ring, or a differential polynomial ring. In fact, if $D$ has finite dimension as an algebra over its center $C$, then $R=D[t;\sigma,\delta]$ is either a twisted polynomial ring or a differential polynomial ring \cite[Theorem 1.1.21]{jacobson2009finite}.  In many parts of this thesis, we will require $D$ to be finite dimensional over $F_0=C \cap {\rm Fix}(\sigma) \cap {\rm Const}(\delta)$, so that all polynomials in $R=D[t;\sigma,\delta]$ are bounded; in this case, it is necessary that $[D:C]$ be finite. Hence, in many places we lose no generality when we examine the special cases $R=D[t;\sigma]$ and $R=D[t;\delta]$ separately.\\
\\
We obtain the following Corollary to Proposition \ref{Intro: Proposition 1}, describing special cases for which all non-zero polynomials in $R$ are bounded:

\begin{corollary}\label{Intro: Corollary 1}
Let $D$ be a central division algebra over $C$ of degree $d$.
\begin{enumerate}[(i)]
\item If $\sigma$ has finite inner order, then all non-zero polynomials in $R = D[t;\sigma]$ are bounded.
\item If $D$ has characteristic zero, and $\delta$ is an inner derivation, then all non-zero polynomials in $R = D[t;\delta]$ are bounded.
\item If $D$ has prime characteristic, and $\delta\vert_C$ is algebraic, then all non-zero polynomials in $R = D[t;\delta]$ are bounded.
\end{enumerate}
\begin{proof}
We refer the reader to Sections 1.4 and 1.5 of \cite{jacobson2009finite}, in which Jacobson shows that $D$ has finite dimension as an $F$-vector space, hence finite rank as an $F$-module in cases (i)-(iii). The result follows immediately by Proposition \ref{Intro: Proposition 1}.
\end{proof}
\end{corollary}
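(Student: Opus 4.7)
The plan is to reduce each of the three cases to the hypothesis of Proposition \ref{Intro: Proposition 1}, namely that $D$ has finite rank over $F_0 = C \cap {\rm Fix}(\sigma) \cap {\rm Const}(\delta)$. Since we are told $[D:C] = d^2$ is finite, by multiplicativity of degrees in the tower $F_0 \subseteq C \subseteq D$ it will suffice in each case to show that $[C:F_0] < \infty$. The conclusion that every non-zero polynomial is bounded then follows immediately from Proposition \ref{Intro: Proposition 1}.

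For (i), we have $\delta = 0$, so ${\rm Const}(\delta) = D$ and $F_0 = C \cap {\rm Fix}(\sigma)$. The assumption that $\sigma$ has finite inner order $n$ means $\sigma^n = \iota_u$ for some $u \in D^{\times}$; but every inner automorphism of $D$ acts as the identity on the center, so $\sigma^n\vert_C = {\rm id}_C$. Consequently $\sigma\vert_C$ has finite order as an element of ${\rm Aut}(C)$, and elementary Galois theory gives $[C : {\rm Fix}(\sigma\vert_C)] = [C : F_0] \leq n$, as required.

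For (ii), we have $\sigma = {\rm id}$, so $F_0 = C \cap {\rm Const}(\delta)$. If $\delta = \delta_c$ is the inner derivation associated to some $c \in D$, then for any $a \in C$ we have $\delta(a) = ca - ac = 0$, because elements of the center commute with everything. Hence $C \subseteq {\rm Const}(\delta)$ and $F_0 = C$, so trivially $[C:F_0] = 1$. For (iii), again $F_0 = C \cap {\rm Const}(\delta)$, and the restriction $\delta\vert_C$ is a derivation of the field $C$ which is assumed to be algebraic. The key input here is the structure theory of algebraic derivations in prime characteristic developed in Sections 1.4--1.5 of \cite{jacobson2009finite}: an algebraic derivation on a field $C$ of characteristic $p > 0$ forces $[C : {\rm Const}(\delta\vert_C)]$ to be finite. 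Since ${\rm Const}(\delta\vert_C) = F_0$, the claim follows.

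The one genuinely non-trivial ingredient is the cited fact in case (iii) that algebraicity of $\delta\vert_C$ implies finiteness of $[C : {\rm Const}(\delta\vert_C)]$; cases (i) and (ii) are essentially bookkeeping once one remembers that inner maps act trivially on the center. No case-by-case calculation with polynomials is needed, since the entire technical content has been offloaded to Proposition \ref{Intro: Proposition 1} and to Jacobson's theory.
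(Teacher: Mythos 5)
Your proof is correct and follows essentially the same route as the paper: reduce each case to the finiteness of $[D:F_0]$ and then invoke Proposition \ref{Intro: Proposition 1}. The only difference is that you make explicit (via the tower $F_0 \subseteq C \subseteq D$ and the observations that inner automorphisms and inner derivations act trivially on $C$) the finiteness arguments that the paper simply delegates to Sections 1.4 and 1.5 of Jacobson, which is a welcome expansion rather than a divergence.
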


Since any bound $f^{\star}$ of a non-zero polynomial $f$ in $R$ is a right-invariant polynomial in $R$, we recall some results which determine the right-invariant polynomials in  $R$, when $R$ is either a twisted polynomial ring, or a differential polynomial ring.\\
\\
Independently, Jacobson \cite{jacobson2009finite} and Petit \cite{petit1966certains} determine the right-invariant polynomials in $R=D[t;\sigma]$ and the center of $R$:

\begin{proposition}(\cite[(15)]{petit1966certains}, \cite[Theorem 1.1.22]{jacobson2009finite})\label{Intro: Proposition 2}
\begin{enumerate}[(i)]
\item The polynomial $f(t) = t^m - \sum_{i=0}^{m-1}a_it^i \in D[t;\sigma]$ is right-invariant in $D[t;\sigma]$ if and only if $a_i \in {\rm Fix}(\sigma)$ and $\sigma^m(c)a_i = a_i\sigma^i(c)$ for all $i \in \lbrace 0,1,\dots,m-1 \rbrace$ and for all $c \in D$.
\item $f(t)$ is right-invariant in $D[t;\sigma]$ if and only if $f(t)=ag(t)t^s$ for some $a\in D^\times$, $g(t)\in C(R)$ and
integer $s\geq0$.
\item If no non-zero power of $\sigma$ is an inner automorphism then $C(R) = F$.
\item If $\sigma$ has finite inner order $n$ with $\sigma^n = \iota_u$ for some $u \in D^{\times}$, then $C(R)$ is equal to the set of polynomials of the form $$\gamma_0 + \gamma_1 u^{-1}t^n + \gamma_2 u^{-2}t^{2n} + \dots + \gamma_su^{-s}t^{sn}$$ for some $s \in \mathbb{N} \cup \lbrace 0 \rbrace$ and $\gamma_i \in F$ such that $\gamma_iu^{-i} \in {\rm Fix}(\sigma)$. Moreover, if $n$ is also the order of $\sigma\vert_C$, then $u$ can be taken from ${\rm Fix}(\sigma)$, and $$C(R) = F[u^{-1}t^n].$$ The last situation holds if $[D:C] < \infty$.
\end{enumerate}
\end{proposition}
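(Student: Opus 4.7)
The plan is to treat the four parts in sequence, exploiting throughout that $R = D[t;\sigma]$ is generated as a ring by $D$ together with $t$. Accordingly $f$ is right-invariant precisely when $fc \in Rf$ for every $c \in D$ and $ft \in Rf$, and $f \in C(R)$ precisely when $cf = fc$ for every $c \in D$ and $tf = ft$; in each case the analysis reduces to computing these products explicitly and comparing coefficients modulo $f$. For (i), using the relation $t^i c = \sigma^i(c) t^i$ I would compute $fc = \sigma^m(c) t^m - \sum_{i=0}^{m-1} a_i \sigma^i(c) t^i$, so that $fc - \sigma^m(c) f$ has degree less than $m$ and by uniqueness of right Euclidean division lies in $Rf$ iff it is zero, yielding the conjugation condition $\sigma^m(c) a_i = a_i \sigma^i(c)$. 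A parallel reduction of $ft - tf$ modulo $f$ forces $\sigma(a_i) = a_i$, and both conditions are clearly also sufficient.

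For (ii), the reverse direction is immediate: $t$ is right-invariant since $tc = \sigma(c)t$, central polynomials generate two-sided ideals, and left multiplication by a unit of $D$ preserves $Rf$. For the forward direction, given $f$ right-invariant, I would factor off the largest right power $t^s$ to reduce to the case of nonzero constant term, then divide on the left by the leading unit $a_m$ to obtain a monic right-invariant polynomial $g$ with $g(0) = b_0 \neq 0$. Applying (i) at $i = 0$ now pins down $\sigma^m = \iota_{b_0}$, and substituting this identification back into the remaining relations $\sigma^m(c) b_i = b_i \sigma^i(c)$ together with $\sigma(b_i) = b_i$ matches precisely the description of $C(R)$ from (iii) or (iv), giving $g \in C(R)$ and hence $f = a g(t) t^s$. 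This step is the main obstacle: the coefficient-level analysis of (i) describes right-invariance only pointwise, so extracting the structural factorisation requires coordinating the stripping of $t^s$, the division by the leading unit, and the substitution of $\sigma^m = \iota_{b_0}$ back into the intertwining relations so that the reduced polynomial actually lies in the center, not merely in the larger set of right-invariant polynomials.

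For (iii) and (iv) the analogous computation yields $c a_i = a_i \sigma^i(c)$ from $cf = fc$, and $\sigma(a_i) = a_i$ from $tf = ft$. In (iii), if any $a_i$ with $i \geq 1$ were nonzero, then $\sigma^i(c) = a_i^{-1} c a_i$ would render $\sigma^i$ inner against hypothesis, so $f = a_0 \in C \cap {\rm Fix}(\sigma) = F$. In (iv), the same dichotomy restricts nonzero contributions to indices $i = sn$; iterating $\sigma^n = \iota_u$ together with $\sigma^n(u) = u u u^{-1} = u$ gives $\sigma^{sn} = \iota_{u^s}$, so $c a_{sn} = a_{sn} u^s c u^{-s}$ forces $a_{sn} u^s \in C$ and hence $a_{sn} = \gamma_s u^{-s}$ for some $\gamma_s \in C$. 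The fixation condition rewrites as $\gamma_s u^{-s} \in {\rm Fix}(\sigma)$, and when $n$ is the order of $\sigma|_C$ a standard rescaling in $C^{\times}$ permits taking $u \in {\rm Fix}(\sigma)$, whence $\gamma_s \in F$ and $C(R) = F[u^{-1} t^n]$; the concluding clause for $[D:C] < \infty$ then follows because in that setting the inner order of $\sigma$ coincides with the order of $\sigma|_C$.
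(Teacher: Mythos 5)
The thesis imports this proposition from Petit and Jacobson without giving a proof, so there is nothing to compare against; judged on its own terms, your proposal is sound in outline for (iii) and (iv), but two steps do not work as written. The smaller one is in (i): the reduction of $ft$ is \emph{not} parallel to that of $fc$, because $ft-tf=\sum_{i=0}^{m-1}(\sigma(a_i)-a_i)t^{i+1}$ has degree $m$, not degree less than $m$, so membership in $Rf$ only forces $ft-tf=df$ with $d=\sigma(a_{m-1})-a_{m-1}\in D$. You then need the cascading coefficient argument (the constant term gives $da_0=0$, the $t$-coefficient gives $\sigma(a_0)-a_0=-da_1$, and so on) to force $d=0$ and hence $\sigma(a_i)=a_i$ for all $i$; this is precisely the computation the thesis carries out in the proof of Theorem \ref{C5.2: Theorem 2}.

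The genuine gap is the forward direction of (ii), which you yourself flag as the main obstacle: the normalisation you propose does not produce a central polynomial. After stripping $t^s$ and dividing by the leading unit, the monic right-invariant polynomial $g=\sum_i b_it^i$ of degree $m'$ with $b_0=g(0)\neq 0$ satisfies, by (i), $\sigma^{m'}(c)b_i=b_i\sigma^i(c)$ with $\sigma^{m'}=\iota_{b_0}$, whereas centrality requires $cb_i=b_i\sigma^i(c)$; these coincide for all $c$ only when $b_0\in C$. Concretely, for $\sigma=\iota_u$ with $u\notin C$ and $\gamma\in F^{\times}$, the polynomial $t-u\gamma$ is monic, right-invariant, and has nonzero constant term, but it is not central --- the central generator of $R(t-u\gamma)$ is $u^{-1}t-\gamma$. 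The repair is to normalise by the \emph{constant} coefficient instead: setting $c_i=b_0^{-1}b_i$, the relations $\sigma(b_0)=b_0$, $\sigma(b_i)=b_i$ and $b_i\sigma^i(d)=\sigma^{m'}(d)b_i=b_0db_0^{-1}b_i$ give $\sigma(c_i)=c_i$ and $dc_i=db_0^{-1}b_i=b_0^{-1}\sigma^{m'}(d)b_i=c_i\sigma^i(d)$ for all $d\in D$, so $b_0^{-1}g$ commutes with $D$ and with $t$, hence lies in $C(R)$, and $f=(a_mb_0)\,(b_0^{-1}g)\,t^s$ has the required form. This also makes the appeal to the explicit descriptions in (iii)/(iv) unnecessary at that point, removing the awkward forward reference.
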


Also, Amitsur \cite[Theorem 1.1.32]{jacobson2009finite} determines the right-invariant polynomials in $R = D[t;\delta]$ and its center:

\begin{theorem}\cite[Theorem 1.1.32]{jacobson1943theory}\label{Intro: Theorem 2}
\begin{enumerate}[(i)]
\item The right-invariant polynomials of $R=D[t;\delta]$ are the elements $ap(t)$ where $a \in D$ and $q(t) \in C(R)$.
\item For $D$ of characteristic zero, the following are satisfied:
\begin{enumerate}
\item If $\delta = \delta_c$ for some $c \in D^{\times}$, then $$C(R) = F[t-c]$$ which is isomorphic to $F[x]$ under the map fixing elements of $F$ and sending $t-c$ to $x$.
\item  If $\delta$ is an outer derivation, then $C(R) = F.$
\end{enumerate}
\item If $D$ has prime characteristic $p$, then the following are satisfied:
\begin{enumerate}
\item If $\delta\vert_C$ is algebraic with minimum polynomial $$g(t) = t^{p^e} + \gamma_1t^{p^{e-1}} + \dots + \gamma_et \in F[t]$$ such that $g(\delta) = \delta_c$ for some $c \in D^{\times}$, then $$C(R) = F[g(t)-c]$$ which is isomorphic to $F[x]$ under the map fixing elements of $F$ and sending $g(t)-c$ to $x$.
\item If $\delta\vert_C$ is transcendental, then $C(R) = F$.
\end{enumerate}
\end{enumerate}
\end{theorem}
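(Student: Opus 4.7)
For (i), the plan is to reduce right-invariance to the monic case. If $a \in D^{\times}$ and $q(t) \in C(R)$, then $(aq)R = a(qR) = a(Rq) = (aR)q = Rq = R(aq)$, using that $a$ is a unit in $D$ (so $aR = Ra = R$) and $q$ commutes with $R$; thus $aq$ is right-invariant. Conversely, a right-invariant $f$ with leading coefficient $a_m$ yields a monic $g := a_m^{-1}f$ that generates the same left ideal $Rf = Rg$, which is two-sided, and is hence itself right-invariant. For any $d \in D$, the element $gd - dg$ lies in $Rg$ but has degree strictly less than $\deg g$ (the top-degree terms cancel since $g$ is monic), so it vanishes; a parallel degree argument gives $gt - tg = 0$. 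Thus $g \in C(R)$ and $f = a_m \cdot g$.

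For the characteristic-zero case (ii)(a) with $\delta = \delta_c$, a direct calculation shows $t - c$ commutes with every $d \in D$ (via $\delta(d) = cd - dc$) and with $t$ (via $\delta(c) = [c,c] = 0$), so $F[t-c] \subseteq C(R)$. Since $\{(t-c)^i\}_{i \geq 0}$ is a left $D$-basis of $R$, any $q \in R$ has a unique expansion $\sum b_i (t-c)^i$; commutation of $q$ with $D$ forces $b_i \in C$, while commutation with $t$ forces $\delta(b_i) = 0$, giving $b_i \in C \cap {\rm Const}(\delta) = F$. The map $F[x] \to F[t-c]$, $x \mapsto t-c$, is then a ring isomorphism since $t-c$ has positive degree. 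For (ii)(b), assume $q(t) = \sum_{i=0}^{m} b_i t^i \in C(R)$ with $m \geq 1$; using $t^i d = \sum_j \binom{i}{j} \delta^{i-j}(d) t^j$, the equation $[q,d] = 0$ yields $b_m \in C$ (from the coefficient of $t^m$) and $m b_m \delta(d) = [d, b_{m-1}]$ (from the coefficient of $t^{m-1}$). Since $m$ is invertible in characteristic zero and $b_m \in D^{\times}$, this presents $\delta$ as an inner derivation, contradicting outerness. Hence $\deg q = 0$ and $q \in D \cap C(R) = F$.

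For (iii), the new ingredient is that when $g$ is a $p$-polynomial with $F$-coefficients, Lucas's theorem gives $\binom{p^k}{j} \equiv 0 \pmod{p}$ for $0 < j < p^k$, whence $g(t) d = d g(t) + g(\delta)(d)$ in $R$. In case (a), combined with $g(\delta) = \delta_c$ this yields $[g(t) - c, d] = 0$; commutation with $t$ requires $\delta(c) = 0$, which is arranged by replacing $c$ with $c - z$ for a suitable $z \in C$, using that $\delta(c) \in C$ (a consequence of $\delta$ and $g(\delta)$ commuting as operators on $D$). With this choice $h := g(t) - c \in C(R)$, and $F[h] \cong F[x]$ since $h$ has positive degree. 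For the reverse inclusion, given $q \in C(R)$, right Euclidean division by the monic $h$ yields $q = \tilde q\, h + r$ with $\deg r < p^e$, and one verifies $\tilde q$ and $r$ are both central; a coefficient-comparison argument adapted from (ii)(b), exploiting the $p$-polynomial structure of $g$, then forces $r \in F$, hence $q \in F[h]$. Case (b) parallels (ii)(b): a central element of positive degree would furnish a polynomial equation satisfied by $\delta|_C$, contradicting transcendence.

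The main obstacle is (iii)(a), specifically the two interlinked subtleties of choosing $c$ so that $\delta(c) = 0$ holds compatibly with $g(\delta) = \delta_c$, and of proving that the only central elements of degree less than $p^e$ are scalars in $F$. Both points rest on the minimality of $g$ as the annihilator of $\delta|_C$, but converting this into conditions on $R$ requires the full $p$-polynomial apparatus rather than the simpler characteristic-zero commutator calculation, because in positive characteristic the binomial coefficients $\binom{i}{j}$ that drove the contradiction in (ii)(b) can vanish mod $p$.
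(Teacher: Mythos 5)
The paper does not prove this statement at all: it is imported verbatim from Jacobson's book (Theorem 1.1.32 there, due to Amitsur), so there is no in-paper proof to compare yours against. Judged on its own merits, your treatment of (i) and (ii) is correct and is the standard argument: normalising a right-invariant $f$ to the monic $g=a_m^{-1}f$ generating the same two-sided left ideal and killing $gd-dg$ and $gt-tg$ by degree gives (i), and the commutator computations for $t-c$ together with the coefficient-of-$t^{m-1}$ trick give (ii). (A small remark on (ii)(a): there $F=C$ automatically, since $C\subseteq\mathrm{Const}(\delta_c)$, so the condition $\delta(b_i)=0$ you extract from commutation with $t$ is already implied by $b_i\in C$.)

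Part (iii)(a), which you correctly identify as the crux, contains two genuine gaps. First, to normalise so that $\delta(c)=0$ you need $\delta(c)\in\delta(C)$, not merely $\delta(c)\in C$; the relation you actually have is $G(\delta)\bigl(\delta(c)\bigr)=0$, where $g(x)=xG(x)$, coming from $g(\delta)(c)=[c,c]=0$, and $\ker\bigl(G(\delta\vert_C)\bigr)$ need not equal $\delta(C)$ for an arbitrary operator. The inclusion does hold here, but only because $[C:F]=p^e=\deg g$ makes $C$ a cyclic $F[\delta\vert_C]$-module, and that fact (a consequence of the Jacobson--Bourbaki correspondence) must be invoked explicitly. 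Second, and more seriously, the step you defer --- that a central element $r=\sum b_it^i$ of degree $N$ with $1\le N<p^e$ cannot exist --- is not obtainable by ``adapting the coefficient comparison from (ii)(b)'': in characteristic $p$ the coefficient $\binom{N}{N-1}=N$ may vanish mod $p$, and then the $t^{N-1}$-coefficient yields no inner-derivation presentation of $\delta$ and no contradiction. The correct argument extracts from the constant coefficient of $[r,z]=0$ for $z\in C$ the operator identity $\sum_{i\ge1}b_i(\delta\vert_C)^i=0$ and then appeals to the linear independence of $1,\delta\vert_C,\dots,\delta\vert_C^{p^e-1}$ over $D$ (an Artin/Jacobson--Bourbaki independence theorem tied to $[C:F]=p^e$). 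The same independence theorem, rather than the bare remark that ``a central element furnishes a polynomial relation contradicting transcendence,'' is what is needed in (iii)(b), since the relation obtained has coefficients in $D$, not in $F$. So your skeleton follows the classical route, but the two points you flag as the main obstacles are precisely the ones the sketch does not actually overcome.
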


\section{Petit Algebras}
This section is dedicated to a construction of nonassociative algebras obtained from a skew polynomial ring $R$, which generalises the construction of the associative quotient algebras of $R$ obtained from factoring $R$ by a two-sided principal ideal.\\
\\ 
Let $R = D[t;\sigma,\delta]$ be a skew polynomial ring, and let $f \in R$ have positive degree $m$. The skew polynomials of degree less than $m$ canonically represent the elements of the right $R$-module $R/Rf$. Moreover, the set of skew polynomials of degree less than $m$ $$R_m = \lbrace g \in R : {\rm deg}(g) < m \rbrace$$ endowed with the usual term-wise addition, and the multiplication defined by
$$g \circ_f h = \begin{cases}
			   gh, &\quad\text{if ${\rm deg}(g) + {\rm deg}(h) < m$ }\\
			   gh\,{\rm mod_r}f, &\quad\text{if ${\rm deg}(g) + {\rm deg}(h) \geq m$ }
			  \end{cases}$$
where ${\rm mod_r}f$ denotes the remainder on right division by $f$, is a unital, nonassociative ring $S_f$, which we also denote by $R/Rf$. When the context is clear we will use $\circ$, or simply juxtaposition in place of $\circ_f$ for the multiplication in $S_f$. The ring $S_f$ has the structure of a unital, nonassociative algebra over its subfield $F = \lbrace a \in D : ag = ga \text{ for all } g \in S_f \rbrace = {\rm Comm}(S_f) \cap D$. We remark that $S_f$ is a free left $D$-module of finite rank with basis $t^0=1, t^1=t, t^2, \dots, t^{m-1}$, and $S_f$ is associative if and only if $f$ is right-invariant in $R$, in which case $S_f$ is equal to the associative quotient algebra $R/(f)$. The algebras $S_f$ were first introduced in 1966 by Petit, hence they are known as Petit algebras \cite{petit1966certains}.  The skew polynomials $f$ and $df$ yield isomorphic Petit algebras 
for any $d \in D^{\times}$, and if ${\rm deg}(f)=1$, then $S_f \cong D$. \\
\\
It can be easily shown that the field $F = {\rm Comm}(S_f) \cap D$ is equal to the subfield $C \cap {\rm Fix}(\sigma) \cap {\rm Const}(\delta)$ of $D$, i.e.
$$F = {\rm Comm}(S_f) \cap D = C \cap {\rm Fix}(\sigma) \cap {\rm Const}(\delta).$$
To see this, suppose that $a \in F \subseteq D$, so that $ga = ag$ for all $g \in S_f$. In particular $ab = ba$ for all $b \in D$, and $at = ta$. That is, $a \in C$, and $$at = ta = \sigma(a)t + \delta(a),$$ which yields $\sigma(a) = a$ and $\delta(a) = 0$. Therefore $a \in C \cap {\rm Fix}(\sigma) \cap {\rm Const}(\delta)$.
On the other hand, if $a \in C \cap {\rm Fix}(\sigma) \cap {\rm Const}(\delta)$, then $\Delta_{j,i}(a) = 0$, and $\Delta_{j,j}(a) = a$ for any non-negative integers $i,j$, such that $i < j$. So for any $g(t) = \sum\limits_{j=0}^{m-1} g_jt^j \in S_f$ we have  $$ga= (\sum\limits_{j=0}^{m-1} g_jt^j)a = \sum\limits_{j=0}^{m-1} g_j \sum\limits_{i=0}^j \Delta_{j,i}(a)t^i = \sum\limits_{i=0}^{m-1} g_jat^j = a\sum\limits_{i=0}^{m-1} g_jt^j = ag,$$ i.e. $a \in F$. Hence $F = C \cap {\rm Fix}(\sigma) \cap {\rm Const}(\delta)$.\\
\\
In \cite{petit1966certains}, Petit determines the right, middle, and left nuclei of $S_f$ and provides equivalent conditions for the indeterminant $t$ to be contained in the right nucleus of $S_f$:

\begin{theorem}\cite[(2),(5)]{petit1966certains}\label{Intro: Theorem 3}
Let $f \in R$ have degree $m > 1$.
\begin{enumerate}[(i)]
\item The right nucleus of $S_f$ is equal to the eigenring of $f$, that is $${\rm Nuc}_r(S_f) = \mathcal{E}(f).$$
\item If $f$ is not right-invariant, then the left and middle nuclei of $S_f$ are equal to $D$, that is $${\rm Nuc}_l(S_f) = {\rm Nuc}_m(S_f) = D.$$
\item $ft \in Rf$ if and only if $t \in {\rm Nuc}_r(S_f)$ if and only if $t \circ_f t^m = t^m \circ_f t$ if and only if the powers of $t$ are associative.
\end{enumerate}
\end{theorem}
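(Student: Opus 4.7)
The plan is to normalise so that $f$ is monic (which is harmless, since $R(a_m^{-1}f)=Rf$ and $S_f$, $\mathcal{E}(f)$ are unchanged under such scaling) and then to express each of the three associators directly via right division in $R$. For (i), writing $ab = qf + r$ with $r \in R_m$, a short calculation using $(ab)g = a(bg)$ in $R$ will give
\[
(a\circ b)\circ g - a\circ (b\circ g) \;=\; -\,q(fg)\,{\rm mod}_r f.
\]
If $fg \in Rf$ the right-hand side vanishes for every $q$, proving $\mathcal{E}(f) \subseteq {\rm Nuc}_r(S_f)$. For the reverse inclusion I would set $a = t$, $b = t^{m-1}$, forcing $q = 1$; the vanishing of the associator then yields $fg\,{\rm mod}_r f = 0$, i.e.\ $g \in \mathcal{E}(f)$.

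For (ii), the inclusion $D \subseteq {\rm Nuc}_l(S_f) \cap {\rm Nuc}_m(S_f)$ is a one-line check: left or right multiplication by $d \in D$ preserves degrees in $R_m$, so the ``outer'' product needs no reduction modulo $f$ and everything collapses to associativity in $R$. For the converse, the same style of calculation will yield
\[
[g,a,b] \,=\, -q_3(fb)\,{\rm mod}_r f \qquad\text{and}\qquad [a,g,b] \,=\, -q_3'(fb)\,{\rm mod}_r f,
\]
where $q_3, q_3'$ are the right quotients of $ga$ and $ag$ by $f$. Assuming $g$ has positive degree $k$ with leading coefficient $g_k$, I would choose $a = t^{m-k}$: this produces $ga = g_k f + r_3$ and, from the product rule in $R$, $ag = \sigma^{m-k}(g_k)f + r_3'$, both leading coefficients being nonzero because $\sigma$ is injective on the division ring $D$. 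The hypothesis on $g$ then forces $fb \in Rf$ for every $b \in R_m$, and hence $fR \subseteq Rf$ (write any $h \in R$ as $h = qf + r$ with $r \in R_m$), contradicting the hypothesis that $f$ is not right-invariant. Thus $g$ must lie in $D$.

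For (iii), the first equivalence is immediate from (i) and the definition of $\mathcal{E}(f)$. For the second, I would write $f = t^m - \widetilde{t^m}$ with $\widetilde{t^m} \in R_m$, compute
\[
ft = tf + \big(t\widetilde{t^m} - \widetilde{t^m}\,t\big),
\]
and observe that $ft \in Rf$ is equivalent to $t\widetilde{t^m} \equiv \widetilde{t^m}\,t \pmod{Rf}$; reducing both sides modulo $f$ turns this into exactly $t \circ t^m = t^m \circ t$ in $S_f$. Finally, $t \in {\rm Nuc}_r(S_f)$ places every power of $t$ inside the associative subalgebra ${\rm Nuc}_r(S_f)$, so the subalgebra generated by $t$ is associative; and conversely, associativity of the powers of $t$ specialises to $(t\circ t^{m-1})\circ t = t\circ (t^{m-1}\circ t)$, which is precisely $t^m \circ t = t \circ t^m$.

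The only step that requires genuine work, rather than being a formal manipulation, is the leading-coefficient calculation $t^{m-k}g = \sigma^{m-k}(g_k)t^m + (\text{lower})$ used in (ii) for ${\rm Nuc}_m$: this needs the general product rule $(at^j)(bt^k) = \sum_i a\Delta_{j,i}(b)t^{i+k}$ together with $\Delta_{j,j} = \sigma^j$ in the mixed $(\sigma,\delta)$ setting. Once this is in hand, every remaining step is a routine application of the right division algorithm and the injectivity of $\sigma$ on $D$.
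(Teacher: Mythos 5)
Your proposal is correct and complete: the associator identities $[a,b,g]=-q(fg)\,{\rm mod}_r f$, $[g,a,b]=-q_3(fb)\,{\rm mod}_r f$, $[a,g,b]=-q_3'(fb)\,{\rm mod}_r f$ all check out, the choices $a=t$, $b=t^{m-1}$ (for (i)) and $a=t^{m-k}$ (for (ii)) are exactly where the hypothesis $m>1$ and the injectivity of $\sigma$ enter, and the chain of equivalences in (iii) closes correctly with $t^m$ read as its residue $\widetilde{t^m}\in S_f$. The thesis itself gives no proof of this theorem — it is quoted from Petit — so there is nothing to compare against, but your argument is the standard one and reconstructs Petit's reasoning faithfully.
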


\begin{remark}
We note here that for $f \in R$ of degree $1$, i.e. $f(t)=t-a$ for some $a \in D$, the right nucleus is ${\rm Nuc}_r(S_f) = D$, as $S_f \cong D$, however, the eigenring of $f$ is given by $$\mathcal{E}(f) = \{ b \in D: \sigma(b)a=ab-\delta(b) \},$$ which is not equal to $D$ in general. For instance, if $D$ is commutative and $\delta = 0$, then the eigenring of $f$ is $$\mathcal{E}(f) = \{ b \in D : \sigma(b) = b \} = {\rm Fix}(\sigma),$$ which is not equal to $D$ when $\sigma$ is not the identity map. Hence Theorem \ref{Intro: Theorem 3} does not apply for ${\rm deg}(f)=1$.
\end{remark}

\chapter{The Eigenring of $f \in D[t;\sigma]$}

\section{The Minimal Central Left Multiple of $f \in D[t;\sigma]$}

Throughout this chapter, unless stated otherwise, let $D$ be a central division algebra over $C$ of degree $d$, let $\sigma$ be an automorphism of $D$ of finite order $n$ modulo inner automorphisms with $\sigma^n = \iota_u$, and let $f \in R=D[t;\sigma]$ be a monic, non-constant polynomial of degree $m \geq 1$. Then $D$ has finite dimension $d^2n$ over the field $F = C \cap {\rm Fix}(\sigma)$. Recall that $R$ has center $$C(R) = F[u^{-1}t^n] \cong F[x],$$ and that all non-constant polynomials in $R$ are bounded. We define the minimal central left multiple of $f$ to be the polynomial $h(t)=\hat{h}(u^{-1}t^n)$ where $\hat{h} \in F[x]$ is a monic polynomial of minimal degree such that $f$ right divides $h$ in $R$. In the following we use the notation $h$ and $\hat{h}$ for the minimal central left multiple of $f$. \\
\\
We begin by providing some results on the minimal central left multiple of $f \in D[t;\sigma]$, and the relationship between irreducible factors of $h$ in $R$ and irreducible factors of $f$ in $R$.\\
\\
Given a bound $f^*$ of $f$, it is straightforward to determine the minimal central left multiple of $f$:

\begin{proposition}\label{C2.1: Proposition 1}
Let $f \in R$ be a non-constant polynomial with a given bound $f^*=a c(t)t^r$ where $a \in D^{\times}$, $c(t) = \hat{c}(u^{-1}t^n)$ for some monic polynomial $\hat{c} \in F[x]$, and some $r \in \{0,1,\dots , n-1 \}$.
\begin{enumerate}[(i)]
\item If $r=0$, then $$h(t) = a^{-1}f^* = c(t),$$ i.e. $\hat{h}(x) = \hat{c}(x)$.
\item If $r \neq 0$, then $$h(t) = pf^* = c(t)u^{-1}t^n,$$ where $p(t) = u^{-1}\sigma^{n-s}(a^{-1})t^{n-r} \in R$, i.e. $\hat{h}(x) = \hat{c}(x)x$.
\end{enumerate}
\begin{proof}
(i) If $r=0$, then $f^*= ac(t) $ is a central left multiple of $f$, and if $c^{\prime}$ is any other central left multiple of $f$, then $c^{\prime}$ has degree greater than or equal to that of $f^*$, or else $Rc^{\prime}$ is a two-sided ideal of $R$ contained in $Rf$ that is not contained in $Rf^*$, which contradicts the definition of the bound of $f$. Hence $f^*$ is a central left multiple of $f$ of minimal degree. Now $f^*= bh(t) = b\hat{h}(u^{-1}t^n)$ for some $b\in D^{\times}$, i.e. $$a\hat{c}(x) = b\hat{h}(x).$$ Comparing lead coefficients on both sides yields $a = b$, i.e. $h(t) = a^{-1}f^* = c(t)$ as claimed.\\
\\ (ii) If $0 < r < n-1$, then the polynomial 
$$u^{-1}\sigma^{n-r}(a^{-1})t^{n-r}f^*$$
 is clearly a left multiple of $f$, because $f^*$ is a left multiple of $f$.
   Now, since $c(t)$ commutes with all polynomials in $R$ (in particular, with powers of $t$ and elements of $D$) we have that:
\begin{align*}
u^{-1}\sigma^{n-r}(a^{-1})t^{n-s}f^* &= (u^{-1}\sigma^{n-r}(a^{-1})t^{n-r}) (ac(t)t^r)\\
&= u^{-1}\sigma^{n-r}(a^{-1})\sigma^{n-r}(a)c(t)t^n \\
&= c(t)u^{-1}t^{n}.
\end{align*}
Therefore the polynomial $pf^*$ with $p(t)=u^{-1}\sigma^{n-r}(a^{-1})t^{n-r}$ is a central left multiple of $f$, that is also monic as polynomial in $F[x]$ when we make the identification $x=u^{-1}t^n$. Hence in order to confirm that $pf^*$ is the {\it minimal} central left multiple of $f$, we need only show that there is no central left multiple of $f$ of strictly lower degree. To this end suppose for a contradiction that $e$ is a central left multiple of $f$ of degree less than that of $pf^*$. Since the degree of $e$ is a multiple of $n$, it has degree at most ${\rm deg}(pf^*) - n$, and since ${\rm deg}(pf^*) = {\rm deg}(f^*) + n - r$, we have the following inequalities $${\rm deg}(e) \leq {\rm deg}(f^*) - r < {\rm deg}(f^*)$$ as $r>0$. Therefore $Re$ is a two-sided ideal of $R$ contained in $Rf$, but $Re \nsubseteq Rf^*$, which is a contradiction, since $f^*$ is a bound of $f$. Hence $pf^*$ achieves minimal degree for a central left multiple of $f$ and the result follows immediately.
\end{proof}
\end{proposition}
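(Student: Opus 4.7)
The plan is to exhibit the candidate polynomial explicitly in each case and then argue minimality from the defining property of the bound, namely that $Rf^*$ is the unique largest two-sided ideal of $R$ contained in $Rf$.

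For case (i), I would observe that when $r=0$ the bound $f^*=ac(t)$ is already, up to the unit $a$, a central polynomial, since $c(t)=\hat c(u^{-1}t^n)\in C(R)=F[u^{-1}t^n]$ by Proposition \ref{Intro: Proposition 2}(iv). Thus $a^{-1}f^*=c(t)$ is a central left multiple of $f$ which is monic when written in the variable $x=u^{-1}t^n$, so it is a legitimate candidate for $h(t)$. For minimality, if $h'$ is any central left multiple of $f$, then $Rh'=h'R$ is a two-sided ideal of $R$ contained in $Rf$, hence contained in $Rf^*$; this forces $h'=qf^*$ for some $q\in R$ and so $\deg(h')\geq \deg(f^*)=\deg(c)$. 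Therefore $c(t)$ achieves the minimum possible degree.

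For case (ii), the obstruction is that the tail $t^r$ with $1\leq r\leq n-1$ prevents $f^*$ itself from being central, since every element of $C(R)$ has degree a multiple of $n$. The natural move is to left-multiply $f^*$ by the smallest polynomial that promotes $t^r$ to $t^n$ and cancels the leading coefficient $a$. I would therefore try $p(t)=\alpha\, t^{n-r}$ for a suitably chosen $\alpha\in D^\times$. Using the relation $t^{n-r}a=\sigma^{n-r}(a)t^{n-r}$ (valid because $\delta=0$) and the centrality of $c(t)$, the product $\alpha t^{n-r}\cdot ac(t)t^r$ simplifies to $\alpha\sigma^{n-r}(a)\,c(t)\,t^n$. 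Requiring this to equal $c(t)u^{-1}t^n$ forces $\alpha=u^{-1}\sigma^{n-r}(a^{-1})$, giving exactly the stated $p(t)$, and the product becomes $\hat c(x)\cdot x$ in the variable $x=u^{-1}t^n$, a monic polynomial in $F[x]$.

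The step I expect to require the most care is the minimality argument in (ii), since the reasoning used in (i) does not apply directly: $f^*$ is not central, so a central left multiple need not be a left multiple of $f^*$ of large degree for trivial degree reasons. Instead I would argue as follows. Suppose $h'$ is a central left multiple of $f$ with $\deg(h')<\deg(p f^*)$. Because $h'\in C(R)=F[u^{-1}t^n]$, its degree in $t$ is a multiple of $n$, and since $\deg(p f^*)=\deg(c)+n$ is also a multiple of $n$, we get $\deg(h')\leq \deg(c)$. In particular $\deg(h')<\deg(c)+r=\deg(f^*)$, so $h'$ cannot be a right multiple of $f^*$, giving $Rh'\not\subseteq Rf^*$. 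On the other hand $Rh'=h'R$ is a two-sided ideal of $R$ contained in $Rf$, contradicting the bound property of $f^*$. This contradiction establishes minimality and completes the proof of (ii).
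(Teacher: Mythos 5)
Your proposal is correct and follows essentially the same route as the paper: in each case you exhibit the explicit central left multiple (deriving the coefficient $u^{-1}\sigma^{n-r}(a^{-1})$ from the same commutation computation the paper performs), and you prove minimality by the same degree argument combined with the fact that $Rf^*$ is the largest two-sided ideal contained in $Rf$. The only cosmetic difference is that in (i) you argue the minimality directly ($Rh'\subseteq Rf^*$ forces $\deg(h')\geq\deg(f^*)$) where the paper phrases it contrapositively; the substance is identical.
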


As we have already seen, all non-constant polynomials in $R$ have a uniquely determined bound (unique up to left multiplication by an element of $D^{\times}$), hence the preceding result guarantees that any non-constant polynomial has a uniquely determined minimal central left multiple which is a multiple of its bound.\\
\\
We note that in the notation of Proposition \ref{C2.1: Proposition 1}, if $(f,t)_r=1$ then $r=0$, and $f^*$ is equal to $ah(t)$, i.e. $f^*$ is central up to left multiplication by an element in $D^{\times}$.

\begin{proposition}\label{C2.1: Proposition 2}
Let $f(t) = t^m - \sum\limits_{i=0}^{m-1} a_it^i \in R$, and let $f$ have minimal central left multiple $h(t) = \hat{h}(u^{-1}t^n)$ for some monic polynomial $\hat{h} \in F[x]$ with constant coefficient $h_0$.
\begin{enumerate}[(i)]
\item The following are equivalent:
\begin{enumerate}
\item $a_0 \neq 0$,
\item $(f,t)_r = 1$,
\item $h_0 \neq 0$,
\item $(h,t)_r=1$.
\end{enumerate}
\item If the equivalent conditions of (i) are satisfied, then $f(t) \neq t$, and $\hat{h}(x) \neq x$.
\end{enumerate}
\end{proposition}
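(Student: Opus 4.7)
The plan is to deduce (a) $\iff$ (b) and (c) $\iff$ (d) directly from a single observation about right divisibility by $t$ in $R$, then dispose of (d) $\Rightarrow$ (b) trivially, and concentrate the real work on (b) $\Rightarrow$ (d), where the centrality of $u^{-1}t^n$ in $R$ and the minimality of $\hat{h}$ are combined.

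For any $g(t) = \sum_{i} g_i t^i \in R$, multiplying $q = \sum_j q_j t^j \in R$ on the right by $t$ sends each term $q_j t^j$ to $q_j t^{j+1}$, so $t$ right-divides $g$ if and only if the constant coefficient $g_0$ vanishes. Applied to $g = f$ this yields (a) $\iff$ (b), and applied to $g = h$ (whose constant coefficient as a polynomial in $t$ is $\hat{h}(0) = h_0$) this yields (c) $\iff$ (d). For (d) $\Rightarrow$ (b): $f$ right-divides $h$, so any common right factor of $f$ and $t$ is also a common right factor of $h$ and $t$; contrapositively, $(h,t)_r = 1$ forces $(f,t)_r = 1$.

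The substantive step is (b) $\Rightarrow$ (d). Assume $(f,t)_r = 1$ and, for a contradiction, suppose $h_0 = 0$. Then $\hat{h}(x) = x \hat{h}'(x)$ for some non-zero $\hat{h}' \in F[x]$ with $\deg \hat{h}' = \deg \hat{h} - 1$. Set $h'(t) = \hat{h}'(u^{-1}t^n) \in F[u^{-1}t^n] \subseteq C(R)$; then $h(t) = h'(t) \cdot u^{-1}t^n$ and $h'$ commutes with every element of $R$. Next I would lift coprimality from $t$ to $t^n$: by comparing coefficients in an identity $qd = t^n$ and using that $D$ is a division ring and $\sigma$ is an automorphism, any right divisor $d$ of $t^n$ of positive degree has zero constant term, so $t$ right-divides $d$; hence $(f,t)_r = 1$ forces $(f, t^n)_r = 1$, and we may write $Af + Bt^n = 1$ for some $A, B \in R$. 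Centrality of $h'$ gives $uh = u h' u^{-1} t^n = h' t^n$, so $h' t^n = u h \in u \cdot Rf \subseteq Rf$; combined with $fh' = h'f$,
\[
h' \;=\; (Af + Bt^n)h' \;=\; A(fh') + B(t^n h') \;=\; (Ah')f + B(t^n h') \;\in\; Rf.
\]
Thus $\hat{h}'(u^{-1}t^n) \in Rf$ with $\deg \hat{h}' < \deg \hat{h}$, contradicting the definition of $\hat{h}$ as a monic polynomial in $F[x]$ of minimal degree whose evaluation at $u^{-1}t^n$ lies in $Rf$.

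Part (ii) is then immediate from part (i): $f(t) = t$ would force $m = 1$ and $a_0 = 0$, ruled out by (a); likewise $\hat{h}(x) = x$ would force $h_0 = 0$, ruled out by (c). The main obstacle is the implication (b) $\Rightarrow$ (d), specifically the lift $(f,t)_r = 1 \Rightarrow (f,t^n)_r = 1$ and the subsequent use of centrality of $h'$ to move it past $f$ and extract a strictly smaller-degree central left multiple of $f$.
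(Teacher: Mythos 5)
Your proof is correct, and the two easy equivalences, (a) $\Leftrightarrow$ (b) and (c) $\Leftrightarrow$ (d) via constant coefficients, together with (d) $\Rightarrow$ (b) via transitivity of right division, match the paper exactly. Where you genuinely diverge is the substantive implication (b) $\Rightarrow$ (c)/(d). The paper handles it by citing an external result of G\'omez-Torrecillas et al.\ (their Proposition 5.1): if $t$ were a proper two-sided divisor of the bound $h$, then $f$ would admit a proper factorisation $f=ab$ with $b=(f,t)_r$, which is absurd when $(f,t)_r=1$. You instead give a self-contained argument: write $\hat{h}=x\hat{h}'$, lift coprimality from $t$ to $t^n$ (your coefficient-comparison claim that any positive-degree right divisor of $t^n$ has vanishing constant term does check out, since $q_0d_0=0$ forces $q_0=0$ and then inductively $q_\ell=0$ for all $\ell\le n-k$, killing the leading term), invoke the Bezout identity $Af+Bt^n=1$ available because $Rf+Rt^n=R(f,t^n)_r$, and use centrality of $\hat{h}'(u^{-1}t^n)$ to push it past $f$ and land $\hat{h}'(u^{-1}t^n)$ in $Rf$, contradicting the minimality of $\hat{h}$ (the degenerate case $\hat{h}'=1$ is also absurd since $1\notin Rf$ for non-constant $f$). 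What your route buys is independence from the cited factorisation lemma, at the cost of a slightly longer argument; what the paper's route buys is brevity, at the cost of leaning on machinery about two-sided divisors of bounded polynomials. Both are sound.
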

\begin{proof}
(i) 
Clearly, $(a)$ is equivalent to $(b)$, and $(c)$ is equivalent to $(d)$, therefore we need only show that $(a)/(b)$ implies $(c)/(d)$ and vice versa. \\
Firstly, we show that $(c) \Rightarrow (b)$ via the contrapositive. So suppose that $(f,t)_r \neq 1$, then we must have that $(f,t)_r=t$ since $1$ and $t$ are the only monic right divisors of $t$. Then $t$ right divides $f$, i.e. $f=pt$ for some $p \in R$, and so $h=gf =(gp)t$ for some $g \in R$. We conclude that $(h,t)_r \neq 1$.\\
Finally, to show that $(b) \Rightarrow (c)$, suppose that $(f,t)_r=1$, and suppose for a contradiction that $(h,t)_r \neq 1$. Then $h=qt$ for some $q \in R$. Since $t$ is a proper two-sided divisor of $h$ in $R$, there exists a proper factorisation of $f$ in $R$ of the form $f=ab$ where $b = (f,t)_r$, and $a$ is bounded \cite[Proposition 5.1]{gomez2013computing}. However, $b = (f,t)_r = 1$ by assumption, which is a contradiction, $b=1$ is not a proper factor of $f$. Hence we conclude that $(b)$ and $(c)$ are equivalent.\\
\\
(ii) Now assume that the equivalent conditions of (i) are satisfied. Immediately we obtain $f(t) \neq t$ (else $a_0 = 0$). Moreover if $\hat{h}(x)=x$, then $h_0=0$ thus (ii) follows via the contrapositive.
\end{proof}

\begin{corollary}\label{C2.1: Corollary 1}
Let $f = t^m - \sum\limits_{i=0}^{m-1}a_it^i \in  R$ be irreducible, and let $f$ have minimal central left multiple $h(t) = \hat{h}(u^{-1}t^n)$ for some monic, irreducible $\hat{h} \in F[x]$ with constant coefficient $h_0$. Then the following are equivalent:
\begin{enumerate}
\item $a_0 \neq 0$,
\item $(f,t)_r=1$
\item $h_0 = 0$,
\item $(h,t)_r=1$,
\item $\hat{h}(x) \neq x$,
\item $f(t) \neq t$.
\end{enumerate}
\end{corollary}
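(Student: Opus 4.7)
The plan is to use Proposition \ref{C2.1: Proposition 2} as a black box. That proposition already gives the equivalence of (1), (2), (4) and the condition ``$h_0 \neq 0$'' (I read condition (3) as $h_0 \neq 0$, since the literal reading $h_0 = 0$ is incompatible with Proposition \ref{C2.1: Proposition 2}, and I would flag this as a likely typo). So most of the work is done: I only need to splice conditions (5) and (6) into the equivalence chain, and this is precisely where the added hypothesis that $f$ and $\hat{h}$ are irreducible will be used.

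First I would establish (1) $\Leftrightarrow$ (6). The forward direction is immediate, since $f(t) = t$ forces $a_0 = 0$. For the converse I would argue by contrapositive: if $a_0 = 0$ then $f(t) = g(t)\,t$ for some $g \in R$ of degree $m-1$. Since $f$ is irreducible, one of $g$ or $t$ must be a unit in $R = D[t;\sigma]$; but the units of $R$ are exactly $D^\times$, so $t$ is not a unit and $g \in D^\times$. Since $f$ is monic of degree $m$ and $\sigma(g) t$ has degree $1$, this forces $m = 1$ and $g = 1$, whence $f(t) = t$.

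Next I would establish (5) $\Leftrightarrow$ ``$h_0 \neq 0$''. If $h_0 \neq 0$ then $\hat{h}(x) \neq x$ trivially, since $x$ has zero constant term. For the converse I would suppose $h_0 = 0$, so that $x \mid \hat{h}(x)$ in the commutative polynomial ring $F[x]$. Write $\hat{h}(x) = x \cdot g(x)$ with $g \in F[x]$; irreducibility of $\hat{h}$ in $F[x]$ forces $g$ to be a unit, hence a nonzero constant, and monicity of $\hat{h}$ then forces $g = 1$, so $\hat{h}(x) = x$.

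Combining these two equivalences with Proposition \ref{C2.1: Proposition 2} closes the chain (1) $\Leftrightarrow$ (2) $\Leftrightarrow$ (3) $\Leftrightarrow$ (4) $\Leftrightarrow$ (5) $\Leftrightarrow$ (6). I do not anticipate any significant obstacle — all the depth is already packaged in Proposition \ref{C2.1: Proposition 2}, and the two new equivalences are short consequences of irreducibility in $R$ and in $F[x]$ respectively, together with the fact that the units of $R$ coincide with $D^\times$.
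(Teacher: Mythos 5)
Your proof is correct, and you are right to flag condition (3) as a typo for $h_0 \neq 0$: the paper's own proof simply invokes Proposition \ref{C2.1: Proposition 2}, whose condition (c) reads $h_0 \neq 0$, so the intended statement is unambiguous. Your route differs from the paper's in how conditions (5) and (6) are spliced into the chain. The paper takes (1)--(4) $\Rightarrow$ (5), (6) from Proposition \ref{C2.1: Proposition 2}(ii) and then closes the loop by proving (5) $\Rightarrow$ (6) and (6) $\Rightarrow$ (2); the first of these is done by explicitly computing the minimal central left multiple of $f = t$ (namely $u^{-1}t^n$, so $\hat{h}(x) = x$), and the second uses monic irreducibility of $f$ much as you do. You instead prove (1) $\Leftrightarrow$ (6) and (3) $\Leftrightarrow$ (5) directly, each as a short consequence of an irreducibility hypothesis --- of $f$ in $R$ for the former, of $\hat{h}$ in the commutative ring $F[x]$ for the latter. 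This avoids computing the minimal central left multiple of $t$ altogether and makes transparent exactly where each of the two irreducibility assumptions is used, which is arguably the cleaner decomposition. One cosmetic slip: when $g \in D^\times$ and you form the product $g \cdot t$ in $D[t;\sigma]$, the result is $gt$ (the coefficient sits on the left), not $\sigma(g)t$; the twist only enters in $t \cdot g = \sigma(g)t$. This does not affect the argument.
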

\begin{proof}
By Proposition \ref{C2.1: Proposition 2}, (1) - (4) are equivalent, and the equivalent conditions (1) - (4) imply (5) and (6). To complete the proof, we show that (5) implies (6), and that (6) implies (2).\\
\\
\underline{(5) $\Rightarrow$ (6)}\\
We show that (5) $\Rightarrow$ (6) by proving the contrapositive, i.e. if $f(t) = t$, then $\hat{h}(x) = x$.\\
\\
Suppose that $f(t) = t$, and let $g(t) = u^{-1}t^{n-1}$, then $$g(t)f(t)=(u^{-1}t^{n-1})t$$ is a monic polynomial of minimal positive degree in the commutative polynomial ring $F[u^{-1}t^n]$. Therefore we must have $$h(t) = \hat{h}(u^{-1}t^n) = u^{-1}t^n$$ by definition of the minimal central left multiple of $f$, and we conclude that $\hat{h}(x) = x$.\\
\\
\underline{(6) $\Rightarrow$ (2)}\\
Finally, suppose that $f(t) \neq t$. As $f$ is assumed to be a monic, irreducible polynomial in $R$, we must have either $(f,t)_r=f$ or $(f,t)_r=1$. If $(f,t)_r=f$ then, by definition of the greatest common right divisor, $t=pf$ for some $p \in R$. Since both $t$ and $f$ are monic, irreducible polynomials we are forced to take $p=1$, and $f=t$, a contradiction. Hence we are left with $(f,t)_r=1$.
\end{proof}

\begin{lemma}\label{C2.1: Lemma 1} (for  finite fields this is \cite{giesbrecht1998factoring})
Suppose that $h \in R$ is such that $h=\hat{h}(u^{-1}t^n)$ for some monic $\hat{h} \in F[x]$
 and either $\hat{h}(x) = x$, or  $h$ has nonzero constant term. Then the quotient algebra $ R/Rh$ has center
$$ C(R/Rh)\cong  F[x]/ (\hat{h}(x) ).$$
\end{lemma}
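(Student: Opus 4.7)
The plan is to exploit that $h = \hat h(u^{-1}t^n) \in F[u^{-1}t^n] = C(R)$ (by Proposition \ref{Intro: Proposition 2}(iv), applicable because $[D:C] < \infty$ allows us to choose $u \in \mathrm{Fix}(\sigma)$), so $Rh$ is two-sided and $R/Rh$ is associative. The composition $\psi \colon C(R) \hookrightarrow R \twoheadrightarrow R/Rh$ maps into $C(R/Rh)$. First I would identify $\ker\psi = C(R) \cap Rh = C(R)\cdot h$: the inclusion $\supseteq$ is immediate, and for $\subseteq$, if $z = rh$ with $z$ central, then $zs = sz$ for all $s \in R$ combined with the centrality of $h$ yields $(rs-sr)h = 0$, so $r \in C(R)$ since $R$ is a domain. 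Under the isomorphism $C(R) \cong F[x]$ sending $u^{-1}t^n$ to $x$, the element $h$ corresponds to $\hat h$, so $\psi$ descends to an injection $F[x]/(\hat h(x)) \hookrightarrow C(R/Rh)$.

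The heart of the proof is surjectivity: every central class in $R/Rh$ should have a representative in $C(R)$. Let $g \in R$ with $\deg g < \deg h$ represent such a class and write $g = \sum_j g_j t^j$. For each $d \in D$ the commutator $gd - dg = \sum_j (g_j \sigma^j(d) - dg_j)t^j$ lies in $Rh$; but its degree is strictly less than $\deg h$, so it must vanish. Hence $dg_j = g_j \sigma^j(d)$ for all $j$ and all $d$, which whenever $g_j \neq 0$ forces $\sigma^j$ to be inner; since $\sigma$ has inner order $n$, this happens exactly when $n \mid j$. For $j = qn$, the identity $\sigma^{qn} = \iota_{u^q}$ gives $g_j = c_q u^{-q}$ with $c_q \in C$, so $g = \sum_q c_q u^{-q} t^{qn}$.

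The final step is to promote each $c_q \in C$ to $c_q \in F$, after which $g \in F[u^{-1}t^n] = C(R)$ and we are done; the main obstacle lies here. Commuting $g$ with $t$ and using $u \in \mathrm{Fix}(\sigma)$ gives $tg - gt = \sum_q (\sigma(c_q) - c_q) u^{-q} t^{qn+1} \in Rh$. A quick degree count shows this commutator has degree at most $\deg h - n + 1$, which is strictly less than $\deg h$ whenever $n \geq 2$, so it must vanish and we obtain $\sigma(c_q) = c_q$, i.e.\ $c_q \in \mathrm{Fix}(\sigma) \cap C = F$. The hypothesis on $\hat h$ is there to cleanly handle the remaining borderline situation: either $\hat h = x$, in which case $\deg g < n$ forces $g \in D$ and the claim is immediate from the $D$-commutation step, or $h$ has nonzero constant term $\hat h(0)$, in which case writing $tg - gt = ph$ for some $p = \sum p_j t^j \in R$ and comparing constant terms forces $p_0 \hat h(0) = 0$, hence $p_0 = 0$; an iteration on the higher coefficients of $p$, combined with the fact that every nonzero term of $tg - gt$ has exponent congruent to $1 \pmod n$, drives $p = 0$ and hence the desired vanishing of $tg - gt$. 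In every case $g \in C(R)$, establishing surjectivity and the isomorphism $C(R/Rh) \cong F[x]/(\hat h(x))$.
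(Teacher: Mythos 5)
Your proof is correct and follows essentially the same route as the paper's: both establish $C(R/Rh)=\phi(C(R))$ by testing a low-degree central representative against the elements of $D$ and against $t$, killing the error terms by degree comparison and, in the borderline case, by the nonzero constant term of $h$. Your refinement of first pinning down $g=\sum_q c_qu^{-q}t^{qn}$ from the $D$-commutation step (so that the $t$-commutator vanishes by degree alone whenever $n\ge 2$) is a slightly sharper version of the paper's argument, which instead keeps the representative general and uses the constant term of $h$ to force $r_t=0$; both are valid.
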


\begin{proof}
Since $\hat{h}(u^{-1}t^n) \in C(R)$, $Rh$ is a two-sided ideal in $R$.
If $\hat{h}(x) = x$, then a direct calculation shows that $C(R/Rh) = F[x]/(x) = F$.

So suppose that $\hat{h}(x) \not= x$, and $h$ has nonzero constant term $h_0$.
Let $\phi : R \longrightarrow R/Rh$, $\phi(a) = a + Rh$. If $a \in C(R)$, then $(a+Rh)(b+Rh)=(b+Rh)(a+Rh)$ for any $b \in R$, and so $a+Rh = \phi(a) \in C(R/Rh)$. Therefore $\phi(C(R)) \subset  C(R/Rh)$.

Now let $\bar{a} \in C(R/Rh)$, then $\bar{a} = a+Rh$ for some $a \in R$ such that ${\rm deg}(a) < {\rm deg}(h) = mn$,
and
$(a+Rh)(b+Rh)=(b+Rh)(a+Rh)$ for all $b \in R$, which is equivalent to $ab+Rh = ba+Rh$ for all $b \in R$.
  This means that for all $b \in R$, we have
\begin{equation}\label{center of R/Rh}
ab = ba + r_bh
\end{equation} for some $r_b \in R$.

As (\ref{center of R/Rh}) is satisfied for all $b\in R$, let $b \in D$ and suppose $r_b\neq 0$. By comparing degrees on both sides of Equation (\ref{center of R/Rh}), we obtain
$${\rm deg}(ab) = {\rm deg}(ba + r_bh) \, \Leftrightarrow \, {\rm deg}(a) = {\rm deg}(r_bh).$$
This is a contradiction; thus we conclude $r_b = 0$ and $ab = ba$, i.e. $a$ commutes with all elements of $D$.
Now suppose that $b=t$. Assume for a contradiction that $r_t \notin D$, so ${\rm deg}(r_t)\geq 1$. Comparing degrees yields
$${\rm deg}(at)={\rm deg}(ta + r_th)\Rightarrow {\rm deg}(a)+1 = {\rm deg}(r_th) \geq {\rm deg}(h)+1,$$
which is a contradiction, thus $r_t \in D$. Comparing constant terms on both sides of (\ref{center of R/Rh}), we see that $r_th_0 = 0$, where $h_0$ is the constant term of $h$. This implies that either $r_t = 0$ or $h_0 = 0$. Since
$h_0 \neq 0$ this forces $r_t = 0$, and $at =ta$, i.e. $a$ commutes with $t$, and therefore by induction, also with $t^j$. Thus $a\in C(R).$ Hence for all $\bar{a} \in C(R/Rh)$, there exists $a \in C(R)$ such that $\bar{a} = \phi(a) \in \phi(C(R))$, and so $C(R/Rh) = \phi(C(R))$.

Due to this, any element $\bar{a} \in \phi(C(R))$ can be written in the form $\bar{a} = a+Rh$ for some $a \in C(R) \cong F[x]$, where ${\rm deg}(a) < {\rm deg}(h)$. Moreover, $\phi(C(R))$ inherits the multiplication of $R/Rh$.
Define a map from $\phi(C(R))$ to $F[x]/( \hat{h}(x) )$ which fixes elements of $F$ and maps $u^{-1}t^n+Rh$ to $x+( \hat{h}(x) )$.
This yields an $F$-algebra isomorphism and thus $C(R/Rh) \cong F[x]/( \hat{h}(x) ).$
\end{proof}


\section{The Eigenring of $f \in D[t;\sigma]$ for $\hat{h}$ Irreducible}

Recall that $D$ is a central division algebra of finite degree $d$ over $C$, and $\sigma$ is an automorphism of $D$ of finite inner order $n$, with $\sigma^n = \iota_u$ for $u \in D^{\times}$. In the following, let $f\in R=D[t;\sigma]$ be monic of degree $m$ with minimal central left multiple $h(t) = \hat{h}(u^{-1}t^n)$, where $\hat{h}$ is a monic polynomial in $F[x]$. In this section, we determine the structure of the eigenring of $f$, if $\hat{h}(x)$ is irreducible in $F[x]$. 

\begin{lemma}\cite[Theorem 13]{jacobson1943theory}\label{C2.2: Lemma 1}
Let $f$ be irreducible in $R$ such that $(f,t)_r=1$. Then $\hat{h}(x)$ is irreducible in $F[x]$.
\end{lemma}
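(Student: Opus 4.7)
The plan is to argue by contradiction: suppose $\hat{h}(x) = \hat{h}_1(x)\hat{h}_2(x)$ is a non-trivial factorisation in $F[x]$ with $\deg\hat{h}_i \geq 1$, and exhibit a central left multiple of $f$ of degree strictly less than $\deg h$, contradicting the minimality of $h$.

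First I would set $h_i(t) = \hat{h}_i(u^{-1}t^n) \in C(R)$, so that $h = h_1 h_2$ in $R$ and each $h_i$ has $t$-degree $n\deg\hat{h}_i < n\deg\hat{h} = \deg h$. The pivotal observation is that any central polynomial $c \in C(R)$ induces a well-defined $R$-module endomorphism $\bar{c}$ of $R/Rf$ via $[a] \mapsto [ca]$: well-definedness follows from $c \cdot Rf \subseteq Rf$, and $R$-linearity from $cr = rc$ for all $r \in R$. Applying this to $h_1$ and $h_2$ produces elements $\bar{h}_1, \bar{h}_2 \in \mathrm{End}_R(R/Rf)$.

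Since $f$ is irreducible, $Rf$ is a maximal left ideal, so $R/Rf$ is a simple $R$-module, and by Schur's lemma $\mathrm{End}_R(R/Rf)$ is a division ring (it coincides with $\mathcal{E}(f)$ by Theorem \ref{Intro: Theorem 3}(i)). The composite $\bar{h}_1 \circ \bar{h}_2$ is left multiplication by $h_1 h_2 = h$, which is the zero endomorphism because $h \in Rf$. A zero product in a division ring forces a zero factor, so $\bar{h}_j = 0$ for some $j \in \{1,2\}$; evaluating at $a = 1$ yields $h_j \in Rf$, i.e.\ $f$ right-divides $h_j$. But then $h_j$ is a central left multiple of $f$ with $\deg h_j < \deg h$, contradicting the minimality of $h$.

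The main obstacle is verifying that left multiplication by a central element genuinely descends to a well-defined $R$-endomorphism of $R/Rf$; once that is in hand, Schur's lemma closes the argument immediately. The hypothesis $(f,t)_r = 1$ plays no essential role in this strategy beyond excluding the degenerate case $f = ct$ with $c \in D^{\times}$ (equivalent, by Corollary \ref{C2.1: Corollary 1}, to $\hat{h}(x) = x$), for which irreducibility of $\hat{h}$ is automatic since $x$ is irreducible in $F[x]$.
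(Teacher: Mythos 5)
Your proof is correct, but it reaches the conclusion by a genuinely different route from the one in the thesis. The thesis works directly with polynomial arithmetic: writing $\hat{h}=\hat{a}\hat{b}$, it notes that $f$ cannot right-divide $\hat{a}(u^{-1}t^n)$ (minimality of $h$), so irreducibility of $f$ gives $(f,\hat{a}(u^{-1}t^n))_r=1$; a Bezout identity $pf+q\hat{a}(u^{-1}t^n)=1$, multiplied on the right by $\hat{b}(u^{-1}t^n)$ and combined with $\hat{a}(u^{-1}t^n)\hat{b}(u^{-1}t^n)=gf$, shows that $f$ right-divides $\hat{b}(u^{-1}t^n)$, contradicting minimality again. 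You instead pass to the simple left $R$-module $R/Rf$, let the central factors act as endomorphisms, and invoke Schur's lemma: since $\mathrm{End}_R(R/Rf)$ is a division ring and $\bar{h}_1\bar{h}_2=\bar{h}=0$, one $\bar{h}_j$ vanishes, i.e.\ some $h_j\in Rf$. Both arguments hinge on the same intermediate fact --- one of the two central factors is already a left multiple of $f$ --- and your Schur step is really the abstract form of the thesis's Bezout computation (which is how one proves Schur's lemma for $R/Rf$ concretely). What your version buys is that it is coordinate-free and transfers verbatim to the differential cases of Chapter 3, where the thesis merely asserts that ``identical arguments'' apply; the thesis's version is more elementary and produces the cofactor of $f$ in $\hat{b}(u^{-1}t^n)$ explicitly. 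Two small points: to make ``$\bar{h}$ is the zero endomorphism'' airtight you should note $[ha]=[ah]=a\cdot[h]=0$, which uses centrality of $h$ once more, not just $h\in Rf$; and the identification of $\mathrm{End}_R(R/Rf)$ with $\mathcal{E}(f)$ is not actually needed --- Schur's lemma alone suffices. Your closing remark on $(f,t)_r=1$ is accurate: for monic irreducible $f$ the only excluded case is $f=t$, where $\hat{h}(x)=x$ is irreducible anyway.
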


\begin{proof}
Suppose that $\hat{h}  = \hat{a}\hat{b}$ for some $\hat{a},\hat{b}  \in F[x]$, with ${\rm deg}(\hat{a}), {\rm deg}(\hat{b})<{\rm deg}(\hat{h})$. 
  If $f$ divides $\hat{a}(u^{-1}t^n)$ on the right, then $\hat{a}(u^{-1}t^n)$ is a left multiple of $f$ of degree less than $\hat{h}(u^{-1}t^n)$, contradicting the minimality of $h$. Therefore $f$ does not divide $\hat{a}(u^{-1}t^n)$. Since $f$ is irreducible, we have that $(f, \hat{a}(u^{-1}t^n))_r = 1$, and since $R$ is a right Euclidean domain, there exist $p,q \in R$ such that
 $$pf + q\hat{a}(u^{-1}t^n) = 1.$$
Multiplying through by $\hat{b}(u^{-1}t^n)$ on the right gives
$$pf \hat{b}(u^{-1}t^n) + q\hat{a}(u^{-1}t^n)\hat{b}(u^{-1}t^n) = \hat{b}(u^{-1}t^n).$$
Since $f$ right divides $\hat{h}(u^{-1}t^n)$ there exists $g \in R$ such that $\hat{h}(u^{-1}t^n) = \hat{a}(u^{-1}t^n) \hat{b}(u^{-1}t^n) = g(t)f(t)$, therefore we get
 $$(p\hat{b}(u^{-1}t^n) + qg)f= \hat{b}(u^{-1}t^n).$$
That is, $\hat{b}(u^{-1}t^n)$ is a left multiple of $f$, and since ${\rm deg}(\hat{b}) < {\rm deg}(\hat{h})$, $\hat{b}$ is a left multiple of $f$ of degree less than $\hat{h}$, contradicting the minimality of $h$.
 Hence  $\hat{h}$ is irreducible in $F[x]$.
\end{proof}

Although the converse to Lemma \ref{C2.2: Lemma 1} is false in general, Gomez-Torrecillas et al \cite{gomez2013computing} give sufficient conditions for which it holds true. Note that ${\rm deg}(h)=mnd$  is the largest possible degree of $h$. We rephrase the original statement of \cite[Proposition 4.1]{gomez2013computing} to suit our theory since the authors did not recognise $\mathcal{E}(f)$ as the right nucleus of the nonassociative algebra $S_f$.

\begin{proposition} \label{C2.2: Proposition 1}\cite[Proposition 4.1]{gomez2013computing} 
Let $f \in R$ have degree $m > 1$ and satisfy $(f,t)_r=1$.
If ${\rm deg}(h)=mnd$ and $\hat{h}$ is irreducible in $F[x]$, then $f$ is irreducible and ${\rm Nuc}_r(S_f)\cong F[x]/(\hat{h}(x))$ . 
\end{proposition}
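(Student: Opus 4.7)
The strategy is to identify $\mathcal{E}(f)$ with an endomorphism ring and transfer the problem from $R$ to the smaller ring $R/Rh$. By Theorem \ref{Intro: Theorem 3}(i) together with the standard identification $\mathcal{E}(f)\cong\operatorname{End}_R(R/Rf)$, we have $\operatorname{Nuc}_r(S_f)\cong\operatorname{End}_R(R/Rf)$. Since $h$ is central in $R$ and lies in $Rf$, the two-sided ideal $Rh$ is contained in $\operatorname{Ann}_R(R/Rf)$ (for any $g\in R$, $hg=gh\in Rf$ because $h\in Rf$ and $Rf$ is a left ideal). Lemma \ref{Intro: Lemma 4} therefore gives $\operatorname{End}_R(R/Rf)=\operatorname{End}_{R/Rh}(R/Rf)$, reducing the problem to the module structure of $R/Rf$ over $R/Rh$.

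The main structural step is to show that $R/Rh$ is a central simple $Z$-algebra of dimension $(dn)^2$, where $Z:=F[x]/(\hat{h}(x))$. Its centre is $Z$ by Lemma \ref{C2.1: Lemma 1}, which applies because $(f,t)_r=1$ forces $(h,t)_r=1$ via Proposition \ref{C2.1: Proposition 2}, ruling out the degenerate case $\hat{h}(x)=x$. For simplicity, any two-sided ideal of $R$ containing $Rh$ has the form $Rg$ with $g$ right-invariant; by Proposition \ref{Intro: Proposition 2}(ii) we may take $g=a\hat{g}(u^{-1}t^n)t^s$, and $(h,t)_r=1$ forces $s=0$. Then $\hat{g}$ divides $\hat{h}$ in the commutative ring $F[x]$, so by irreducibility of $\hat{h}$ either $Rg=R$ or $Rg=Rh$. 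A dimension count using $\dim_F R/Rh=mnd\cdot d^2n$ and $\dim_F Z=\deg\hat{h}=md$ yields $\dim_Z R/Rh=(dn)^2$.

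By Wedderburn, $R/Rh\cong M_k(D')$ for some $k$ and central division $Z$-algebra $D'$, and each simple $R/Rh$-module has $Z$-dimension $(dn)^2/k$. Since $R/Rf$ is a nonzero $R/Rh$-module, it contains a simple submodule, so $(dn)^2/k\le\dim_Z R/Rf=md^2n/md=dn$; together with $k\le dn$ (from $k^2\le\dim_Z R/Rh$) this forces $k=dn$, hence $R/Rh\cong M_{dn}(Z)$ and $R/Rf$ is a simple $R/Rh$-module. Schur's lemma then gives $\operatorname{End}_{R/Rh}(R/Rf)\cong Z$, so $\mathcal{E}(f)\cong F[x]/(\hat{h}(x))$. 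Irreducibility of $f$ follows at once: simplicity of $R/Rf$ as a left $R$-module makes $Rf$ a maximal left ideal, so any factorisation $f=f_1 f_2$ with $\deg f_i<m$ would produce the proper nontrivial submodule $Rf_2/Rf$. I expect the main obstacle to be the dimension-forcing step that collapses Wedderburn's decomposition to the split case $M_{dn}(Z)$; everything else is a direct assembly of the preparatory lemmas above.
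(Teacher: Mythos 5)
Your proof is correct, and it is worth noting that the thesis itself offers no proof of this proposition: it is imported wholesale from G\'omez-Torrecillas et al.\ as \cite[Proposition 4.1]{gomez2013computing}. So there is no in-paper argument to match against; what you have done is reconstruct one from the surrounding machinery. Your route uses exactly the toolkit of Theorem \ref{C2.2: Theorem 1} and its preparation (the identification $\mathcal{E}(f)\cong{\rm End}_R(R/Rf)$, the reduction ${\rm End}_R(R/Rf)={\rm End}_{R/Rh}(R/Rf)$ via Lemma \ref{Intro: Lemma 4}, the computation of $C(R/Rh)$ via Lemma \ref{C2.1: Lemma 1} after using Proposition \ref{C2.1: Proposition 2} to rule out $\hat{h}(x)=x$, and the maximality of $Rh$ as in Lemma \ref{C2.2: Lemma 3}), but runs the dimension count in the opposite direction: Theorem \ref{C2.2: Theorem 1} assumes $f$ irreducible and derives $dn=ks$, whereas you start from ${\rm deg}(h)=mnd$, observe that $\dim_Z(R/Rh)=(dn)^2$ always while $\dim_Z(R/Rf)=mn^2d^2/{\rm deg}(h)$ drops to $dn$ precisely under the degree hypothesis, and squeeze $dn\le k\le dn$ to force $R/Rh\cong M_{dn}(Z)$ with $R/Rf$ simple. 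That squeeze is the genuine content (it is what delivers the irreducibility of $f$, which Theorem \ref{C2.2: Theorem 1} takes as a hypothesis), and your handling of it is sound: the upper bound $k\le dn$ from $k^2\le\dim_Z(R/Rh)$ and the lower bound from the existence of a simple submodule of $R/Rf$ are both legitimate, and the final steps (Schur on the split matrix algebra, maximality of $Rf$ as a left ideal) are routine. The only points stated more briefly than they deserve are the verification that the quotient $p$ in $h=p\,\hat{g}(u^{-1}t^n)$ is itself central, so that divisibility descends to $F[x]$ (needed for the simplicity of $R/Rh$, though this is exactly Lemma \ref{C2.2: Lemma 3}), and the observation that $Z$ being a field makes $\dim_Z$ well defined on $R/Rf$; neither is a gap.
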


If the minimal central left multiple of $f$ is irreducible as a polynomial in $F[x]$, then the irreducible factors in any factorisation of $f$ in $R$ are mutually similar:

\begin{lemma}\label{C2.2: Lemma 2}
Let $f \in R$ satisfy $(f,t)_r=1$, and let  $\hat{h}(x)$ be irreducible in $F[x]$. Then all irreducible factors of $f$ are similar to all irreducible factors of $h$. In particular, all irreducible factors of $f$ are mutually similar to each other.
\begin{proof}
In the language of \cite[Theorem 1.2.19]{jacobson2009finite}, $h(t)$ is a two-sided maximal element of $R$, hence any factorisation $h(t)=h_1(t)\cdots h_k(t)$ into irreducible polynomials in $R$ is unique up to similarity of polynomials, and $h_i \sim h_j$ for all $i,j$. Let $f(t)=f_1(t)\cdots f_l(t)$ for $f_i$ irreducible polynomials in $R$. Then, by definition of the minimal central left multiple $$h_1(t)h_2(t) \cdots h_k(t) = p(t)f_1(t)f_2(t)\cdots f_l(t)$$ for some $p \in R$. Since the decomposition of $h$ is unique up to similarity, for each $i \in \{ 1,\dots , l \}$, there must exist $j \in \{ 1,\dots , k\}$ such that $f_i \sim h_j$. Finally, as $h_i \sim h_j$ for all $i,j$, we conclude that $f_i \sim f_j$ for all $i,j$.
\end{proof}
\end{lemma}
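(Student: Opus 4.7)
The plan is to exploit the fact that $h(t)$ is a \emph{two-sided} (right-invariant) central element of $R$ whose associated polynomial $\hat h(x)\in F[x]$ is irreducible, which in a PID forces $h$ to be what Jacobson calls a \emph{two-sided maximal element}. Jacobson's structure theory for PIDs (\cite[Theorem 1.2.19]{jacobson2009finite}) then guarantees that any factorisation of such an $h$ into irreducibles of $R$ is unique up to similarity, and moreover that all irreducible factors of $h$ are mutually similar. Because $f$ right divides $h$, every irreducible factorisation of $f$ can be extended to an irreducible factorisation of $h$, which will then pin down the irreducible factors of $f$ up to similarity.

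Concretely, I would first verify that $h(t)=\hat h(u^{-1}t^n)$ generates a maximal two-sided ideal of $R$. Since $h\in C(R)$, the ideal $Rh$ is indeed two-sided, and the hypothesis $(f,t)_r=1$ together with Proposition \ref{C2.1: Proposition 2} gives $(h,t)_r=1$, which rules out the degenerate case $\hat h(x)=x$. Using the correspondence between two-sided ideals of $R$ and ideals of $C(R)\cong F[x]$ (any two-sided ideal of $R$ that strictly contains $Rh$ would be generated by a right-invariant polynomial whose central associate is a proper factor of $\hat h$ in $F[x]$), the irreducibility of $\hat h$ immediately yields maximality of $Rh$. Thus $h$ is two-sided maximal in Jacobson's sense.

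Having set this up, write $h=g\cdot f$ for some $g\in R$ (which exists by definition of the minimal central left multiple), and take irreducible factorisations $f=f_1\cdots f_l$ and $g=g_1\cdots g_k$ in $R$. Concatenating yields an irreducible factorisation
\[
h = g_1\cdots g_k\, f_1\cdots f_l
\]
of $h$. By Jacobson's uniqueness-up-to-similarity for two-sided maximal elements, this must be equivalent, as a sequence, to any other irreducible factorisation of $h$, and all irreducible factors appearing in such a factorisation are mutually similar. In particular each $f_i$ is similar to each irreducible factor of $h$, and hence $f_i\sim f_j$ for all $i,j$.

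The main obstacle, and the step that requires the most care, is the first one: translating ``$\hat h$ irreducible in $F[x]$'' into ``$h$ is a two-sided maximal element of $R$.'' This hinges on the bijection between two-sided ideals of $R$ and ideals of its centre, and on handling the trivial factor $t$ cleanly—exactly where the hypothesis $(f,t)_r=1$ enters. Once that step is secured, the rest is a direct application of Jacobson's theorem.
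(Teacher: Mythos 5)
Your proposal is correct and follows essentially the same route as the paper: establish that $h$ is a two-sided maximal element of $R$, invoke Jacobson's uniqueness-up-to-similarity of irreducible factorisations for such elements, and extend an irreducible factorisation of $f$ to one of $h$ to conclude that all the $f_i$ are similar to the (mutually similar) irreducible factors of $h$. You are in fact slightly more careful than the paper's proof in two respects — explicitly justifying the maximality of $Rh$ via $(f,t)_r=1$ (the paper delegates this to Lemma~\ref{C2.2: Lemma 3}) and explicitly factoring the cofactor $g$ into irreducibles before applying the uniqueness theorem — but these are refinements of the same argument, not a different one.
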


Define $E_{\hat{h}}=F[x]/ (\hat{h}(x) )$. This is a commutative, associative algebra over $F$ of dimension ${\rm deg}(\hat{h})$.  If $\hat{h}$ is irreducible in $F[x]$, then $E_{\hat{h}}$ is a field extension of $F$ of degree ${\rm deg}(\hat{h})$. We first consider the case that $f$ has degree at least $2$ and that $f$ is irreducible in $R$, which is a sufficient condition for $\hat{h}$ to be irreducible in $F[x]$ by Lemma \ref{C2.2: Lemma 1}. We show that  ${\rm Nuc}_r(S_f)$ is a central division algebra of degree $s = dn/k$ over a field extension $E_{\hat{h}}$ of $F$ determined by $h$, where $k$ is the number of irreducible factors in any factorisation of $h$ into irreducible polynomials in $R$. \\
\\ After we have explored some special cases, we loosen the restriction that $f$ is irreducible in $R$, instead assuming only that $\hat{h}$ is irreducible in $F[x]$. We show that ${\rm Nuc}_r(S_f)$ is a central simple algebra of degree $s=ldn/k$ over the same field extension $E_{\hat{h}}$ of $F$, with $k$ as above, and $l$ the number of irreducible factors in any factorisation of $f$ into irreducible polynomials in $R$.

\begin{lemma}\label{C2.2: Lemma 3} \cite[p.~16]{jacobson2009finite}
Suppose that $h \in R$ is such that $h(t)=\hat{h}(u^{-1}t^n)$ for some $\hat{h} \in F[x]$, $\hat{h}(x) \neq x$, and such that
 $\hat{h}$ is irreducible in $F[x]$. Then $h$ generates a maximal two-sided ideal $Rh$ in $R$.
 \end{lemma}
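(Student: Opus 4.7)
The plan is to show directly that any two-sided ideal $I$ of $R$ with $Rh \subseteq I \subseteq R$ must equal either $Rh$ or $R$. Since $u^{-1}t^n \in C(R)$ by Proposition \ref{Intro: Proposition 2}(iv), we have $h \in C(R)$ and so $Rh = hR$ is genuinely two-sided --- this is the only fact I need from Lemma \ref{C2.1: Lemma 1}. Because $R$ is a left PID, I may write $I = Rg$ for some $g \in R$; since $I$ is also a right ideal, $g$ must be right-invariant in $R$. Invoking Proposition \ref{Intro: Proposition 2}(ii) produces a representation
\[ g(t) = a\, p(u^{-1}t^n)\, t^r \]
with $a \in D^{\times}$, $p \in F[x]$, and $r \geq 0$, and rescaling $g$ on the left by $a^{-1}$ allows me to assume $a = 1$.

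I would next rule out $r > 0$. The hypothesis $\hat{h}(x) \neq x$ combined with irreducibility of $\hat{h}$ in $F[x]$ forces $\hat{h}(0) \neq 0$, so $h$ has nonzero constant term in $R$ and therefore $t$ does not right-divide $h$. But $Rh \subseteq Rg$ says $g$ right-divides $h$, and $r \geq 1$ would propagate $t$ as a right divisor of $h$, a contradiction. Hence $r = 0$ and $g = p(u^{-1}t^n)$ lies in the centre $C(R)$.

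The crux --- and the step I expect to require the most care --- is reducing right-divisibility in $R$ to divisibility in the commutative ring $F[x]$: from $h = qg$ with $q \in R$ I want to conclude that $p$ divides $\hat{h}$ in $F[x]$. For this I would argue that $R$ is module-finite over $C(R)$. Indeed, $R$ is generated as an $F[u^{-1}t^n]$-module by the finite set of products $d\,t^i$, where $d$ runs over an $F$-basis of $D$ and $0 \leq i \leq n-1$, since $u^{-1}t^n$ is central and $D$ has finite $F$-dimension. Every element of $R$ is therefore integral over $C(R) \cong F[x]$. Because $g$ is central, the identity $h = qg$ rewrites as $q = hg^{-1}$ inside $\mathrm{Frac}(C(R))$, so $q \in R \cap \mathrm{Frac}(C(R))$; as $F[x]$ is integrally closed, this forces $q \in C(R)$. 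Writing $q = \hat{q}(u^{-1}t^n)$ now gives $\hat{h} = \hat{q}\,p$ in $F[x]$.

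Irreducibility of $\hat{h}$ in $F[x]$ leaves only two possibilities for $p$: either $p$ is a unit, so that $g$ is a unit in $R$ and $I = R$, or $p$ is an associate of $\hat{h}$, so that $g$ and $h$ generate the same left ideal and $I = Rh$. This dichotomy exhausts the options for $I$ and proves that $Rh$ is maximal among the two-sided ideals of $R$.
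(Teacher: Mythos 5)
Your argument is correct. Note that the thesis does not actually prove this lemma --- it is quoted from \cite[p.~16]{jacobson2009finite} --- so you have supplied an argument where the paper defers to the literature, and the route you chose (classify the two-sided ideals above $Rh$ using the left-PID property together with Proposition \ref{Intro: Proposition 2}(ii), eliminate the $t^r$ factor via the nonzero constant term of $h$, then reduce to irreducibility of $\hat{h}$ in $F[x]$) is exactly the natural one given the machinery already set up in Chapter 1. The only place where you work harder than necessary is your ``crux'': to see that the cofactor $q$ in $h=qg$ is central you do not need integrality of $R$ over $C(R)$ or the integral closedness of $F[x]$. Since $g$ and $h$ are both central and $R$ is a domain, for any $r\in R$ one has $(qr-rq)g = q(gr)-r(qg) = (qg)r - r(qg) = hr - rh = 0$, whence $qr=rq$ by cancellation, so $q\in C(R)$ directly; the factorisation $\hat{h}=\hat{q}\,p$ in $F[x]$ then follows as you say. (Your localization argument is also valid --- inverting central nonzero elements of a domain is unproblematic and $F[x]$ is integrally closed --- it is simply heavier than the situation requires.) Two small points are worth making explicit: $Rh\neq R$ because $h$ has positive degree, so maximality is genuinely established; and the inference ``$\hat{h}$ irreducible and $\hat{h}(x)\neq x$ implies $\hat{h}(0)\neq 0$'' uses the chapter's standing convention that $\hat{h}$ is monic, since otherwise $\hat{h}(x)=cx$ with $c\in F^{\times}$, $c\neq 1$, would evade the stated hypothesis while $Rt^{n}\subsetneq Rt$ shows the conclusion fails.
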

 
 Let $f(t)=t-a \in R$ for some $a \in D$. Then the $F$-algebra $S_f$ is equal to the associative ring $D$, hence ${\rm Nuc}_r(S_f) = D$. Moreover for any $b \in D$, it can be easily seen that $fb$ lies in $Rf$ if and only if $\sigma(b)a = ab$. Thus the eigenring $\mathcal{E}(f)$ is equal to $\{b \in D: \sigma(b)a=ab\}$. In particular, if $a \in C$ then $\mathcal{E}(f) = {\rm Fix}(\sigma)$ and if $\sigma={\rm id}$, then $\mathcal{E}(f)={\rm Cent}_D(a)$. Henceforth, for the rest of the chapter, unless stated otherwise, we assume that $f$ has degree $m>1$, in which case ${\rm Nuc}_r(S_f) = \mathcal{E}(f)$.

\begin{theorem}\label{C2.2: Theorem 1}
Let $f \in R=D[t;\sigma]$ be irreducible of degree $m > 1$. Then ${\rm Nuc}_r(S_f)$ is a central division algebra over $E_{\hat{h}}=F[x]/(\hat{h}(x))$ of degree $s=dn/k$, where $k$ is the number of irreducible factors of $h$ in $R$, and
 $$ R/Rh \cong M_k({\rm Nuc}_r(S_f)).$$
 This means that ${\rm deg}(\hat{h})=\frac{md}{s}$, ${\rm deg}(h)=\frac{mnd}{s}$, and
 $$[{\rm Nuc}_r(S_f) :F]=mds.$$
 Moreover, $s$ divides ${\rm gcd}(md,dn)$. If $f$ is not right-invariant, then $k>1$ and $s\not=dn$.
\end{theorem}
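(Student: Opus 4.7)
The plan is to leverage the Artin--Wedderburn structure of $A=R/Rh$ together with Petit's identification ${\rm Nuc}_r(S_f)=\mathcal{E}(f)={\rm End}_R(R/Rf)$, and then count dimensions.

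First I would set up the semisimple picture. Since $f$ is irreducible of degree $m>1$, $f\neq t$, so $(f,t)_r=1$ by Corollary \ref{C2.1: Corollary 1}. Lemma \ref{C2.2: Lemma 1} then gives that $\hat{h}$ is irreducible in $F[x]$, and $\hat{h}(x)\neq x$ (otherwise $f=t$). Lemma \ref{C2.2: Lemma 3} upgrades this to: $Rh$ is a maximal two-sided ideal of $R$, so $A=R/Rh$ is simple Artinian. By Lemma \ref{C2.1: Lemma 1}, its centre is the field $E_{\hat{h}}=F[x]/(\hat{h}(x))$. By Artin--Wedderburn, $A\cong M_k(\Delta)$ for a central division algebra $\Delta$ over $E_{\hat{h}}$, and the integer $k$ appearing here is precisely the length of a composition series of $A$ as a left $A$-module, equivalently the number of irreducible factors of $h$ in $R$ (this last equivalence uses that a composition series of $R/Rh$ coming from any factorisation $h=h_1\cdots h_k$ has composition factors $R/Rh_i$, all of which are mutually isomorphic to $R/Rf$ by Lemma \ref{C2.2: Lemma 2}).

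Next I would pin down ${\rm Nuc}_r(S_f)$. Proposition \ref{C2.1: Proposition 1}(i), applied since $(f,t)_r=1$, gives $f^*=ah$ for some $a\in D^\times$, so $Rh=Rf^*={\rm Ann}_R(R/Rf)$. Hence $R/Rf$ is naturally a left $A$-module, and Lemma \ref{Intro: Lemma 4} gives ${\rm End}_R(R/Rf)={\rm End}_A(R/Rf)$. Irreducibility of $f$ makes $R/Rf$ simple as a left $R$-module, hence simple as a left $A$-module; since $A$ is simple Artinian, $R/Rf$ is (up to isomorphism) the unique simple left $A$-module. By Theorem \ref{Intro: Theorem 3}(i) and Schur's lemma,
\[
{\rm Nuc}_r(S_f)=\mathcal{E}(f)={\rm End}_A(R/Rf),
\]
a central division algebra over $E_{\hat{h}}$ which, by Artin--Wedderburn, has the same centre and degree as $\Delta$. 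This yields the isomorphism $R/Rh\cong M_k({\rm Nuc}_r(S_f))$ (modulo the usual opposite-ring convention, which is harmless for the numerical statements).

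The numerical assertions then fall out of a double count of $F$-dimensions. From $R/Rh\cong(R/Rf)^k$ as left $R$-modules I read off $\dim_F(R/Rh)=k\,\dim_F(R/Rf)$, i.e.\ ${\rm deg}(h)\cdot d^2n=km\cdot d^2n$, so ${\rm deg}(h)=mk$ and ${\rm deg}(\hat{h})={\rm deg}(h)/n=mk/n$. On the other hand $[A:E_{\hat{h}}]\cdot[E_{\hat{h}}:F]=[A:F]$ gives $[A:E_{\hat{h}}]=d^2n^2$, and Artin--Wedderburn gives $[A:E_{\hat{h}}]=k^2s^2$, whence $ks=dn$ and $s=dn/k$. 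Combining these yields ${\rm deg}(\hat{h})=md/s$, ${\rm deg}(h)=mnd/s$, and
\[
[{\rm Nuc}_r(S_f):F]=s^2\,[E_{\hat{h}}:F]=s^2\cdot\frac{md}{s}=mds.
\]
Divisibility is now immediate: $s\mid dn$ from $ks=dn$, and $s\mid md$ because ${\rm deg}(\hat{h})=md/s$ is a positive integer, so $s\mid\gcd(md,dn)$.

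Finally, if $f$ is not right-invariant I would argue by contrapositive that $k>1$: were $k=1$, then $h$ itself would be irreducible in $R$, and since $f\mid h$ on the right with both $f,h$ irreducible we would get $h=cf$ for some $c\in D^\times$; then $Rf=Rh$ would be two-sided, contradicting non-right-invariance of $f$. Hence $k\geq 2$ and $s=dn/k<dn$. The main obstacle in this plan is the careful identification in the second paragraph: keeping straight that $R/Rf$ really is the simple $A$-module whose endomorphism ring is the right nucleus, and checking that the annihilator in $R$ of $R/Rf$ is exactly $Rh$ (rather than merely containing $Rh$), which is what makes Lemma \ref{Intro: Lemma 4} applicable with $I=Rh$. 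Everything else is essentially Artin--Wedderburn bookkeeping.
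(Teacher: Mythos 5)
Your proposal is correct and follows essentially the same route as the paper: realise $A=R/Rh$ as a simple Artinian ring $M_k(\Delta)$ with centre $E_{\hat h}$, identify $\Delta$ with $\mathcal{E}(f)={\rm Nuc}_r(S_f)$, and then extract all the numerical claims from the dimension count $d^2n^2=k^2s^2$. The only cosmetic difference is that the paper imports the identification $\Delta\cong\mathcal{I}(f)/Rf$ wholesale from Jacobson's Theorem 1.2.19, whereas you rederive it via ${\rm End}_A(R/Rf)={\rm End}_R(R/Rf)$ together with the annihilator computation ${\rm Ann}_R(R/Rf)=Rh$ — a detail worth making explicit, as you did.
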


\begin{proof}
The minimal central left multiple $h$ of $f$ is a two-sided maximal element in $R$ in the terminology of \cite{jacobson2009finite}, and  $h=gf$ for some $g\in R$ by the definition of $h$.
Since $R$ is a principal ideal domain, the irreducible factors $h_i$ of any factorization $h=h_1h_2\cdots h_k$ of $h$ into irreducible polynomials are all similar as polynomials. This implies that all irreducible factors of $h$ have the same degree.

Moreover, $R/Rh$ is a simple Artinian ring with $R/Rh \cong M_k(D_h)$, where $D_h \cong \mathcal{I}(h_i)/Rh_i$ and $\mathcal{I}(h_i)=\lbrace g \in R: h_ig \in Rh_i \rbrace$ is the idealiser of $Rh_i$ \cite[Theorem 1.2.19]{jacobson2009finite}.

Since $f$ is an irreducible divisor of $h$ with $h=gf$ for some $g \in R$, we thus obtain that  $h=h_1h_2\cdots h_{k-1}f$ for some irreducible polynomials $h_i \in R$ of degree $m$,  $D_h \cong \mathcal{I}(f)/Rf = \mathcal{E}(f)$, and therefore
$$R/Rh \cong M_k({\rm Nuc}_r(S_f))$$ since the eigenring of $f$ is equal to the right nucleus of $S_f$ when $f$ has degree at least $2$.
In particular, here $h$ has degree $km$ and since $f$ is irreducible, ${\rm Nuc}_r(S_f)$ is a division algebra.
Now $R/Rh$ is a central simple algebra over $E_{\hat{h}}$ and so
  ${\rm Nuc}_r(S_f)$ is a central division algebra over $E_{\hat{h}}$ of dimension $s^2$.
Comparing the dimensions of $R/Rh$ and $M_k({\rm Nuc}_r(S_f))$ over $F$ it follows that $d^2n \, {\rm deg}(h)= d^2n^2 \,{\rm deg} (\hat{h})=k^2s^2
\, {\rm deg}(\hat{h})$, so that we get $d^2n^2 =k^2s^2 $, that is $dn=ks$. In particular, this implies that ${\rm deg}(h)=\frac{dnm}{s}$ and ${\rm deg}(\hat{h})=\frac{dm}{s}$. Moreover $$[{\rm Nuc}_r(S_f):F]= [{\rm Nuc}_r(S_f):E_{\hat{h}}][E_{\hat{h}}:F] =s^2 {\rm deg}(\hat{h}) = \frac{dms^2}{s} = dms.$$
Now since ${\rm Nuc}_r(S_f)$ is a subalgebra of $S_f$ we have 
\begin{equation}\label{C2.2: Theorem 1 Eq 1}
[S_f:F]=[S_f:{\rm Nuc}_r(S_f)][{\rm Nuc}_r(S_f):F]= d^2mn.
\end{equation}
Substituting $dn = ks$ and $[{\rm Nuc}_r(S_f):F]=dms$ into Equation (\ref{C2.2: Theorem 1 Eq 1}) we obtain the equality $[S_f:{\rm Nuc}_r(S_f)]=k$.
If $f$ is not right-invariant, then $k>1$ and so we derive $s\not=dn$ looking at the degree of $h$. More precisely, for $f$ of degree $m>1$, $f$ being not right-invariant is equivalent to $S_f$ being not associative which in turn is equivalent to
$k>1$.
\end{proof}

\begin{remark}
If we consider the degree one polynomial $f=t-a$ for some $a \in D^{\times}$, then an analogous proof to the one of Theorem \ref{C2.2: Theorem 1} yields ${\rm deg}(h) = \frac{nd}{s}$, ${\rm deg}(\hat{h})=\frac{d}{s}$ and that $\mathcal{E}(f)$ is a central division algebra over $F$ of degree $s$ contained in $D$. In particular $s$ divides $d$. Recall that for $f$ of degree one, ${\rm Nuc}_r(S_f)$ is not necessarily equal to the eigenring of $f$.
\end{remark}

Now that we have dealt with the right nucleus of $f$ for $f$ an irreducible polynomial in $R$, we turn our attention to the more general setting with $f$ not necessarily irreducible in $R$, but with $\hat{h}$ an irreducible polynomial in $F[x]$. Note that this setting includes the previous case that $f \in R$ is irreducible, which corresponds to the case $l=1$ in the following result (Theorem \ref{C2.2: Theorem 2}).

\begin{theorem}\label{C2.2: Theorem 2}
Let $f \in R$ satisfy $(f,t)_r=1$, and suppose that $\hat{h}(x)$ is irreducible in $F[x]$. Then $f$ is the product of $l \geq 1$ irreducible polynomials in $R$ all of which are mutually similar to each other, and $$R/Rh \cong M_k(\mathcal{E}(g))$$ where $g \in R$ is any irreducible divisor of $h$ in $R$. If ${\rm deg}(g)=r \geq 1$, then $m=rl$, $\mathcal{E}(g)$ is a central division algebra over $E_{\hat{h}}=F[x]/(\hat{h}(x))$ of degree $s^{\prime}=dn/k$, where $k$ is the number of irreducible factors of $h$, and
 $$ {\rm Nuc}_r(S_f) \cong M_l(\mathcal{E}(g)).$$
 In particular, ${\rm Nuc}_r(S_f)$ is a central simple algebra over $E_{\hat{h}}$ of degree $s = ls^{\prime}$, ${\rm deg}(\hat{h})=\frac{rd}{s^{\prime}}=\frac{md}{s}$, ${\rm deg}(h) = \frac{rnd}{s^{\prime}} = \frac{mnd}{s}$, and
 $$[{\rm Nuc}_r(S_f) :F] = l^2rds^{\prime} = mds.$$ In particular $s^{\prime}$ divides ${\rm gcd}(rd,dn)$, and $s$ divides ${\rm gcd}(md,dn)$.\\
\begin{proof}
Since $\hat{h}$ is irreducible in $F[x]$, $h$ is a two-sided maximal element of $R$ in the language of \cite{jacobson2009finite}, and $h=pf$ for some $p \in R$ by the definition of the minimal central left multiple. Since $R$ is a principal ideal domain, the irreducible factors $h_i$ of any factorisation $h=h_1h_2\cdots h_k$ of $h$ in $R$ into irreducible polynomials are all similar as polynomials. In particular, $R/Rh_i \cong R/Rh_j$ for all $i,j$, and all irreducible factors of $h$ have the same degree.\\
\\
Moreover, $R/Rh$ is a simple Artinian ring with $R/Rh \cong M_k(\mathcal{E}(g))$, where $g \in R$ is an irreducible polynomial similar to $h_i$ for all $i$, and $\mathcal{E}(g)$ denotes the eigenring of $g$ in $R$ \cite[Theorem 1.2.19]{jacobson2009finite}.\\
\\
Let $A=R/Rh$, and suppose that $f=f_1f_2\cdots f_l$ where $f_i \in R$ are irreducible polynomials. Then $f_i \sim f_j$ for any $i,j$, and each of the polynomials $g_i$ has minimal central left multiple $h$ \cite[Proposition 5.2]{gomez2013computing}. Any left $A$-module is isomorphic to a direct sum of simple left $A$-modules, and any two simple left $A$-modules are isomorphic  \cite[Theorem 25]{jacobson1943theory}. It follows that $$R/Rf \cong R/Rf_1 \oplus R/Rf_2 \oplus \cdots \oplus R/Rf_l$$ as left $A$-modules (e.g. see \cite[Corollary 4.7]{gomez2013computing}).  Since $R/Rf_i \cong R/Rg$ for $g$ any irreducible factor of $h$, we have $$R/Rf \cong (R/Rg)^{\oplus l},$$ as left $A$-modules.
By Lemmas \ref{Intro: Lemma 1} and \ref{Intro: Lemma 2}, we obtain $${\rm End}_A(R/Rf) \cong {\rm End}_A((R/Rg)^{\oplus l}) \cong M_l({\rm End}_A(R/Rg)),$$ as rings. Since $h$ is a bound of both $f$ and $g$, the two-sided ideal $Rh$ is equal to ${\rm Ann}_R(R/Rf)$ and to ${\rm Ann}_R(R/Rg)$ \cite[pg.~38]{jacobson1943theory}. Hence ${\rm End}_R(R/Rf)={\rm End}_A(R/Rf)$ and ${\rm End}_R(R/Rg) = {\rm End}_A(R/Rg)$, by Lemma \ref{Intro: Lemma 4}, and $${\rm End}_R(R/Rf) \cong M_l({\rm End}_R(R/Rg)).$$ Finally, for any $p \in R$, the eigenring $\mathcal{E}(p)$ is isomorphic to ${\rm End}_R(R/Rp)$, therefore $$\mathcal{E}(f) \cong  M_l(\mathcal{E}(g)).$$
\\
Suppose that ${\rm deg}(g)=r \geq 1$ for $g$ any irreducible factor of $h$ in $R$. Then $f=f_1f_2\cdots f_l$ for $f_i \in R$ irreducible, implies that ${\rm deg}(f_i)=r$, and $$m ={\rm deg}(f) = \sum\limits_{i=1}^l {\rm deg}(f_i) = rl.$$ Now since $g$ is irreducible of degree $r$ with minimal central left multiple $h(t) = \hat{h}(u^{-1}t^n)$, $\mathcal{E}(g)$ is a central division algebra over $E_{\hat{h}}$ of degree $s^{\prime}=dn/k$, where $k$ is the number of irreducible divisors of $h$ in $R$, ${\rm deg}(\hat{h})=\frac{rd}{s^{\prime}}=\frac{md}{ls^{\prime}}$ and ${\rm deg}(h)=\frac{rdn}{s^{\prime}}=\frac{mdn}{ls^{\prime}}$ by Theorem \ref{C2.2: Theorem 1}. Finally, since $$\mathcal{E}(f) \cong M_l(\mathcal{E}(g)),$$ $\mathcal{E}(f)$ is a central simple algebra over $E_{\hat{h}}$ of degree $s = ls^{\prime}$, and $$[\mathcal{E}(f) : F] = s^2{\rm deg}(\hat{h}) = mds.$$
\end{proof}
\end{theorem}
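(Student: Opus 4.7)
The plan is to reduce the general case to the irreducible case already handled in Theorem \ref{C2.2: Theorem 1}, by analysing $R/Rf$ as a module over the simple Artinian ring $A = R/Rh$. The key observation is that when $\hat{h}$ is irreducible in $F[x]$, Lemma \ref{C2.2: Lemma 3} makes $Rh$ a maximal two-sided ideal of $R$, so $A$ has (up to isomorphism) a unique simple left module, which opens the door to semisimple decomposition arguments.

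First I would factor $f = f_1 f_2 \cdots f_l$ into irreducible polynomials in $R$. By Lemma \ref{C2.2: Lemma 2}, the factors $f_i$ are mutually similar and each is similar to any fixed irreducible divisor $g$ of $h$; since similar polynomials yield isomorphic quotients in a PID, $R/Rf_i \cong R/Rg$ as left $R$-modules. In particular every $f_i$ has the same degree $r = {\rm deg}(g)$, forcing $m = rl$. Because $h$ is a left multiple of both $f$ and each $f_i$, the ideal $Rh$ annihilates $R/Rf$ and every $R/Rf_i$, and Lemma \ref{Intro: Lemma 4} identifies the $R$-endomorphism rings of these modules with their $A$-endomorphism rings.

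Next I would combine Jacobson's structure theory with the semisimplicity of $A$. Theorem 1.2.19 of \cite{jacobson2009finite} gives $A \cong M_k(\mathcal{E}(g))$ for any irreducible divisor $g$ of $h$ in $R$, establishing the first displayed isomorphism of the theorem. Since $A$ is simple Artinian, every left $A$-module is a direct sum of copies of the unique simple module $R/Rg$. Matching composition length against the factorisation $f = f_1 \cdots f_l$ yields
\begin{equation*}
R/Rf \;\cong\; \bigoplus_{i=1}^{l} R/Rf_i \;\cong\; (R/Rg)^{\oplus l}
\end{equation*}
as left $A$-modules, and Lemmas \ref{Intro: Lemma 1} and \ref{Intro: Lemma 2} then deliver
\begin{equation*}
\mathcal{E}(f) \;\cong\; {\rm End}_R(R/Rf) \;=\; {\rm End}_A(R/Rf) \;\cong\; M_l({\rm End}_A(R/Rg)) \;\cong\; M_l(\mathcal{E}(g)),
\end{equation*}
which is the heart of the theorem.

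The numerical assertions then follow by feeding $g$ into Theorem \ref{C2.2: Theorem 1}: $\mathcal{E}(g)$ is a central division algebra over $E_{\hat{h}}$ of degree $s' = dn/k$, with ${\rm deg}(\hat{h}) = rd/s'$ and ${\rm deg}(h) = rnd/s'$. Substituting $m = rl$ and $s = ls'$ converts these into the stated formulas; $M_l$ of a central division algebra of degree $s'$ over $E_{\hat{h}}$ is a central simple algebra of degree $s$ over $E_{\hat{h}}$, and the dimension identity $[\mathcal{E}(f):F] = s^2 {\rm deg}(\hat{h}) = mds$ is immediate. The main obstacle I anticipate is justifying the decomposition $R/Rf \cong (R/Rg)^{\oplus l}$ cleanly: while morally a standard semisimplicity statement over $A$, relating the composition length of $R/Rf$ as an $A$-module to the number of irreducible factors of $f$ in the PID $R$ needs care, and will likely be invoked via \cite[Theorem 25]{jacobson1943theory} or a comparable uniqueness-of-decomposition result for modules over principal ideal domains.
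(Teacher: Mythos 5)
Your proposal is correct and follows essentially the same route as the paper's proof: factor $f$ into mutually similar irreducibles, identify $R/Rf \cong (R/Rg)^{\oplus l}$ as a module over the simple Artinian ring $A = R/Rh$, pass from $R$-endomorphisms to $A$-endomorphisms via the annihilator lemma, apply the matrix-ring endomorphism lemmas to get $\mathcal{E}(f) \cong M_l(\mathcal{E}(g))$, and feed $g$ into Theorem \ref{C2.2: Theorem 1} for the numerical data. Even the step you flag as delicate (relating the composition length of $R/Rf$ over $A$ to the number of irreducible factors of $f$) is handled in the paper exactly as you anticipate, by citing \cite[Theorem 25]{jacobson1943theory} together with \cite[Corollary 4.7]{gomez2013computing}.
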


An immediate Corollary to Theorem \ref{C2.2: Theorem 2} is:

\begin{corollary}\label{C2.2: Corollary 1}
Under the assumptions of Theorem \ref{C2.2: Theorem 2} 
$$[S_f:F] = \frac{k}{l}[{\rm Nuc}_r(S_f):F].$$
\end{corollary}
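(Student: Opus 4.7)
The plan is to derive the claimed identity by direct computation from the dimension formulas already established in Theorem \ref{C2.2: Theorem 2}. Since $S_f$ is a free left $D$-module of rank $m$ with basis $1, t, t^2, \ldots, t^{m-1}$, and $D$ has finite dimension $d^2 n$ over $F$ (as noted at the start of Chapter 2), the total $F$-dimension of $S_f$ is
\[
[S_f:F] \;=\; m\,[D:F] \;=\; m d^2 n.
\]
This provides the left-hand side of the claimed identity without needing to invoke any further module-theoretic machinery.

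Next, I would substitute the values for the right-hand side taken straight from the conclusion of Theorem \ref{C2.2: Theorem 2}: namely $[{\rm Nuc}_r(S_f):F] = m d s$, together with $s = l s'$ and $s' = dn/k$, which together give $s = l d n / k$. Plugging in:
\[
\frac{k}{l}\,[{\rm Nuc}_r(S_f):F] \;=\; \frac{k}{l} \cdot m d s \;=\; \frac{k}{l} \cdot m d \cdot \frac{l d n}{k} \;=\; m d^2 n.
\]
Comparing with the expression for $[S_f:F]$ above yields the result.

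There is essentially no substantive obstacle here: the corollary is a bookkeeping consequence of the dimension counts in the parent theorem, and the only thing to be careful about is to consistently express $s$ in terms of $d$, $n$, $k$, and $l$ so that the factors of $k$ and $l$ cancel in the expected way. If desired, one can alternatively read the identity as $[S_f:{\rm Nuc}_r(S_f)] = k/l$, which parallels (and generalises) the equality $[S_f:{\rm Nuc}_r(S_f)] = k$ derived inside the proof of Theorem \ref{C2.2: Theorem 1} for the case $l=1$; but for the purposes of the proof I would simply present the direct computation above.
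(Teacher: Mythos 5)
Your computation is correct and is exactly the bookkeeping the paper intends: the corollary is stated without proof as ``immediate,'' and the identity follows from $[S_f:F]=m[D:F]=md^2n$ together with $[{\rm Nuc}_r(S_f):F]=mds$ and $s=ls'=ldn/k$ from Theorem \ref{C2.2: Theorem 2}, just as you wrote. Your closing remark that this reads as $[S_f:{\rm Nuc}_r(S_f)]=k/l$, generalising the equality $[S_f:{\rm Nuc}_r(S_f)]=k$ obtained in the proof of Theorem \ref{C2.2: Theorem 1}, is also consistent with the paper.
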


In Theorem \ref{C2.2: Theorem 2}, in the particular case that ${\rm deg}(g) = 1$, $f$ is the product of $m$ linear polynomials in $R$ which are all mutually similar to each other, ${\rm Nuc}_r(S_f)$ is a central simple algebra over $E_{\hat{h}}$ of degree $ms$, and $s$ divides $d$. We obtain the following result as a Corollary to Theorem \ref{C2.2: Theorem 2}:

\begin{corollary}\label{C2.2: Corollary 2}
Suppose that $(f,t)_r=1$, that ${\rm gcd}(m,n)=1$ and that $\hat{h}$ is irreducible in $F[x]$. Then $s$ divides $d$, and $f$ is not right-invariant unless $n=1$ and $s=d$. 
Additionally, if we suppose that $d$ is prime, then one of the following holds: 
\begin{enumerate}
\item $f$ is irreducible and ${\rm Nuc}_r(S_f) \cong E_{\hat{h}}$ is a field extension of $F$ of degree $md$.
\item $f$ is irreducible and ${\rm Nuc}_r(S_f)$ is a central division algebra over $E_{\hat{h}}$ of degree $d$, ${\rm deg}(\hat{h}) = m$, and $[{\rm Nuc}_r(S_f):F]=d^2m$.
\item $f$ is the product of $d$ irreducible polynomials in $R$ of degree $\frac{m}{d}$, all of which are mutually similar to each other. Moreover $${\rm Nuc}_r(S_f) \cong M_d(E_{\hat{h}})$$ is a central simple algebra over $E_{\hat{h}}$ of degree $d$, ${\rm deg}(\hat{h}) = m$, and $[{\rm Nuc}_r(S_f):F]=d^2m$. 
\end{enumerate}
\end{corollary}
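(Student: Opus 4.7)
The plan is to derive this as a direct consequence of Theorem \ref{C2.2: Theorem 2} combined with the coprimality hypothesis and elementary arithmetic. First, I would invoke Theorem \ref{C2.2: Theorem 2} to get that $s$ divides $\gcd(md,dn)$. Using the standard identity $\gcd(md,dn) = d\gcd(m,n)$ together with the hypothesis $\gcd(m,n)=1$, this immediately collapses to $s \mid d$.

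Next, for the right-invariance statement I would rely on the equivalence $f$ right-invariant $\Leftrightarrow$ $S_f$ associative (Theorem \ref{Intro: Theorem 3}) together with the identity $[S_f : {\rm Nuc}_r(S_f)] = k/l$ obtained in the proof of Theorem \ref{C2.2: Theorem 2} and recorded in Corollary \ref{C2.2: Corollary 1}. Since $k = dn/s'$ and $s = ls'$, the equality $k = l$ is equivalent to $s = dn$. But $s \mid d$ already forces $s \le d \le dn$, so $s = dn$ is possible only when $n = 1$ and $s = d$, giving the claimed characterization.

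For the second part, assume $d$ is prime. Then $s \in \{1,d\}$, and I would split into the three cases arising from the factorization $s = ls'$ with $l, s' \ge 1$. If $s = 1$ then $l = s' = 1$, so $f$ is irreducible and ${\rm Nuc}_r(S_f) \cong M_1(\mathcal{E}(g)) = \mathcal{E}(g)$ is a central division algebra of degree $s' = 1$ over $E_{\hat{h}}$, hence equal to $E_{\hat{h}}$; the degree $[E_{\hat{h}}:F] = \deg(\hat{h}) = md/s = md$ then yields case (1). If $s = d$, the primality of $d$ forces $(l,s') = (1,d)$ or $(l,s') = (d,1)$. The first subcase gives an irreducible $f$ with ${\rm Nuc}_r(S_f) = \mathcal{E}(g)$ a central division algebra of degree $d$ over $E_{\hat{h}}$, yielding case (2); the second subcase gives $f = f_1 \cdots f_d$ a product of $d$ mutually similar irreducibles of degree $r = m/l = m/d$ with $\mathcal{E}(g) \cong E_{\hat{h}}$ (as $s' = 1$), so ${\rm Nuc}_r(S_f) \cong M_d(E_{\hat{h}})$, yielding case (3). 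In each subcase, $\deg(\hat{h}) = md/s$ and $[{\rm Nuc}_r(S_f):F] = mds$ are read off directly from Theorem \ref{C2.2: Theorem 2}.

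There is no real obstacle here; the argument is essentially a careful bookkeeping of the divisibility relations provided by Theorem \ref{C2.2: Theorem 2}. The only step that requires a moment's thought is the right-invariance characterization, but this reduces entirely to the already-established formula $[S_f:{\rm Nuc}_r(S_f)] = k/l$ combined with the elementary observation that $s \mid d$ leaves almost no room for $s$ to equal $dn$.
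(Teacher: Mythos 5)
Your proposal is correct and follows essentially the same route as the paper: $s \mid d$ from $\gcd(md,dn)=d$, right-invariance ruled out by a dimension count (the paper compares $[S_f:F]$ with $[{\rm Nuc}_r(S_f):F]$ directly, which is equivalent to your $k=l$ criterion from Corollary \ref{C2.2: Corollary 1}), and the case split $s\in\{1,d\}$ with $(l,s')$ determined by primality of $d$. The only cosmetic difference is that the paper obtains case (1) by citing Proposition \ref{C2.2: Proposition 1} when $s=1$, whereas you read it off directly from Theorem \ref{C2.2: Theorem 2}; both are valid.
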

\begin{proof}
Since ${\rm gcd}(dm,dn) = {\rm gcd}(m,n)d = d$, $s$ divides $d$ by Theorem \ref{C2.2: Theorem 2}. Moreover, $f$ is right-invariant if and only if $[S_f:F]=[{\rm Nuc}_r(S_f):F]$, i.e. if and only $d^2mn = dms$. This yields $s=d$ and $n=1$ since $s \leq d$.\\
Now suppose additionally that $d$ is prime. Then $s$ dividing $d$ means that $s=1$ or $s=d$. If $s=1$, then ${\rm deg}(h)=mnd$ and we obtain (1) by Proposition \ref{C2.2: Proposition 1}.\\
On the other hand if $s=d$, then there are two possible scenarios; either $l=1$ in which case $f$ satisfies (2), or $l=d$ in which case we obtain case (3), both following Theorem \ref{C2.2: Theorem 2}.
\end{proof}

Note that in Corollary \ref{C2.2: Corollary 2}, if $d$ does not divide $m$ (e.g. if $d$ is greater than $m$), then (3) cannot occur. Also if $n=1$ and $f$ is not right-invariant, then $f$ must satisfy (1).

Now we consider some certain special cases of Theorems \ref{C2.2: Theorem 1} and \ref{C2.2: Theorem 2}.

\subsection*{What if $\sigma$ is an inner automorphism?}
Suppose now that $\sigma = \iota_u$ for some $u \in D^{\times}$ (i.e. $\sigma$ has finite inner order $n=1$). Then we have that $C \subseteq {\rm Fix}(\sigma)$, i.e. $F = C \cap {\rm Fix}(\sigma) = C$, and
$$C(R)=C[u^{-1}t]\cong C[x]$$
under the map which fixes elements of $C$ and sends $u^{-1}t$ to $x$.
Then $f$ has minimal central left multiple $h(t)=\hat{h}(t^n)$ for some $\hat{h} \in C[x]$. 
We obtain the following results as Corollaries to Theorem \ref{C2.2: Theorem 1}:

\begin{corollary}\label{C2.2': Corollary 1}
 Let $f$ be irreducible. Then:\\
(i) ${\rm Nuc}_r(S_f)$ is a central division algebra over $E_{\hat{h}}=C[x]/(\hat{h}(x))$ of degree $s=d/k$, where $k$ is the number of irreducible factors of $h$ in $R$, and
 $$ R/Rh \cong M_k({\rm Nuc}_r(S_f)).$$
 This means that ${\rm deg}(\hat{h})=\frac{md}{s}$, ${\rm deg}(h)=\frac{md}{s}$, $s$ divides $d$ and $$[{\rm Nuc}_r(S_f) :F]=mds.$$
 (ii) If $d$ is prime and $f$ not right-invariant, then
$${\rm Nuc}_r(S_f) \cong E_{\hat{h}}$$
is a field extension and $[{\rm Nuc}_r(S_f):C]=md$, ${\rm deg}(\hat{h})=md$, and ${\rm deg}(h)=md$.
\end{corollary}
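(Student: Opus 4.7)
The plan is to derive both parts directly from Theorem \ref{C2.2: Theorem 1} by specialising to the inner automorphism setting, where the hypothesis $\sigma = \iota_u$ forces the inner order to be $n=1$ and the fixed field $F = C \cap {\rm Fix}(\sigma)$ to collapse to $C$. Since $\sigma = \iota_u$ is in particular an automorphism of $D$ of finite inner order, the assumptions of Theorem \ref{C2.2: Theorem 1} are in force, and we may simply read off the structural conclusions after setting $n=1$.

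For part (i), I would apply Theorem \ref{C2.2: Theorem 1} verbatim with $n = 1$ and $F = C$. The theorem gives ${\rm Nuc}_r(S_f)$ as a central division algebra over $E_{\hat{h}} = C[x]/(\hat{h}(x))$ of degree $s = dn/k = d/k$, where $k$ is the number of irreducible factors of $h$ in $R$, together with the matrix decomposition $R/Rh \cong M_k({\rm Nuc}_r(S_f))$. The degree formulas ${\rm deg}(\hat{h}) = md/s$ and ${\rm deg}(h) = mnd/s = md/s$ are likewise immediate, as is $[{\rm Nuc}_r(S_f):C] = mds$. The divisibility $s \mid d$ comes from $s \mid \gcd(md, dn) = \gcd(md, d) = d$. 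No genuine computation is required beyond this substitution.

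For part (ii), the additional hypotheses are that $d$ is prime and $f$ is not right-invariant. By the last assertion of Theorem \ref{C2.2: Theorem 1}, $f$ being not right-invariant forces $s \neq dn = d$. Combined with $s \mid d$ and the primality of $d$, the only remaining possibility is $s = 1$. Consequently ${\rm Nuc}_r(S_f)$ is a central division algebra over $E_{\hat{h}}$ of degree $1$, so ${\rm Nuc}_r(S_f) \cong E_{\hat{h}}$, which is a field extension of $C$. Substituting $s = 1$ and $n = 1$ into the formulas from (i) then yields $[{\rm Nuc}_r(S_f):C] = md$ and ${\rm deg}(\hat{h}) = {\rm deg}(h) = md$.

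I do not anticipate any real obstacle here, since the work has already been done in Theorem \ref{C2.2: Theorem 1}; the corollary is purely a specialisation. The only point requiring care is to verify at the outset that the hypotheses of Theorem \ref{C2.2: Theorem 1} are genuinely inherited (namely, that $\sigma = \iota_u$ has finite inner order $n = 1$ and that the condition $F = C \cap {\rm Fix}(\sigma) = C$ correctly identifies the centre of $R$ appearing in the definition of $E_{\hat{h}}$), and to note explicitly in part (ii) why $s = 1$ corresponds to ${\rm Nuc}_r(S_f)$ collapsing to the field $E_{\hat{h}}$ rather than to a nontrivial division algebra over it.
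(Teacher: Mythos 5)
Your proposal is correct and follows essentially the same route as the paper: both treat the corollary as a direct specialisation of Theorem \ref{C2.2: Theorem 1} to the inner-automorphism case $n=1$, $F=C$ (the paper re-runs the dimension count $d^2 = k^2s^2$ with $n=1$ rather than substituting into $dn=ks$, but this is the same argument). Your derivation of $s=1$ in part (ii) from $s\neq d$, $s\mid d$ and the primality of $d$ matches the paper's use of $d=ks$ with $k>1$.
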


\begin{proof}
(i) 
We note that $$R/Rh \cong M_k({\rm Nuc}_r(S_f))$$ and that ${\rm Nuc}_r(S_f)$ is a central division algebra over $E_{\hat{h}}$ of dimension $s^2$ by an identical proof of Theorem \ref{C2.2: Theorem 1}.
Now comparing the dimensions of $R/Rh$ and $M_k({\rm Nuc}_r(S_f))$ over $F$ it follows that $d^2 \, {\rm deg}(h)= d^2 \,{\rm deg} (\hat{h})=k^2s^2 \, {\rm deg}(\hat{h})$, so that we get $d^2 =k^2s^2 $, that is $d=ks$. In particular, this implies that ${\rm deg}(h) = {\rm deg}(\hat{h}) = \frac{md}{s}$.
\\ (ii) If $d$ is prime and $f$ not right-invariant, then in the above proof $d=ks$ forces $s=1$, so that here
$ R/Rh \cong M_k(E_{\hat{h}})$, ${\rm Nuc}_r(S_f) \cong E_{\hat{h}}$ and ${\rm deg}(h) = {\rm deg}(\hat{h}) = md.$
\end{proof}

\begin{corollary}\label{C2.2': Corollary 2}
Let $d=pq$ for $p$ and $q$ prime, and $f$ be irreducible and not right-invariant.
Then ${\rm Nuc}_r(S_f)\cong E_{\hat{h}}$ is a field extension of $C$ of degree $md$, or  ${\rm Nuc}_r(S_f)$ is a central division algebra over $E_{\hat{h}}$ of degree $q$ (resp., $p$), and
 $[{\rm Nuc}_r(S_f):C] = mdq^2$ (resp., $= mdp^2$).
\end{corollary}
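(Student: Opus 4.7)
The plan is to apply Corollary \ref{C2.2': Corollary 1}(i) and then to enumerate the factorisations of $d=pq$ into two positive integer factors.

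By Corollary \ref{C2.2': Corollary 1}(i), ${\rm Nuc}_r(S_f)$ is a central division algebra over $E_{\hat{h}}=C[x]/(\hat{h}(x))$ of some degree $s$, where $s$ and the number $k$ of irreducible factors of $h$ in $R$ satisfy $ks=d$; in addition $\deg\hat{h}=md/s$ and $[{\rm Nuc}_r(S_f):C]=s^{2}\deg\hat{h}$. Since $f$ is assumed not to be right-invariant, the last sentence of Theorem \ref{C2.2: Theorem 1} (which applies verbatim in the inner-automorphism case $n=1$) forces $k>1$, so that the possibility $s=d$ is excluded. With $d=pq$ and $p,q$ prime, the admissible values of the ordered pair $(k,s)$ are therefore exactly $(pq,1)$, $(p,q)$ and $(q,p)$.

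I would then split into these three cases. In $(k,s)=(pq,1)$ the central division algebra has degree $1$ over $E_{\hat{h}}$, so ${\rm Nuc}_r(S_f)\cong E_{\hat{h}}$; since here $\deg\hat{h}=md$, this realises ${\rm Nuc}_r(S_f)$ as a field extension of $C$ of degree $md$. In $(k,s)=(p,q)$ (respectively $(q,p)$) the right nucleus is a central division algebra of prime degree $q$ (respectively $p$) over $E_{\hat{h}}$, and the stated $C$-dimensions follow directly by substituting the corresponding value of $s$ into $[{\rm Nuc}_r(S_f):C]=s^{2}\deg\hat{h}$.

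I do not anticipate any serious obstacle: the structural heavy lifting was done in Theorem \ref{C2.2: Theorem 1} and its specialisation Corollary \ref{C2.2': Corollary 1}(i), and the only nontrivial input beyond those results is the remark that the non-right-invariance of $f$ rules out $k=1$. What remains is a short arithmetic case analysis on the divisors of $d=pq$.
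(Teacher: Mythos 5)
Your proposal is correct and takes essentially the same route as the paper: both invoke the relation $d=ks$ from Corollary \ref{C2.2': Corollary 1}(i), use the non-right-invariance of $f$ to rule out $k=1$ (equivalently $s=d$), and then enumerate the factorisations of $d=pq$. One caution: substituting $s=q$ into $[{\rm Nuc}_r(S_f):C]=s^2\deg\hat{h}=mds$ yields $mpq^2$ (which is what the paper's own proof obtains and what Corollary \ref{C2.2': Corollary 3} records), not the $mdq^2$ printed in the statement, so the dimension in the statement is a typo and your claim that the ``stated'' value follows directly from the substitution should be adjusted accordingly.
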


\begin{proof}
(i) Since $f$ is not right-invariant, we note that $k>1$. If $d=pq$ then the equation $d=ks$ in the proof of Corollary \ref{C2.2': Corollary 1} (i) forces either that $s=1$ and $k=d$, hence that ${\rm Nuc}_r(S_f)\cong E_{\hat{h}}$, or that $s\not=1$ and then w.l.o.g. that  $k=p$ and $s=q$, so that here
${\rm Nuc}_r(S_f)$ is a central division algebra over $E_{\hat{h}}$ of degree $q$, ${\rm deg}(h) = {\rm deg} (\hat{h}) = mp$,  and $[{\rm Nuc}_r(S_f):C]= mds = mpq^2$.
\end{proof}

This observation generalizes as follows by induction:

\begin{corollary} \label{C2.2': Corollary 3}
Let $d=p_1\cdots p_l$ be the prime decomposition of $d$, and $f$ be irreducible and not right-invariant.
Then ${\rm Nuc}_r(S_f)\cong E_{\hat{h}}$ is a field extension of $C$ of degree $md$, or
${\rm Nuc}_r(S_f)$ is a central division algebra over $E_{\hat{h}}$ of degree $q = q_1\cdots q_r$, with $q_i\in \{p_1,\dots, p_l\}$, and
$$[{\rm Nuc}_r(S_f):C] = m p q^2,$$ where $p = \frac{d}{q_1 \cdots q_r} = \frac{d}{q}$.
\end{corollary}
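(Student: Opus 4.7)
The plan is to derive the statement directly from Corollary \ref{C2.2': Corollary 1}(i) and the uniqueness of prime factorisation, with the induction on $l$ serving only as a bookkeeping device to match the notation $q=q_1\cdots q_r$ and $p=d/q$ used in the statement. By Corollary \ref{C2.2': Corollary 1}(i), applied to the irreducible, non right-invariant polynomial $f$, we know that ${\rm Nuc}_r(S_f)$ is a central division algebra over $E_{\hat{h}}$ of some degree $s$, where $d=ks$ and $k$ is the number of irreducible factors of $h$ in $R$. Since $f$ is not right-invariant, $k>1$, so $s$ is a proper divisor of $d$, and moreover $[{\rm Nuc}_r(S_f):C]=mds$.

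Next I would read off the divisor structure from $d=p_1\cdots p_l$: every divisor $s$ of $d$ has the form $s=q_1\cdots q_r$ for some $\{q_1,\dots,q_r\}\subseteq \{p_1,\dots,p_l\}$ with $0\le r\le l$, and necessarily $r<l$ since $s<d$. Setting $q=s$ and $p=d/q=k$, two mutually exclusive cases arise. If $r=0$, then $s=1$ and the same argument as in Corollary \ref{C2.2': Corollary 1}(ii) (which uses primality of $d$ only to force $s=1$, and that is already achieved here) gives ${\rm Nuc}_r(S_f)\cong E_{\hat{h}}$, a field extension of $C$ of degree ${\rm deg}(\hat{h})=md$. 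If $r\ge 1$, then ${\rm Nuc}_r(S_f)$ is a central division algebra over $E_{\hat{h}}$ of degree $q$, and the dimension formula yields
\[
[{\rm Nuc}_r(S_f):C]=mds=mdq=m\bigl(d/q\bigr)q^{2}=mpq^{2},
\]
which is exactly the asserted formula.

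Formally, one can phrase this as induction on $l$, with Corollary \ref{C2.2': Corollary 2} as the base case $l=2$ and the inductive step repeating the divisor analysis with one additional prime factor of $d$; but no new algebra is used in either the base case or the step. The main (mild) obstacle is purely notational, namely checking that the parameters $p,q,q_1,\dots,q_r$ in the statement line up correctly with $k$ and $s$ coming out of Corollary \ref{C2.2': Corollary 1}(i), and in particular verifying the identity $mds=mpq^{2}$.
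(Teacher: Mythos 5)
Your argument is correct and is essentially the paper's own: the paper gives no separate proof for this corollary, noting only that it follows "by induction" from the $d=pq$ case, whose proof is exactly your divisor analysis of $d=ks$ coming out of Corollary \ref{C2.2': Corollary 1}(i), with $s=q$, $k=p=d/q$, and $[{\rm Nuc}_r(S_f):C]=mds=mpq^2$. Your observation that the induction is mere bookkeeping, and your handling of the $s=1$ case via the degree formula ${\rm deg}(\hat{h})=md/s$, match the intended reading.
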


Again, we next turn our attention to the more general setting that $f \in R$ is reducible polynomial of degree $m > 1$ such that $(f,t)_r = 1$ with minimal central left multiple $h(t)=\hat{h}(u^{-1}t^n)$ for some monic, irreducible polynomial $\hat{h}(x) \in C[x]$. The following results are Corollaries to Theorem \ref{C2.2: Theorem 2}.

\begin{corollary}\label{C2.2': Corollary 4}
The polynomial $f$ is the product of $l \geq 1$ irreducible polynomials in $R$ all of which are mutually similar to each other, and $$R/Rh \cong M_k(\mathcal{E}(g))$$ where $g \in R$ is any irreducible divisor of $h$ in $R$. \\
\\ (i) Suppose that ${\rm deg}(g)=r \geq 1$. Then $m=rl$, $\mathcal{E}(g)$ is a central division algebra over $E_{\hat{h}}=C[x]/(\hat{h}(x))$ of degree $s^{\prime}=d/k$, where $k$ is the number of irreducible factors of $h$, and
 $${\rm Nuc}_r(S_f) \cong M_l(\mathcal{E}(g)).$$
 In particular, ${\rm Nuc}_r(S_f)$ is a central simple algebra over $E_{\hat{h}}$ of degree $s=ls^{\prime}$, ${\rm deg}(\hat{h})=\frac{rd}{s^{\prime}}=\frac{md}{s}$, ${\rm deg}(h) = \frac{rd}{s^{\prime}} = \frac{md}{s}$, and
 $$[{\rm Nuc}_r(S_f):C] = l^2rds^{\prime} = mds.$$ In particular $l$, $s^{\prime}$ and $s$ all divide $d$, and $l$ divides $k$.\\
 \\ (ii) 
 If ${\rm gcd}(d,m)=1$, then $f$ is irreducible, and ${\rm Nuc}_r(S_f)$ is a central division algebra over $E_{\hat{h}}$ of degree $s=d/k$, and ${\rm deg}(\hat{h})={\rm deg}(h) = \frac{md}{s}$. \\
 \\ (iii) If $d$ is prime, and $f$ is not right-invariant, then $f$ is irreducible and $${\rm Nuc}_r(S_f) \cong E_{\hat{h}}$$ is a field extension of $F$ of degree $md$.
\end{corollary}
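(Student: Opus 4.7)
The plan is to read this Corollary off directly from Theorem \ref{C2.2: Theorem 2} by specialising to the inner-automorphism case $n=1$, where $F = C \cap {\rm Fix}(\sigma) = C$ and $E_{\hat h} = C[x]/(\hat h(x))$. First I would transcribe the relevant formulas of Theorem \ref{C2.2: Theorem 2}: the quantity $s' = dn/k$ becomes $d/k$, the quantity $s = ls'$ still divides $\gcd(md,dn) = d$, and the degree identities $\deg(\hat h) = rd/s' = md/s$ and $\deg(h) = rnd/s' = rd/s' = md/s$ coincide. The decomposition statements ($f$ is a product of $l \geq 1$ mutually similar irreducibles, $m = rl$, the isomorphism $R/Rh \cong M_k(\mathcal{E}(g))$, and ${\rm Nuc}_r(S_f) \cong M_l(\mathcal{E}(g))$ with $\mathcal{E}(g)$ a central division algebra over $E_{\hat h}$ of degree $s'$) transfer verbatim. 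To record the extra divisibility $l \mid k$ in (i), I would use that $ls' = s \mid d$ together with $k = d/s'$ to conclude $l \mid d/s' = k$.

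For part (ii), the plan is to observe that $m = rl$ gives $l \mid m$, while (i) yields $l \mid d$; hence $l \mid \gcd(d,m) = 1$, forcing $l = 1$. Then $f$ is irreducible and the structural statements about ${\rm Nuc}_r(S_f)$ follow directly by setting $l=1$ in (i) (equivalently, this is the content of Corollary \ref{C2.2': Corollary 1}(i)).

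For part (iii), suppose $d$ is prime and $f$ is not right-invariant. Since $s \mid d$, we have $s = 1$ or $s = d$. The plan is to rule out $s = d$ by a dimension count: $s = d$ would give $[{\rm Nuc}_r(S_f):C] = mds = md^2$, while $[S_f:C] = d^2 m$ (using $[D:C] = d^2$ and $n=1$); combined with the inclusion ${\rm Nuc}_r(S_f) \subseteq S_f$, this forces ${\rm Nuc}_r(S_f) = S_f$, making $S_f$ associative and $f$ right-invariant, a contradiction. Hence $s = 1$, which in turn gives $l = s' = 1$ and $k = d$. Consequently $f$ is irreducible and ${\rm Nuc}_r(S_f)$ is a central division algebra of degree $1$ over $E_{\hat h}$, so ${\rm Nuc}_r(S_f) \cong E_{\hat h}$ with $[E_{\hat h}:C] = \deg(\hat h) = md/s = md$.

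The main obstacle is bookkeeping — keeping track of which divisibility facts from Theorem \ref{C2.2: Theorem 2} collapse when $n=1$, and, for part (iii), spotting the contradiction with nonassociativity of $S_f$ that rules out $s = d$; once these are laid out, everything follows from Theorem \ref{C2.2: Theorem 2}.
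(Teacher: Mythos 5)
Your proposal is correct and follows exactly the route the paper intends: the paper gives no separate proof for this corollary, presenting it as a direct specialisation of Theorem \ref{C2.2: Theorem 2} to the inner case $n=1$ (so $F=C$, $s'=d/k$, and the degree formulas collapse). Your filled-in details — deducing $l\mid k$ from $ls'\mid d=ks'$, forcing $l=1$ in (ii) via $l\mid\gcd(d,m)$, and ruling out $s=d$ in (iii) by the dimension count $[{\rm Nuc}_r(S_f):C]=mds$ versus $[S_f:C]=md^2$ together with the equivalence of associativity and right-invariance — are all sound and consistent with the arguments the paper uses in the neighbouring Corollaries \ref{C2.2': Corollary 1} and \ref{C2.2: Corollary 2}.
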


In Corollary \ref{C2.2': Corollary 4} (i), if ${\rm deg}(g) = 1$, then $f$ is the product of $m$ linear polynomials in $R$ which are all mutually similar to each other, and ${\rm Nuc}_r(S_f)$ is a central simple algebra over $E_{\hat{h}}$ of degree $md/k$.

\subsection*{What if $D$ is a field?}
Now suppose that $D=K$ for $K$ a cyclic Galois field extension of $F = {\rm Fix}(\sigma)$ of degree $n$ with Galois group ${\rm Gal}(K/F) = \langle \sigma \rangle$. Then $\sigma$ has order $n$, 
and $R=K[t;\sigma]$ has center $$C(R)=F[t^n]$$ which is isomorphic to $F[x]$ under the map which fixes elements of $F$ and sends $t^n$ to $x$. Also $f$ has minimal central left multiple $h(t)=\hat{h}(t^n)$ for $\hat{h} \in F[x]$. We obtain the following results as corollaries to Theorem \ref{C2.2: Theorem 1}: 

\begin{theorem}\label{C2.2'': Theorem 1}
Let $f$ be irreducible, then:
 \\ (i) ${\rm Nuc}_r(S_f)$ is a central division algebra over $E_{\hat{h}}=F[x]/(\hat{h}(x))$ of degree $s=n/k$, where $k$ is the number of irreducible factors of $h$, and
 $$ R/Rh \cong M_k({\rm Nuc}_r(S_f)).$$
 In particular, this means that ${\rm deg}(\hat{h})=\frac{m}{s}$, ${\rm deg}(h)=\frac{nm}{s}$, and
 $$[{\rm Nuc}_r(S_f):F]=ms.$$
 Moreover, $s$ divides $m$ and $n$.
 \\
 (ii) If ${\rm gcd}(m,n)=1$, or $n$ is prime and $f$ not right-invariant, then
$${\rm Nuc}_r(S_f) \cong E_{\hat{h}}.$$
In particular, then $[{\rm Nuc}_r(S_f):F]=m$, ${\rm deg}(\hat{h})=m$, and ${\rm deg}(h)=mn$.
\end{theorem}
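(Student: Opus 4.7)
The plan is to deduce Theorem \ref{C2.2'': Theorem 1} as a direct specialization of Theorem \ref{C2.2: Theorem 1} to the case $D = K$, which corresponds to setting $d = 1$ in the general setup. First I would verify that the hypotheses of Theorem \ref{C2.2: Theorem 1} are satisfied in this setting. Since $K$ is a commutative field, its center is $K$ itself, so $K$ is a central division algebra over its center of degree $d = 1$. The Galois automorphism $\sigma$ has order $n$ on $K$, and any nonidentity automorphism of a field is outer (inner automorphisms of a commutative ring are trivial), so $\sigma^i$ is outer for $1 \leq i < n$ while $\sigma^n = \mathrm{id}_K = \iota_1$. Hence $\sigma$ has finite inner order exactly $n$ with $u = 1$, matching the standing hypothesis of Chapter 2.

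For part (i), I would substitute $d = 1$ into the conclusions of Theorem \ref{C2.2: Theorem 1}. This immediately gives that ${\rm Nuc}_r(S_f)$ is a central division algebra over $E_{\hat{h}} = F[x]/(\hat{h}(x))$ of degree $s = dn/k = n/k$, the isomorphism $R/Rh \cong M_k({\rm Nuc}_r(S_f))$, and the degree/dimension formulas ${\rm deg}(\hat{h}) = md/s = m/s$, ${\rm deg}(h) = mnd/s = mn/s$, and $[{\rm Nuc}_r(S_f):F] = mds = ms$. The divisibility statement from Theorem \ref{C2.2: Theorem 1} that $s$ divides ${\rm gcd}(md, dn)$ becomes $s \mid {\rm gcd}(m, n)$, so in particular $s$ divides both $m$ and $n$, as required.

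For part (ii), the strategy is simply to force $s = 1$ in each subcase, whereupon a central division algebra of degree $1$ over $E_{\hat{h}}$ is just $E_{\hat{h}}$ itself. In the first subcase ${\rm gcd}(m,n) = 1$, the divisibility $s \mid {\rm gcd}(m,n)$ from part (i) directly forces $s = 1$. In the second subcase, $n$ is prime and $f$ is not right-invariant; here $s \mid n$ leaves only $s \in \{1, n\}$, and the final clause of Theorem \ref{C2.2: Theorem 1} (that $f$ not right-invariant implies $s \neq dn$) together with $d = 1$ excludes $s = n$. Either way $s = 1$, so ${\rm Nuc}_r(S_f) \cong E_{\hat{h}}$, and the remaining formulas $[{\rm Nuc}_r(S_f):F] = m$, ${\rm deg}(\hat{h}) = m$, and ${\rm deg}(h) = mn$ follow by substituting $s = 1$ into part (i).

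There is no genuine obstacle here; the entire argument is a careful bookkeeping specialization of the already-established general result. The only mild subtlety is confirming that $\sigma$ has finite inner order $n$ rather than some proper divisor of $n$, which is ensured by the fact that inner automorphisms of a commutative field are trivial.
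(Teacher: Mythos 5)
Your proposal is correct and follows essentially the same route as the paper, which likewise obtains this theorem as the $d=1$ specialization of Theorem \ref{C2.2: Theorem 1} (the paper re-runs the dimension count $n\,{\rm deg}(h)=k^2s^2\,{\rm deg}(\hat{h})$ to get $n=ks$, but this is the same bookkeeping you perform by substitution). If anything, your treatment of part (ii) is slightly more careful than the paper's, since you explicitly rule out $s=n$ via the ``$f$ not right-invariant $\Rightarrow s\neq dn$'' clause and handle the ${\rm gcd}(m,n)=1$ subcase, both of which the paper's proof of (ii) leaves implicit.
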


\begin{proof}
(i) 
We note that $$R/Rh \cong M_k({\rm Nuc}_r(S_f))$$ and that
${\rm Nuc}_r(S_f)$ is a central division algebra over $E_{\hat{h}}$ of dimension $s^2$ by an identical proof of Theorem \ref{C2.2: Theorem 1}.
Now comparing the dimensions of $R/Rh$ and $M_k({\rm Nuc}_r(S_f))$ over $F$ it follows that $n \, {\rm deg}(h)= n^2 \,{\rm deg} (\hat{h})=k^2s^2
\,{\rm deg}(\hat{h})$, so that we get $n^2 =k^2s^2 $, that is $n=ks$. In particular, this implies that ${\rm deg}(h)=nm/s$ and ${\rm deg}(\hat{h})=\frac{m}{s}$.
\\ (ii) If $n$ is prime then in the above proof $n=ks$ forces $s=1$, so that here
$ R/Rh \cong M_k(E_{\hat{h}})$, ${\rm Nuc}_r(S_f)\cong E_{\hat{h}}$ and ${\rm deg}(h)=mn.$
\end{proof}

\begin{remark}
As a byproduct of Theorem \ref{C2.2'': Theorem 1}, we obtain \cite[Lemma 4]{lavrauw2013semifields}. This is due to the fact that $E_{\hat{h}}$ in the proof of (i) is a finite extension of the field $F$, and so for $F= \mathbb{F}_q$ a finite base field, the only central division algebra over $E_{\hat{h}}$ is itself, i.e. $s=1$. In this case $[{\rm Nuc}_r(S_f) : \mathbb{F}_q] = m$, hence $\vert {\rm Nuc}_r(S_f) \vert = q^m$.
\end{remark}

\begin{corollary}\label{C2.2'': Corollary 1}
Let $n=pq$ for $p$ and $q$ prime.  Let $f$ be irreducible and not right-invariant.
\\ (i) ${\rm Nuc}_r(S_f)\cong E_{\hat{h}}$ is a field extension of $F$ of degree $m$, or  ${\rm Nuc}_r(S_f)$ is a central division algebra over $E_{\hat{h}}$ of prime degree $q$ (resp., $p$),
 $[{\rm Nuc}_r(S_f):F] =qm$ (resp., $=pm$), and $q$ (resp., $p$) divides $m$.
 \\ (ii) If ${\rm gcd}(m,n)=1$, then ${\rm Nuc}_r(S_f)\cong E_{\hat{h}}$ is a field extension of $F$ of degree $m$.
\end{corollary}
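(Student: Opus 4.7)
The plan is to derive the corollary directly from Theorem \ref{C2.2'': Theorem 1}(i) by performing a case analysis on the divisors of $n = pq$. Recall that Theorem \ref{C2.2'': Theorem 1} provides the factorisation $n = ks$, where $k$ is the number of irreducible factors of $h$ in $R$ and $s$ is the degree of ${\rm Nuc}_r(S_f)$ over $E_{\hat{h}}$, and moreover $s$ divides both $m$ and $n$. Since $f$ is assumed not right-invariant, Theorem \ref{C2.2: Theorem 1} (specialised to the field case, where $d=1$) forces $k > 1$, and consequently $s \neq n$.

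For part (i), I would enumerate the divisors of $n = pq$: these are $\{1, p, q, pq\}$. Excluding $s = pq$, the remaining possibilities are $s \in \{1, p, q\}$. When $s = 1$, Theorem \ref{C2.2'': Theorem 1}(i) gives ${\rm Nuc}_r(S_f) \cong E_{\hat{h}}$, a field extension of $F$ of degree ${\rm deg}(\hat{h}) = m/s = m$. For $s = q$, the divisibility $s \mid m$ forces $q \mid m$; the algebra ${\rm Nuc}_r(S_f)$ is a central division algebra over $E_{\hat{h}}$ of prime degree $q$, and $[{\rm Nuc}_r(S_f):F] = ms = mq$ by Theorem \ref{C2.2'': Theorem 1}(i). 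The case $s = p$ is completely symmetric, yielding the parenthetical alternative. This exhausts the three scenarios listed in the corollary.

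For part (ii), the hypothesis $\gcd(m,n) = 1$ combined with the divisibility conditions $s \mid m$ and $s \mid n$ forces $s = 1$, so we fall into the first case of (i) and obtain ${\rm Nuc}_r(S_f) \cong E_{\hat{h}}$. (This statement is also recorded as part of Theorem \ref{C2.2'': Theorem 1}(ii).)

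The argument poses no real obstacle; it is a straightforward case analysis on the divisors of $n$. The only subtle point is correctly excluding the value $s = n$, which is precisely what the non-right-invariance hypothesis on $f$ (via Theorem \ref{C2.2: Theorem 1}) provides.
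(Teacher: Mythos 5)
Your argument is correct and is exactly the intended one: the paper states this corollary without an explicit proof, but the analogous Corollary \ref{C2.2': Corollary 2} is proved by precisely the same case analysis on the equation $n=ks$ (there $d=ks$), using $k>1$ from non-right-invariance to exclude $s=n$ and then running through the divisors $s\in\{1,p,q\}$ with the divisibility $s\mid m$ from Theorem \ref{C2.2'': Theorem 1}(i). Part (ii) via $s\mid\gcd(m,n)=1$ likewise matches the paper's Theorem \ref{C2.2'': Theorem 1}(ii).
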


This observation generalizes as follows by induction:

\begin{corollary} \label{C2.2'': Corollary 2}
 Let $f$ be irreducible and not right-invariant.
\\ (i) ${\rm Nuc}_r(S_f)\cong E_{\hat{h}}$ is a field extension of $F$ of degree $m$, or
${\rm Nuc}_r(S_f)$ is a central division algebra over $E_{\hat{h}}$ of degree $q_1\cdots q_r$, with $q_i\in \{p_1,\dots, p_l\}$,
$[{\rm Nuc}_r(S_f):F] =q_1\cdots q_r m$, and $q_1\cdots q_r$  divides $m$.
 \\ (ii)  If ${\rm gcd}(m,n)=1$ (i.e., $m$ is not divisible by any set of prime factors of $n$), then ${\rm Nuc}_r(S_f)\cong E_{\hat{h}}$ is a field extension of $F$ of degree $m$.
\end{corollary}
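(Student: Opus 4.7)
The plan is to derive this corollary directly from Theorem \ref{C2.2'': Theorem 1}(i), using only the fact that the degree $s$ of ${\rm Nuc}_r(S_f)$ over $E_{\hat{h}}$ must simultaneously divide $m$ and $n$. The induction mentioned in the statement amounts to no more than bookkeeping the prime divisors of $n$, with Corollary \ref{C2.2'': Corollary 1} as the base case $l=2$.

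First I would invoke Theorem \ref{C2.2'': Theorem 1}(i) applied to the irreducible polynomial $f$: this gives that ${\rm Nuc}_r(S_f)$ is a central division algebra over $E_{\hat{h}}=F[x]/(\hat{h}(x))$ of degree $s=n/k$, where $k$ is the number of irreducible factors of $h$ in $R$, with $[{\rm Nuc}_r(S_f):F]=ms$ and $s \mid \gcd(m,n)$. For part (i), I would then write $n=p_1\cdots p_l$ as its prime factorisation and use that any divisor $s$ of $n$ is a product of a sub-multiset of $\{p_1,\dots,p_l\}$. If this product is empty, then $s=1$ and ${\rm Nuc}_r(S_f)\cong E_{\hat{h}}$, which by the same theorem has degree ${\rm deg}(\hat{h})=m/s=m$ over $F$. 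Otherwise $s=q_1\cdots q_r$ with $q_i\in\{p_1,\dots,p_l\}$, and since $s\mid m$, we immediately obtain $q_1\cdots q_r\mid m$ and $[{\rm Nuc}_r(S_f):F]=ms=q_1\cdots q_r\cdot m$.

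For part (ii), I would argue that $\gcd(m,n)=1$ together with $s\mid m$ and $s\mid n$ forces $s=1$, so that ${\rm Nuc}_r(S_f)\cong E_{\hat{h}}$ is a field extension of $F$ of degree $m$ by part (i). The hypothesis that $f$ is not right-invariant is used only implicitly, via Theorem \ref{C2.2: Theorem 1}, to ensure $k>1$ and hence $s<n$; this plays no role in the divisibility conclusions themselves.

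There is no genuine obstacle: the entire content of the corollary is a rephrasing of the divisibility constraints on $s$ furnished by Theorem \ref{C2.2'': Theorem 1}(i) in terms of the prime factorisation of $n$. If one prefers to highlight the inductive structure suggested by the statement, one can phrase the argument by induction on $l$, with Corollary \ref{C2.2'': Corollary 1} as the case $l=2$ and the inductive step reducing, via $s \mid n/p_l$ or $s \mid p_l \cdot (n/p_l)$, to the same bookkeeping; but this is cosmetic.
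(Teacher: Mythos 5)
Your proposal is correct and follows essentially the same route as the paper: the paper gives no written proof beyond the remark that Corollary \ref{C2.2'': Corollary 1} ``generalizes by induction,'' and that induction unfolds into exactly your direct argument, namely that $s=n/k$ from Theorem \ref{C2.2'': Theorem 1}(i) divides $\gcd(m,n)$ and hence is a product of a sub-multiset of the prime factors of $n$. Your observation that the non-right-invariance hypothesis only serves to force $k>1$ (excluding $s=n$) and is not needed for the divisibility statements is also accurate.
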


\begin{corollary}\label{C2.2'': Corollary 3}
Let $f$ be irreducible. Suppose that $m$ is prime. Then $f$ is not right-invariant and one of the following holds:
 \\ (i) ${\rm Nuc}_r(S_f)\cong E_{\hat{h}}$,
 $[{\rm Nuc}_r(S_f) :F]=m$, and ${\rm deg}(h)=mn$.
\\  (ii) ${\rm Nuc}_r(S_f)$ is a central division algebra over $F=E_{\hat{h}}$ of prime degree $m$, $[{\rm Nuc}_r(S_f) :F]=m^2$, and $m$ divides $n$. This case occurs when $\hat{h}(x)=x-a\in F[x]$, i.e. when $h(t)=t^n-a$.
\end{corollary}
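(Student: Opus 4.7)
The plan is to specialize Theorem \ref{C2.2'': Theorem 1}(i) using the primality of $m$. That theorem says that when $f$ is irreducible, ${\rm Nuc}_r(S_f)$ is a central division algebra over $E_{\hat{h}}$ of degree $s=n/k$, with $s$ dividing both $m$ and $n$. Primality of $m$ then forces $s\in\{1,m\}$, and I would split into these two cases.

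In the case $s=1$, a central division algebra of degree $1$ over a field is the field itself, so ${\rm Nuc}_r(S_f)\cong E_{\hat{h}}$; the dimension formulae $[{\rm Nuc}_r(S_f):F]=ms$ and ${\rm deg}(h)=mn/s$ from Theorem \ref{C2.2'': Theorem 1} then immediately yield $[{\rm Nuc}_r(S_f):F]=m$ and ${\rm deg}(h)=mn$, which is case (i). In the case $s=m$, the divisibility $s\mid n$ forces $m\mid n$, and the dimension formula gives $[{\rm Nuc}_r(S_f):F]=m^2$. The key extra observation here is that ${\rm deg}(\hat{h})=m/s=1$, so $\hat{h}(x)=x-a$ for some $a\in F$; this collapses $E_{\hat{h}}$ to $F$ and yields $h(t)=\hat{h}(t^n)=t^n-a$, establishing case (ii).

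For the assertion that $f$ is not right-invariant, I would use that right-invariance of $f$ is equivalent to $S_f$ being associative, hence to the dimensional equality $[S_f:F]=[{\rm Nuc}_r(S_f):F]$. Since $[S_f:F]=mn$, the values $m$ and $m^2$ obtained in the two cases cannot match $mn$ in the non-trivial regime of this section (they would force $n=1$ or $n=m$ with $f$ coinciding with its own central multiple). I do not anticipate a substantive obstacle: once Theorem \ref{C2.2'': Theorem 1} is invoked, the corollary reduces to the routine divisor analysis on the prime $m$, together with the linear-$\hat{h}$ observation that pins down the shape of $h$ in case (ii).
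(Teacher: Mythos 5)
Your case split on $s\in\{1,m\}$, the resulting dimension formulae, and the observation that $s=m$ forces $\deg(\hat h)=m/s=1$, hence $E_{\hat h}=F$ and $h(t)=t^n-a$, is exactly the intended derivation: the paper supplies no separate proof for this corollary and treats it as an immediate specialization of Theorem \ref{C2.2'': Theorem 1}(i), which is what you do. The one soft spot is the claim that $f$ is not right-invariant. Your dimension count correctly reduces right-invariance to $n=s$, i.e.\ to $n=1$ in case (i) and $n=m$ in case (ii), but you do not actually rule out $n=m$ -- and in fact it cannot be ruled out: if $m=n$ is prime and $a\in F^\times$ is a non-norm, then $f(t)=t^n-a$ lies in $C(R)=F[t^n]$, is irreducible, and \emph{is} right-invariant (here $k=1$, $h=f$, and $S_f$ is the associative cyclic algebra $(K/F,\sigma,a)$). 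So that clause of the statement needs the implicit exclusion of $f$ being a degree-$n$ central polynomial (equivalently $k>1$, or $m<n$ in case (ii)); the paper asserts it without proof and your argument, as you half-acknowledge, only closes it under that extra hypothesis. Everything else in your proposal is fine.
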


We therefore found examples of polynomials $f\in R$ whose eigenspace is a central simple algebra over $F$. Thus for any splitting field $L$ of ${\rm Nuc}_r(S_f)$ of degree $m$, the polynomial $f$ in case (ii) of Theorem \ref{C2.2'': Theorem 1} will be reducible in $L[t;\sigma]$.

\begin{corollary}\label{C2.2'': Corollary 4}
Suppose that $n$ is prime or  that ${\rm gcd}(m,n)=1$, and let $f \in F[t]\subset K[t;\sigma]$ be irreducible and not right-invariant.
Then ${\rm Nuc}_r(S_f)\cong F[t]/(f(t)).$
\end{corollary}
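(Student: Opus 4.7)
The plan is to exhibit an explicit copy of $F[t]/(f(t))$ inside ${\rm Nuc}_r(S_f)$ and then compare $F$-dimensions using Theorem \ref{C2.2'': Theorem 1}(ii). The key observation is that since all coefficients of $f$ lie in $F={\rm Fix}(\sigma)$, they commute past $t$ in $R=K[t;\sigma]$. Writing $f(t)=\sum_{i=0}^m a_it^i$ with $a_i\in F$, one computes $ft=\sum a_it^{i+1}$ and $tf=\sum\sigma(a_i)t^{i+1}=\sum a_it^{i+1}$, so that $ft=tf\in Rf$. By Theorem \ref{Intro: Theorem 3}(iii), this gives $t\in {\rm Nuc}_r(S_f)$ and, moreover, all powers of $t$ associate in $S_f$.

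Next, I would use this to embed the commutative polynomial image $F[t]/(f(t))$ into ${\rm Nuc}_r(S_f)$. Because $F\subseteq {\rm Nuc}_r(S_f)$ and because the powers of $t$ all lie in ${\rm Nuc}_r(S_f)$ and associate among themselves (and with elements of $F$), the $F$-subalgebra $F\langle t\rangle$ of $S_f$ generated by $t$ is an associative commutative subalgebra contained in ${\rm Nuc}_r(S_f)$. Evaluating $f$ at $t$ inside $S_f$ via the product $\circ_f$ yields $0$, since $t^m\circ_f 1=t^m\,\mathrm{mod}_r f=-\sum_{i<m}a_it^i$. Hence the natural $F$-algebra homomorphism $F[x]\longrightarrow S_f$ sending $x\mapsto t$ factors through $F[x]/(f(x))$. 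Since $f$ is irreducible in $K[t;\sigma]$ and $F[t]\subseteq K[t;\sigma]$, $f$ is also irreducible in the commutative ring $F[t]$, so $F[x]/(f(x))$ is a field and the induced map is injective. This gives an embedding $F[t]/(f(t))\hookrightarrow {\rm Nuc}_r(S_f)$.

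Finally, I would close the argument by a dimension count. The embedding just constructed makes ${\rm Nuc}_r(S_f)$ a vector space over $F[t]/(f(t))$, and $[F[t]/(f(t)):F]={\rm deg}(f)=m$. On the other hand, the hypotheses ($n$ prime, or ${\rm gcd}(m,n)=1$) together with $f$ irreducible and not right-invariant place us exactly in the situation of Theorem \ref{C2.2'': Theorem 1}(ii), which gives $[{\rm Nuc}_r(S_f):F]=m$. The embedding is therefore an isomorphism, and we conclude ${\rm Nuc}_r(S_f)\cong F[t]/(f(t))$.

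The only point that requires a little care is the formal verification that $f(t)=0$ in $S_f$ under the nonassociative product $\circ_f$; but this is immediate once we know the powers of $t$ associate, so no genuine obstacle arises and the corollary really is a short consequence of Theorem \ref{C2.2'': Theorem 1}(ii) combined with Theorem \ref{Intro: Theorem 3}(iii).
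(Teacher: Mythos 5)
Your proposal is correct and follows essentially the same route as the paper: the paper cites \cite[Proposition 2]{brown2018nonassociative} (reproved as Corollary \ref{C5.2: Corollary 2}) for the fact that $F[t]/(f(t))$ sits inside ${\rm Nuc}_r(S_f)$ as a subfield of degree $m$, and then compares dimensions via Theorem \ref{C2.2'': Theorem 1}(ii), exactly as you do. The only difference is that you unpack the proof of that cited inclusion (via $ft=tf$ and Theorem \ref{Intro: Theorem 3}(iii)) rather than quoting it, which is harmless.
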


\begin{proof}
If $f$ is irreducible in $K[t;\sigma]$, then $F[t]/(f(t))$ is a subfield of the right nucleus of degree $m$ \cite[Proposition 2]{brown2018nonassociative}, hence must be all of the right nucleus, since that has dimension $m$ due to our assumptions (Theorem \ref{C2.2'': Theorem 1} (ii), Corollary \ref{C2.2'': Corollary 2} (ii)).
\end{proof}

\begin{theorem}\label{C2.2'': Theorem 2}
Suppose that $\hat{h}(x)$ is irreducible in $F[x]$. Then:
\begin{enumerate}[(i)]
\item  $f$ is the product of $l \geq 1$ irreducible polynomials in $R$ all of which are mutually similar to each other, and $$R/Rh \cong M_k(\mathcal{E}(g))$$ where $g \in R$ is any irreducible divisor of $h$ in $R$. If ${\rm deg}(g)=r \geq 1$, then $m=rl$, $\mathcal{E}(g)$ is a central division algebra over $E_{\hat{h}}=F[x]/(\hat{h}(x))$ of degree $s^{\prime}=n/k$, where $k$ is the number of irreducible factors of $h$, and
 $$ {\rm Nuc}_r(S_f) \cong M_l(\mathcal{E}(g)).$$
 In particular, ${\rm Nuc}_r(S_f)$ is a central simple algebra over $E_{\hat{h}}$ of degree $s = ls^{\prime}$, ${\rm deg}(\hat{h})=\frac{r}{s^{\prime}}=\frac{m}{s}$, ${\rm deg}(h) = \frac{rn}{s^{\prime}} = \frac{mn}{s}$, and
 $$[{\rm Nuc}_r(S_f) :F] = l^2rs^{\prime} = ms.$$ In particular $s^{\prime}$ divides ${\rm gcd}(r,n)$, $s$ divides ${\rm gcd}(m,n)$ and $l$ divides ${\rm gcd}(m,n)$.
\item If $m$ is prime, then one of the following holds:
\begin{enumerate}
	\item ${\rm Nuc}_r(S_f) \cong E_{\hat{h}}$ is a field extension of $F$ of prime degree $m$,
	\item ${\rm Nuc}_r(S_f)$ is a central division algebra over $F$ of prime degree $m$,
	\item ${\rm Nuc}_r(S_f) \cong M_m(F)$ is a central simple algebra over $F$ of prime degree $m$.
\end{enumerate}
\item If ${\rm gcd}(m,n)=1$, or $n$ is prime and $f$ not right-invariant, then $f$ is irreducible and $${\rm Nuc}_r(S_f) \cong E_{\hat{h}}$$ is a field extension of $F$ of degree ${\rm deg}(\hat{h}) = m$, and ${\rm deg}(h)=mn$.
\end{enumerate}

\begin{proof}
This result corresponds to the special case that $d=1$ in Theorem \ref{C2.2: Theorem 2}, and the corollaries that follow it.
\end{proof}
\end{theorem}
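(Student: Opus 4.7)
The plan is to deduce Theorem~\ref{C2.2'': Theorem 2} from Theorem~\ref{C2.2: Theorem 2} by specialising to the case $D=K$, so that $d=1$, and then to refine parts (ii) and (iii) by elementary number-theoretic case analysis using the primality hypotheses.

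For part (i), I would simply substitute $d=1$ into the conclusions of Theorem~\ref{C2.2: Theorem 2}. The decomposition $f=f_1\cdots f_l$ into mutually similar irreducibles, the isomorphisms $R/Rh\cong M_k(\mathcal{E}(g))$ and ${\rm Nuc}_r(S_f)\cong M_l(\mathcal{E}(g))$, and the relations $m=rl$, $s=ls'$, $s'=n/k$, ${\rm deg}(\hat h)=r/s'=m/s$, ${\rm deg}(h)=rn/s'=mn/s$ and $[{\rm Nuc}_r(S_f):F]=l^2rs'=ms$ carry over verbatim. The additional divisibility $l\mid \gcd(m,n)$ follows because $l\mid s$ (from $s=ls'$) with $s\mid \gcd(m,n)$, and because $l\mid m$ (from $m=rl$).

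For part (ii), primality of $m=rl$ leaves only the two possibilities $l=1,\,r=m$ or $l=m,\,r=1$. In the first, $f$ is irreducible and $\mathcal{E}(f)\cong \mathcal{E}(g)$ is a central division algebra over $E_{\hat h}$ of degree $s\mid \gcd(m,n)$; primality of $m$ restricts $s$ to $\{1,m\}$. Taking $s=1$ yields (a), while $s=m$ forces ${\rm deg}(\hat h)=m/s=1$, hence $E_{\hat h}\cong F$ and we obtain (b). In the second, $r=1$ forces $s'\mid \gcd(1,n)=1$, so $s'=1$ and $\mathcal{E}(g)\cong E_{\hat h}$; then ${\rm deg}(\hat h)=r/s'=1$ as well, so $E_{\hat h}\cong F$ and ${\rm Nuc}_r(S_f)\cong M_l(\mathcal{E}(g))\cong M_m(F)$, giving (c).

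For part (iii) I would treat the two sufficient conditions separately. If $\gcd(m,n)=1$, then $s\mid \gcd(m,n)=1$ forces $s=ls'=1$ and hence $l=s'=1$, so $f$ is irreducible, ${\rm Nuc}_r(S_f)\cong E_{\hat h}$, ${\rm deg}(\hat h)=m$ and ${\rm deg}(h)=mn$. If instead $n$ is prime and $f$ is not right-invariant, then, using that for $f$ of degree $>1$ non-right-invariance is equivalent to $k>1$ (as observed at the end of the proof of Theorem~\ref{C2.2: Theorem 1}), we have $k>1$, and $n=ks'$ with $n$ prime forces $k=n$, $s'=1$, and $\mathcal{E}(g)\cong E_{\hat h}$. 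The main obstacle is ruling out $l>1$, since the divisibility bookkeeping alone permits $l\mid \gcd(m,n)$ to be as large as $n$; the natural route is to argue that any proper factorisation $f=f_1\cdots f_l$ with $l\geq 2$ would allow one to assemble a central multiple of $f$ of strictly smaller degree than $h$ out of $l$ pairwise similar irreducible factors of $h$, contradicting the minimality of $h$. Once $l=1$ is established, $r=m$, ${\rm deg}(\hat h)=m$ and ${\rm deg}(h)=mn$ follow immediately from part (i).
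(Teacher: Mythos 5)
Your overall route is the same as the paper's: the paper proves this theorem in one line by specialising Theorem \ref{C2.2: Theorem 2} to $d=1$, and your treatment of part (i), part (ii), and the $\gcd(m,n)=1$ half of part (iii) is a correct (and more explicit) unwinding of that specialisation.

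The one place where your argument does not go through as written is the second half of part (iii), which you yourself flag. Two remarks. First, the equivalence ``not right-invariant $\Leftrightarrow k>1$'' that you import from the proof of Theorem \ref{C2.2: Theorem 1} is stated there for \emph{irreducible} $f$; in the reducible setting the dimension count (Corollary \ref{C2.2: Corollary 1} with $d=1$) gives $[S_f:F]=\frac{k}{l}[{\rm Nuc}_r(S_f):F]$, so right-invariance is equivalent to $k=l$, not to $k=1$. The implication you actually use (not right-invariant $\Rightarrow k>1$) survives, since $k=1$ forces $f$ to coincide with $h$ up to a unit and hence to be right-invariant, so your derivation of $k=n$ and $s'=1$ is fine. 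Second, the ``main obstacle'' of ruling out $l>1$ does not require the new minimality argument you sketch, and that sketch would not work as stated: if $l=n$ then ${\rm deg}(h)=rn/s'=rn={\rm deg}(f)$, so no central multiple of $f$ of degree smaller than $h$ ever appears --- instead $h$ and $f$ have \emph{equal} degree. The clean way to finish is already contained in your own bookkeeping: $l$ divides $n$ prime, so $l\in\{1,n\}$; if $l=n$ then $s=ls'=n$, whence $[{\rm Nuc}_r(S_f):F]=ms=mn=[S_f:F]$, so $S_f$ equals its right nucleus, is associative, and $f$ is right-invariant, contradicting the hypothesis. Hence $l=1$, and the remaining conclusions of (iii) follow from (i) as you say.
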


\begin{remark}
 Let $K/F$ be a cyclic Galois extension of degree $n$ with Galois group ${\rm Gal}(K/F)=\langle \sigma \rangle$ and let $f \in R$ be monic. If $(f,t)_r=1$, then the minimal central left multiple of $f$ is equal to the minimal polynomial of the matrix $A_f=C_f C_f^\sigma \cdots C_f^{\sigma^{n-1}}$ over $F$,
where $C_f$ is the companion matrix of $f$ \cite[Section 3.4]{sheekey2018new}.
\end{remark}


\chapter{The Eigenring of $f \in D[t;\delta]$}
Let $D$ be a central division algebra of finite degree $d$ over its center $C$. We now investigate the dimension of the right nucleus of $S_f$ for $f \in R = D[t;\delta]$ monic of degree $m$. 
Similarly to Chapter 2, if $f(t)=t-a$ for some $a \in D$, then it can be easily seen that ${\rm Nuc}_r(S_f) = D$ as the $F$-algebra $S_f$ is equal to the associative ring $D$. Moreover, for any $b \in D$, $fb \in Rf$ if and only if $\delta(b) = ab-ba$. Therefore the eigenring $\mathcal{E}(f)$ is equal to $\{ b \in D: \delta(b) = [a,b] \}$. Hence for the rest of the chapter, unless explicitly stated otherwise, we assume that $m > 1$. 
Since the center of $R$ depends on the characteristic of $D$, we split our investigation into the cases ${\rm Char}(C) = 0$, and ${\rm Char}(C)=p$ prime. 
\section{Zero Characteristic}
In the following, let ${\rm Char}(D)=0$. Let $\delta$ be a derivation of $D$, and recall that $F = C \cap {\rm Const}(\delta)$. Recall that $\delta$ is said to be an inner derivation of $D$ if there exists $c \in D$ such that $\delta(a)=[c,a]=ca-ac$ for all $a \in D$, otherwise it is called an outer derivation of $D$. If $\delta$ is an outer derivation, then $R = D[t;\delta]$ is a simple ring and ${\rm C}(R) = F $ (Amitsur, cf. \cite[Theorem 1.1.32]{jacobson2009finite}). In this case there are no non-constant bounded polynomials in $R$. 
Henceforth we assume that $\delta = \delta_c$ is the inner derivation of $D$ defined by
 $\delta(a)= ca - ac$ for $c \in D^{\times}$. Note that here ${\rm Const}(\delta) = {\rm Cent}_D(c)$  is the centraliser of $c$ in $D$, since $\delta_c(a) = 0$ if and only if $ca = ac$. Since $C \subseteq {\rm Cent}_D(c)$,  we have $F = C \cap {\rm Const}(\delta) = C$.
Every polynomial in $R=D[t;\delta]$ is bounded (Corollary \ref{Intro: Corollary 1}). Again, we define the {\it minimal central left multiple} of a polynomial $f \in R$ to be the monic polynomial $h(t) = \hat{h}(t-c)$ for some $\hat{h} \in F[x]$ of minimal degree, such that $h = gf$ for some $g \in R$.\\
\\
Every polynomial $f$ in $R$ has a unique minimal central left multiple $h(t)$, and for any bound $f^*$ of $f$, $h(t) = af^*$ for some nonzero scalar $a$ in $D^{\times}$. We note that $f^* \in C(R)$ for all $f \in R$ by \cite[Theorem 1.1.32]{jacobson2009finite}, and so we need not assume that $(f,t)_r=1$ as in the case $R=D[t;\sigma]$.

\begin{lemma}\label{C3.1: Lemma 1} 
Let $f \in R$ have degree $m \geq 1$. Then the following are satisfied.
\begin{enumerate}[(i)]
\item Every $f \in R =D[t;\delta] $ has a unique minimal central left multiple, which is a bound of $f$.
\item \cite[Theorem 13]{jacobson1943theory} If $f$ is irreducible in $R$, then $\hat{h}$ is irreducible in $F[x]$.
\item \cite{jacobson2009finite} The quotient algebra $R/Rh$ has center $$C(R/Rh) \cong F[x]/(\hat{h}(x)).$$
\item Suppose that $\hat{h}$ is irreducible in $F[x]$.
\begin{enumerate}
\item \cite[Theorem 13]{jacobson1943theory} $h$ generates a maximal two sided ideal in $R$.
\item All irreducible factors of $f$ are similar to all irreducible factors of $h$ in $R$. In particular, all irreducible factors of $f$ are mutually similar to each other.
\end{enumerate}
\end{enumerate}
\end{lemma}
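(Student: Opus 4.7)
The plan is to split the four parts according to their source. Parts (ii), (iii) and (iv)(a) are direct analogues of results already proven (or cited from Jacobson) in the twisted case of Chapter 2, so I would largely indicate how the arguments transfer to the differential setting. The substantive work is in (i) and (iv)(b).

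For (i), I would begin by invoking Corollary \ref{Intro: Corollary 1}(ii): since $\delta = \delta_c$ is inner and $D$ has characteristic zero, every polynomial $f \in R$ is bounded. Let $f^*$ be a bound of $f$. Theorem \ref{Intro: Theorem 2}(i) says that any right-invariant polynomial is of the form $ap(t)$ with $a \in D$ and $p(t) \in C(R)$, and Theorem \ref{Intro: Theorem 2}(ii)(a) identifies $C(R) = F[t-c]$. So $f^* = a\hat{h}(t-c)$ for some $a \in D^{\times}$ and monic $\hat{h} \in F[x]$. Set $h(t) := a^{-1}f^* = \hat{h}(t-c)$; this is a central left multiple of $f$ and is itself a bound of $f$, as it differs from $f^*$ only by a left scalar. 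Any other central left multiple $h'$ of $f$ generates a two-sided ideal $Rh' \subseteq Rf$, so $Rh' \subseteq Rf^* = Rh$ by the defining property of a bound, forcing $h$ to right divide $h'$. Uniqueness of the monic $\hat{h}$ follows immediately.

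For (ii), (iii) and (iv)(a), the proofs of Lemmas \ref{C2.2: Lemma 1}, \ref{C2.1: Lemma 1} and \ref{C2.2: Lemma 3} adapt: the only place where the commutation rule is used substantively is in the analysis of centralizers in (iii), where one replaces $ta = \sigma(a)t$ by $ta = at + \delta(a)$. A representative $a \in R$ of $\bar{a} \in C(R/Rh)$ with $\deg a < \deg h$ still satisfies $ab - ba = r_b h$ for all $b \in R$; for $b \in D$ the degree law forces $r_b = 0$ as before, and for $b = t$ the computation $at - ta = -\sum_i \delta(a_i)t^i$ has degree strictly less than $\deg h$, again forcing $r_t = 0$. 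The remainder of the argument then mirrors Lemma \ref{C2.1: Lemma 1} verbatim.

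For (iv)(b), I would follow the template of Lemma \ref{C2.2: Lemma 2}. By (iv)(a), $h$ is a two-sided maximal element of $R$, so by Jacobson's theory any two factorizations of $h$ into irreducibles of $R$ have the same length and corresponding factors are similar; in particular, all irreducible factors of $h$ are mutually similar. Writing $h = pf = p \cdot f_1 \cdots f_l$ and further factoring $p$ into irreducibles produces a second factorization of $h$, which by the uniqueness above matches the factorization $h = h_1 \cdots h_k$ up to similarity. Hence every irreducible factor of $f$ is similar to some irreducible factor of $h$, and since the $h_j$ are pairwise similar, so are the $f_i$. The main obstacle, such as it is, will be keeping the differential computation in (iii) clean; once the Leibniz correction is accounted for, the degree estimates close the argument exactly as in the twisted case.
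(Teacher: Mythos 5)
Your proposal is correct and follows essentially the same route as the paper, whose proof simply refers back to Theorem \ref{Intro: Theorem 2} for (i) (the paper's citation of Theorem \ref{Intro: Theorem 3} there appears to be a typo) and to the arguments of Lemmas \ref{C2.2: Lemma 1}, \ref{C2.1: Lemma 1}, \ref{C2.2: Lemma 3}, and \ref{C2.2: Lemma 2} for the remaining parts. Your extra detail on the $b=t$ step in (iii) --- where $at-ta=-\sum_i\delta(a_i)t^i$ has degree below $\deg(h)$, so no constant-term hypothesis is needed --- is a correct and welcome clarification of how the twisted-case argument transfers.
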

\begin{proof}
(i) follows directly from Theorem \ref{Intro: Theorem 3}, and (ii), (iii), (iv)(a), and (iv)(b) follow from identical arguments to those of Lemmas \ref{C2.2: Lemma 1}, \ref{C2.1: Lemma 1}, \ref{C2.2: Lemma 3}, and \ref{C2.2: Lemma 2}, respectively.
\end{proof}

Define $E_{\hat{h}}=F[x]/ (\hat{h}(x) )$. This is a commutative algebra over $F$ of dimension ${\rm deg}(\hat{h})$.  If $\hat{h}$ is irreducible in $F[x]$, then $E_{\hat{h}}$ is a field extension of $F$ of degree equal to the degree of $\hat{h}$ in $F[x]$. Gomez-Torrecillas et al. show that the converse of Lemma \ref{C3.1: Lemma 1} (ii) holds when $h$ achieves the maximum possible degree $mnd$:

\begin{proposition}\cite[Proposition 4.1]{gomez2013computing}\label{C3.1: Proposition 1}
Let $f$ have degree $m \geq 1$.
If ${\rm deg}(h)=md$ and $\hat{h}$ is irreducible in $F[x]$, then $f$ is irreducible and ${\rm Nuc}_r(S_f)\cong E_{\hat{h}}$ .
\end{proposition}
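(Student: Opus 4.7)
The plan is to use the Artin-Wedderburn structure of $R/Rh$ together with a dimension count, and a factorisation argument forcing both $f$ to be irreducible and the division part of $R/Rh$ to collapse to the centre. Throughout I will assume $m>1$ in line with the convention of the chapter (the case $m=1$ is peripheral and $\mathrm{Nuc}_r(S_f)=\mathcal{E}(f)$ fails there anyway, so the statement is really about $m\ge 2$).

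\textbf{Step 1 (Structure of $R/Rh$).} Since $\hat h$ is irreducible in $F[x]$, Lemma \ref{C3.1: Lemma 1}(iv)(a) tells us that $Rh$ is a maximal two-sided ideal of $R$, so $A:=R/Rh$ is a simple Artinian ring. By Lemma \ref{C3.1: Lemma 1}(iii), the centre of $A$ is $E_{\hat h}=F[x]/(\hat h(x))$, which is a field because $\hat h$ is irreducible. Artin--Wedderburn therefore gives an isomorphism $A\cong M_k(\Delta)$ for some central division $E_{\hat h}$-algebra $\Delta$ of degree $s$.

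\textbf{Step 2 (Dimension count).} As a left $D$-vector space, $R/Rh$ has rank $\deg(h)=md$, and $[D:F]=d^2$ since $F=C$. Hence $[A:F]=d^2\cdot md=md^3$. On the other hand, writing $A\cong M_k(\Delta)$ and using $\deg(\hat h)=\deg(h)=md$ (because $h(t)=\hat h(t-c)$ and $t-c$ has degree one), $[A:F]=k^2s^2\,[E_{\hat h}:F]=k^2s^2\,md$. Comparing gives $d^2=k^2s^2$, that is $ks=d$.

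\textbf{Step 3 (Factorisation forces $l=s=1$).} Write $h=h_1\cdots h_k$ as a product of irreducibles in $R$. Since $\hat h$ is irreducible, Lemma \ref{C3.1: Lemma 1}(iv)(b) gives that all the $h_i$ are mutually similar and similar to every irreducible factor of $f$; in particular, all irreducible factors of $f$ and of $h$ share a common degree $r$. If $f=f_1\cdots f_l$ is a factorisation of $f$ into irreducibles in $R$, then $m=rl$ and $\deg(h)=rk$. Substituting $\deg(h)=md=rld$ yields $k=ld$. Combined with $ks=d$, this gives $lds=d$, so $l=s=1$. Therefore $f$ is irreducible and $A\cong M_k(E_{\hat h})$.

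\textbf{Step 4 (Identify the right nucleus).} Since $h\in Rh\subseteq {\rm Ann}_R(R/Rf)$, the left $R$-module $R/Rf$ is an $A$-module, and by Lemma \ref{Intro: Lemma 4} the endomorphism rings coincide: $\mathcal{E}(f)=\mathrm{End}_R(R/Rf)=\mathrm{End}_A(R/Rf)$. Because $f$ is irreducible, $R/Rf$ is a simple $A$-module, and every simple module over $M_k(E_{\hat h})$ has endomorphism ring $E_{\hat h}$ by Schur's lemma (Lemma \ref{Intro: Lemma 2}(ii)). Therefore $\mathcal{E}(f)\cong E_{\hat h}$, and Theorem \ref{Intro: Theorem 3}(i) gives $\mathrm{Nuc}_r(S_f)=\mathcal{E}(f)\cong E_{\hat h}$. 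The step I expect to require the most care is Step 3: verifying that the similarity statement forces all factors of $f$ and $h$ to share a single degree $r$, so that the two equations $ks=d$ and $k=ld$ can be combined cleanly.
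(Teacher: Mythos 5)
Your argument is correct; note that the thesis itself offers no proof of this proposition (it is quoted from G\'omez-Torrecillas et al.), so what you have written is a self-contained derivation from the surrounding machinery rather than a rival to an in-paper argument. Steps 1 and 2 reproduce exactly the dimension count used in the proof of Theorem \ref{C3.1: Theorem 1}: $d^2\,{\rm deg}(h)=k^2s^2\,{\rm deg}(\hat{h})$, hence $ks=d$, using that ${\rm deg}(h)={\rm deg}(\hat{h})$ and $[D:F]=d^2$ since $F=C$ here. Step 3 is the genuine content, and it holds up: Lemma \ref{C3.1: Lemma 1}(iv)(b) makes every irreducible factor of $f$ similar to every irreducible factor of $h$, and similar polynomials have equal degree (an $R$-module isomorphism $R/Rp\cong R/Rq$ is in particular $D$-linear), so ${\rm deg}(h)=rk$ and $m=rl$ give $k=ld$ once ${\rm deg}(h)=md$ is imposed, whence $ls=1$ and $l=s=1$. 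The same conclusion can be read off more quickly from Theorem \ref{C3.1: Theorem 2}, where ${\rm deg}(\hat{h})=md/s$ forces $s=ls'=1$, but that theorem appears after the proposition and its proof leans on the Chapter 2 machinery, so your direct route is cleaner. Your caveat about $m=1$ is also well taken: for linear $f$ one obtains $\mathcal{E}(f)\cong E_{\hat{h}}$ but ${\rm Nuc}_r(S_f)=D$, consistent with the remark following Theorem \ref{Intro: Theorem 3}, so the statement as phrased really does require $m\geq 2$. The only cosmetic slip is the citation in Step 4: the fact that the unique simple module over $M_k(E_{\hat{h}})$ has endomorphism ring $E_{\hat{h}}$ is standard, but it is not literally Lemma \ref{Intro: Lemma 2}(ii), which gives the complementary isomorphism ${\rm End}(M_1^{\oplus k})\cong M_k({\rm End}(M_1))$.
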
 

Again, we first consider the case that $f$ is irreducible in $R$, which is a sufficient condition for $\hat{h}$ to be irreducible in $F[x]$ by Lemma \ref{C3.1: Lemma 1} (ii). We show that ${\rm Nuc}_r(S_f)$ is a central division algebra of degree $s = d/k$ over a field extension $E_{\hat{h}}$ of $F$ determined by $h$, where $k$ is the number of irreducible factors in any complete factorisation of $h$ in $R$. \\
\\ After we have explored some special cases, we loosen the restriction that $f$ is irreducible in $R$, instead assuming only that $\hat{h}$ is irreducible in $F[x]$. We note that this setting includes the previous one that $f$ is irreducible as a special case. We obtain that ${\rm Nuc}_r(S_f)$ is a central simple algebra of degree $s=ldn/k$ over the same field extension $E_{\hat{h}}$ of $F$, with $k$ as above, and $l$ the number of irreducible factors in any complete factorisation of $f$ in $R$.  


\begin{theorem}\label{C3.1: Theorem 1}
Let $f$ be irreducible.
 \\
(i) ${\rm Nuc}_r(S_f)$ is a central division algebra over $E_{\hat{h}}=F[x]/(\hat{h}(x))$ of degree $s=d/k$, where $k$ is the number of irreducible factors of $h$ in $R$, and
 $$ R/Rh \cong M_k({\rm Nuc}_r(S_f)).$$
 In particular, this means that ${\rm deg}(\hat{h})=\frac{md}{s}$, ${\rm deg}(h)=\frac{md}{s}$, and
 $$[{\rm Nuc}_r(S_f) :F]=mds.$$
 Moreover, $s$ divides $d$.
 \\
(ii) If $d$ is prime and $f$ not right-invariant, then
$${\rm Nuc}_r(S_f) \cong E_{\hat{h}}.$$
In particular, then $[{\rm Nuc}_r(S_f) :F]=md$, ${\rm deg}(\hat{h})=md$, and ${\rm deg}(h)=md$.
\end{theorem}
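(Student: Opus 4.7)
The plan is to mirror the proof of Theorem \ref{C2.2: Theorem 1} (or more precisely its inner-automorphism specialisation Corollary \ref{C2.2': Corollary 1}), replacing the substitution $u^{-1}t^n$ by the linear substitution $t-c$. The crucial simplification here is that the substitution is \emph{linear}, so $\deg(h)=\deg(\hat h)$, which will streamline the dimension count.

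First I would invoke Lemma \ref{C3.1: Lemma 1}: since $f$ is irreducible, $\hat h$ is irreducible in $F[x]$ (part (ii)), so $Rh$ is a maximal two-sided ideal of $R$ (part (iv)(a)), and the centre of $A := R/Rh$ is the field $E_{\hat h}=F[x]/(\hat h(x))$ (part (iii)). Hence $A$ is a simple Artinian $E_{\hat h}$-algebra. Jacobson's structure theorem for noncommutative PIDs (\cite[Theorem 1.2.19]{jacobson2009finite}) gives $A\cong M_k(D_h)$, where $D_h\cong \mathcal{I}(h_i)/Rh_i$ for any irreducible factor $h_i$ of $h$, and all irreducible factors of $h$ are mutually similar. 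Since $f$ is an irreducible right divisor of $h$, I may take $f$ itself as one of the $h_i$; using Petit's identification (Theorem \ref{Intro: Theorem 3}) this gives $D_h\cong\mathcal{E}(f)={\rm Nuc}_r(S_f)$, so that
$$R/Rh\;\cong\; M_k({\rm Nuc}_r(S_f)).$$
Because $f$ is irreducible, the eigenring $\mathcal{E}(f)$ is a division algebra, so ${\rm Nuc}_r(S_f)$ is a central \emph{division} algebra over $E_{\hat h}$; call its degree $s$.

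Next I would read off the numerical data by a dimension count over $F=C$. On the one hand $\dim_F(R/Rh)=d^2\deg(h)=d^2\deg(\hat h)$ (using $\deg(h)=\deg(\hat h)$, a direct consequence of $h(t)=\hat h(t-c)$). On the other hand $\dim_F M_k({\rm Nuc}_r(S_f))=k^2s^2\deg(\hat h)$. Equating gives $d=ks$, so $s\mid d$ and $s=d/k$. Since $h$ factors as $k$ irreducibles each similar to $f$, every such factor has degree $m$, hence $\deg(h)=km=md/s$ and $\deg(\hat h)=md/s$ as well. Finally $[{\rm Nuc}_r(S_f):F]=s^2\deg(\hat h)=mds$, proving (i).

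For (ii), assume $d$ is prime and $f$ is not right-invariant. By Theorem \ref{Intro: Theorem 3}, $f$ not being right-invariant means $S_f$ is nonassociative, equivalently, the left multiple $h=gf$ expressing $h$ as a central left multiple forces $\deg(g)\geq 1$, so $h$ admits at least one further irreducible factor besides $f$, giving $k>1$. Combined with $ks=d$ prime, this forces $k=d$ and $s=1$, whence ${\rm Nuc}_r(S_f)\cong E_{\hat h}$, and the remaining numerical claims follow by substituting $s=1$ into part (i). The only real subtlety in the whole argument is the identification $D_h\cong\mathcal{E}(f)$: it relies on the fact that similar polynomials have isomorphic eigenrings, which in turn follows from the similarity of all irreducible factors of $h$ guaranteed by $\hat h$ being irreducible (Lemma \ref{C3.1: Lemma 1}(iv)(b)); this is the same hinge point as in the twisted case, and no new difficulty appears in the differential setting.
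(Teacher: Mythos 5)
Your proposal is correct and follows essentially the same route as the paper: the paper's proof explicitly imports the first part of the proof of Theorem \ref{C2.2: Theorem 1} verbatim (the Jacobson structure theorem giving $R/Rh\cong M_k(\mathcal{E}(f))$ via similarity of the irreducible factors of $h$), and then performs exactly your dimension count $d^2\deg(\hat h)=k^2s^2\deg(\hat h)$ to get $d=ks$, with the same $k>1$ argument for part (ii). No substantive difference.
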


\begin{proof}
(i) The first part of the proof of (i) may be taken verbatim to be the first part of the proof of Theorem \ref{C2.2: Theorem 1}. Comparing the dimensions of $R/Rh$ and $M_k({\rm Nuc}_r(S_f))$ over $F$ it follows that $d^2 \, {\rm deg}(h)= d^2 \,{\rm deg} (\hat{h})=k^2s^2
\, {\rm deg}(\hat{h})$, so that we get $d^2 =k^2s^2 $, that is $d=ks$. In particular, this implies that ${\rm deg}(h)=\frac{dm}{s}$ and ${\rm deg}(\hat{h})=\frac{dm}{s}$. Moreover $$[{\rm Nuc}_r(S_f):F]= [{\rm Nuc}_r(S_f):E_{\hat{h}}][E_{\hat{h}}:F] =s^2 {\rm deg}(\hat{h}) = \frac{dms^2}{s} = dms.$$
Now since ${\rm Nuc}_r(S_f)$ is a subalgebra of $S_f$ we have 
\begin{equation}\label{C2.2: Theorem 1 Eq 1}
[S_f:F]=[S_f:{\rm Nuc}_r(S_f)][{\rm Nuc}_r(S_f):F]= d^2m.
\end{equation}
Substituting $d = ks$ and $[{\rm Nuc}_r(S_f):F]=dms$ into Equation (\ref{C2.2: Theorem 1 Eq 1}) we obtain the equality $[S_f:{\rm Nuc}_r(S_f)]=k$.
If $f$ is not right-invariant, then $k>1$ and so we derive $s\not=d$ looking at the degree of $h$. More precisely, for $f$ of degree $m>1$, $f$ being not right-invariant is equivalent to $S_f$ being not associative which in turn is equivalent to
$k>1$.\\
(ii) If $d$ is prime then $d=ks$ forces $s=1$, as $k=1$ implies that $f$ is right-invariant, so that here
$R/Rh \cong M_k(E_{\hat{h}})$, ${\rm Nuc}_r(S_f) \cong E_{\hat{h}}$ and ${\rm deg}(h)=md.$
\end{proof}

\begin{corollary} \label{C3.1: Corollary 1}
Let $d=pq$ for primes $p$ and $q$ and suppose that $f$ is irreducible and not right-invariant. Then ${\rm Nuc}_r(S_f) \cong E_{\hat{h}}$ is a field extension of $C$ of degree $md$, or ${\rm Nuc}_r(S_f)$ is a central division algebra over $E_{\hat{h}}$ of degree $q$ (resp. $p$), and
$$[{\rm Nuc}_r(S_f):C] =mpq^2 \,\, ({\text resp. }\, mqp^2).$$
\begin{proof}
If $s=1$, then ${\rm Nuc}_r(S_f)\cong E_{\hat{h}}$ is a field extension of $C$ of degree $md$. Suppose that $s \neq 1$. Since $s$ divides $d=pq$, we have w.l.o.g. $s = q$. Then ${\rm Nuc}_r(S_f)$ is a central division algebra over $E_{\hat{h}}$ of degree $q$, and $$[{\rm Nuc}_r(S_f):C] =mds = mpq^2.$$
\end{proof}
\end{corollary}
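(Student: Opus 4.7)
The plan is to apply Theorem \ref{C3.1: Theorem 1} directly and perform a short case analysis based on the divisors of $d=pq$. Since $f$ is assumed irreducible, part (i) of that theorem tells us that ${\rm Nuc}_r(S_f)$ is a central division algebra over $E_{\hat{h}}$ of degree $s = d/k$, where $k$ is the number of irreducible factors of $h$ in $R$, and moreover $s$ divides $d$. With $d=pq$ a product of two primes, the divisors of $d$ are $1,\,p,\,q,\,pq$, so $s\in\{1,p,q,pq\}$.

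Next I would use the hypothesis that $f$ is not right-invariant. By the final sentence of the proof of Theorem \ref{C3.1: Theorem 1}, $f$ not right-invariant forces $k>1$, hence $s=d/k\neq d$. This eliminates the possibility $s=pq$, leaving $s\in\{1,p,q\}$.

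Finally I would read off the structure in each remaining case using the formulae ${\rm deg}(\hat{h})=md/s$ and $[{\rm Nuc}_r(S_f):F]=mds$ (with $F=C$ in the present setting). When $s=1$, we have $[E_{\hat{h}}:C]=md$ and ${\rm Nuc}_r(S_f)\cong E_{\hat{h}}$, giving the field extension of degree $md$. When $s=q$ (resp.\ $s=p$), ${\rm Nuc}_r(S_f)$ is a central division algebra over $E_{\hat{h}}$ of degree $q$ (resp.\ $p$), and the dimension formula yields
\[
[{\rm Nuc}_r(S_f):C] = mds = mpq\cdot q = mpq^2 \quad (\text{resp. } mqp^2).
\]

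There is no real obstacle here; the corollary is essentially an enumeration of the possibilities left over once $s$ is constrained to divide $d=pq$ with $s\neq d$. The only point worth being careful about is to correctly invoke the implication ``not right-invariant $\Rightarrow k>1$'' from Theorem \ref{C3.1: Theorem 1} in order to rule out the trivial case $s=pq$.
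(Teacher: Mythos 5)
Your proof is correct and follows essentially the same route as the paper: invoke Theorem \ref{C3.1: Theorem 1} to get $s\mid d=pq$ and then enumerate the cases $s=1,p,q$. You are in fact slightly more careful than the paper's own proof, which silently assumes $s\neq pq$; your explicit appeal to ``not right-invariant $\Rightarrow k>1 \Rightarrow s\neq d$'' is exactly the step needed to justify that exclusion.
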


Corollary \ref{C3.1: Corollary 1} generalises by induction as follows:

\begin{corollary} \label{C3.1: Corollary 2}
Let $d=p_1\cdots p_l$ be the prime decomposition of $n$ and suppose that $f$ is irreducible and not right-invariant. Then ${\rm Nuc}_r(S_f)\cong E_{\hat{h}}$ is a field extension of $C$ of degree $md$, or ${\rm Nuc}_r(S_f)$ is a central division algebra over $E_{\hat{h}}$ of degree $q=p_1\cdots p_r$, where $r < l$, and if $p = \frac{d}{q} = p_{r+1} \cdots p_l$, then
$$[{\rm Nuc}_r(S_f):C] =mpq^2.$$
\end{corollary}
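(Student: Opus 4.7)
The plan is to derive the result directly from Theorem \ref{C3.1: Theorem 1}(i), supplementing it with the prime factorisation of $d$. By that theorem, ${\rm Nuc}_r(S_f)$ is a central division algebra over $E_{\hat{h}}$ of degree $s=d/k$, with $[{\rm Nuc}_r(S_f):F]=mds$, where $k$ is the number of irreducible factors of the minimal central left multiple $h$. In the present setting $F=C$ since $\delta$ is inner, so $[{\rm Nuc}_r(S_f):C]=mds$.

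The key additional observation is that $s$ divides $d=p_1\cdots p_l$, and so, since each $p_i$ is prime, $s$ must itself be a product of some subset of these primes. After reindexing I may write $s=p_1\cdots p_r$ for some $0\le r\le l$. If $r=0$ then $s=1$ and Theorem \ref{C3.1: Theorem 1} gives ${\rm Nuc}_r(S_f)\cong E_{\hat{h}}$, which has dimension ${\rm deg}(\hat{h})=md$ over $C$; this is the field-extension alternative. Otherwise, setting $q=s=p_1\cdots p_r$ and $p=d/q=p_{r+1}\cdots p_l$, the dimension formula becomes
\[
[{\rm Nuc}_r(S_f):C]=mds=m(pq)q=mpq^2,
\]
as claimed.

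The only place the hypothesis that $f$ is not right-invariant is used, and the single step to verify carefully, is that $r<l$: otherwise $s=d$ would force $k=1$, meaning $h=f$ up to a unit, so $Rf=Rh$ would be two-sided and $f$ would be right-invariant, contradicting the assumption. The authors' suggestion of an induction on $l$ is essentially equivalent — one strips off the prime $p_l$, applies Corollary \ref{C3.1: Corollary 1} as the base case, and repackages the divisibility of $s$ — but the direct argument above avoids the recursion. I expect no substantive obstacle; once Theorem \ref{C3.1: Theorem 1} is in hand the work is purely bookkeeping with the prime decomposition.
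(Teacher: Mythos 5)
Your argument is correct and is essentially the paper's: the paper leaves this corollary as an induction from the $d=pq$ case, but the substance in both is just Theorem \ref{C3.1: Theorem 1} ($s$ divides $d$, $d=ks$, and $[{\rm Nuc}_r(S_f):C]=mds$), followed by splitting on whether $s=1$ and writing $s$ as a subproduct of the primes of $d$. Your direct handling of $r<l$ via ``$s=d$ forces $k=1$, hence $f$ right-invariant'' is exactly the observation already recorded at the end of that theorem's proof, so nothing is missing.
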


\begin{theorem}\label{C3.1: Theorem 2}
(i) \quad Suppose that $\hat{h}(x)$ is irreducible in $F[x]$. Then $f$ is the product of $l \geq 1$ irreducible polynomials in $R$ all of which are mutually similar to each other, and $$R/Rh \cong M_k(\mathcal{E}(g))$$ where $g \in R$ is any irreducible divisor of $h$ in $R$. If ${\rm deg}(g)=r \geq 1$, then $m=rl$, $\mathcal{E}(g)$ is a central division algebra over $E_{\hat{h}}=F[x]/(\hat{h}(x))$ of degree $s^{\prime}=d/k$, where $k$ is the number of irreducible factors of $h$, and
 $$ {\rm Nuc}_r(S_f) \cong M_l(\mathcal{E}(g)).$$
 In particular, ${\rm Nuc}_r(S_f)$ is a central simple algebra over $E_{\hat{h}}$ of degree $s= ls^{\prime}$, ${\rm deg}(\hat{h})=\frac{rd}{s^{\prime}}=\frac{md}{s}$, ${\rm deg}(h) = \frac{rd}{s^{\prime}} = \frac{md}{s}$, and
 $$[{\rm Nuc}_r(S_f) :F] = l^2rds^{\prime} = mds.$$
 In particular $s^{\prime}$, $s$, and $l$ divide $d$.\\
(ii) If $d$ is prime and $f$ not right-invariant, then
$${\rm Nuc}_r(S_f) \cong E_{\hat{h}}.$$
In particular, then $[{\rm Nuc}_r(S_f) :F]=md$, ${\rm deg}(\hat{h})=md$, and ${\rm deg}(h)=md$.
\begin{proof}
(i) \quad Once again, we can take the first part of the proof (i) verbatim from the proof of Theorem \ref{C2.2: Theorem 2}. Now since $g$ is irreducible of degree $r$ with minimal central left multiple $h(t) = \hat{h}(u^{-1}t^n)$, $\mathcal{E}(g)$ is a central division algebra over $E_{\hat{h}}$ of degree $s^{\prime}=d/k$, where $k$ is the number of irreducible divisors of $h$ in $R$, ${\rm deg}(\hat{h})=\frac{rd}{s^{\prime}}=\frac{md}{ls^{\prime}}$ and ${\rm deg}(h)=\frac{rd}{s^{\prime}}=\frac{md}{ls^{\prime}}$ by Theorem \ref{C2.2: Theorem 1}. Finally, since $$\mathcal{E}(f) \cong M_l(\mathcal{E}(g)),$$ $\mathcal{E}(f)$ is a central simple algebra over $E_{\hat{h}}$ of degree $s = ls^{\prime}$, and $$[\mathcal{E}(f) : F] = s^2{\rm deg}(\hat{h}) = mds.$$\\
(ii) \quad Suppose that $d$ is prime and $f$ not right-invariant. As $s$ divides $d$, we are forced to take $s=1$ (else $s=d$ and $[{\rm Nuc}_r(S_f):F] = d^2m$, i.e. $f$ is right-invariant). The result follows immediately.
\end{proof}
\end{theorem}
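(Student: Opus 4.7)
The plan is to follow the blueprint of Theorem \ref{C2.2: Theorem 2} essentially verbatim, substituting the differential ingredients assembled in Lemma \ref{C3.1: Lemma 1} for the twisted-polynomial ones. For part (i), since $\hat{h}$ is irreducible in $F[x]$, Lemma \ref{C3.1: Lemma 1}(iv)(a) gives that $Rh$ is a maximal two-sided ideal of $R$, so $A := R/Rh$ is a simple Artinian ring. Applying Jacobson's description of the semisimple quotient of a PID by a two-sided maximal ideal (\cite[Theorem 1.2.19]{jacobson2009finite}) yields $A \cong M_k(\mathcal{E}(g))$ for any irreducible right factor $g$ of $h$, where $k$ is the number of factors in any irreducible factorisation $h = h_1 \cdots h_k$; all the $h_i$ are mutually similar (since $R$ is a PID and $h$ is a two-sided maximal element), so the choice of $g$ is irrelevant up to isomorphism, and each $h_i$ has a common degree $r$. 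Lemma \ref{C3.1: Lemma 1}(iv)(b) then says the irreducible factors $f_1, \dots, f_l$ of $f$ are all similar to $g$, hence $R/Rf_i \cong R/Rg$ as left $R$-modules and $m = rl$.

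The central step is the $A$-module isomorphism $R/Rf \cong (R/Rg)^{\oplus l}$. Since $h$ is a bound of each $f_i$, $Rh$ annihilates $R/Rf$, so $R/Rf$ is naturally a left $A$-module; as $A$ is simple Artinian with a unique simple module (up to isomorphism) namely $R/Rg$, standard semisimple module theory, together with \cite[Corollary 4.7]{gomez2013computing}, supplies the decomposition $R/Rf \cong \bigoplus_{i=1}^{l} R/Rf_i \cong (R/Rg)^{\oplus l}$. Taking endomorphism rings via Lemmas \ref{Intro: Lemma 1}, \ref{Intro: Lemma 2} and \ref{Intro: Lemma 4} gives
$$\mathcal{E}(f) \cong {\rm End}_R(R/Rf) \cong {\rm End}_A(R/Rf) \cong M_l(\mathcal{E}(g)),$$
and Theorem \ref{Intro: Theorem 3} identifies this with ${\rm Nuc}_r(S_f)$. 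The numerical claims reduce to a dimension count over $F$: comparing $[R/Rh:F] = d^2\deg(h)$ with $[M_k(\mathcal{E}(g)):F] = k^2 (s^{\prime})^2 \deg(\hat{h})$ and using $\deg(h) = \deg(\hat{h})$ (since the centre of $R$ is $F[t-c] \cong F[x]$) forces $d = ks^{\prime}$, from which every stated degree formula follows.

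For part (ii), let $d$ be prime and $f$ not right-invariant. Since $s = ls^{\prime}$ divides $d$ by (i), either $s = 1$ or $s = d$; but $s = d$ would yield $[{\rm Nuc}_r(S_f):F] = mds = md^2 = [S_f:F]$, making $S_f$ associative and $f$ right-invariant, contradicting the hypothesis. Hence $s = 1$, which forces $l = 1$ and $s^{\prime} = 1$ simultaneously, so $f$ is irreducible and $\mathcal{E}(g) = \mathcal{E}(f) \cong E_{\hat{h}}$, and the formulas of (i) specialise to those claimed.

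The only genuine obstacle, exactly as in the $\sigma$-case, is justifying that the $R$-module similarity $R/Rf_i \cong R/Rg$ passes cleanly through to an $A$-module decomposition $R/Rf \cong (R/Rg)^{\oplus l}$; one must verify that every composition factor of $R/Rf$ as an $A$-module is isomorphic to $R/Rg$, which is where the simple Artinian nature of $A$ (and the uniqueness of its simple module) is essential. Once that decomposition is in hand, the rest is a mechanical rewriting of Theorem \ref{C2.2: Theorem 2} with the centre of $R$ now generated by $t-c$ rather than by $u^{-1}t^n$.
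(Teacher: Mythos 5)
Your proposal is correct and takes essentially the same route as the paper: the paper's proof simply imports the first part of the argument verbatim from Theorem \ref{C2.2: Theorem 2} (semisimple decomposition $R/Rf \cong (R/Rg)^{\oplus l}$ over the simple Artinian ring $R/Rh$, passage to endomorphism rings, then the dimension count $d = ks'$ coming from $\deg(h) = \deg(\hat{h})$), and handles (ii) by the identical ``$s=d$ forces right-invariance'' contradiction. You have merely written out explicitly the steps the paper defers to the earlier theorem, including the correct observation that the centre being $F[t-c]$ is what replaces $dn$ by $d$ in the formula for $s'$.
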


\section{Prime Characteristic}
In this section, let ${\rm Char}(C)=p>0$. Let $\delta$ be a derivation of $D$, such that $\delta\vert_C$ is algebraic with minimum polynomial
$$g(t) = t^{p^e} + \gamma_1t^{p^{e-1}} + \gamma_2t^{p^{e-2}} + \dots + \gamma_{e-1}t^p + \gamma_et \in F[t],$$
 such that $g(\delta) = \delta_c$ is the inner derivation defined by $c \in D^{\times}$. Then we have $[C:F] = p^e$ for $F = C \cap {\rm Const}(\delta)$.  Then $R$ has dimension $d^2p^e$ over $F$ and every polynomial in $R$ is bounded (Corollary \ref{Intro: Corollary 1}). Recall that the two-sided elements of $R$ are the elements $uq(t)$ where $u \in D$ and $q(t) \in C(R)$, and $C(R) = F[g(t)-c]$, which is isomorphic to $F[x]$ under the map fixing elements of $F$ and sending $g(t)-c$ to $x$ \cite[Theorem 1.1.32]{jacobson2009finite}. Again we define the {\it minimal central left multiple} of a polynomial $f \in R$ to be the monic polynomial $h(t) = \hat{h}(g(t)-c)$ for some $\hat{h} \in F[x]$ of minimal degree, such that $h = gf$ for some $g \in R$.

\begin{lemma}\label{C3.2: Lemma 1}
\begin{enumerate}[(i)]
\item $f$ has a unique minimal central left multiple, which is a bound of $f$.
\item \cite[Theorem 13]{jacobson1943theory} If $f$ is irreducible, then $\hat{h}$ is irreducible in $F[x]$.
\item \cite{jacobson2009finite} The quotient algebra $R/Rh$ has center $$C(R/Rh) \cong F[x]/(\hat{h}(x)).$$
\item Suppose that $\hat{h}$ is irreducible in $F[x]$.
\begin{enumerate}
\item \cite[Theorem 13]{jacobson1943theory} $h$ generates a maximal two sided ideal in $R$.
\item All irreducible factors of $f$ are similar to all irreducible factors of $h$ in $R$. In particular, all irreducible factors of $f$ are mutually similar to each other.
\end{enumerate}
\end{enumerate}
\end{lemma}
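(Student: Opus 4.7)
The plan is to mirror the proof of Lemma \ref{C3.1: Lemma 1} from the zero characteristic case, since the only structural change in passing to characteristic $p$ is that the generator of the center of $R$ has shifted from $t-c$ to $g(t)-c$ of degree $p^e$. All four sub-claims will follow from essentially verbatim adaptations of the arguments already appearing in Lemmas \ref{C2.2: Lemma 1}, \ref{C2.1: Lemma 1}, \ref{C2.2: Lemma 3}, and \ref{C2.2: Lemma 2}, together with Theorem \ref{Intro: Theorem 2}, which pins down the right-invariant polynomials in $R$ and the center $C(R)=F[g(t)-c]$, and Corollary \ref{Intro: Corollary 1} (iii), which ensures every non-zero $f\in R$ is bounded in the present setting.

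For (i), I would first invoke Corollary \ref{Intro: Corollary 1} to obtain a bound $f^\star$ of $f$. By Theorem \ref{Intro: Theorem 2}, every right-invariant polynomial in $R$ has the form $a\,\hat q(g(t)-c)$ with $a\in D^\times$ and $\hat q\in F[x]$; in particular $f^\star=a\,\hat q(g(t)-c)$, so $a^{-1}f^\star$ is a monic central left multiple of $f$. Among such monic central left multiples, the one of minimal degree is unique (by Euclidean division in $F[x]$) and coincides with $h(t)=\hat h(g(t)-c)$. To show $h$ is itself a bound, let $I$ be any two-sided ideal contained in $Rf$; then $I=Re$ with $e$ right-invariant, hence $e=a'\hat q(g(t)-c)$, and dividing $\hat q$ by $\hat h$ in $F[x]$ together with the minimality of $\hat h$ forces $\hat h\mid \hat q$, so $I\subseteq Rh$.

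For (ii), (iv)(a) and (iv)(b) the arguments transcribe essentially unchanged. Repeating the proof of Lemma \ref{C2.2: Lemma 1} with $y:=g(t)-c$ playing the role of $u^{-1}t^n$ yields (ii); the proof of Lemma \ref{C2.2: Lemma 3} then gives (iv)(a), since $Rh$ corresponds to the maximal ideal $(\hat h(x))$ of $F[x]\cong C(R)$; and (iv)(b) follows exactly as in Lemma \ref{C2.2: Lemma 2}, because once $h$ is a two-sided maximal element of $R$, the Jacobson uniqueness up to similarity of factorisations of two-sided elements \cite[Theorem 1.2.19]{jacobson2009finite} forces all irreducible factors of $h$, and hence of $f$, to be mutually similar.

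The technical heart of the statement is (iii), and this is where I expect the delicate work to lie. I would follow the scheme of Lemma \ref{C2.1: Lemma 1}: define $\phi:R\to R/Rh$, observe $\phi(C(R))\subseteq C(R/Rh)$ automatically, and take $\bar a=a+Rh\in C(R/Rh)$ with $\deg a<\deg h=p^e\deg\hat h$, pushing through the commutation relation $ab=ba+r_b h$ for $b\in R$. Testing against $b\in D$ gives $r_b=0$ by a degree comparison. Testing against $b=t$ requires one to rewrite $ta=at+\delta(a)$, compare degrees to force $\deg r_t<\deg h-1$, and then examine the lowest-order coefficient. The main obstacle is the subsequent case analysis: unlike the automorphism setting, the constant term of $h$ is $\hat h(-c)\in D$ and may vanish even when $\hat h(x)\neq x$, so a supplementary argument comparing the lowest nonzero coefficient of $h$ against $r_t h$ (or splitting off the sub-case $\hat h(x)=x$ exactly as in Lemma \ref{C2.1: Lemma 1}) is needed to conclude $a\in C(R)$. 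Once this is in place, the isomorphism $C(R/Rh)\cong F[x]/(\hat h(x))$ is obtained by sending $(g(t)-c)+Rh$ to $x+(\hat h(x))$ and fixing $F$.
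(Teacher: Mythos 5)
Your proposal is correct and follows essentially the same route as the paper, whose entire proof is the single line that it is identical to that of Lemma \ref{C3.1: Lemma 1}, which in turn defers to exactly the four earlier lemmas you adapt. The one place where you anticipate delicate work, part (iii) in the case that the constant term $\hat h(-c)$ of $h$ vanishes, actually resolves more easily than in the twisted case: since $\sigma={\rm id}$, the relation $at-ta=r_th$ has left-hand side $-\sum_j\delta(a_j)t^j$ of degree at most ${\rm deg}(a)<{\rm deg}(h)$, so $r_t=0$ follows from the degree comparison alone and no lowest-nonzero-coefficient analysis is needed.
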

\begin{proof}
This is identical to the proof of Lemma \ref{C3.1: Lemma 1}.
\end{proof}

Define $E_{\hat{h}}=F[x]/ (\hat{h}(x) )$. This is a commutative algebra over $F$ of dimension ${\rm deg}(\hat{h})$.  If $\hat{h}$ is irreducible in $F[x]$, then $E_{\hat{h}}$ is a field extension of $F$ of degree ${\rm deg}(\hat{h})$.

\begin{theorem}\label{C3.2: Theorem 2}
Let $f$ be irreducible. Then ${\rm Nuc}_r(S_f)$ is a central division algebra over $E_{\hat{h}}=F[x]/(\hat{h}(x))$ of degree $s=dp^e/k$, where $k$ is the number of irreducible factors of $h$ in $R$, and
 $$R/Rh \cong M_k({\rm Nuc}_r(S_f)).$$
 In particular, this means that ${\rm deg}(\hat{h})=\frac{md}{s}$, ${\rm deg}(h)=\frac{mdp^e}{s}$, and
 $$[{\rm Nuc}_r(S_f) :F]= mds.$$
 Moreover, $s$ divides ${\rm gcd}(dm,dp^e)$.
\end{theorem}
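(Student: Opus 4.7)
The plan is to mirror the proofs of Theorems \ref{C2.2: Theorem 1} and \ref{C3.1: Theorem 1} and exploit that the setup in prime characteristic behaves formally in the same way, with the central parameter $n$ replaced by $p^e = [C:F]$. First I would invoke Lemma \ref{C3.2: Lemma 1}: because $f$ is irreducible, $\hat{h}$ is irreducible in $F[x]$, hence $Rh$ is a maximal two-sided ideal of $R$ and $R/Rh$ is a simple Artinian ring. By \cite[Theorem 1.2.19]{jacobson2009finite}, for any complete factorisation $h = h_1 h_2 \cdots h_k$ into irreducibles in $R$, all $h_i$ are mutually similar as polynomials in $R$ (so they share the same degree), and $R/Rh \cong M_k(D_h)$ with $D_h \cong \mathcal{I}(h_i)/Rh_i = \mathcal{E}(h_i)$.

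The key identification step is that $f$ itself is an irreducible right divisor of $h$ (by the definition of the minimal central left multiple), so $f$ is similar to each $h_i$, giving $\mathcal{E}(f) \cong D_h$. Since $m = {\rm deg}(f) > 1$ we have ${\rm Nuc}_r(S_f) = \mathcal{E}(f)$ by Theorem \ref{Intro: Theorem 3}, so
\[
R/Rh \;\cong\; M_k({\rm Nuc}_r(S_f)).
\]
Irreducibility of $f$ forces $\mathcal{E}(f)$ to be a division algebra, and as $R/Rh$ is central simple over its centre $E_{\hat{h}} = F[x]/(\hat{h}(x))$ (Lemma \ref{C3.2: Lemma 1}(iii)), ${\rm Nuc}_r(S_f)$ is a central division algebra over $E_{\hat{h}}$ of some degree $s$.

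Next I would carry out the dimension bookkeeping over $F$. Since $[D:C]=d^2$ and $[C:F]=p^e$, we have $[D:F]=d^2 p^e$, and since $h(t) = \hat{h}(g(t)-c)$ with $g$ of degree $p^e$, we have ${\rm deg}(h) = p^e {\rm deg}(\hat{h})$. Hence
\[
[R/Rh : F] \;=\; d^2 p^e \cdot {\rm deg}(h) \;=\; d^2 p^{2e}\,{\rm deg}(\hat{h}).
\]
On the other hand $[R/Rh:F] = (ks)^2 {\rm deg}(\hat{h})$, and comparing gives $ks = dp^e$, i.e.\ $s = dp^e/k$. Also, similarity of the $h_i$ yields ${\rm deg}(h) = km$, so ${\rm deg}(\hat{h}) = km/p^e = md/s$ and ${\rm deg}(h) = mdp^e/s$. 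Finally $[{\rm Nuc}_r(S_f):F] = s^2 {\rm deg}(\hat{h}) = mds$. The divisibility statement $s \mid \gcd(dm, dp^e)$ is then immediate: $s \mid dp^e$ from $ks = dp^e$, and $s \mid md$ because ${\rm deg}(\hat{h}) = md/s$ must be a positive integer.

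Essentially no new ideas beyond Theorems \ref{C2.2: Theorem 1} and \ref{C3.1: Theorem 1} are needed; the only genuinely different ingredients are the correct identification of $[C:F]$ as $p^e$ and the relation ${\rm deg}(h) = p^e {\rm deg}(\hat{h})$ arising from the shape of the centre $C(R) = F[g(t)-c]$. The main obstacle, if any, is to make sure the structural inputs from Jacobson's theory (used freely in the earlier cases with a twist) still apply verbatim in the differential setting with $\delta|_C$ algebraic; but this is exactly what Lemma \ref{C3.2: Lemma 1} records, so no extra work is required.
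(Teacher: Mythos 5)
Your proposal is correct and follows essentially the same route as the paper: the paper's own proof takes the structural part (the factorisation of $h$ into $k$ mutually similar irreducibles, the identification $D_h \cong \mathcal{E}(f) = {\rm Nuc}_r(S_f)$, and $R/Rh \cong M_k({\rm Nuc}_r(S_f))$) verbatim from Theorem \ref{C2.2: Theorem 1}, and then performs exactly your dimension count $d^2p^e\,{\rm deg}(h) = d^2p^{2e}\,{\rm deg}(\hat{h}) = k^2s^2\,{\rm deg}(\hat{h})$ to get $dp^e = ks$ and the stated formulas. Your bookkeeping of $[D:F]=d^2p^e$, ${\rm deg}(h)=p^e{\rm deg}(\hat{h})$, ${\rm deg}(h)=km$, and the divisibility conclusion all match the paper.
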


\begin{proof}
The first part of this proof can be taken verbatim to be the first part of the proof of Theorem \ref{C2.2: Theorem 1}. Comparing the dimensions of $R/Rh$ and $M_k({\rm Nuc}_r(S_f))$ over $F$ it follows that $d^2p^e \, {\rm deg}(h)= d^2p^{2e} \,{\rm deg} (\hat{h})=k^2s^2
\, {\rm deg}(\hat{h})$, so that we get $d^2p^{2e} =k^2s^2 $, that is $dp^e=ks$. In particular, this implies that ${\rm deg}(h)=\frac{dmp^e}{s}$ and ${\rm deg}(\hat{h})=\frac{dm}{s}$. Moreover $$[{\rm Nuc}_r(S_f):F]= [{\rm Nuc}_r(S_f):E_{\hat{h}}][E_{\hat{h}}:F] =s^2 {\rm deg}(\hat{h}) = \frac{dms^2}{s} = dms.$$
Now since ${\rm Nuc}_r(S_f)$ is a subalgebra of $S_f$ we have 
\begin{equation}\label{C2.2: Theorem 1 Eq 1}
[S_f:F]=[S_f:{\rm Nuc}_r(S_f)][{\rm Nuc}_r(S_f):F]= d^2mp^e.
\end{equation}
Substituting $dp^e = ks$ and $[{\rm Nuc}_r(S_f):F]=dms$ into Equation (\ref{C2.2: Theorem 1 Eq 1}) we obtain the equality $[S_f:{\rm Nuc}_r(S_f)]=k$.
If $f$ is not right-invariant, then $k>1$ and so we derive $s\not=dp^e$ looking at the degree of $h$. More precisely, for $f$ of degree $m>1$, $f$ being not right-invariant is equivalent to $S_f$ being not associative which in turn is equivalent to
$k>1$.
\end{proof}

\begin{corollary}\label{C3.2: Corollary 1}
Let $f$ be irreducible. If ${\rm gcd}(m,p^e)=1$, then $s$ divides $d$, and $f$ is not right-invariant. Additionally, if $d$ is prime, then one of the following holds:\\
(i)\quad ${\rm Nuc}_r(S_f) \cong E_{\hat{h}}$ is a field extension of $F$ of degree $md$. \\
(ii)\quad ${\rm Nuc}_r(S_f)$ is a central division algebra over $E_{\hat{h}}$ of degree $d$, ${\rm deg}(h) = mp^e$, ${\rm deg}(h)=m$, and $[{\rm Nuc}_r(S_f):F]=md^2$.
\end{corollary}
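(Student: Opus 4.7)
The plan is to deduce both claims directly from Theorem \ref{C3.2: Theorem 2}, which supplies the divisibility $s\mid \gcd(dm, dp^e)$ together with the explicit formulas $\deg(\hat h)=md/s$, $\deg(h)=mdp^e/s$ and $[{\rm Nuc}_r(S_f):F]=mds$. First, factoring $\gcd(dm,dp^e)=d\cdot\gcd(m,p^e)$, the hypothesis $\gcd(m,p^e)=1$ collapses this divisibility to $s\mid d$, giving the first assertion.

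Next, I would rule out right-invariance of $f$. From the final paragraph of the proof of Theorem \ref{C3.2: Theorem 2}, when $m>1$, $f$ is right-invariant precisely when $k=1$, which via the relation $dp^e=ks$ is equivalent to $s=dp^e$. Since $s\mid d$ forces $s\le d$, and in the setting of this section $p^e>1$ (otherwise $g(t)=t$ and $\delta=\delta_c$ would be inner, placing us in the previous section rather than this one), we get $s\ne dp^e$, and so $f$ cannot be right-invariant.

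For the dichotomy under $d$ prime, $s\mid d$ leaves only $s=1$ or $s=d$. If $s=1$, then ${\rm Nuc}_r(S_f)$ is central division of degree $1$ over $E_{\hat h}$, hence equals $E_{\hat h}$; substituting $s=1$ into the formulas gives $\deg(\hat h)=md$ and $[{\rm Nuc}_r(S_f):F]=md$, which is (i). If $s=d$, the same substitution gives $\deg(\hat h)=m$, $\deg(h)=mp^e$, and $[{\rm Nuc}_r(S_f):F]=md\cdot d=md^2$, which is (ii).

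I do not anticipate any genuine obstacle here: all the substantive content is already packaged inside Theorem \ref{C3.2: Theorem 2}, and the corollary amounts to a case split on the two positive divisors of the prime $d$, together with a one-line degree comparison to exclude right-invariance.
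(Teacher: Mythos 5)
Your proposal is correct and follows essentially the same route as the paper: both reduce everything to Theorem \ref{C3.2: Theorem 2}, use $\gcd(dm,dp^e)=d$ to get $s\mid d$, and then split on the two divisors of the prime $d$. The only cosmetic difference is in excluding right-invariance — the paper compares $[{\rm Nuc}_r(S_f):F]=mds\le md^2<md^2p^e=[S_f:F]$ directly, while you argue via $k>1$ through $dp^e=ks$ — but these are equivalent one-line consequences of the same theorem.
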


\begin{proof}
Since ${\rm gcd}(m,p^e)=1$,  ${\rm gcd}(dm,dp^e) 
=d$. Hence $s$ divides $d$ by Theorem \ref{C3.2: Theorem 2}, and we have $$[{\rm Nuc}_r(S_f):F]=mds \leq md^2 < md^2p^e = [S_f:F],$$ i.e. $f$ is not right-invariant. Additionally, if we suppose that $d$ is prime, then $s$ dividing $d$ implies that $s=1$ or $s=d$. (i) follows by considering $s=1$ and (ii) follows from $s=d$.
\end{proof}

The following result partially answers a question by Amitsur, who asked when the right nucleus of a polynomial is a central simple algebra.

\begin{theorem}\label{C3.2: Theorem 3}
 Suppose that ${\rm gcd}(d,p^e)=1$ and that $f$ is not right-invariant. Then $s=1$, or $s\not=1$ and $s$ divides either $d$ or $p^e$. \\
Suppose additionally that $d$ is prime and $e=1$. Then one of the following holds:
\\ (i) ${\rm Nuc}_r(S_f)\cong E_{\hat{h}},$ $dp=k$, ${\rm deg}(\hat{h})=dm$ and ${\rm deg}(h)=dp m$. In particular, then $[{\rm Nuc}_r(S_f) :F]=dm$.
\\ (ii) ${\rm Nuc}_r(S_f)$ is a central division algebra
 over $E_{\hat{h}}$ of degree $d$,  $p$ is the number of irreducible factors of $h$ in $R$, ${\rm deg}(h)=pm$, ${\rm deg}(\hat{h})=m$ and
 $$ R/Rh \cong M_k({\rm Nuc}_r(S_f)).$$
In particular, then $[{\rm Nuc}_r(S_f) :F]=d^2m$.
\\ (iii) ${\rm Nuc}_r(S_f)$ is a central division algebra over $E_{\hat{h}}$ of degree $p$,  $d$ is the number of irreducible factors of $h$ in $R$, ${\rm deg}(\hat{h})=dm/p$, ${\rm deg}(h)=dm$, and
 $[{\rm Nuc}_r(S_f) :F]=p^2/dm $.
 \\ Note that case (iii) cannot happen if $p$ does not divide $dm$ or if $dm$ does not divide $p^2$.
\end{theorem}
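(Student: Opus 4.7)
The plan is to deduce this from Theorem \ref{C3.2: Theorem 2}, which (assuming $f$ is irreducible, so that the theorem applies) supplies the key identity $dp^e = ks$ together with the divisibility constraint $s \mid \gcd(dm, dp^e)$. Under the coprimality hypothesis $\gcd(d, p^e) = 1$ the latter simplifies to $s \mid d \cdot \gcd(m, p^e)$. Writing $s = s_1 s_2$ with $\gcd(s_1, p) = 1$ and $s_2$ a power of $p$, the coprimality forces $s_1 \mid d$ and $s_2 \mid p^e$. Non-right-invariance of $f$ gives $k > 1$, excluding the maximal case $s = dp^e$. To conclude $s \mid d$ or $s \mid p^e$ one must rule out a genuinely mixed $s$ (both $s_1 > 1$ and $s_2 > 1$); this is where I expect the main obstacle to lie, and I would attack it by exploiting the central simple structure of ${\rm Nuc}_r(S_f)$ over $E_{\hat{h}}$ together with the module-theoretic constraints coming from $R/Rh \cong M_k({\rm Nuc}_r(S_f))$. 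If pure divisibility proves insufficient I would either seek a refined structural argument or fall back to the enumerative classification below, which already covers every case arising under the extra hypothesis that $d$ is prime and $e = 1$.

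Under the latter hypothesis the divisors of $dp^e = dp$ are exactly $\{1, d, p, dp\}$ (using $d$ prime and $\gcd(d, p) = 1$). The maximal value $s = dp$ would force $k = 1$, i.e.\ $f$ right-invariant, and is therefore excluded. The remaining values $s \in \{1, d, p\}$ align with cases (i), (ii), (iii). In each case the computation is mechanical: substitute $s$ into $k = dp/s$, $\deg(\hat{h}) = md/s$, $\deg(h) = mdp/s$, and $[{\rm Nuc}_r(S_f):F] = mds$ from Theorem \ref{C3.2: Theorem 2}, and identify ${\rm Nuc}_r(S_f)$ as a central division algebra of degree $s$ over $E_{\hat{h}}$.

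For $s = 1$ the division algebra collapses to its centre $E_{\hat{h}}$, yielding $k = dp$, $\deg(\hat{h}) = dm$, $\deg(h) = dpm$, and $[{\rm Nuc}_r(S_f):F] = dm$, which is case (i). For $s = d$, ${\rm Nuc}_r(S_f)$ is a central division algebra of degree $d$ over $E_{\hat{h}}$ with $k = p$, $\deg(\hat{h}) = m$, $\deg(h) = pm$, and $[{\rm Nuc}_r(S_f):F] = d^2 m$, which is case (ii). For $s = p$, ${\rm Nuc}_r(S_f)$ is a central division algebra of degree $p$ over $E_{\hat{h}}$ with $k = d$, $\deg(h) = md$, and $[{\rm Nuc}_r(S_f):F] = mdp$; the requirement that $\deg(\hat{h}) = md/p$ be a positive integer, combined with $\gcd(p, d) = 1$, forces $p \mid m$, matching the divisibility restriction flagged in the concluding remark of the statement.
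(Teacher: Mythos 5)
Your proposal follows the paper's own proof essentially step for step on the substantive part: you invoke the identity $dp^e=ks$ and the divisibility $s\mid\gcd(dm,dp^e)$ from Theorem \ref{C3.2: Theorem 2} (correctly noting that this presupposes $f$ irreducible, which the statement leaves implicit), observe that for $d$ prime and $e=1$ the divisors of $dp$ are $\{1,d,p,dp\}$, exclude $s=dp$ via $k=1\Leftrightarrow f$ right-invariant, and read off cases (i)--(iii) by substituting $s\in\{1,d,p\}$ into $k=dp/s$, ${\rm deg}(\hat{h})=md/s$, ${\rm deg}(h)=mdp/s$ and $[{\rm Nuc}_r(S_f):F]=mds$. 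That is exactly the paper's argument and it is correct. Two remarks. First, the opening dichotomy ($s=1$, or $s$ divides $d$ or $p^e$) is the one point where you stop short, flagging that the constraint $s\mid d\,\gcd(m,p^e)$ does not by itself exclude a mixed $s$ with nontrivial prime-to-$p$ part and $p$-part; the paper supplies no argument here either, dismissing it with ``It is clear that\dots''. So you have located a genuine soft spot in the source rather than missed an available proof; it is harmless for the enumerated case $d$ prime, $e=1$, where the divisor list settles everything. Second, in case (iii) you obtain $[{\rm Nuc}_r(S_f):F]=s^2\,{\rm deg}(\hat{h})=p^2\cdot dm/p=mdp$, whereas the statement records $p^2/dm$; your value is the one consistent with $[{\rm Nuc}_r(S_f):F]=mds$, and the expression in the statement (together with the closing remark that $dm$ must divide $p^2$) appears to be a typo propagated from the paper's proof, so your deduction that case (iii) merely forces $p\mid m$ is the defensible one.
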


\begin{proof}
 It is clear that $s=1$, or else $s\not=1$ and $s$ divides either $d$ or $p^e$.
Suppose additionally that $d$ is prime, $e=1$. Then the equation $dp=ks$ forces that either $s=1$ and $k=pd$, or that $s\not=1$ and then $d=k$ and $p=s$ (or resp., $d=s$ and $p=k$).
 As before, $s=1$ yields (i).
\\
If $d=s\not=1$ and $p=k$ then this implies (ii) employing that $[{\rm Nuc}_r(S_f) :F]=[{\rm Nuc}_r(S_f) :E_{\hat{h}}][E_{\hat{h}}:F]=d^2 {\rm deg}(\hat{h})=d^2m$.
\\
If $d=k$ and $p=s\not=1$ then this implies (iii) using that $[{\rm Nuc}_r(S_f) :F]=[{\rm Nuc}_r(S_f) :E_{\hat{h}}][E_{\hat{h}}:F]=p^2 {\rm deg}(\hat{h})= p^2/dm $. In particular, this case means that ${\rm deg}(\hat{h})=dm/p$, which forces $n$ to divide $dm$, as well as
$[{\rm Nuc}_r(S_f) :F]=p^2/dm $ which in turn forces $dm$ to divide $n^2$.
\end{proof}

\begin{theorem}\label{C3.2: Theorem 4}
Suppose that $\hat{h}$ is irreducible in $F[x]$. Then $f$ is the product of $l \geq 1$ irreducible polynomials in $R$ all of which are mutually similar to each other, and $$R/Rh \cong M_k(\mathcal{E}(g))$$ where $g \in R$ is any irreducible divisor of $h$ in $R$. If ${\rm deg}(g)=r \geq 1$, then $m=rl$, $\mathcal{E}(g)$ is a central division algebra over $E_{\hat{h}}=F[x]/(\hat{h}(x))$ of degree $s^{\prime}=dp^e/k$, where $k$ is the number of irreducible factors of $h$, and
 $$ {\rm Nuc}_r(S_f) \cong M_l(\mathcal{E}(g)).$$
 In particular, ${\rm Nuc}_r(S_f)$ is a central simple algebra over $E_{\hat{h}}$ of degree $s= ls^{\prime}$, ${\rm deg}(\hat{h})=\frac{rd}{s^{\prime}}=\frac{md}{s}$, ${\rm deg}(h) = \frac{rdp^e}{s^{\prime}} = \frac{mdp^e}{s}$, and
 $$[{\rm Nuc}_r(S_f) :F] = l^2rds^{\prime} = mds.$$ In particular $s^{\prime}$, $s$, and $l$ divide ${\rm gcd}(dp^e,dm)$.
\begin{proof}
The first part of this proof is identical to the first part of the proof of Theorem \ref{C2.2: Theorem 2}. Now since $g$ is irreducible of degree $r$ with minimal central left multiple $h(t) = \hat{h}(u^{-1}t^n)$, $\mathcal{E}(g)$ is a central division algebra over $E_{\hat{h}}$ of degree $s^{\prime}=dp^e/k$, where $k$ is the number of irreducible divisors of $h$ in $R$, ${\rm deg}(\hat{h})=\frac{rd}{s^{\prime}}=\frac{md}{ls^{\prime}}$ and ${\rm deg}(h)=\frac{rdp^e}{s^{\prime}}=\frac{mdp^e}{ls^{\prime}}$ by Theorem \ref{C2.2: Theorem 1}. Finally, since $$\mathcal{E}(f) \cong M_l(\mathcal{E}(g)),$$ $\mathcal{E}(f)$ is a central simple algebra over $E_{\hat{h}}$ of degree $s = ls^{\prime}$, and $$[\mathcal{E}(f) : F] = s^2{\rm deg}(\hat{h}) = mds.$$
\end{proof}
\end{theorem}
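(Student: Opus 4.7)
The plan is to mirror the structure of the proof of Theorem \ref{C2.2: Theorem 2} (and its characteristic-zero analogue Theorem \ref{C3.1: Theorem 2}), carrying over the two ingredients that change when passing to characteristic $p$: the center of $R$ is now $F[g(t)-c]\cong F[x]$ and $D$ has dimension $d^2 p^e$ over $F$, so the global dimension of $R/Rh$ over $F$ becomes $d^2 p^e\,{\rm deg}(\hat h)$.

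First I would invoke Lemma \ref{C3.2: Lemma 1}(iv)(a) to conclude that $Rh$ is a maximal two-sided ideal of $R$, so that $A:=R/Rh$ is a simple Artinian ring. By Jacobson's structure theorem for two-sided maximal elements of a PID (\cite[Theorem 1.2.19]{jacobson2009finite}), every factorisation $h=h_1\cdots h_k$ of $h$ into irreducibles in $R$ has all its $h_i$ mutually similar, and
\[
R/Rh \;\cong\; M_k(\mathcal{I}(g)/Rg) \;=\; M_k(\mathcal{E}(g))
\]
for $g$ any irreducible right factor of $h$. Since by Lemma \ref{C3.2: Lemma 1}(iv)(b) every irreducible factor of $f$ is similar to every irreducible factor of $h$, a complete factorisation $f=f_1\cdots f_l$ in $R$ consists of mutually similar irreducibles, each of common degree $r={\rm deg}(g)$, whence $m=rl$.

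Next I would identify the right nucleus. Since $h$ is a (two-sided) bound of both $f$ and $g$, $Rh={\rm Ann}_R(R/Rf)={\rm Ann}_R(R/Rg)$, so by Lemma \ref{Intro: Lemma 4} the endomorphism rings over $R$ and over $A=R/Rh$ coincide. The similarity of the $f_i$ and the theory of modules over a PID give
\[
R/Rf \;\cong\; \bigoplus_{i=1}^{l} R/Rf_i \;\cong\; (R/Rg)^{\oplus l}
\]
as left $A$-modules (as used in the proof of Theorem \ref{C2.2: Theorem 2} via \cite[Corollary 4.7]{gomez2013computing}). Applying Lemmas \ref{Intro: Lemma 1} and \ref{Intro: Lemma 2}(ii),
\[
{\rm Nuc}_r(S_f)\;\cong\;\mathcal{E}(f)\;=\;{\rm End}_R(R/Rf)\;\cong\;M_l\bigl({\rm End}_R(R/Rg)\bigr)\;\cong\;M_l(\mathcal{E}(g)).
\]
By the already-proved Theorem \ref{C3.2: Theorem 2} applied to the irreducible polynomial $g$, $\mathcal{E}(g)$ is a central division algebra over $E_{\hat h}=F[x]/(\hat h(x))$ of degree $s'=dp^e/k$. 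Hence ${\rm Nuc}_r(S_f)\cong M_l(\mathcal{E}(g))$ is a central simple algebra over $E_{\hat h}$ of degree $s=ls'$.

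Finally, the numerical relations drop out of a dimension count. Comparing $[R/Rh:F]=d^2p^e\,{\rm deg}(\hat h)$ with $[M_k(\mathcal{E}(g)):F]=k^2(s')^2{\rm deg}(\hat h)$ gives $dp^e=ks'$, hence ${\rm deg}(\hat h)=rd/s'=md/s$ and ${\rm deg}(h)=p^e{\rm deg}(\hat h)=rdp^e/s'=mdp^e/s$. The dimension of ${\rm Nuc}_r(S_f)$ over $F$ is then $[M_l(\mathcal E(g)):F]=l^2 (s')^2{\rm deg}(\hat h)=l^2 r d s' = mds$. All divisibility assertions ($s', s, l \mid \gcd(dp^e,dm)$) follow directly from the equalities $dp^e=ks'$ and $s=ls'$ together with $m=rl$. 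The only non-routine step is the $A$-module decomposition $R/Rf\cong (R/Rg)^{\oplus l}$, and the main ``obstacle'' is simply to note that it carries over from the twisted-polynomial case verbatim, since all of Jacobson's machinery (PID, bound, idealiser/eigenring, similarity classes) is equally available in $D[t;\delta]$ under our standing hypotheses, so no essentially new ideas beyond those used in Theorems \ref{C2.2: Theorem 2} and \ref{C3.1: Theorem 2} are needed.
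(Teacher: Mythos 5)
Your argument is correct and follows the paper's proof essentially step for step (the paper likewise takes the first part verbatim from the proof of Theorem \ref{C2.2: Theorem 2} and then reads off the degree of $\mathcal{E}(g)$ from the irreducible case); your explicit appeal to Theorem \ref{C3.2: Theorem 2} for $\mathcal{E}(g)$ is in fact a more accurate citation than the paper's reference back to the twisted-polynomial Theorem \ref{C2.2: Theorem 1}. One arithmetic slip to fix: $[R/Rh:F]=[D:F]\,{\rm deg}(h)=d^2p^e\cdot p^e\,{\rm deg}(\hat{h})=d^2p^{2e}\,{\rm deg}(\hat{h})$, not $d^2p^e\,{\rm deg}(\hat{h})$; only with that extra factor of $p^e$ does equating with $[M_k(\mathcal{E}(g)):F]=k^2(s')^2\,{\rm deg}(\hat{h})$ yield $dp^e=ks'$ as you claim.
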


\begin{corollary}\label{C3.2: Corollary 2}
Let $f \in R$ be reducible (i.e. $l \geq 2$) and suppose that $\hat{h}(x)$ is irreducible in $F[x]$. If ${\rm gcd}(m,p^e)=1$ (i.e. $p$ does not divide $m$), then $s$ divides $d$. If $d$ is prime and not equal to $p$,  then $f$ is not right-invariant and has $d$ irreducible factors in any factorisation into irreducible polynomials in $R$, each of which has degree $r=m/d$. Moreover $${\rm Nuc}_r(S_f) \cong M_d(E_{\hat{h}})$$ is a central simple algebra over $E_{\hat{h}}$ of degree $d$, ${\rm deg}(\hat{h})=m$, ${\rm deg}(h)=mp^e$, and $[{\rm Nuc}_r(S_f):F]=md^2$.
\end{corollary}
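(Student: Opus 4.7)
The plan is to reduce directly to Theorem \ref{C3.2: Theorem 4} and extract the numerical consequences from the divisibility constraints on $s$, $s'$, and $l$.

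First I would observe that since $\gcd(m,p^e)=1$, we have $\gcd(dp^e,dm)=d\cdot\gcd(p^e,m)=d$. Theorem \ref{C3.2: Theorem 4} then tells us that $s$, $s'$ and $l$ all divide $d$, which immediately gives the first assertion.

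Next, assume in addition that $d$ is prime and $d\neq p$. Since $f$ is reducible we have $l\geq2$; combined with $l\mid d$ and $d$ prime this forces $l=d$, so each irreducible factor of $f$ has degree $r=m/l=m/d$. Since $s=ls'=ds'$ and $s\mid d$, we must have $s'=1$ and $s=d$. Having $s'=1$ means $\mathcal{E}(g)$ is a central division algebra of degree $1$ over $E_{\hat{h}}$, i.e.\ $\mathcal{E}(g)\cong E_{\hat{h}}$, whence
\[
{\rm Nuc}_r(S_f)\cong M_l(\mathcal{E}(g))\cong M_d(E_{\hat{h}}),
\]
a central simple algebra over $E_{\hat{h}}$ of degree $d$. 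Substituting $s=d$ into the degree formulas of Theorem \ref{C3.2: Theorem 4} yields ${\rm deg}(\hat{h})=md/s=m$, ${\rm deg}(h)=mdp^e/s=mp^e$, and $[{\rm Nuc}_r(S_f):F]=mds=md^2$.

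Finally, to see that $f$ is not right-invariant, I would compare dimensions over $F$. Recall $[S_f:F]=d^2mp^e$ while $[{\rm Nuc}_r(S_f):F]=md^2$. Since $d\neq p$ and $p$ is the characteristic we have $p^e\geq p\geq2$, so $[S_f:F]\neq[{\rm Nuc}_r(S_f):F]$, hence $S_f\neq{\rm Nuc}_r(S_f)$ and $f$ is not right-invariant. The only step that requires any genuine thought is the numerical squeeze that forces $l=d$ and $s'=1$ simultaneously; once that is in place, everything else is a direct substitution into the formulas already established in Theorem \ref{C3.2: Theorem 4}.
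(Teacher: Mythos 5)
Your proof is correct and follows essentially the same route as the paper's own: use $\gcd(dm,dp^e)=d$ to get $s,s',l\mid d$ from Theorem \ref{C3.2: Theorem 4}, force $l=d$ and $s'=1$, $s=d$ by primality, substitute into the degree formulas, and rule out right-invariance by comparing $[{\rm Nuc}_r(S_f):F]=md^2$ with $[S_f:F]=d^2mp^e$. You fill in slightly more detail (e.g.\ noting explicitly that $l\geq 2$ is what forces $l=d$), but the argument is the same.
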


\begin{proof}
Suppose that $d$ is a prime not equal to $p$. Since $l$ divides $d$ by Theorem \ref{C3.2: Theorem 4}, we are forced to take $l=d$. 
Also by Theorem \ref{C3.2: Theorem 4}, $s=ds^{\prime}$ divides $d$, and we are forced to take $s^{\prime}=1$ and $s=d$. Therefore $f$ is not right-invariant, as $[{\rm Nuc}_r(S_f):F] = md^2 < [S_f:F]$, and the rest follows immediately from Theorem \ref{C3.2: Theorem 4} by applying $s=d$.

\end{proof}

\begin{proposition}\label{C3.2: Proposition 1}
Suppose that $\hat{h}$ is irreducible in $F[x]$. If $d$ divides $p$ and ${\rm gcd}(m,p^e) = 1$, then $f$ is irreducible.
\begin{proof}
First we note that $l$ divides ${\rm gcd}(dm,dp^e)$ in the notation of Theorem \ref{C3.2: Theorem 3}. Since ${\rm gcd}(m,p^e)=1$ by assumption, we must have that $l$ divides $d$, and so $l$ must also divide the prime $p$. Hence $l=1$ or $l=p$. Suppose that $l=p$, then $p$ divides $m$ (since $l$ divides $m$ by Theorem \ref{C3.2: Theorem 3}), which contradicts our assumption that ${\rm gcd}(m,p^e)=1$. We conclude that $l$ must be equal to $1$, i.e. that $f$ is irreducible.
\end{proof}
\end{proposition}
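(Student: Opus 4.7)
The plan is to deduce the irreducibility of $f$ as a direct consequence of Theorem \ref{C3.2: Theorem 4}, by combining the divisibility constraints on the number $l$ of irreducible factors of $f$ with the hypotheses. Since $\hat{h}$ is assumed irreducible in $F[x]$, Theorem \ref{C3.2: Theorem 4} guarantees a factorisation $f = f_1 f_2 \cdots f_l$ into $l \geq 1$ mutually similar irreducible polynomials in $R$, and moreover $l$ divides $\gcd(dm, dp^e)$. The goal is therefore to force $l = 1$ using the two hypotheses on $d$ and on $m$.

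First I would simplify the gcd: $\gcd(dm, dp^e) = d \cdot \gcd(m, p^e) = d$, using the hypothesis $\gcd(m, p^e) = 1$. Hence $l$ divides $d$. Since $d$ divides the prime $p$, we have $d \in \{1, p\}$, so necessarily $l \in \{1, p\}$ (in the case $d = 1$ we get $l = 1$ immediately, and in the case $d = p$ we get $l \mid p$).

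It remains to exclude the possibility $l = p$ in the case $d = p$. For this I would invoke the relation $m = r l$ from Theorem \ref{C3.2: Theorem 4}, where $r = {\rm deg}(g)$ is the degree of any irreducible factor of $h$ in $R$. This relation says $l \mid m$, so if $l = p$ then $p \mid m$, which contradicts the standing hypothesis $\gcd(m, p^e) = 1$. Therefore $l = 1$, i.e.\ $f$ is irreducible, as claimed.

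There is no substantive obstacle here: the proposition is essentially a short numerical corollary of the structure theorem (Theorem \ref{C3.2: Theorem 4}), and the only thing to check carefully is that the divisibility $l \mid \gcd(dm, dp^e)$ and the equality $m = rl$ are both supplied by that theorem. The mild subtlety is handling the degenerate case $d = 1$ separately (which is trivial), so that one only needs the relation $l \mid m$ in the nontrivial case $d = p$ to exclude $l = p$.
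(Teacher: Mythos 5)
Your proposal is correct and follows essentially the same route as the paper's own proof: both deduce $l \mid \gcd(dm,dp^e) = d$ from the irreducibility of $\hat{h}$ and the hypothesis $\gcd(m,p^e)=1$, then use $d \mid p$ to reduce to $l \in \{1,p\}$, and finally exclude $l=p$ via $l \mid m$ (from $m = rl$). The only difference is cosmetic: you correctly attribute the divisibility facts to Theorem \ref{C3.2: Theorem 4} (the paper's citation of Theorem \ref{C3.2: Theorem 3} here appears to be a slip), and your explicit case split on $d \in \{1,p\}$ is a slightly cleaner phrasing of the same argument.
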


\subsection*{What if $D=K$ is a Field of Prime Characteristic?}
Let $K$ be a field with ${\rm Char}(K)=p>0$. Let $\delta$ be an algebraic derivation of $K$ with minimum polynomial
$$g(t) = t^{p^e} + \gamma_1t^{p^{e-1}} + \gamma_2t^{p^{e-2}} + \dots + \gamma_{e-1}t^p + \gamma_et \in F[t]$$
such that $g(\delta) = 0$ where $F = {\rm Const}(\delta)$. If $R=K[t;\delta]$ then $$C(R) = F[g(t)] \cong F[x].$$
\\
 We note that in this setting the results Lemma \ref{C3.2: Lemma 1} through to Corollary \ref{C3.2: Corollary 2} hold analogously, with $D=K$, $g(\delta)=0$, and $d=1$. Note that
$$S_f = K[t;\delta]/K[t;\delta]f(t)$$ is a nonassociative algebra over $F$ of dimension $mp^e$.
In this setting $f$ has minimal central left multiple $h(t)=\hat{h}(g(t))$ for some $\hat{h} \in F[x]$.

\begin{theorem}\label{C3.2': Theorem 1}
Let $f$ be irreducible. Then ${\rm Nuc}_r(S_f)$ is a central division algebra over $E_{\hat{h}}=F[x]/(\hat{h}(x))$ of degree $s=p^e/k$, where $k$ is the number of irreducible factors of $h$ in $R$, and
 $$ R/Rh \cong M_k({\rm Nuc}_r(S_f)).$$
 In particular, this means that ${\rm deg}(\hat{h})=\frac{m}{s}$, ${\rm deg}(h)=\frac{mp^e}{s}$, and
 $$[{\rm Nuc}_r(S_f) :F]= ms.$$
 Moreover, $s$ divides ${\rm gcd}(m,p^e)$ and $s = p^{\epsilon}$ for some $\epsilon \in \mathbb{Z}$, with $0 \leq \epsilon \leq e$.
\end{theorem}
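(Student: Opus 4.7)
The plan is to recognise this theorem as nothing more than the specialisation of Theorem \ref{C3.2: Theorem 2} to the field case $D = K$, i.e.\ $d = 1$. In the present setting $K$ is its own centre so the degree of $D$ over $C$ collapses to $1$, and all preparatory results needed for Theorem \ref{C3.2: Theorem 2} continue to hold verbatim in this setting: the unique existence of a minimal central left multiple, irreducibility of $\hat{h}$ in $F[x]$ following from irreducibility of $f$ in $R$, the fact that $h$ generates a maximal two-sided ideal, the Artin-Wedderburn decomposition $R/Rh \cong M_k(\mathcal{E}(g))$, and the identification of $\mathcal{E}(f)$ with ${\rm Nuc}_r(S_f)$ when ${\rm deg}(f) > 1$. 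So I would first simply invoke Theorem \ref{C3.2: Theorem 2} and substitute $d=1$ to obtain $s = p^e/k$, the isomorphism $R/Rh \cong M_k({\rm Nuc}_r(S_f))$, the degree identities ${\rm deg}(\hat{h}) = m/s$ and ${\rm deg}(h) = mp^e/s$, the dimension $[{\rm Nuc}_r(S_f) : F] = ms$, and the divisibility $s \mid {\rm gcd}(m, p^e)$.

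The only statement not appearing verbatim in Theorem \ref{C3.2: Theorem 2} is the refinement that $s = p^\epsilon$ for some $0 \leq \epsilon \leq e$. This is a one-line consequence of primality: since $s$ divides $p^e$ and $p$ is prime, every positive divisor of $p^e$ is a power of $p$ with exponent at most $e$. Nothing deeper is needed.

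There is no real obstacle here — the specialisation to $d=1$ is essentially cosmetic. If one preferred to re-derive the numerical content from scratch rather than quoting Theorem \ref{C3.2: Theorem 2}, the heart of the matter would be to compare $F$-dimensions across $R/Rh \cong M_k({\rm Nuc}_r(S_f))$: the left side has $F$-dimension $p^e \cdot {\rm deg}(h) = p^{2e} \cdot {\rm deg}(\hat{h})$ while the right side has $F$-dimension $k^2 s^2 \cdot [E_{\hat{h}}:F] = k^2 s^2 \cdot {\rm deg}(\hat{h})$, which forces $p^e = ks$. Combined with $[S_f : F] = m p^e$ and $[{\rm Nuc}_r(S_f) : F] = s^2 {\rm deg}(\hat{h})$, all the degree and dimension identities then fall out exactly as in Theorem \ref{C3.2: Theorem 2}, and the powers-of-$p$ statement follows as above.
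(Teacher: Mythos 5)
Your proposal is correct and follows essentially the same route as the paper: the author likewise refers back to the proof of Theorem \ref{C3.2: Theorem 2} for the isomorphism $R/Rh \cong M_k({\rm Nuc}_r(S_f))$ and then compares $F$-dimensions to get $p^e = ks$ and the degree identities, exactly as in your fallback derivation. Your observation that $s = p^{\epsilon}$ follows immediately from $s \mid p^e$ with $p$ prime is the right (and only) extra step needed; the paper in fact leaves this point implicit.
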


\begin{proof}
We refer to the proof of Theorem \ref{C3.2: Theorem 2} for
$$R/Rh \cong M_k({\rm Nuc}_r(S_f)),$$ since it is identical to the proof in this case, too.
We note that $h$ has degree $km$, and since $f$ is irreducible, ${\rm Nuc}_r(S_f)$ is a division algebra.
Now $R/Rh$ is a central simple algebra over $E_{\hat{h}}$ and so
  ${\rm Nuc}_r(S_f)$ is a central division algebra over $E_{\hat{h}}$ of dimension $s^2$.
Comparing the dimensions of $R/Rh$ and $M_k({\rm Nuc}_r(S_f))$ over $F$ it follows that $p^e \, {\rm deg}(h)= p^{2e} \,{\rm deg}(\hat{h})=k^2s^2
\, {\rm deg}(\hat{h})$, so that we get $p^{2e} =k^2s^2$, that is $p^e=ks$. In particular, this implies that ${\rm deg}(h)=\frac{mp^{e}}{s}$ and ${\rm deg}(\hat{h})=\frac{m}{s}$.
\end{proof}

\begin{corollary}\label{C3.2': Corollary 1}
Let $f$ be irreducible of prime degree $m$. Then
 ${\rm Nuc}_r(S_f)$ a field extension of $F$ of degree $m$, or $E_{\hat{h}} = F$ and ${\rm Nuc}_r(S_f)$ is a central division algebra over $F$ of degree $p$, and ${\rm deg}(\hat{h})=1$.
\end{corollary}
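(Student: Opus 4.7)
The plan is to apply Theorem \ref{C3.2': Theorem 1} directly and use the arithmetic constraints that $m$ is prime imposes on the parameter $s$. By that theorem, ${\rm Nuc}_r(S_f)$ is a central division algebra over $E_{\hat{h}}$ of degree $s = p^e/k$ with ${\rm deg}(\hat{h}) = m/s$, and moreover $s$ divides ${\rm gcd}(m,p^e)$ and is itself a power of $p$, say $s = p^\epsilon$ with $0 \le \epsilon \le e$.

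First I would observe that since $s \mid m$ and $m$ is prime, there are only two possibilities: $s = 1$ or $s = m$. In the case $s = 1$, Theorem \ref{C3.2': Theorem 1} gives ${\rm deg}(\hat{h}) = m$, so $E_{\hat{h}} = F[x]/(\hat{h}(x))$ is a field extension of $F$ of degree $m$, and ${\rm Nuc}_r(S_f)$ coincides with $E_{\hat{h}}$ since it has degree $1$ over $E_{\hat{h}}$. This yields the first alternative in the statement.

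Next I would handle the case $s = m$. The constraint that $s$ is a power of $p$ forces $m = p^\epsilon$, and since $m$ is prime this means $\epsilon = 1$ and $m = p$. Then ${\rm deg}(\hat{h}) = m/s = 1$, hence $E_{\hat{h}} = F$, and ${\rm Nuc}_r(S_f)$ is a central division algebra over $F$ of degree $s = p$. This yields the second alternative.

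No step here is really an obstacle; the whole argument is just a case analysis extracted from Theorem \ref{C3.2': Theorem 1}. The only subtlety worth being careful about is the compatibility between the two divisibility conditions $s \mid m$ and $s = p^\epsilon$, which together with $m$ prime pin down $s \in \{1, m\}$ and force $m = p$ in the nontrivial case.
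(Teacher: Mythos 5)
Your proof is correct and follows essentially the same route as the paper: both arguments read off from Theorem \ref{C3.2': Theorem 1} that $s$ divides $m$, use primality of $m$ to split into the cases $s=1$ and $s=m$, and in the second case deduce $m=p$ from the fact that $s$ is a power of $p$ (the paper phrases this as $m=s=p^e/k$ forcing $m\mid p^e$, which is the same observation).
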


\begin{proof}
By Theorem \ref{C3.2': Theorem 1}, $s$ divides $m$, and since $m$ is prime we must have $s=1$ or $s=m$. Suppose first that $s=1$, then ${\rm Nuc}_r(S_f)$ is a central divsion algebra over $E_{\hat{h}}$ of degree 1, i.e. ${\rm Nuc}_r(S_f)=E_{\hat{h}}$, a field extension over $F$ of degree ${\rm deg}(\hat{h}) = \frac{m}{s} = m$.\\
Now suppose that $s = m$, then ${\rm deg}(\hat{h}) = \frac{m}{s} = 1$ and $E_{\hat{h}} = F$. Hence ${\rm Nuc}_r(S_f)$ is a central division algebra over $F$ of degree $m$. Moreover, $m = s = \frac{p^e}{k}$, so that $m$ divides $p^e$, and since $m$ is prime $m=p$.
\end{proof}

\begin{corollary}\label{C3.2': Corollary 2}
Suppose that $\delta$ has minimum polynomial $g(t) = t^p - \gamma t \in F[t]$ (i.e. $e=1$) If $f$ is irreducible and not right-invariant, then ${\rm Nuc}_r(S_f)$ is a field extension of $F$ of degree $m$.
\end{corollary}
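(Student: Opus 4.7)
The plan is to apply Theorem \ref{C3.2': Theorem 1} directly, using the extra input that $e = 1$ to drastically restrict the possible values of $s$. Under that theorem, $s$ must be a power of $p$ satisfying $s \mid \gcd(m,p^e) = \gcd(m,p)$, so the identity $s = p^{\epsilon}$ with $0 \leq \epsilon \leq 1$ leaves only $s \in \{1,p\}$. The conclusion of the corollary is exactly the $s = 1$ branch of Theorem \ref{C3.2': Theorem 1}, so the entire proof reduces to excluding the case $s = p$ using the hypothesis that $f$ is not right-invariant.

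First I would argue that $s = p$ forces $k = p^e/s = 1$, which means $h$ is itself irreducible in $R$. Since $f$ right-divides $h$ with $h = gf$ for some $g \in R$, and $f$ has positive degree, irreducibility of $h$ forces $g$ to be a unit, i.e., $g = u \in K^{\times}$, so $h$ and $f$ agree up to a scalar: $h = uf$. Now $h \in C(R)$, so for every $r \in R$ we have $hr = rh$, which after substitution rearranges to $fr = (u^{-1}ru)f$. The element $u^{-1}ru$ belongs to $R$ because conjugation by a unit of $K$ sends $R$ into itself (the only nontrivial check is $u^{-1}tu = t + u^{-1}\delta(u) \in R$). Consequently $fR \subseteq Rf$, i.e., $f$ is right-invariant, contradicting the hypothesis.

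With $s = p$ ruled out, we have $s = 1$, and Theorem \ref{C3.2': Theorem 1} yields ${\rm Nuc}_r(S_f) \cong E_{\hat{h}}$ as a central division algebra of degree $1$ over $E_{\hat{h}}$. Lemma \ref{C3.2: Lemma 1}(ii) guarantees that $\hat{h}$ is irreducible in $F[x]$ (since $f$ is irreducible), so $E_{\hat{h}} = F[x]/(\hat{h}(x))$ is a genuine field extension of $F$ of degree ${\rm deg}(\hat{h}) = m/s = m$, which is exactly the claim. The only delicate point in the argument is the conjugation-by-unit step used to derive right-invariance from $h = uf$; everything else is a direct bookkeeping consequence of the theorem and lemma already at our disposal.
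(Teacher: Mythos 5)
Your proof is correct and takes essentially the same approach as the paper: both apply Theorem \ref{C3.2': Theorem 1}, use $e=1$ together with the primality of $p$ to reduce to $s\in\{1,p\}$, and then exclude $s=p$ using the hypothesis that $f$ is not right-invariant. The only difference is that where the paper silently invokes the equivalence between $k>1$ and non-right-invariance (recorded in the proofs of the earlier theorems of this chapter), you supply an explicit and correct justification, showing that $k=1$ forces $h=uf$ and hence $fr=(u^{-1}ru)f\in Rf$ for all $r\in R$.
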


\begin{proof}
Since $p$ is prime and $f$ not right-invariant, by Theorem \ref{C3.2': Theorem 1} we are forced to take $s=1$ and $k=p$. Then ${\rm Nuc}_r(S_f) = E_{\hat{h}}$ is a field extension over $F$ of degree ${\rm deg}(\hat{h}) = \frac{m}{s} = m$.
\end{proof}

\begin{theorem}\label{C3.2': Theorem 2}
Suppose that $\hat{h}(x)$ is irreducible in $F[x]$. Then $f$ is the product of $l \geq 1$ irreducible polynomials in $R$ all of which are mutually similar to each other, and $$R/Rh \cong M_k(\mathcal{E}(g))$$ where $g \in R$ is any irreducible divisor of $h$ in $R$. If ${\rm deg}(g)=r \geq 1$, then $m=rl$, $\mathcal{E}(g)$ is a central division algebra over $E_{\hat{h}}=F[x]/(\hat{h}(x))$ of degree $s^{\prime}=p^e/k$, where $k$ is the number of irreducible factors of $h$, and
 $${\rm Nuc}_r(S_f) \cong M_l(\mathcal{E}(g)).$$
 In particular, ${\rm Nuc}_r(S_f)$ is a central simple algebra over $E_{\hat{h}}$ of degree $s= ls^{\prime}$, ${\rm deg}(\hat{h})=\frac{r}{s^{\prime}}=\frac{m}{s}$, ${\rm deg}(h) = \frac{rp^e}{s^{\prime}} = \frac{mp^e}{s}$, and
 $$[{\rm Nuc}_r(S_f) :F] = l^2rs^{\prime} = ms.$$ In particular $s^{\prime}$, $s$, and $l$ divide ${\rm gcd}(m,p^e)$.
\begin{proof}
This follows immediately from Theorem \ref{C3.2: Theorem 2} with $d=1$.
\end{proof}
\end{theorem}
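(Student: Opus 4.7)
The plan is to invoke Theorem \ref{C3.2: Theorem 4} directly, taking $D=K$ as a (trivial) central division algebra over itself of degree $d=1$. First I would verify that the hypotheses match: when $D=K$ is a field, the center $C$ of $D$ is $K$ itself, so the integer $d$ measuring $[D:C]^{1/2}$ equals $1$; the field $F = C \cap {\rm Const}(\delta)$ reduces to ${\rm Const}(\delta)$; and the minimum polynomial condition on $\delta\vert_C$ is exactly the algebraicity hypothesis stated for $\delta$ in this subsection, with $g(\delta)=0=\delta_c$ for $c=0$. Hence the ambient setup of Section 3.2 applies with $d=1$.

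Given this, the conclusions of Theorem \ref{C3.2': Theorem 2} are obtained by straightforward substitution of $d=1$ into the conclusions of Theorem \ref{C3.2: Theorem 4}. The structural statements carry over unchanged: $f$ factors into $l\geq 1$ mutually similar irreducible polynomials, $R/Rh\cong M_k(\mathcal{E}(g))$ for any irreducible divisor $g$ of $h$, $m=rl$, $\mathcal{E}(g)$ is a central division algebra over $E_{\hat{h}}=F[x]/(\hat{h}(x))$, and ${\rm Nuc}_r(S_f) \cong M_l(\mathcal{E}(g))$. The numerical formulas specialise as $s'=dp^e/k = p^e/k$, $s=ls'$, ${\rm deg}(\hat{h}) = rd/s' = r/s' = m/s$, ${\rm deg}(h) = rdp^e/s' = rp^e/s' = mp^e/s$, and $[{\rm Nuc}_r(S_f):F] = l^2rds' = l^2rs' = ms$. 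Finally, the divisibility condition that $s', s, l$ divide ${\rm gcd}(dp^e,dm)$ becomes the claim that they divide ${\rm gcd}(p^e,m)$.

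There is no substantive obstacle here: the only thing to check is that no step in the proof of Theorem \ref{C3.2: Theorem 4} used $d>1$ in an essential way (for example, in invoking the Artin--Wedderburn decomposition of $R/Rh$ or in identifying $\mathcal{E}(f)\cong M_l(\mathcal{E}(g))$ via Lemmas \ref{Intro: Lemma 1}--\ref{Intro: Lemma 4}). Each of those ingredients remains valid when the coefficient ring is a field, so the substitution $d=1$ is legitimate and the proof reduces to the single sentence asserting this specialisation.
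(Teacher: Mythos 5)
Your proof is correct and takes essentially the same route as the paper: both reduce the statement to the general prime-characteristic result for $D[t;\delta]$ by specialising $d=1$. In fact your citation of Theorem \ref{C3.2: Theorem 4} (the $\hat{h}$-irreducible, $f$ possibly reducible case) is the appropriate one, whereas the paper's proof points to Theorem \ref{C3.2: Theorem 2} (the $f$-irreducible case), which appears to be a misprint.
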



\chapter{The Eigenring of $f \in R$ with $\hat{h}$ Reducible and Squarefree}
Let $D$ be a central division algebra over $C$ of degree $d$, let $\sigma$ be an automorphism of $D$ of finite inner order $n$, with $\sigma^n = \iota_u$, and let $f \in R=D[t;\sigma]$ have degree $m$. Recall that $R$ has center $$C(R) = F[u^{-1}t^n] \cong F[x],$$ and that all non-constant polynomials in $R$ are bounded. We let $h(t) = \hat{h}(u^{-1}t^n)$ denote the minimal central left multiple of $f$ for some monic polynomial $\hat{h} \in F[x]$. If $f(t)=t-a$ for some $a \in D^{\times}$ (i.e. $m=1$), then ${\rm Nuc}_r(S_f) = D$ and $\mathcal{E}(f)=\{ b \in D : \sigma(b)a =ab\}$. From now on, we assume that $m > 1$, so that $${\rm Nuc}_r(S_f) = \mathcal{E}(f) = {\rm End}_R(R/Rf).$$ If $(f,t)_r = 1$, then any bound $f^*$ of $f$ has the form $f^* = a\hat{h}(u^{-1}t^n)$ for some $a \in D^{\times}$ (alternatively, $f^*(t) = ah(t)$).
\\
\\
For the remainder of this chapter we assume that $(f,t)_r=1$ and that $\hat{h}$ is a squarefree polynomial in $F[x]$, that is $\hat{h}(x)=\hat{\pi}_1(x)\hat{\pi}_2(x) \cdots \hat{\pi}_z(x)$ for some $z \in \mathbb{N}$ and some monic, irreducible polynomials $\hat{\pi}_z,\hat{\pi}_2,\dots,\hat{\pi}_z$ in $F[x]$. We also use the notation $\pi_i(t) = \hat{\pi}_i(u^{-1}t^n)$ for $i \in \{1,2,\dots,z\}$.

\begin{lemma}\label{C4: Lemma 1}
If $\hat{\pi}_i(x) \neq \hat{\pi}_j(x)$ for $i,j \in \{1,2,\dots,z\}$, then ${\rm gcd}(\pi_i(t),\pi_j(t))=1$.
\begin{proof}
Suppose that $\hat{\pi}_i(x) \neq \hat{\pi}_j(x)$ for some $i,j \in \{1,2,\dots,z\}$. Since $\hat{\pi}_i,\hat{\pi}_j$ are monic, irreducible polynomials in $F[x]$, ${\rm gcd}(\hat{\pi}_i(x),\hat{\pi}_j(x))=1$. By Bezout's Lemma, there exist polynomials $\hat{p},\hat{q} \in F[x]$ such that $$\hat{p}(x)\hat{\pi}_i(x) + \hat{q}(x)\hat{\pi}_j(x) = 1.$$ Let $p(t) = \hat{p}(u^{-1}t^n)$ and $q(t) = \hat{q}(u^{-1}t^n)$, then $$p(t)\pi_i(t) + q(t)\pi_j(t) = 1.$$ We conclude that ${\rm gcd}(\pi_i(t),\pi_j(t))=1$, by Bezout's Lemma.
\end{proof}
\end{lemma}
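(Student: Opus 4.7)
The plan is to exploit the ring isomorphism $F[x] \cong F[u^{-1}t^n] = C(R)$ and transport Bezout's identity from the commutative polynomial ring $F[x]$ into $R$.

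First I would work in $F[x]$, where $\hat{\pi}_i$ and $\hat{\pi}_j$ are monic, irreducible, and distinct by hypothesis. Since $F[x]$ is a (commutative) PID and distinct monic irreducibles are coprime, $\gcd(\hat{\pi}_i(x), \hat{\pi}_j(x)) = 1$ in $F[x]$, so Bezout's Lemma yields polynomials $\hat{p}, \hat{q} \in F[x]$ satisfying
\[
\hat{p}(x)\hat{\pi}_i(x) + \hat{q}(x)\hat{\pi}_j(x) = 1.
\]

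Next I would evaluate this identity via the substitution $x \mapsto u^{-1}t^n$. Because $u^{-1}t^n$ lies in $C(R)$ and the map $F[x] \to R$ sending $x$ to $u^{-1}t^n$ is a ring homomorphism (its image is contained in the commutative subring $C(R)$), the substitution preserves products and sums. Setting $p(t) = \hat{p}(u^{-1}t^n)$ and $q(t) = \hat{q}(u^{-1}t^n)$, the relation transfers to
\[
p(t)\pi_i(t) + q(t)\pi_j(t) = 1
\]
in $R$.

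Finally, since $R = D[t;\sigma]$ is a left PID, the existence of such a left linear combination equal to $1$ forces any common left (or in fact two-sided) divisor of $\pi_i(t)$ and $\pi_j(t)$ to be a unit, i.e. $\gcd(\pi_i(t), \pi_j(t)) = 1$. There is no real obstacle here; the only subtle point is verifying that the evaluation map $F[x] \to R$ is a well-defined ring homomorphism, which holds precisely because $u^{-1}t^n$ is central in $R$, so no rewriting caused by the twist $ta = \sigma(a)t$ interferes with the multiplicative structure of the substituted expressions.
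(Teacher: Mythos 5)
Your proposal follows essentially the same route as the paper's proof: Bezout in $F[x]$ for the distinct monic irreducibles, followed by the substitution $x \mapsto u^{-1}t^n$ into the center of $R$ to obtain $p(t)\pi_i(t) + q(t)\pi_j(t) = 1$, and then Bezout again in $R$. The extra remark that the evaluation map is a well-defined ring homomorphism because $u^{-1}t^n$ is central is a welcome clarification but does not change the argument.
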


Strictly, the proof of Lemma \ref{C4: Lemma 1} shows that ${\rm gcrd}(\pi_i(t),\pi_j(t))=1$. However, the polynomials $\pi_i(t)$ and $\pi_j(t)$ lie in $C(R)$, hence they commute with all other polynomials in $R$, and so there is no distinction between the greatest common right divisor and the greatest common left divisor of $\pi_i(t)$ and $\pi_j(t)$. We call this simply the greatest common divisor. For the rest of the chapter we assume that $\hat{\pi}_i(x) \neq \hat{\pi}_j(x)$ for all $i,j \in \{1,2,\dots,z\}$ such that $i \neq j$. In this case $\hat{h}$ is said to be a {\it squarefree polynomial} in $F[x]$.

\begin{lemma}\cite[Chapter 1]{lam2006exercises}\label{C4: Lemma 2}
Let $S$ be a semisimple Artinian ring with direct sum decomposition $$S=S_1 \oplus S_2 \oplus \cdots \oplus S_z$$ for $S_i$ a simple Artinian ring $1 \leq i \leq z$, and let $M$ be a left $S$-module. Then $M$ is a semisimple module over $S$, and there are exactly $z$ isomorphism classes of simple left $S$-modules. Moreover, if $\{ V_i: i=1,2,\dots,z \}$ is a collection of representatives of the $z$ isomorphism classes of simple left $S$-modules that occur as submodules of $M$, then $$M = \bigoplus\limits_{i=1}^z M_i,$$ where $M_i$ denotes the direct sum of all submodules of $M$ that are isomorphic to $V_i$. That is, $$M \cong \bigoplus_{i=1}^z (V_i)^{\oplus n_i},$$ where $n_i$ is the number of submodules of $M$ that are isomorphic to $V_i$.
\end{lemma}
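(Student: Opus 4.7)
The plan is to apply the classical Artin--Wedderburn theory via the central idempotents attached to the direct sum decomposition of $S$. First I would extract the central orthogonal idempotents $e_1,\ldots,e_z$ with $e_i \in S_i$, $e_i e_j = \delta_{ij} e_i$, and $1 = e_1 + \cdots + e_z$, which come for free from $S = S_1 \oplus \cdots \oplus S_z$. These act on $M$ and give a decomposition $M = e_1 M \oplus \cdots \oplus e_z M$ of $S$-modules; because $S_j e_i = 0$ for $j \neq i$, each summand $e_i M$ is annihilated by $S_j$ for $j \neq i$ and is therefore naturally a left $S_i$-module. Semisimplicity of $M$ as an $S$-module follows at once: a finite direct sum of simple Artinian rings is semisimple Artinian, and over such a ring every left module is a direct sum of simple submodules.

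Next I would classify the simple left $S$-modules. Each $S_i$ is simple Artinian, so by Artin--Wedderburn it has (up to isomorphism) a unique simple left module $V_i$, and every left $S_i$-module is isomorphic to a direct sum of copies of $V_i$. Inflate each $V_i$ to a left $S$-module by declaring $S_j$ to act as zero for $j\neq i$. I claim that the $V_i$ for $i = 1,\ldots,z$ are pairwise non-isomorphic and exhaust the simple left $S$-modules up to isomorphism. Indeed, for any simple left $S$-module $V$ the decomposition $V = \bigoplus_i e_i V$ into $S$-submodules together with simplicity forces $V = e_i V$ for exactly one $i$, which makes $V$ a simple $S_i$-module and hence $V \cong V_i$; and the $e_i$ act as the identity on $V_i$ but as zero on $V_j$ for $j \neq i$, so $V_i \not\cong V_j$ whenever $i \neq j$.

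Finally, applying the $S_i$-module classification to $e_i M$ yields $e_i M \cong V_i^{\oplus n_i}$ as $S_i$-modules, hence also as $S$-modules once $V_i$ is inflated. Summing over $i$ gives the decomposition $M \cong \bigoplus_{i=1}^z V_i^{\oplus n_i}$ asserted in the lemma, and the cardinal $n_i$ equals the number of simple submodules of $M$ isomorphic to $V_i$ because $V_i$-isotypic submodules of $M$ are precisely those supported in $e_i M$. The whole argument is essentially bookkeeping on top of Artin--Wedderburn; the only subtle step is verifying that the ``intrinsic'' isotypic components $M_i$ in the statement coincide with the idempotent-cut pieces $e_i M$, which follows because any copy of $V_j$ inside $M$ must be killed by $e_i$ for $i \neq j$ and hence lies in $e_j M$.
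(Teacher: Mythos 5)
Your proof is correct. The paper does not actually prove this lemma --- it is quoted verbatim from Lam's \emph{Exercises in Classical Ring Theory} with only a citation --- so there is no in-text argument to compare against; your central-idempotent decomposition $M=e_1M\oplus\cdots\oplus e_zM$ combined with Artin--Wedderburn for each simple block $S_i$ is exactly the standard route one would find in that reference, and every step (the $e_i$ being central and orthogonal, $S_je_iM=0$ for $j\neq i$, the classification of simple $S$-modules by which idempotent acts as the identity, and the identification of the isotypic component $M_i$ with $e_iM$) is sound. The only caveat worth flagging is one you inherit from the statement itself rather than introduce: ``the number of submodules of $M$ isomorphic to $V_i$'' should really be read as the multiplicity of $V_i$ in a decomposition (i.e.\ the number of summands), since the literal count of isomorphic submodules is generally infinite; your argument correctly establishes the multiplicity version.
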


\begin{remark}[Submodules of $R/Rf$]\label{C4: Remark 1}
By \cite[pg.~33]{jacobson1943theory}, any submodule of the left $R$-module $R/Rf$ has the form $Rg/Rf$, where $f=pg$ for some $p \in R$. Moreover, there is an $R$-module isomorphism $$Rg/Rf \cong R/Rp.$$ Therefore $B$ is a simple submodule of $R/Rf$ if and only if $B \cong R/Rp$ for some irreducible polynomial $p \in R$ such that $f = pg$ for some $g \in R$\\
\\
 Now, by \cite[Proposition 5.2]{gomez2013computing}, there is a ``rough decomposition'' of $f$, of the form $f = g_1g_2\cdots g_z$ for some $g_i \in R$, such that $g_i$ has minimal central left multiple $\pi_i$ for each $i$. We note that, for all $i$, any two irreducible factors of $g_i$ are also both irreducible factors of $\pi_i$, and are therefore similar to each other as polynomials \cite[Theorem 1.2.19]{jacobson2009finite}, as $\pi_i$ is a two-sided maximal element. Since the irreducible factors $\hat{\pi}_i$ of $\hat{h}$ in $F[x]$ can be permuted in the decomposition of $\hat{h}(x)$ (as $F[x]$ is a commutative ring of polynomials), we infer that the ``rough decomposition'' of $f$ is not unique, i.e. for any permutation $\beta \in {\rm S}_z$, there exist $z$ polynomials $g_1^{\prime},g_2^{\prime},\dots,g_z^{\prime} \in R$, such that $f=g_1^{\prime}g_2^{\prime}\cdots g_z^{\prime}$, and $g_i^{\prime}$ has minimal central left multiple $\pi_{\beta(i)}$.\\
\\ For $f_i \in R$ any irreducible factor of $f$ with minimal central left multiple $\pi_i(t)$, there exists a ``rough decomposition'' $f=g_1^{\prime}\cdots g_z^{\prime}$ and a permutation $\beta \in {\rm S}_z$, such that $f_i$ is an irreducible factor of $g_1^{\prime}$ and that $i = \beta(1)$. Furthermore, since all irreducible factors of $g_1^{\prime}$ are mutually similar, there exists some irreducible $f_i^{\prime} \in R$, and some $g \in R$ such that $f_i \sim f_i^{\prime}$, and $g_1^{\prime}=f_i^{\prime}g$, i.e. $f=f_i^{\prime}gg_2^{\prime}\cdots g_z^{\prime}$. This means that $R/Rf_i^{\prime}$ is a simple submodule of $R/Rf$, and $$R/Rf_i \cong R/Rf_i^{\prime}.$$
\end{remark}
From the argument of Remark \ref{C4: Remark 1}, we immediately obtain the following:

\begin{lemma}\label{C4: Lemma 3}
If $f_i$ is an irreducible factor of $f$ in $R$, then the $R$-module $R/Rf_i$ is isomorphic to a simple submodule of $R/Rf$.
\end{lemma}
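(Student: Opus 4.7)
The plan is to extract the proof directly from Remark \ref{C4: Remark 1}, which already lays out all the needed ingredients. First I would recall Jacobson's description of submodules of $R/Rf$: every submodule has the form $Rg/Rf$ with $f=pg$, and $Rg/Rf \cong R/Rp$ as left $R$-modules. Consequently, the simple submodules of $R/Rf$ are exactly those modules $R/Rp$ where $p$ is an irreducible \emph{left} factor of $f$. This reframes the problem: given an irreducible factor $f_i$ of $f$ (which need not be a left factor), I must produce an irreducible left factor $f_i'$ of $f$ such that $R/Rf_i \cong R/Rf_i'$.

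Next, I would identify the unique irreducible component of $\hat{h}$ corresponding to $f_i$. Since $f_i$ divides $f$, its minimal central left multiple divides $h$, and by the squarefree hypothesis with pairwise non-associate factors, it equals $\pi_j(t) = \hat{\pi}_j(u^{-1}t^n)$ for a unique $j \in \{1,\ldots,z\}$. The key structural input, explained in Remark \ref{C4: Remark 1}, is that a rough decomposition $f = g_1 g_2 \cdots g_z$ (with each $g_k$ having minimal central left multiple $\pi_k$) can be reshuffled along any permutation of the central factors $\hat{\pi}_k$ of $\hat{h}$. I would apply this to obtain a rough decomposition $f = g_1' g_2' \cdots g_z'$ in which $g_1'$ has minimal central left multiple $\pi_j$.

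Finally I would pick any irreducible left factor $f_i'$ of $g_1'$, writing $g_1' = f_i' g$ for some $g \in R$, so that $f = f_i' (g g_2' \cdots g_z')$ exhibits $f_i'$ as an irreducible left factor of $f$. By \cite[Theorem 1.2.19]{jacobson2009finite}, the polynomial $\pi_j$ being a two-sided maximal element forces all irreducible factors of its left multiple $g_1'$ to be mutually similar, and hence $f_i' \sim f_i$ (both being irreducible with minimal central left multiple $\pi_j$). Similarity of polynomials is defined by $R/Rf_i \cong R/Rf_i'$, so $R/Rf_i'$ is the desired simple submodule of $R/Rf$ that is isomorphic to $R/Rf_i$.

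The only delicate point is the left-versus-right factor bookkeeping, since an arbitrary irreducible factor of $f$ need not appear as a left factor in any given factorisation. The permutation of the rough decomposition is precisely what circumvents this obstacle, by letting me slot the block whose minimal central left multiple matches that of $f_i$ into the leftmost position.
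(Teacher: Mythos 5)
Your proposal is correct and follows essentially the same route as the paper, whose proof of this lemma is exactly the argument of Remark \ref{C4: Remark 1}: Jacobson's identification of simple submodules of $R/Rf$ with irreducible left factors of $f$, the permutability of the rough decomposition $f=g_1g_2\cdots g_z$ to move the relevant block to the front, and the mutual similarity of irreducible factors of the two-sided maximal element $\pi_j$. The only blemish is a phrasing slip --- $g_1'$ is a right divisor of $\pi_j$ (equivalently, $\pi_j$ is a left multiple of $g_1'$), not ``its left multiple'' --- but the intended and correct claim, that all irreducible factors of $g_1'$ are irreducible factors of $\pi_j$ and hence mutually similar, is exactly what the paper uses.
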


\begin{theorem}\label{C4: Theorem 1}

\begin{enumerate}[(i)]
\item $A=R/Rh$ is a semisimple Artinian ring which is isomorphic to the direct sum of simple Artinian rings $$R/Rh \cong R/R\pi_1 \oplus R/R\pi_2 \oplus \cdots \oplus R/R\pi_z.$$ Moreover, this is an isomorphism of $E_{\hat{h}}$-algebras.
\item The center of $R/Rh$ is a direct sum of fields $$C(R/Rh) = C(R/R\pi_1) \oplus C(R/R\pi_2) \oplus \cdots \oplus C(R/R\pi_z).$$ That is, $$E_{\hat{h}} \cong E_{\hat{\pi}_1} \oplus E_{\hat{\pi}_2} \oplus \cdots \oplus E_{\hat{\pi}_z},$$ where $E_{\hat{\pi}_i} := F[x]/\langle \hat{\pi}_i(x) \rangle$ is a field extension of $F$ of degree equal to the degree of $\hat{\pi}_i$ in $F[x]$ for each $i$.
\item Let $f_1,f_2,\dots,f_z \in R$ be irreducible polynomials of degrees $m_1,m_2,\dots,m_z$ respectively, such that $f_i(t)$ is an irreducible factor of $\pi_i(t)$, and let $k_i$ be the number of irreducible factors in any factorisation of $\pi_i$ into irreducibles in $R$, for each $i$. Then $\mathcal{E}(f_i)$ is a central division algebra over $E_{\hat{\pi}_i}$ of degree $s_i = dn/k_i$, and $$R/R\pi_i \cong M_{k_i}(\mathcal{E}(f_i))$$ is a central simple algebra over $E_{\hat{\pi}_i}$ of degree $k_i s_i$. In particular, ${\rm deg}(\hat{\pi}_i) = \frac{m_id}{s_i}$, ${\rm deg}(\pi_i) = \frac{m_idn}{s_i}$, and the following is an isomorphism of rings and of $E_{\hat{h}}$-algebras $$R/Rh \cong M_{k_1}(\mathcal{E}(f_1)) \oplus M_{k_2}(\mathcal{E}(f_2)) \oplus \cdots \oplus M_{k_z}(\mathcal{E}(f_z)).$$
\item There is a factorisation $f(t)=g_1(t)g_2(t)\cdots g_z(t)$ for $g_1,g_2,\dots,g_z \in R$ such that, for each $i$, $g_i$ has minimal central left multiple $\pi_i$, and all irreducible factors of $g_i$ are mutually similar to each other. 
\item The following is an isomorphism of left $A$-modules, $$R/Rf \cong (R/Rf_1)^{\oplus r_1} \oplus (R/Rf_2)^{\oplus r_2} \oplus \cdots \oplus (R/Rf_z)^{\oplus r_z},$$ where $r_i$ denotes the number of irreducible factors of $g_i$ for each $i$.
\item The following is an isomorphism of rings, and of $E_{\hat{h}}$-algebras: $${\rm Nuc}_r(S_f) \cong M_{r_1}(\mathcal{E}(f_1)) \oplus M_{r_2}(\mathcal{E}(f_2)) \oplus \cdots \oplus M_{r_z}(\mathcal{E}(f_z)).$$ In particular, ${\rm Nuc}_r(S_f)$ is both a semisimple Artinian ring and a semisimple $E_{\hat{h}}$-algebra.
\end{enumerate} 
\begin{proof}
\begin{enumerate}[(i)]
\item By assumption, the polynomial $\hat{h}$ admits a factorisation into distinct monic, irreducible polynomials in $F[x]$, i.e. $\hat{h}(x)= \hat{\pi}_1(x) \hat{\pi}_2(x) \cdots \hat{\pi}_z(x)$ where $\hat{\pi}_i(x) \neq \hat{\pi}_j(x)$ for $i \neq j$. A decomposition of $\hat{h}$ into irreducible polynomials in $F[x]$ always exists, and is unique, since $F[x]$ is a unique factorisation domain (e.g. see \cite[pg. 92-95]{adkins2012algebra}). Now, since $\hat{\pi}_i(x) \neq \hat{\pi}_j(x)$ for $i \neq j$, we have that $(\pi_i(t),\pi_j(t))=1$ for each distinct pair $i,j$, by Lemma \ref{C4: Lemma 1}. Also, by \cite[pg.~3]{gomez2013computing}, we have that $R\pi_i + R\pi_j = R(\pi_i,\pi_j)_r = R$, since ${\rm gcd}(\pi_i(t),\pi_j(t)) = (\pi_i,\pi_j)_r = 1$ for any $i \neq j$. Hence the Chinese Remainder Theorem for non-commutative rings (e.g.~ \cite{ore1952general}) yields $$\frac{R}{Rh} = \frac{R}{R\pi_1 \cap R\pi_2 \cap \cdots \cap R\pi_z} \cong \frac{R}{R\pi_1} \oplus \frac{R}{R\pi_2} \oplus \cdots \oplus \frac{R}{R\pi_z}$$ with the first equality due to the fact that $$R\pi_1 \cap R\pi_2 \cap \dots \cap R\pi_z = R\, {\rm lcm}(\pi_1,\pi_2,\dots,\pi_z) = Rh$$ (e.g. \cite[pg. 14]{gomez2012basic}).
\item Since $R/Rh$ is a semisimple Artinian ring isomorphic to the direct sum of the simple Artinian rings $R/R\pi_i$ for $i=1,2,\dots,z$, we have that $$C(R/Rh) \cong C(R/R\pi_1) \oplus C(R/R\pi_2) \oplus \cdots \oplus C(R/R\pi_z),$$ and $C(R/R\pi_i)$ is a field for each $i$, by \cite[Theorems 23-24]{jacobson1943theory}. Now, since $C(R/R\pi_i) \cong F[x]/\langle \hat{\pi}_i(x) \rangle =: E_{\hat{\pi}_i}$ for all $i$ and $C(R/Rh) \cong E_{\hat{h}}$ by Lemma \ref{C2.1: Lemma 1}, $$E_{\hat{h}} \cong E_{\hat{\pi}_1} \oplus  E_{\hat{\pi}_2} \oplus \cdots \oplus E_{\hat{\pi}_z}.$$
\item By Theorem \ref{C2.2: Theorem 1}, the eigenring $\mathcal{E}(f_i)$ is a central division algebra over $E_{\hat{\pi}_i}$ of degree $s_i = dn/k_i$ for all $i$. Moreover, $$R/R\pi_i \cong M_{k_i}(\mathcal{E}(f_i)),$$  is a central simple algebra of degree $k_is_i=dn$ over $E_{\hat{\pi}_i}$, ${\rm deg}(\hat{\pi}_i)=m_id/s_i$, and ${\rm deg}(h)=m_idn/s_i$, for all $i$. The stated isomorphism follows immediately from (i) and (ii).
\item The existence of the factorisation $f(t) = g_1(t)\cdots g_z(t)$ with each $g_i$ having minimal central left multiple $\pi_i$, is essentially \cite[Proposition 5.2]{gomez2013computing}. We point out that for each $i$, any irreducible divisor of $g_i$ is also an irreducible divisor of $\pi_i$, by definition of the minimal central left multiple of $g_i$. In the language of \cite{jacobson2009finite}, $\pi_i$ is a two-sided maximal element of $R$, and by \cite[Theorem 1.2.19]{jacobson2009finite}, any two irreducible divisors of $\pi_i$ in $R$ are similar polynomials. In particular, any two irreducible divisors of $g_i$ are similar, for each $i$.
\item By (i), $A=R/Rh$ is a semisimple Artinian ring, with $z$ summands in any direct sum decomposition into simple Artinian rings. Then any left $A$-module is isomorphic to a direct sum of simple left $A$-modules, and there are precisely $z$ isomorphism classes of simple left $A$-modules, by Lemma \ref{C4: Lemma 2}. \\
\\
Consider the left $A$-module $R/Rf$. By \cite[pg.~1]{gomez2013computing}, the lattice of submodules of $R/Rf$ as a left $A$-module is identical to its lattice of submodules as a left $R$-module. By Lemma \ref{C4: Lemma 3}, we have that any irreducible factor $p$ of $f$ in $R$, yields a simple left $R$-module $R/Rp$ that is isomorphic to a simple $R$-submodule of $R/Rf$, and hence is isomorphic to a simple left $A$-submodule of $R/Rf$ also. Now, every simple left $A$-module which occurs as a submodule of $R/Rf$ appears as a direct summand in such a decomposition (up to isomorphism), by Lemma \ref{C4: Lemma 2}. Therefore, since there are $r_i$ irreducible polynomials similar to $f_i$ in any factorisation of $f$ into irreducibles in $R$ for each $i$, we have that the isomorphism class of $R/Rf_i$ has size $r_i$ for all $i$. Hence, $$R/Rf \cong (R/Rf_1)^{\oplus r_1} \oplus (R/Rf_2)^{\oplus r_2} \oplus \cdots \oplus (R/Rf_z)^{\oplus r_z},$$ as left $A$-modules. 
\item By (iii) and Lemma \ref{Intro: Lemma 1}, $${\rm End}_A(R/Rf) \cong {\rm End}_A((R/Rf_1)^{\oplus r_1} \oplus (R/Rf_2)^{\oplus r_2} \oplus \cdots \oplus (R/Rf_z)^{\oplus r_z})$$ as rings. By Lemmas \ref{Intro: Lemma 2} and \ref{Intro: Lemma 3},$${\rm End}_A(R/Rf) \cong \bigoplus_{i=1}^z {\rm End}_A((R/Rf_i)^{\oplus r_i}) \cong \bigoplus_{i=1}^z M_{r_i}({\rm End}_A(R/Rf_i)).$$ Also, since $Rh$ is a two-sided ideal of $R$ contained in ${\rm Ann}_R(R/Rf_i)$ and $Rh = {\rm Ann}_R(R/Rf)$, we have $${\rm End}_R(R/Rf) \cong \bigoplus_{i=1}^z M_{r_i}({\rm End}_R(R/Rf_i)),$$ by Lemma \ref{Intro: Lemma 4}. Therefore, since ${\rm End}_R(R/Rp) = \mathcal{E}(p)$ for any $p \in R$, and since $\mathcal{E}(p) = {\rm Nuc}_r(S_p)$ for any $p \in R$ of degree at least 2, we have $${\rm Nuc}_r(S_f) \cong \bigoplus_{i=1}^z M_{r_i}(\mathcal{E}(f_i)).$$
\end{enumerate}
\end{proof}
\end{theorem}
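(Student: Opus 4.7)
The plan is to prove the six parts of the theorem in order, since each one bootstraps the next. For part (i), I would invoke the Chinese Remainder Theorem for noncommutative rings (Ore's version). The assumption that $\hat{h}$ is squarefree means $\hat{h}(x) = \hat{\pi}_1(x)\cdots \hat{\pi}_z(x)$ with pairwise distinct monic irreducibles in the UFD $F[x]$, so by Lemma \ref{C4: Lemma 1} we have $\gcd(\pi_i,\pi_j)=1$ for $i\neq j$, hence $R\pi_i + R\pi_j = R$. Since the $\pi_i$ lie in $C(R)$, the ideals $R\pi_i$ are two-sided, and $R\pi_1 \cap \cdots \cap R\pi_z = R\,{\rm lcm}(\pi_1,\dots,\pi_z) = Rh$. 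The CRT then gives the ring isomorphism $R/Rh \cong \bigoplus_i R/R\pi_i$, and it is automatically an isomorphism of $E_{\hat{h}}$-algebras because the center of $R$ maps into the center of each factor. Part (ii) then drops out by taking centers on both sides and applying Lemma \ref{C2.1: Lemma 1} to each summand to identify $C(R/R\pi_i) \cong F[x]/(\hat{\pi}_i(x)) = E_{\hat{\pi}_i}$.

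Part (iii) is essentially Theorem \ref{C2.2: Theorem 1} applied pointwise: $\pi_i$ is the minimal central left multiple of any of its irreducible factors $f_i$ (by definition of minimality together with irreducibility of $\hat{\pi}_i$, which forces the central multiple to be $\pi_i$ itself up to scalar), so the theorem gives $R/R\pi_i \cong M_{k_i}(\mathcal{E}(f_i))$ with $\mathcal{E}(f_i)$ a central division algebra over $E_{\hat{\pi}_i}$ of degree $s_i = dn/k_i$, and the degree formulas follow. Assembling with (i) gives the direct sum formula for $R/Rh$. Part (iv) I would cite from \cite[Proposition 5.2]{gomez2013computing}, pointing out that, since $\pi_i$ is a two-sided maximal element in Jacobson's language, any two irreducible factors of $g_i$ (both being irreducible divisors of $\pi_i$) are similar by \cite[Theorem 1.2.19]{jacobson2009finite}.

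For part (v), set $A = R/Rh$. Since $h$ bounds $f$, we have $Rh = {\rm Ann}_R(R/Rf)$, so $R/Rf$ is naturally an $A$-module, and its lattice of $R$-submodules coincides with its lattice of $A$-submodules. Part (i) says $A$ is semisimple Artinian with $z$ simple summands, so by Lemma \ref{C4: Lemma 2}, $R/Rf$ decomposes as a direct sum of simple $A$-modules in at most $z$ isotypic components, one for each class $[R/Rf_i]$. To compute multiplicities, use the factorization $f = g_1 \cdots g_z$ from part (iv): each $g_i$ is a product of $r_i$ irreducible polynomials all similar to $f_i$, which via the Jordan-Hölder-style refinement (every irreducible factor $p$ of $f$ gives a simple submodule $R/Rp$ of $R/Rf$, by Lemma \ref{C4: Lemma 3}) yields exactly $r_i$ copies of $R/Rf_i$ as a summand, producing the stated module isomorphism.

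Part (vi) is then a routine endomorphism-ring calculation: apply Lemma \ref{Intro: Lemma 1} to transport across the isomorphism of (v), Lemma \ref{Intro: Lemma 2}(ii) to convert each isotypic block into a matrix ring, and Lemma \ref{Intro: Lemma 3} to kill the off-diagonal Hom-groups (using that the $R/Rf_i$ are pairwise non-isomorphic simple $A$-modules, because they lie in distinct isotypic components via distinct primary ideals $R\pi_i$). Finally, Lemma \ref{Intro: Lemma 4} applied with $I = Rh \subseteq {\rm Ann}_R(R/Rf_i)$ lets me replace ${\rm End}_A$ by ${\rm End}_R$, so that each diagonal block becomes $\mathcal{E}(f_i)$, and since $f$ has degree $\geq 2$ we identify ${\rm End}_R(R/Rf) = \mathcal{E}(f) = {\rm Nuc}_r(S_f)$ by Theorem \ref{Intro: Theorem 3}. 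The main conceptual obstacle is the passage from the ring decomposition of $A$ in (i) to the correct multiplicities $r_i$ in (v); I expect this is the step requiring the most care, because one must verify that the two $A$-simple modules $R/Rf_i$ and $R/Rf_j$ for $i\neq j$ are genuinely non-isomorphic (they are annihilated by different maximal two-sided ideals $R\pi_i$ and $R\pi_j$), and that the count $r_i$ coming from the factorization of $f$ matches the Jordan-Hölder length of the $\pi_i$-primary component of $R/Rf$.
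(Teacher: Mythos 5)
Your proposal is correct and follows essentially the same route as the paper's own proof: CRT for part (i), passage to centers for (ii), pointwise application of Theorem \ref{C2.2: Theorem 1} for (iii), the rough decomposition of \cite[Proposition 5.2]{gomez2013computing} for (iv), the semisimple-module count via Lemmas \ref{C4: Lemma 2} and \ref{C4: Lemma 3} for (v), and the endomorphism-ring computation via Lemmas \ref{Intro: Lemma 1}--\ref{Intro: Lemma 4} for (vi). Your explicit remark that $R/Rf_i\not\cong R/Rf_j$ for $i\neq j$ because they are annihilated by distinct maximal two-sided ideals $R\pi_i$ is a point the paper leaves implicit, but it is the right justification and does not change the argument.
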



\chapter{Subalgebras of the Right Nucleus}
 
Throughout this chapter, unless stated otherwise, let $D$ be an associative division algebra over its center $C$, let $\sigma$ be an endomorphism of $D$, and let $\delta$ be a left $\sigma$-derivation of $D$. Let
$F = C \cap {\rm Fix}(\sigma) \cap {\rm Const}(\delta)$. The results of this chapter in the special case that $\delta = 0$ and $D$ is a cyclic field extension of $F$ appear in \cite{owen2019eigenspaces}.

\section{The General Case $R=D[t;\sigma,\delta]$}
In this section we take yet another approach to investigate the Petit algebras associated with the polynomials in the ring $R=D[t;\sigma,\delta]$, and their nuclei. Throughout this chapter we let $f \in R$ be  monic polynomial of degree $m > 1$.\\
\\
In order to better understand the right nucleus of $S_f = D[t;\sigma,\delta]/D[t;\sigma,\delta]f$, we investigate firstly which elements of the coefficient ring $D$ lie in ${\rm Nuc}_r(S_f)$, and secondly which powers of $t$ lie in ${\rm Nuc}_r(S_f)$, before constructing a particular vector subspace of ${\rm Nuc}_r(S_f)$. To this end, we define
$$L^{(\sigma,\delta,f)} = {\rm Nuc}_r(S_f) \cap D$$ which is equal to the set $$\lbrace c \in D: [a,b,c] = 0 \text{ for all } a,b \in S_f \rbrace.$$
For $\sigma = {\rm id}$, we use the notation $L^{({\rm id},\delta,f)} = L^{(\delta,f)}$, and for $\delta = 0$ we use the notation $L^{(\sigma,0,f)} = L^{(\sigma,f)}$. From the definition of $L^{(\sigma,\delta,f)}$ we immediately have:

\begin{lemma}\label{C5.1: Lemma 1}
If $f$ is not right-invariant, then $$L^{(\sigma,\delta,f)} = {\rm Nuc}(S_f)=D \cap {\rm Nuc}_r(S_f).$$
\begin{proof}
Since $f$ is not right-invariant ${\rm Nuc}_l(S_f) = {\rm Nuc}_m(S_f) = D$, by Theorem \ref{Intro: Theorem 3}. Hence
$${\rm Nuc}(S_f) = {\rm Nuc}_l(S_f) \cap {\rm Nuc}_m(S_f) \cap {\rm Nuc}_r(S_f) = D \cap {\rm Nuc}_r(S_f),$$ and the result follows.
\end{proof}
\end{lemma}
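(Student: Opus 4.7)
The plan is essentially to unpack the definitions and invoke Petit's result on the left and middle nuclei (Theorem \ref{Intro: Theorem 3}), which has already been recorded in the preliminaries. The statement equates three sets, so I would establish the chain $L^{(\sigma,\delta,f)} = D \cap {\rm Nuc}_r(S_f) = {\rm Nuc}(S_f)$, where the first equality is immediate from the definition $L^{(\sigma,\delta,f)} = {\rm Nuc}_r(S_f) \cap D$, and the second equality is where all the content lies.

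First I would recall that by definition the nucleus is the intersection ${\rm Nuc}(S_f) = {\rm Nuc}_l(S_f) \cap {\rm Nuc}_m(S_f) \cap {\rm Nuc}_r(S_f)$. The hypothesis that $f$ is not right-invariant together with the hypothesis $\deg(f) = m > 1$ in force throughout the chapter puts us in a position to apply Theorem \ref{Intro: Theorem 3}(ii), which yields ${\rm Nuc}_l(S_f) = {\rm Nuc}_m(S_f) = D$. Substituting into the triple intersection collapses it to $D \cap D \cap {\rm Nuc}_r(S_f) = D \cap {\rm Nuc}_r(S_f)$.

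Combining the two equalities finishes the proof. There is no genuine obstacle here: the statement is a direct corollary of Theorem \ref{Intro: Theorem 3}, which itself is the nontrivial ingredient and is quoted from Petit. The only thing worth flagging is the implicit assumption that $\deg(f) > 1$ (stated at the top of the section), since Theorem \ref{Intro: Theorem 3} requires this; if $\deg(f) = 1$ then $S_f \cong D$ is associative and all three nuclei coincide trivially with $D$, so the lemma would be vacuous in a different sense.
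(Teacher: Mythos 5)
Your proposal is correct and follows exactly the same route as the paper's own proof: apply Theorem \ref{Intro: Theorem 3}(ii) to get ${\rm Nuc}_l(S_f) = {\rm Nuc}_m(S_f) = D$, collapse the triple intersection, and note that the first equality is just the definition of $L^{(\sigma,\delta,f)}$. Your remark about the implicit standing hypothesis $\deg(f) > 1$ is a fair and accurate observation, but the argument itself matches the paper's.
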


From now on, we write $L=L^{(\sigma,\delta,f)}$ when it is clear from the context which skew polynomial ring $D[t;\sigma,\delta]$ and which skew polynomial $f$ is used.\\
\\
In the following proposition we show that ${\rm Nuc}(S_f)$ is a division subring of $D$ containing the field $F$, if $f$ is not right-invariant:

\begin{proposition}\label{C5.1: Proposition 1}
\begin{enumerate}[(i)]
\item If $f$ is right-invariant then $L=D$.
\item If $f$ is not right-invariant then $L$ is a division subring of $D$ containing $F$.
\end{enumerate}
\begin{proof}
\begin{enumerate}[(i)]
\item If $f$ is right-invariant, then $S_f$ is associative, i.e. $[a,b,c]=0$ for all $a,b,c \in S_f$. In particular, $[a,b,c]=0$ for all $a,b \in S_f$ and all $c \in D$, hence $L=D$.
\item Suppose that $f$ is not right-invariant. Then $L={\rm Nuc}(S_f) = {\rm Nuc}_r(S_f) \cap D$ is a subring of $D$. Moreover, it is well known that ${\rm Nuc}(S_f)$ is an associative subalgebra of $S_f$ over $F$, hence it contains $F$ as a subfield. It remains to show that every nonzero element of $L$ has a multiplicative inverse, i.e. $L$ is division. Let $c \in L$ be nonzero. Then $c$ has a unique multiplicative inverse $c^{-1} \neq 0$ in $D$ since $L \subseteq D$. Then we have $$[a,b,c^{-1}]c = (a(bc^{-1}) - (ab)c^{-1})c = ab - ab,$$ as $c \in L \subseteq {\rm Nuc}_r(S_f)$. Hence $[a,b,c^{-1}]c = 0$ for any $a,b \in S_f$, which yields $$([a,b,c^{-1}]c)c^{-1} = [a,b,c^{-1}](cc^{-1}) = [a,b,c^{-1}] = 0$$ because $c \in D = {\rm Nuc}_m(S_f)$.
Therefore $c^{-1} \in {\rm Nuc}_r(S_f) \cap D = L$, and so $L={\rm Nuc}(S_f)$ is a division ring.
\end{enumerate}
\end{proof}
\end{proposition}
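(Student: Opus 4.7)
For (i), I would just use the fact recalled in the discussion of Petit's construction that $S_f$ is associative whenever $f$ is right-invariant; then every associator vanishes, so ${\rm Nuc}_r(S_f)=S_f$, and intersecting with $D$ gives $L=D$ at once.

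For (ii), the first move is to rewrite $L={\rm Nuc}(S_f)$ via Lemma \ref{C5.1: Lemma 1}. Because the nucleus of any unital nonassociative $F$-algebra is known to be an associative subalgebra containing $F$, both the subring property of $L$ and the containment $F\subseteq L$ come for free, and the only real task is to show that $L$ is closed under inversion in $D$. So I fix a nonzero $c\in L$, take its inverse $c^{-1}\in D$, and try to verify $[a,b,c^{-1}]=0$ for arbitrary $a,b\in S_f$, which will give $c^{-1}\in{\rm Nuc}_r(S_f)\cap D=L$.

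The key calculation I would run is a two-step right-multiplication of the associator $[a,b,c^{-1}]$. First, multiply by $c$ on the right: since $c\in{\rm Nuc}_r(S_f)$, I can reassociate the two summands $(a(bc^{-1}))c$ and $((ab)c^{-1})c$ freely, and (using $c^{-1}c=1$ in $D$, and hence in $S_f$) both collapse to $ab$, yielding $[a,b,c^{-1}]c=0$. Second, multiply this equation by $c^{-1}$ on the right and use that $f$ is not right-invariant, so by Theorem \ref{Intro: Theorem 3} we have $D={\rm Nuc}_m(S_f)$ and in particular $c$ can be slid across $c^{-1}$; this rewrites $([a,b,c^{-1}]c)c^{-1}$ as $[a,b,c^{-1}](cc^{-1})=[a,b,c^{-1}]$, forcing the associator itself to be zero.

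The main subtlety is purely organisational: at each reassociation one has to specify which nucleus membership (of $c$, or of $c$ as an element of $D\subseteq{\rm Nuc}_m$) is being invoked, and one has to observe explicitly that the identity $cc^{-1}=1$ holds in $S_f$ (true because a product of two elements of $D$ stays of degree $<m$, so no reduction mod $f$ intervenes). Once this bookkeeping is in order, the argument is mechanical; the only nontrivial input is Theorem \ref{Intro: Theorem 3}, which supplies ${\rm Nuc}_m(S_f)=D$ when $f$ is not right-invariant.
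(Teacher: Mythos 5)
Your proposal is correct and follows essentially the same route as the paper: part (i) via associativity of $S_f$, and part (ii) by identifying $L$ with ${\rm Nuc}(S_f)$ through Lemma \ref{C5.1: Lemma 1} and then proving closure under inversion with exactly the two-step right-multiplication of $[a,b,c^{-1}]$ by $c$ and then $c^{-1}$, invoking $c\in{\rm Nuc}_r(S_f)$ and ${\rm Nuc}_m(S_f)=D$ at the same points the paper does. Your extra remark that $cc^{-1}=1$ holds in $S_f$ because products of elements of $D$ need no reduction mod $f$ is a sensible piece of bookkeeping that the paper leaves implicit.
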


\begin{corollary}\label{C5.1: Corollary 1}
If $D$ is a field and $f$ not right-invariant, then $L = {\rm Nuc}(S_f)$ is an intermediate field of $D/F$.
\end{corollary}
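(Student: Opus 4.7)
The plan is to invoke Proposition \ref{C5.1: Proposition 1}(ii) directly and then use the extra hypothesis that $D$ is commutative to upgrade the conclusion from ``division subring'' to ``subfield''.

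First I would note that by Lemma \ref{C5.1: Lemma 1}, since $f$ is not right-invariant, we have $L = {\rm Nuc}(S_f) = D \cap {\rm Nuc}_r(S_f)$. Then Proposition \ref{C5.1: Proposition 1}(ii) applies immediately to give that $L$ is a division subring of $D$ containing $F$. So the only thing left to verify is that $L$ is a field, i.e. that multiplication in $L$ is commutative.

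This is the easy step: $L$ is a subring of $D$, and by hypothesis $D$ is a field, so in particular $D$ is commutative. Commutativity is inherited by subrings, hence any two elements $a,b \in L$ satisfy $ab = ba$. Combined with the division property, $L$ is a field. The inclusions $F \subseteq L \subseteq D$ (the left inclusion from Proposition \ref{C5.1: Proposition 1}(ii), the right from the definition of $L$) show that $L$ is an intermediate field of the extension $D/F$, completing the proof.

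There is no substantive obstacle here; the corollary is a direct specialisation of Proposition \ref{C5.1: Proposition 1}(ii) to the commutative setting, and the argument amounts to observing that a commutative division ring is a field.
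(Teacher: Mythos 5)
Your proof is correct and matches the paper's (implicit) reasoning exactly: the corollary is stated without proof as an immediate consequence of Proposition \ref{C5.1: Proposition 1}(ii), and your observation that a division subring of a field is commutative, hence an intermediate field containing $F$, is precisely the intended argument.
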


We call $a \in D$ a {\it (right) semi-invariant element} with respect to $f$ (or $f$-semi-invariant) if $fa \in Df$. Let $M$ be a division subring of $D$. We say that $f$ is {\it $M$-weak semi-invariant} if $fM \subset Df$. If $f$ is $D$-weak semi-invariant, it is called {\it (right) semi-invariant}. It is clear that $fb \in Df$ for any $b \in L$. Hence any polynomial $f \in R$ is $L$-weak semi-invariant. A polynomial $f \in R$ is right-invariant if and only if it is (right) semi-invariant, and $ft \in Rf$ (e.g. \cite{lam1989invariant}). We can now restate this criteria in terms of $f$-semi-invariant elements, and $L$.

\begin{proposition}\label{C5.1: Proposition 2}
$f$ is right-invariant if and only if $L = D$ and $t \in {\rm Nuc}_r(S_f)$.
\begin{proof}
We recall that $f$ is right-invariant if and only if $S_f$ is associative.\\
Suppose that $f$ is right-invariant. Then $L=D$ by Proposition \ref{C5.1: Proposition 1}. As previously stated, if $f$ is right-invariant then $S_f$ is associative, which in turn implies that $S_f$ is equal to its right nucleus. Hence $t \in S_f$ yields $t \in {\rm Nuc}_r(S_f)$.
Conversely, suppose that $L = D$ and $t \in {\rm Nuc}_r(S_f)$. Since $L$ is contained in the right nucleus, and the right nucleus is closed under addition and multiplication the direct sum $$D \oplus Dt \oplus Dt^2 \oplus \dots \oplus Dt^{m-1}$$ is an $F$-subspace of ${\rm Nuc}_r(S_f)$. But $D \oplus Dt \oplus Dt^2 \oplus \dots \oplus Dt^{m-1}$ is equal to $S_f$ as vector spaces, hence $S_f \subseteq {\rm Nuc}_r(S_f)$. Since ${\rm Nuc}_r(S_f)$ is a subalgebra of $S_f$, $S_f={\rm Nuc}_r(S_f)$, i.e. $S_f$ is associative. The result follows immediately.
\end{proof}
\end{proposition}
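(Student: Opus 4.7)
The plan is to show both directions by reducing to the known criterion that $S_f$ is associative if and only if $f$ is right-invariant (which is encoded in Theorem \ref{Intro: Theorem 3} and the surrounding discussion).

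First I would handle the forward direction, which is essentially immediate: if $f$ is right-invariant then $S_f$ is associative, so ${\rm Nuc}_r(S_f)=S_f$. This gives $L=D\cap S_f=D$ on the one hand, and $t\in S_f={\rm Nuc}_r(S_f)$ on the other. (Alternatively one may invoke Proposition \ref{C5.1: Proposition 1}(i) directly for $L=D$.)

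For the reverse direction, assume $L=D$ and $t\in{\rm Nuc}_r(S_f)$. The key observation is that ${\rm Nuc}_r(S_f)$ is closed under the multiplication of $S_f$ (it is an associative subalgebra). Hence it contains every power $t^i$ obtained by iteratively multiplying $t$ with itself inside $S_f$, and also every product $ct^i$ with $c\in D=L\subseteq{\rm Nuc}_r(S_f)$. Since $S_f$ is the $F$-linear span (equivalently, the left $D$-module) of $1,t,\ldots,t^{m-1}$, this gives the inclusion of $F$-vector spaces
\[
S_f=\bigoplus_{i=0}^{m-1}Dt^i\subseteq{\rm Nuc}_r(S_f).
\]
Combined with the reverse inclusion ${\rm Nuc}_r(S_f)\subseteq S_f$, this forces $S_f={\rm Nuc}_r(S_f)$. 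But $S_f$ equalling its own right nucleus means exactly that $[a,b,c]=0$ for all $a,b,c\in S_f$, i.e.\ $S_f$ is associative. By Petit's result quoted in the construction of $S_f$, associativity of $S_f$ is equivalent to $f$ being right-invariant in $R$, which concludes the argument.

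I do not expect any genuine obstacle here: the only mild subtlety is making sure that the closure argument in the reverse direction is stated carefully enough, namely that the powers of $t$ used to span $S_f$ really do coincide with the iterated products $t\circ_f t\circ_f\cdots\circ_f t$ computed inside ${\rm Nuc}_r(S_f)$; this is automatic because for $i<m$ no reduction modulo $f$ occurs, so $t\circ_f\cdots\circ_f t=t^i$ as elements of $R_m=S_f$. Everything else is bookkeeping with the definitions of $L$ and ${\rm Nuc}_r(S_f)$.
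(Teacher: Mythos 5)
Your proposal is correct and follows essentially the same route as the paper: the forward direction via associativity of $S_f$ forcing ${\rm Nuc}_r(S_f)=S_f$, and the reverse direction by using closure of the right nucleus under multiplication to show $D\oplus Dt\oplus\cdots\oplus Dt^{m-1}=S_f\subseteq{\rm Nuc}_r(S_f)$. Your added remark that $t\circ_f\cdots\circ_f t=t^i$ for $i<m$ because no reduction modulo $f$ occurs is a nice touch of care that the paper leaves implicit.
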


The following theorem describes equivalent conditions for an element $c \in D$ to be semi-invariant with respect to $f$, and further we show that $L$ is precisely the subset of $D$ consisting of the semi-invariant elements with respect to $f$.

\begin{theorem}\label{C5.1: Theorem 1}
Let $f(t) = t^m - \sum\limits_{i=0}^{m-1}a_it^i \in R$ and $c \in D$. Then the following are equivalent:
\begin{enumerate}
\item $c$ is a semi-invariant element with respect to $f$,
\item $c \in L^{(\sigma,\delta,f)}$,
\item $\sigma^m(c)a_k = \sum\limits_{j=k}^{m-1} a_j\Delta_{j,k}(c) - \Delta_{m,k}(c)$ for all $k \in \lbrace 0,1,\dots,m-1 \rbrace$.
\end{enumerate}
\begin{proof}
To prove this we show first that (1) is true if and only if (2) is true, then we show that (1) is true if and only if (3) holds.\\
\\
Suppose that $c$ is semi-invariant with respect to $f$. Then $fc \in Df \subset Rf$ by definition, i.e. $c \in {\rm Nuc}_r(S_f)$. Hence $c \in {\rm Nuc}_r(S_f) \cap D = L$. Conversely, let $c \in L$. Then, in particular, $c \in {\rm Nuc}_r(S_f)$, and so $fc \in Rf$. This means that there exists $c^{\prime} \in R$ such that $fc = c^{\prime}f$. Comparing the degree of $fc$ and $c^{\prime}f$ it follows that $c^{\prime} \in D$, hence $c$ is a semi-invariant element with respect to $f$. Therefore the statements (1) and (2) are equivalent.\\
\\
Now suppose that $c$ is semi-invariant with respect to $f$. By definition, $fc \in Df$, that is $fc = c^{\prime}f$ for some $c^{\prime} \in D$. Computing each side gives
\begin{align}\label{Equation 1: Calculation of fc=c'f}
fc = (t^m - \sum\limits_{i=0}^{m-1}a_it^i)c = \sum\limits_{j=0}^m \Delta_{m,j}(c)t^j - \sum\limits_{i=0}^{m-1} \sum\limits_{j=0}^i a_i\Delta_{i,j}(c)t^j = c^{\prime}t^m - \sum\limits_{i=0}c^{\prime}a_it^i.
\end{align}
If we compare the coefficients of $t^m$ we get:
\begin{equation}\label{Equation 2: c^prime = sigma^m(c)}
c^{\prime} = \Delta_{m,m}(c) = \sigma^m(c).
\end{equation}
Now, comparing the coefficients of $t^k$ for $k \in \lbrace 0,1,...,m-1 \rbrace$ gives
\begin{align}\label{Equation 3: c^primea_k relation}
&  a_k\Delta_{k,k}(c) + a_{k+1}\Delta_{k+1,k}(c) + \dots + a_{m-1}\Delta_{m-1,k}(c) - \Delta_{m,k}(c) = c^{\prime}a_k.
\end{align}
Combining Equations \ref{Equation 2: c^prime = sigma^m(c)} and \ref{Equation 3: c^primea_k relation} yields
\begin{align}
\sigma^m(c)a_k &= \sum\limits_{i=k}^{m-1} a_i\Delta_{i,k}(c) - \Delta_{m,k}(c)
\end{align}
for all $k \in \lbrace 0,1,\dots,m-1\rbrace$. Conversely, suppose that $$\sigma^m(c)a_k = \sum\limits_{i=k}^{m-1} a_i\Delta_{i,k}(c) - \Delta_{m,k}(c)$$ for all $k \in \lbrace 0,1,\dots,m-1 \rbrace$. Then $$fc = \sum\limits_{j=0}^m \Delta_{m,j}(c)t^j - \sum\limits_{i=0}^{m-1} \sum\limits_{j=0}^i \Delta_{i,j}(c)t^j = \sigma^m(c)f$$ by Equation \ref{Equation 1: Calculation of fc=c'f}. Hence (1) is equivalent to (3).
\end{proof}
\end{theorem}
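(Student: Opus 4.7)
My plan is to prove the three-way equivalence by establishing $(1) \Leftrightarrow (2)$ and $(1) \Leftrightarrow (3)$ separately. The first equivalence unwinds the definition of $L$ together with a degree argument; the second is a direct coefficient comparison using the multiplication rule of $R$.

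For $(1) \Leftrightarrow (2)$, if $fc \in Df$ then in particular $fc \in Rf$, so $c$ lies in the eigenring $\mathcal{E}(f)$, which by Theorem \ref{Intro: Theorem 3}(i) coincides with ${\rm Nuc}_r(S_f)$; combined with $c \in D$ this puts $c \in L$. Conversely, if $c \in L \subseteq \mathcal{E}(f)$, then $fc = gf$ for some $g \in R$; since ${\rm deg}(fc) = m$ and ${\rm deg}(gf) = {\rm deg}(g) + m$, the polynomial $g$ must sit in $D$, and hence $fc \in Df$.

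For $(1) \Leftrightarrow (3)$, I would expand $fc$ directly using the identity $t^j c = \sum_{i=0}^{j}\Delta_{j,i}(c)t^i$ (the special case $a = b = 1$, $k = 0$ of the general multiplication rule in $R$), then collect coefficients of each power of $t$ by swapping the order of the resulting double summation and isolating the leading contribution via $\Delta_{m,m}(c) = \sigma^m(c)$. This should yield
\[ fc = \sigma^m(c)t^m + \sum_{k=0}^{m-1}\Big(\Delta_{m,k}(c) - \sum_{i=k}^{m-1}a_i\Delta_{i,k}(c)\Big)t^k. \]
If $fc = c'f$ with $c' \in D$, matching the $t^m$-coefficient forces $c' = \sigma^m(c)$, and matching the $t^k$-coefficient for each $0 \leq k \leq m-1$ yields exactly the relation in (3) after rearrangement. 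The converse direction is the same computation traced in reverse: the relations in (3) together with the ansatz $c' = \sigma^m(c)$ reconstitute $fc = \sigma^m(c)f$, so $fc \in Df$.

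The main technical obstacle will be the double-summation bookkeeping, namely re-indexing $\sum_{i=0}^{m-1}\sum_{j=0}^{i}$ as $\sum_{j=0}^{m-1}\sum_{i=j}^{m-1}$ and carefully separating the leading $\Delta_{m,m}(c) = \sigma^m(c)$ contribution from the lower $\Delta_{m,k}(c)$ terms, so that the equation $fc = c'f$ decouples cleanly into one identity per power of $t$. Once this clerical step is in place, each direction reduces to coefficient matching in the free left $D$-module $R$ with basis $1,t,\dots,t^{m-1},t^m$.
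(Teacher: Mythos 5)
Your proposal is correct and follows essentially the same route as the paper's proof: the equivalence $(1)\Leftrightarrow(2)$ via the identification $\mathcal{E}(f)={\rm Nuc}_r(S_f)$ (valid under the chapter's standing assumption $m>1$) plus the degree comparison forcing $c'\in D$, and the equivalence $(1)\Leftrightarrow(3)$ via expanding $fc$ with $t^jc=\sum_{i=0}^{j}\Delta_{j,i}(c)t^i$ and matching coefficients, with $\Delta_{m,m}(c)=\sigma^m(c)$ pinning down $c'$. Your re-indexed display for $fc$ agrees with the paper's Equation for $fc=c'f$ after swapping the order of summation, so no gap remains.
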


Due to Theorem \ref{C5.1: Theorem 1}, we sometimes refer to $L$ as the subring of semi-invariant elements in $D$ with respect to the skew polynomial $f$. This means that if $f$ is not right-invariant, then the set of semi-invariant elements in $D$ with respect to $f$ is the nucleus of $S_f$. Petit provides the following result in \cite{petit1966certains}:

\begin{theorem} \cite[(5)]{petit1966certains} \label{C5.1: Theorem 2}
Let $f(t) = t^m - \sum_{i=0}^{m-1}a_it^i \in R = D[t;\sigma,\delta].$ Then the following are equivalent:
\begin{enumerate}
\item $t \in {\rm Nuc}_r(S_f)$,
\item $t^m \circ_f t = t \circ_f t^m$,
\item $ft \in Rf$,
\item all powers of $t$ are associative in $S_f$.
\end{enumerate}
\end{theorem}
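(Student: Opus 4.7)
My plan is to close the cycle $(3) \Rightarrow (1) \Rightarrow (4) \Rightarrow (2) \Rightarrow (3)$. Three of the four arrows come essentially for free from material already in the excerpt; the real content lives in the last implication.

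The equivalence $(1) \Leftrightarrow (3)$ is a direct consequence of Theorem \ref{Intro: Theorem 3}(i): since ${\rm Nuc}_r(S_f) = \mathcal{E}(f) = \{g \in R : \deg g < m \text{ and } fg \in Rf\}$ and $\deg t = 1 < m$ (recall $m > 1$), the statements $t \in {\rm Nuc}_r(S_f)$ and $ft \in Rf$ are literally the same. For $(1) \Rightarrow (4)$, I would use that ${\rm Nuc}_r(S_f)$ is an associative subalgebra of $S_f$; once $t$ lies in it, so do all its $\circ_f$-powers, and hence those powers associate amongst themselves. The implication $(4) \Rightarrow (2)$ is the trivial instance of this associativity applied to $t$ and $t^m$.

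The main step is $(2) \Rightarrow (3)$, which I would prove by explicit calculation. The key technical observation is that the element $t^m$ of $S_f$ is unambiguously $T := \sum_{i=0}^{m-1} a_i t^i$, since $t^m \equiv \sum a_i t^i \pmod{Rf}$, independently of any bracketing. I would expand both $t \circ_f T$ and $T \circ_f t$ by repeatedly applying $ta = \sigma(a)t + \delta(a)$, and then reduce the single $t^m$ term that appears on each side using the identification $t^m \equiv T$ once more. In parallel, I would carry out one step of right Euclidean division, writing $ft = q(t) f + r(t)$ with $\deg r < m$, where $q$ is a monic linear polynomial whose constant term is forced by cancelling the leading $t^m$ contribution of $ft - tf$; this yields an explicit formula for $r$ in terms of the quantities $\sigma(a_{j-1}) - a_{j-1}$, $\sigma(a_{m-1}) - a_{m-1}$, and $\delta(a_j)$.

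Comparing coefficient of $t^j$ for $j = 0, 1, \dots, m-1$, the difference $T \circ_f t - t \circ_f T$ turns out to coincide, up to an overall sign, with the remainder $r(t)$. Consequently the condition $(2)$ that this difference vanish is precisely the system of relations $r = 0$, which is the condition $ft \in Rf$, i.e.\ $(3)$. The only real obstacle is the bookkeeping: one has to re-index carefully after applying $ta = \sigma(a)t + \delta(a)$ and again after the mod-$f$ reduction, so that the coefficient-by-coefficient match with $r$ is transparent. Beyond this calculation the argument is a direct unpacking of definitions together with the eigenring identification of Theorem \ref{Intro: Theorem 3}(i).
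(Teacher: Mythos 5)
Your proposal is correct. Note that the thesis itself gives no proof of this statement --- it is quoted from Petit's paper --- so there is nothing internal to compare against; your argument is a sound reconstruction. The cycle $(3)\Rightarrow(1)\Rightarrow(4)\Rightarrow(2)\Rightarrow(3)$ works: $(1)\Leftrightarrow(3)$ is immediate from ${\rm Nuc}_r(S_f)=\mathcal{E}(f)$ since $\deg t=1<m$; the observation that $t^m$ is unambiguously $T=\sum_{i=0}^{m-1}a_it^i$ is valid because every proper sub-product of $m$ copies of $t$ has degree below $m$ and so no reduction occurs before the last multiplication; and the key identity behind $(2)\Leftrightarrow(3)$ is exactly $ft-tf=tT-Tt$ in $R$, so that $ft\,{\rm mod}_r f=(tT-Tt)\,{\rm mod}_r f=t\circ_f T-T\circ_f t$, making the vanishing of the remainder of $ft$ equivalent to condition $(2)$.
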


We obtain the following additional criterium for $t$ to be contained in the right nucleus:

\begin{theorem}\label{C5.1: Theorem 3}
Let $f(t) = t^m - \sum\limits_{i=0}^{m-1}a_it^i \in R$. If $a_i \in {\rm Fix}(\sigma) \cap {\rm Const}(\delta)$ for all $i \in \lbrace 0,1,...,m-1 \rbrace$, then $t \in {\rm Nuc}_r(S_f)$.
\end{theorem}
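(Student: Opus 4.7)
The plan is to invoke Theorem \ref{C5.1: Theorem 2}, which reduces the claim $t\in{\rm Nuc}_r(S_f)$ to the single condition $ft\in Rf$. So the only thing to check is that right multiplying $f$ by $t$ lands inside the left ideal $Rf$, and under the hypothesis $a_i\in{\rm Fix}(\sigma)\cap{\rm Const}(\delta)$ this will follow because the coefficients commute past $t$ on both sides in the same way.

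Concretely, I would compute $ft$ and $tf$ separately and show they are equal. On the one side,
\[
ft=\Bigl(t^m-\sum_{i=0}^{m-1}a_it^i\Bigr)t=t^{m+1}-\sum_{i=0}^{m-1}a_it^{i+1}.
\]
On the other side, using the commutation rule $ta=\sigma(a)t+\delta(a)$ together with $\sigma(a_i)=a_i$ and $\delta(a_i)=0$, each term $ta_i$ collapses to $a_it$, so that
\[
tf=t^{m+1}-\sum_{i=0}^{m-1}ta_it^i=t^{m+1}-\sum_{i=0}^{m-1}(\sigma(a_i)t+\delta(a_i))t^i=t^{m+1}-\sum_{i=0}^{m-1}a_it^{i+1}.
\]
Hence $ft=tf$, and since $tf=1\cdot f\cdot $ (with a left factor of $t$ in front of $f$) we trivially have $tf\in Rf$. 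Equivalently, one may simply observe $ft=tf\in Rf$.

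Therefore $ft\in Rf$, and Theorem \ref{C5.1: Theorem 2} gives $t\in{\rm Nuc}_r(S_f)$. There is no genuine obstacle here; the entire content is the observation that fixing the coefficients by both $\sigma$ and $\delta$ is exactly what is needed to push $t$ through $f$ unchanged. I would present the proof in one short paragraph with the two displayed calculations above and a final sentence citing Theorem \ref{C5.1: Theorem 2}.
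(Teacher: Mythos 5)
Your proof is correct and follows essentially the same route as the paper: both reduce the claim to $ft\in Rf$ via Theorem \ref{C5.1: Theorem 2} and then verify $ft=tf$ using that the coefficients are fixed by $\sigma$ and annihilated by $\delta$. Your version of the computation (pushing $t$ through $f$ from the left via $ta_i=\sigma(a_i)t+\delta(a_i)=a_it$) is in fact slightly cleaner than the paper's, which rewrites $a_it$ as $t\sigma^{-1}(a_i)-\delta(\sigma^{-1}(a_i))$ and so implicitly invokes $\sigma^{-1}$.
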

\begin{proof}
Suppose that $a_i \in {\rm Fix}(\sigma) \cap {\rm Const}(\delta)$. Then $$ft = (t^m-\sum\limits_{i=0}^{m-1}a_it^i)t = t^{m+1} - \sum\limits_{i=0}^{m-1}a_it^{i+1} = t^{m+1} - \sum\limits_{i=0}^{m-1}(t\sigma^{-1}(a_i) - \delta(\sigma^{-1}(a_i)))t^i,$$ but $a_i \in {\rm Fix}(\sigma) \cap {\rm Const}(\delta)$ for all $i \in \lbrace 0,1,\dots,m-1\rbrace$, which gives $\sigma^{-1}(a_i) = a_i$ and $\delta(\sigma^{-1}(a_i))=\delta(a_i)=0$ for all $i$. So $$ft = t^{m+1} - \sum\limits_{i=0}^{m-1}(t\sigma^{-1}(a_i) - \delta(\sigma^{-1}(a_i)))t^i = t(t^m-\sum\limits_{i=0}^{m-1}a_it^i) = tf \in Rf.$$ Hence $t \in {\rm Nuc}_r(S_f)$ by Theorem \ref{C5.1: Theorem 2}.
\end{proof}

From now on, we denote by $F_k$ the set ${\rm Fix}(\sigma^k) \cap {\rm Const}(\delta)$ for any $k \in \mathbb{Z}$. We obtain the following generalization of Theorem \ref{C5.1: Theorem 3}:

\begin{theorem}\label{C5.1: Theorem 4}
Let $f(t) = t^m - \sum\limits_{i=0}^{m-1} a_it^i \in R$ and let $k \in \lbrace 1,2,\dots,m-1 \rbrace$. If $a_i \in F_k$ for all $i \in \lbrace 0,1,\dots,m-1 \rbrace$, then $t^k \in {\rm Nuc}_r(S_f)$. In particular, then
$$t^m \circ_f t^k = t^k \circ_f t^m.$$
\end{theorem}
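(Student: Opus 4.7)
The plan is to establish the stronger identity $ft^k = t^k f$ inside $R$; since this gives $ft^k\in Rf$ and $t^k\in R_m$, Theorem~\ref{Intro: Theorem 3}(i) (applicable because $m\ge 2$) yields $t^k\in\mathcal{E}(f)={\rm Nuc}_r(S_f)$. Expanding both products, the top-degree terms $t^{m+k}$ cancel, and the identity $ft^k=t^k f$ reduces to the coefficient-wise equalities
\[
t^k a_i \;=\; a_i\, t^k \qquad (i=0,1,\dots,m-1).
\]

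For each $i$, the commutation rule $t^k a = \sum_{j=0}^{k}\Delta_{k,j}(a)\,t^j$ recasts $t^k a_i = a_i t^k$ as the joint conditions $\Delta_{k,k}(a_i)=a_i$ together with $\Delta_{k,j}(a_i)=0$ for all $0\le j<k$. The first is immediate from $\Delta_{k,k}=\sigma^k$ and $a_i\in{\rm Fix}(\sigma^k)$. For the remaining identities I would proceed by induction on $k$, using the recursion $\Delta_{k,j}=\delta\,\Delta_{k-1,j}+\sigma\,\Delta_{k-1,j-1}$: the hypothesis $\delta(a_i)=0$, combined with $\sigma^k(a_i)=a_i$, is designed to force every composition appearing in $\Delta_{k,j}$ for $j<k$ to annihilate $a_i$. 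After this coefficient check, the equality $ft^k-t^k f=\sum_i(t^k a_i - a_i t^k)t^i = 0$ is immediate, so $ft^k\in Rf$ as required.

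For the ``in particular'' assertion, once $t^k\in{\rm Nuc}_r(S_f)$ has been established it follows formally. Since $1\le m-k\le m-1$, the monomial $t^{m-k}$ lies in $S_f$, and both $t^{m-k}\circ_f t^k$ and $t^k\circ_f t^{m-k}$ equal $t^m\,{\rm mod}_r\,f$, that is the element of $S_f$ denoted $t^m$. Applying the right-associativity provided by $t^k\in{\rm Nuc}_r(S_f)$,
\[
t^k\circ_f t^m \;=\; t^k\circ_f(t^{m-k}\circ_f t^k) \;=\; (t^k\circ_f t^{m-k})\circ_f t^k \;=\; t^m\circ_f t^k.
\]

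The principal obstacle is the combinatorial verification that $\Delta_{k,j}(a_i)=0$ for all $0\le j<k$. This is the technical heart of the proof: it requires a careful inductive bookkeeping of how the alternating applications of $\sigma$ and $\delta$ in the recursive definition of $\Delta_{k,j}$ act on $a_i$, leveraging $\delta(a_i)=0$ and $\sigma^k(a_i)=a_i$ to ensure that all the mixed $\sigma$--$\delta$ terms collapse to zero, leaving only $\Delta_{k,k}(a_i)=\sigma^k(a_i)=a_i$.
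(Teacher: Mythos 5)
Your proposal follows the paper's own route exactly: reduce $ft^k=t^kf$ in $R$ to the coefficient identities $t^ka_i=a_it^k$, i.e.\ $\Delta_{k,k}(a_i)=\sigma^k(a_i)=a_i$ together with $\Delta_{k,j}(a_i)=0$ for $0\le j<k$, and then derive the ``in particular'' statement from the vanishing associator $[t^k,t^{m-k},t^k]=0$ --- this last step is word-for-word the paper's argument. The one step you defer, the vanishing of $\Delta_{k,j}(a_i)$ for $j<k$, is precisely what the paper disposes of in the single parenthetical ``as $a_i\in{\rm Const}(\delta)$'', so you have correctly isolated the crux; but as written your argument is not complete, and the induction you sketch does not close from the hypotheses $\delta(a_i)=0$ and $\sigma^k(a_i)=a_i$ alone.

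Concretely: for $a\in{\rm Const}(\delta)$, every word in the expansion of $\Delta_{k,j}$ whose rightmost (first-applied) letter is $\delta$ annihilates $a$, so the recursion collapses to $\Delta_{k,j}(a)=\Delta_{k-1,j-1}(\sigma(a))$. Iterating this requires $\sigma(a)\in{\rm Const}(\delta)$ at the next step, then $\sigma^2(a)$, and so on. For instance $\Delta_{2,1}(a)=\delta(\sigma(a))+\sigma(\delta(a))=\delta(\sigma(a))$, which the conditions $\delta(a)=0$ and $\sigma^2(a)=a$ do not force to vanish. What the computation actually needs is that the whole orbit $a_i,\sigma(a_i),\dots,\sigma^{k-1}(a_i)$ consists of $\delta$-constants; only then does the telescoping $\Delta_{k,j}(a_i)=\Delta_{k-j,0}(\sigma^j(a_i))=\delta^{k-j}(\sigma^j(a_i))=0$ go through. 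This is automatic in the two cases where the theorem is invoked in the thesis --- $\delta=0$, where $\Delta_{k,j}=0$ for $j\neq k$ identically, and $\sigma={\rm id}$, where $\Delta_{k,j}=\binom{k}{j}\delta^{k-j}$ and a single application of $\delta$ already gives zero --- but in the mixed case $D[t;\sigma,\delta]$ it is an additional hypothesis, one that the paper's one-line justification also silently assumes. To turn your sketch into a proof, either restrict to those two cases or first establish $\delta(\sigma^l(a_i))=0$ for $0\le l\le k-1$ (e.g.\ by assuming ${\rm Const}(\delta)$ is $\sigma$-stable) before asserting the coefficient identities.
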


\begin{proof}
Suppose that $a_i \in F_k$ for all $i$. Then in $R$, we have
\begin{align*}
t^kf &= t^k(t^m-\sum\limits_{i=0}^{m-1} a_it^i) \\
     &= t^{m+k} - t^k \sum\limits_{i=0}^{m-1} a_it^i \\
     &= t^{m+k} - \sum\limits_{i=0}^{m-1}\sum\limits_{j=0}^{k} \Delta_{k,j}(a_i)t^{i+j} \\
     &= t^{m+k} - \sum\limits_{i=0}^{m-1} \sigma^k(a_i)t^{i+k} \text{ (as $a_i \in {\rm Const}(\delta)$ $\forall i$) }\\
     &= t^{m+k} - \sum\limits_{i=0}^{m-1} a_it^{i+k} \text{ (as $a_i \in {\rm Fix}(\sigma^k)$ $\forall i$) }\\
     &= ft^k \in Rf,
\end{align*}
i.e. $ft^k = t^kf \in Rf$, and so $t^k \in {\rm Nuc}_r(S_f)$ as claimed.\\
\\
Since $t^k \in {\rm Nuc}_r(S_f)$, we have in particular that $[t^k,t^{m-k},t^k]=0$ in $S_f$, that is $t^k\circ_f(t^{m-k}\circ_f t^k)=(t^k\circ_f t^{m-k})\circ_f t^k$. Therefore $t^k \circ_f t^m=t^m \circ_f t^k$.
\end{proof}

\begin{proposition}\label{C5.1: Proposition 3}
Let $f(t) = t^m-\sum\limits_{i=0}^{m-1}a_it^i \in R$. Suppose that there exist $k \in \{1,2,\dots,k-1\}$ such that $a_i \in F_k$ for all $i$ and that there is no $j \in \{1,2,\dots,k-1\}$ such that $a_i \in F_j$ for all $i$.
\\ (i) If $m=qk$ for some positive integer $q$, then
$$L \oplus Lt^k \oplus Lt^{2k} \oplus \dots \oplus Lt^{(q-1)k}\oplus L(\sum_{i=0}^{m-1} a_it^i) $$
is an $F$-sub vector space of ${\rm Nuc}_r(S_f).$
\\ (ii) If $m = qk + r$ for some positive integers $q,r$ with $0 < r < k$, then $$L \oplus Lt^k \oplus Lt^{2k} \oplus \dots \oplus Lt^{qk}$$
is an $F$-sub vector space of $  {\rm Nuc}_r(S_f).$
\end{proposition}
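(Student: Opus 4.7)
The argument combines Theorem~\ref{C5.1: Theorem 4} with the closure of ${\rm Nuc}_r(S_f)$ as an associative subalgebra of $S_f$ under $\circ_f$ and $F$-linear combinations. First I would apply Theorem~\ref{C5.1: Theorem 4} directly: the hypothesis $a_i \in F_k$ for every $i$ places $t^k \in {\rm Nuc}_r(S_f)$. Iterating $\circ_f$ keeps us inside the nucleus, so the $j$-fold product $t^k \circ_f \cdots \circ_f t^k$ lies in ${\rm Nuc}_r(S_f)$ for every $j \geq 1$. Whenever $jk < m$ no reduction modulo $f$ is required, and this product is simply $t^{jk}$. In case (ii), where $qk = m - r < m$, this covers $j = 1, \dots, q$; in case (i), where $m = qk$, it covers $j = 1, \dots, q-1$, and the remaining $q$-fold product is
\[
t^{qk}\,{\rm mod}_r f \;=\; t^m\,{\rm mod}_r f \;=\; \sum_{i=0}^{m-1} a_i t^i,
\]
so $\sum_{i=0}^{m-1} a_i t^i \in {\rm Nuc}_r(S_f)$ as well.

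Next, since $L = {\rm Nuc}_r(S_f) \cap D$ sits inside the nucleus by definition, closure of the nucleus under $\circ_f$ yields $L \circ_f t^{jk} \subseteq {\rm Nuc}_r(S_f)$ for each allowed $j$, and in case (i) also $L \circ_f \bigl(\sum_{i=0}^{m-1} a_i t^i\bigr) \subseteq {\rm Nuc}_r(S_f)$. For $\ell \in L \subseteq D$ and any $p \in S_f$ of degree less than $m$ one has $\ell \circ_f p = \ell p$ (no reduction is needed), so these products are precisely $L t^{jk}$ and $L(\sum a_i t^i)$ respectively. Since ${\rm Nuc}_r(S_f)$ is an $F$-subspace, the sum of all these subspaces sits inside the right nucleus, giving the containment claimed in the proposition.

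The main obstacle is to verify that the indicated sum is \emph{direct}, as the notation $\oplus$ suggests. In case (ii) this is transparent, since the subspaces $L t^{jk}$ live at pairwise distinct degrees $0, k, 2k, \dots, qk$, each strictly less than $m$, so any $F$-linear relation forces each $L$-coefficient to vanish. Case (i) is more delicate, because the summand $L(\sum a_i t^i)$ can have nontrivial components at every degree in $\{0, 1, \dots, m-1\}$ and hence overlaps the $L t^{jk}$ factors. The plan would be to inspect the coefficients of $t^i$ at indices $i$ not divisible by $k$, where only this last summand contributes, and to use that $L$ is a division ring (Proposition~\ref{C5.1: Proposition 1}) together with the minimality of $k$ in the hypothesis to force the $L$-scalar multiplying $\sum a_i t^i$ to vanish in any null relation, reducing the problem to the directness already established among the $L t^{jk}$. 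This final coefficient-chasing step, which is the one that genuinely uses the minimality clause, is the piece I expect to demand the most care.
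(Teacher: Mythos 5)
Your containment argument is exactly the paper's proof: Theorem \ref{C5.1: Theorem 4} places $t^k$ in ${\rm Nuc}_r(S_f)$, closure of the right nucleus under multiplication gives $t^{2k},\dots,t^{(q-1)k}$ and, in case (i), $(t^k)^{q}=t^m\,{\rm mod}_r f=\sum_{i=0}^{m-1}a_it^i$, and then $L\subseteq {\rm Nuc}_r(S_f)$ yields the containment of the whole sum. The paper stops there; it never addresses whether the sum is direct, so for the part the paper actually proves your argument is complete and identical in approach.

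Your instinct to flag directness in case (i) is sound, but the plan you sketch will not close it: the minimality hypothesis on $k$ constrains which fields $F_j$ contain the coefficients, not which coefficients are nonzero, so it cannot produce a nonzero $a_i$ with $i$ not divisible by $k$, and without such an index your coefficient-chasing step has nothing to grab. Concretely, let $K/F$ be cyclic of degree $4$ with ${\rm Gal}(K/F)=\langle\sigma\rangle$ and take $f(t)=t^4-at^2\in K[t;\sigma]$ with $a\in{\rm Fix}(\sigma^2)\setminus F$. Then $k=2$ is minimal in the sense of the hypothesis, $q=2$, and by Proposition \ref{C5.2: Proposition 1} one has $L={\rm Fix}(\sigma^{4-2})={\rm Fix}(\sigma^2)\ni a$, so $L(at^2)=Lt^2$ and the sum $L+Lt^2+L(at^2)$ of case (i) is not direct. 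So either the symbol $\oplus$ in (i) must be read loosely as a sum of $F$-subspaces of ${\rm Nuc}_r(S_f)$ --- which is all the paper's own proof establishes --- or the statement needs an extra hypothesis (for instance that some $a_i$ with $i$ not divisible by $k$ is nonzero, in which case your argument does force the last scalar to vanish and directness follows from the degree grading as in case (ii)).
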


\begin{proof}
 (i) Since $a_i \in F_k$, we have that $t^k \in {\rm Nuc}_r(S_f)$ by Theorem \ref{C5.1: Theorem 4}. The right nucleus is a subalgebra of $S_f$, which  implies that $t^{2k},\dots,t^{(q-1)k}, (t^k)^q = t^m = \sum_{i=0}^{m-1} a_it^i  \in {\rm Nuc}_r(S_f)$. Furthermore, we know that $L \subset  {\rm Nuc}_r(S_f)$, and so
$Lt^{jk} \subset  {\rm Nuc}_r(S_f)$ for any $j \in \lbrace 0,1,\dots,q \rbrace$. Therefore
   $$L \oplus Lt^k \oplus \dots \oplus Lt^{(q-1)k} \oplus L (\sum_{i=0}^{m-1} a_it^i)\subset  {\rm Nuc}_r(S_f)$$
    as claimed.
\\ (ii) We have $t^k \in {\rm Nuc}_r(S_f)$. Again since ${\rm Nuc}_r(S_f)$ is a subalgebra of $S_f$, this implies that $t^{2k},\dots t^{qk}, t^{(q+1)k},\dots \in {\rm Nuc}_r(S_f)$, hence the assertion follows by a similar argument to the proof of (i).
\end{proof}

Note that the powers $t^{qk}, t^{(q+1)k}, t^{(q+2)k},\dots$ of $t^k$ in Proposition \ref{C5.1: Proposition 3} (ii) lie in ${\rm Nuc}_r(S_f)$, but they need not be equal to polynomials in $t^k$, since $qk,(q+1)k,(q+2)k,\dots\phantom{}\geq m$.

\begin{lemma}\label{C5.1: Lemma 2}
Suppose that $f(t) = t^m - \sum\limits_{i=0}^{m-1} a_it^i \in R$ with $a_i \in F_1$ for all $i \in \lbrace 0,1,\dots,m-1 \rbrace$. If $\sigma\vert_{L}$ and $\delta\vert_{L}$ commute (i.e. $\sigma(\delta(c)) = \delta(\sigma(c))$ for all $c \in L$), then $\sigma\vert_{L}$ is an injective ring endomorphism of $L$ and $\delta\vert_{L}$ is a left $\sigma\vert_{L}$-derivation of $L$.
\end{lemma}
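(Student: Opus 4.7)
The plan is first to reduce the lemma to verifying the two inclusions $\sigma(L) \subseteq L$ and $\delta(L) \subseteq L$. Once these hold, the ring-endomorphism property of $\sigma|_L$ (together with injectivity, inherited from the injectivity of $\sigma$ on $D$) and the Leibniz rule for $\delta|_L$ follow by restriction from $D$: every term of $\sigma(x)\delta(y) + \delta(x) y$ for $x, y \in L$ then lies in $L$, so $\delta|_L$ is a left $\sigma|_L$-derivation. Thus the substance of the lemma is the two inclusions.

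To verify the inclusions I would invoke Theorem~\ref{C5.1: Theorem 1}: $c \in L$ iff $fc = \sigma^m(c) f$ in $R$. The hypothesis $a_i \in F_1$ also gives $ft = tf$ (equivalently $t \in {\rm Nuc}_r(S_f)$, cf.\ Theorem~\ref{C5.1: Theorem 3}). For $c \in L$, left-multiplying $fc = \sigma^m(c) f$ by $t$ and using $ft = tf$, $tc = \sigma(c) t + \delta(c)$, and $t\sigma^m(c) = \sigma^{m+1}(c) t + \delta\sigma^m(c)$ produces the key identity
\[
\bigl(f\sigma(c) - \sigma^{m+1}(c) f\bigr)\, t \;=\; \delta\sigma^m(c)\, f - f\delta(c) \qquad (\star)
\]
in $R$. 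The target is to deduce from $(\star)$ that both sides vanish: the vanishing of the left-hand side gives $\sigma(c) \in L$, and the vanishing of the right-hand side, once combined with $\delta\sigma^m(c) = \sigma^m\delta(c)$, gives $f\delta(c) = \sigma^m(\delta(c))\, f$, i.e.\ $\delta(c) \in L$.

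The main obstacle is that the commutation hypothesis is given only on $L$, whereas the identity $\delta\sigma^m(c) = \sigma^m\delta(c)$ requires iterating the commutation through $\sigma(c), \sigma^2(c), \dots, \sigma^{m-1}(c)$, which a priori need not lie in $L$. I would overcome this by proving $\sigma(L) \subseteq L$ and $\delta(L) \subseteq L$ by a simultaneous induction, establishing at stage $k$ that $\sigma^k(c) \in L$ together with the commutation $\sigma^k\delta(c) = \delta\sigma^k(c)$; the hypothesis then applies to $\sigma^{k-1}(c) \in L$ at each step and feeds the next stage. Once $\sigma^m\delta(c) = \delta\sigma^m(c)$ is in hand, a degree-by-degree comparison of coefficients in $(\star)$ (starting from the coefficient of $t^m$, which on the right becomes zero) propagates to force both sides of $(\star)$ to vanish, yielding the two inclusions.
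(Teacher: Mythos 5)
Your reduction to the two inclusions $\sigma(L)\subseteq L$ and $\delta(L)\subseteq L$ is correct, the identity $(\star)$ is correctly derived, and the simultaneous induction giving $\delta\sigma^k(c)=\sigma^k\delta(c)$ from commutation on $L$ is a sensible way to handle a subtlety that the paper in fact glosses over. The gap is the final step: the claim that a coefficient comparison in $(\star)$ forces both sides to vanish is unjustified, and degree bookkeeping alone will not deliver it. Write $g=f\sigma(c)-\sigma^{m+1}(c)f$ and $r=\sigma^m(\delta(c))f-f\delta(c)$; both have degree at most $m-1$ because the $t^m$-coefficients cancel. The relation $gt=r$ then yields exactly three pieces of information: $g_{m-1}=0$ (coefficient of $t^m$), $r_0=0$ (constant term), and the coupling $g_{i-1}=r_i$ for $1\le i\le m-1$. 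Nothing propagates: $g_{m-1}=0$ says nothing about $g_{m-2}$, and the coupling merely trades one unknown coefficient for another, so $g=r=0$ is not forced. Since $\sigma(c)\in L$ is equivalent to $g=0$ and $\delta(c)\in L$ is equivalent to $r=0$ (both because $\deg<m$ forces a left multiple of $f$ to be zero), neither inclusion is established. Worse, your induction needs $\sigma^{k}(c)\in L$ at each stage in order to invoke the commutation hypothesis at the next, so the entire argument collapses onto this one unproved vanishing claim.

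The paper takes a different and decoupled route: it starts from the $m$ scalar identities $\sigma^m(c)a_k=\sum_{j=k}^{m-1}a_j\Delta_{j,k}(c)-\Delta_{m,k}(c)$ of Theorem \ref{C5.1: Theorem 1} and applies $\sigma$ (respectively $\delta$) to them directly; using $a_j\in{\rm Fix}(\sigma)\cap{\rm Const}(\delta)$ and the commutation to rewrite $\sigma(\Delta_{j,k}(c))$ as $\Delta_{j,k}(\sigma(c))$ and $\delta(\Delta_{j,k}(c))$ as $\Delta_{j,k}(\delta(c))$, it obtains the full membership criterion for $\sigma(c)$ and for $\delta(c)$ as two independent systems, rather than one polynomial identity in which the two conclusions are entangled. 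If you wish to salvage your route you would need a second relation that separates $g$ from $r$; left-multiplying $fc=\sigma^m(c)f$ by higher powers of $t$ only reproduces linear combinations of the same coupled data, so some version of the coefficient-level argument appears unavoidable.
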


\begin{proof}
Let $c \in L$. Clearly, if $c = 0$, then $\sigma(c),\delta(c) \in L$, so we assume that $c \neq 0$. By Theorem \ref{C5.1: Theorem 1}
\begin{equation}\label{sigma_L delta_L Equation 1}
\sigma^m(c) a_k = \sum\limits_{j=k}^{m-1} a_j \Delta_{j,k}(c) - \Delta_{m,k}(c)
\end{equation}
for all $k \in \lbrace 0,1,\dots,m-1 \rbrace$. Applying $\sigma$ to both sides of \ref{sigma_L delta_L Equation 1} yields
\begin{equation}
\sigma^{m+1}(c) a_k = \sum\limits_{j=k}^{m-1} a_j \sigma(\Delta_{j,k}(c)) - \sigma(\Delta_{m,k}(c))
\end{equation}
as $a_i \in {\rm Fix}(\sigma)$ for all $i$. Now, since $\sigma(\delta(c)) = \delta(\sigma(c))$ we have
\begin{equation}
\sigma^m(\sigma(c)) a_k = \sum\limits_{j=k}^{m-1} a_j \Delta_{j,k}(\sigma(c)) - \Delta_{m,k}(\sigma(c)),
\end{equation}
i.e. $\sigma(c) \in L$. Therefore $\sigma\vert_{L}$ is a ring endomorphism of $L$, which is necessarily injective since $L$ is a division ring.\\
\\
Now we apply $\delta$ to both sides of Equation \ref{sigma_L delta_L Equation 1}, which yields
\begin{align*}
& \delta(\sigma^m(c) a_k) = \delta(\sum\limits_{j=k}^{m-1} a_j \Delta_{j,k}(c) - \Delta_{m,k}(c)) \\
\Rightarrow \,\, & \delta(\sigma^m(c)) a_k = \sum\limits_{j=k}^{m-1} a_j \delta(\Delta_{j,k}(c)) - \delta(\Delta_{m,k}(c)),\text{ (since $\delta(a_k) = 0$ for all $k$)}\\
\Rightarrow \,\, & \sigma^m(\delta(c)) a_k = \sum\limits_{j=k}^{m-1} a_j \Delta_{j,k}(\delta(c)) - \Delta_{m,k}(\delta(c)),
\end{align*}
for all $k \in \lbrace 0,1,\dots,m-1 \rbrace$, that is $\delta(c) \in L$.\\
\\
Finally, for any $a,b \in L$ $$\delta\vert_{L}(ab) = \sigma\vert_{L}(a)\delta\vert_{L}(b) + \delta\vert_{L}(a)b \in L.$$
Hence $\delta\vert_{L}$ is a left $\sigma\vert_{L}$-derivation of $L$.
\end{proof}

\begin{theorem}\label{C5.1: Theorem 5}
Suppose that $\sigma\vert_{L}$ and $\delta\vert_{L}$ commute, and let $f(t) = t^m - \sum_{i=0}^{m-1} a_it^i \in F[t] \subset R$. Write $\sigma=\sigma\vert_{L}$, and $\delta = \delta\vert_{L}$. Then
$$L_f :=  L[t;\sigma,\delta]/L[t;\sigma,\delta]f(t)$$
is a subalgebra of ${\rm Nuc}_r(S_f)$.
\end{theorem}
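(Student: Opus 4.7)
The plan is to produce an injective $F$-algebra homomorphism $\iota : L_f \to {\rm Nuc}_r(S_f)$. First, Lemma \ref{C5.1: Lemma 2} guarantees that $\sigma|_L$ is a ring endomorphism of $L$ and $\delta|_L$ a left $\sigma|_L$-derivation, so $L[t;\sigma,\delta]$ is a bona fide skew polynomial ring. Since each coefficient $a_i$ lies in $F \subseteq L$, the polynomial $f$ lies in $L[t;\sigma,\delta]$ and the quotient $L_f$ is well-defined. Because $\sigma|_L$ and $\delta|_L$ are literally the restrictions of $\sigma$ and $\delta$, the natural inclusion $L[t;\sigma,\delta] \hookrightarrow R = D[t;\sigma,\delta]$ is a ring monomorphism that fixes $t$ and is the identity on $L$.

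Next I would define $\iota$ on representatives: any $g \in L_f$ has a unique representative as a polynomial of degree less than $m$ with coefficients in $L$, and I send it to the same polynomial viewed as an element of $S_f$. This is clearly a well-defined $F$-linear injection on the underlying vector spaces. To check that $\iota$ is multiplicative, take two such representatives $g,h$ and perform right Euclidean division in $L[t;\sigma,\delta]$, obtaining $gh = qf + r$ with $q,r \in L[t;\sigma,\delta]$ and ${\rm deg}(r) < m$. This equation holds verbatim in $R$; by uniqueness of the right division algorithm in $R$, the same $q,r$ furnish the quotient and remainder of $gh$ by $f$ in $R$ as well. Therefore $\iota(g \circ_L h) = r = \iota(g) \circ_f \iota(h)$.

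It remains to prove that $\iota(L_f) \subseteq {\rm Nuc}_r(S_f)$. The hypothesis $f \in F[t]$ gives $a_i \in F \subseteq F_1 = {\rm Fix}(\sigma) \cap {\rm Const}(\delta)$, so Theorem \ref{C5.1: Theorem 3} yields $t \in {\rm Nuc}_r(S_f)$. Since the right nucleus is an associative subalgebra of $S_f$, it is closed under addition and multiplication, so $t^i \in {\rm Nuc}_r(S_f)$ for every $i \geq 0$, and combined with $L = L^{(\sigma,\delta,f)} \subseteq {\rm Nuc}_r(S_f)$ this forces every element $\sum_{i=0}^{m-1} c_i t^i$ with $c_i \in L$ to lie in ${\rm Nuc}_r(S_f)$. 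Thus $\iota$ embeds $L_f$ as a subalgebra of ${\rm Nuc}_r(S_f)$, as required.

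The step that requires the most care is the compatibility of multiplications, where one must justify that right Euclidean division by $f$ produces the same quotient and remainder whether performed in the smaller ring $L[t;\sigma,\delta]$ or in the larger ring $R$; the argument is short but genuinely uses the uniqueness half of the division algorithm together with the inclusion of the two skew polynomial rings. Every other piece is a direct application of Lemma \ref{C5.1: Lemma 2}, Theorem \ref{C5.1: Theorem 3}, and the containment $L \subseteq {\rm Nuc}_r(S_f)$ that defines $L$.
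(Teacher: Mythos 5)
Your proof is correct and follows essentially the same route as the paper: Lemma \ref{C5.1: Lemma 2} makes $L[t;\sigma\vert_L,\delta\vert_L]$ well-defined, $t \in {\rm Nuc}_r(S_f)$ follows from the coefficients lying in ${\rm Fix}(\sigma)\cap{\rm Const}(\delta)$, and $L \oplus Lt \oplus \cdots \oplus Lt^{m-1} \subseteq {\rm Nuc}_r(S_f)$ by closure of the nucleus. The only difference is that you explicitly verify, via uniqueness of the right division algorithm, that the multiplication on $L_f$ agrees with $\circ_f$ restricted from $S_f$ --- a detail the paper compresses into the assertion that $L_f$ is a subalgebra of $S_f$, and which is a worthwhile addition.
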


\begin{proof}
Since $\sigma$ is an injective endomorphism of $L$, and $\delta$ is a left $\sigma$-derivation of $L$, the skew polynomial ring  $L[t;\sigma,\delta]$ is well-defined, $f(t) \in F[t]\subset L[t;\sigma,\delta]$, and hence $ L_f= L[t;\sigma,\delta]/L[t;\sigma,\delta]f(t)$ is a subalgebra of $S_f$.\\
On the other hand, $L$ is contained in the right nucleus by definition, and since $a_i \in F$ for all $i$, we have that $t$ lies in the right nucleus by Theorem \ref{C5.1: Theorem 4}. Thus
$L \oplus Lt\oplus \dots \oplus Lt^{m-1}$ is a subspace of the right nucleus, as ${\rm Nuc}_r(S_f)$ is an algebra, i.e. it is closed under addition and multiplication. Moreover the elements of $L\oplus Lt\oplus \cdots \oplus Lt^{m-1}$ are precisely the elements of $L_f$, that is $L_f \subseteq {\rm Nuc}_r(S_f)$. Therefore $L_f $ is a subalgebra of ${\rm Nuc}_r(S_f)$.
\end{proof}

\section{The Special Case $R=D[t;\sigma]$}
Throughout this section we consider the Petit algebras associated with the polynomial ring $R=D[t;\sigma]$, and their nuclei. As a Corollary to Theorem \ref{C5.1: Theorem 1}, we obtain:

\begin{theorem}\label{C5.2: Theorem 1}
Let $f(t) = t^m - \sum\limits_{i=0}^{m-1}a_it^i \in R$ and $c \in D$. Then $c \in L$ if and only if $\sigma^m(c)a_k = a_k\sigma^k(c)$ for all $k \in \lbrace 0,1,\dots,m-1 \rbrace$.
\begin{proof}
For $\delta = 0$, $\Delta_{j,i} = 0$ for all non-negative integers $i,j$ with $i \neq j$, and $\Delta_{j,j} = \sigma^j$. We apply this to Theorem \ref{C5.1: Theorem 1}.
\end{proof}
\end{theorem}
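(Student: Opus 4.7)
The plan is to specialize Theorem \ref{C5.1: Theorem 1} to the twisted polynomial ring case, which corresponds to setting $\delta = 0$ throughout. Theorem \ref{C5.1: Theorem 1} gives a characterization of $L = L^{(\sigma,\delta,f)}$ valid for the general ring $R = D[t;\sigma,\delta]$, namely that $c \in L$ if and only if
\[
\sigma^m(c)a_k = \sum_{j=k}^{m-1} a_j \Delta_{j,k}(c) - \Delta_{m,k}(c)
\]
for all $k \in \{0,1,\dots,m-1\}$. Since in the present setting $R = D[t;\sigma]$ we have $\delta = 0$, the only task is to evaluate the maps $\Delta_{j,k}$ under this hypothesis and observe that the right-hand side of the above identity collapses to a single term.

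First I would recall from the preliminaries (Section 1.3) that when $\delta = 0$ the maps $\Delta_{j,i} \colon D \to D$ satisfy $\Delta_{j,i} = 0$ whenever $j \neq i$, and $\Delta_{j,j} = \sigma^j$. Substituting this into the sum $\sum_{j=k}^{m-1} a_j \Delta_{j,k}(c)$, every term with $j > k$ vanishes, so only the $j = k$ term remains, contributing $a_k \Delta_{k,k}(c) = a_k \sigma^k(c)$. Similarly, since $k \in \{0,1,\dots,m-1\}$ we have $k \neq m$, and so $\Delta_{m,k}(c) = 0$. Thus the condition of Theorem \ref{C5.1: Theorem 1} reduces in this special case to
\[
\sigma^m(c) a_k = a_k \sigma^k(c) \quad \text{for all } k \in \{0,1,\dots,m-1\},
\]
which is exactly the assertion. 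There is no genuine obstacle here; the proof is a direct specialization, and the only thing to check carefully is the bookkeeping that every off-diagonal $\Delta_{j,k}$ indeed vanishes, which was recorded when the $\Delta_{j,i}$ were introduced.
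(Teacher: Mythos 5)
Your proposal is correct and follows exactly the paper's own argument: specialize Theorem \ref{C5.1: Theorem 1} to $\delta = 0$, where $\Delta_{j,i} = 0$ for $j \neq i$ and $\Delta_{j,j} = \sigma^j$, so the sum collapses to $a_k\sigma^k(c)$ and the term $\Delta_{m,k}(c)$ vanishes. Your version simply writes out the bookkeeping that the paper leaves implicit.
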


From now on we denote the indices of the nonzero coefficients $a_i$ in $f(t) = t^m - \sum_{i=0}^{m-1} a_it^i \in D[t;\sigma]$ by $\lambda_1,\dots,\lambda_{r}$, $1 \leq r \leq m$. We denote the set of these indices by
 $$\Lambda_f = \{ \lambda_1,\lambda_2,\dots,\lambda_{r} \}\subset \lbrace 0,1,\dots,m-1 \rbrace.$$
  If it is clear from the context which $f$ is used, we simply write $\Lambda$.

\begin{corollary}\label{C5.2: Corollary 1}
Let $f(t) = t^m - \sum\limits_{i=0}^{m-1}a_it^i \in R$ and assume that $a_i \in C$ for all $i \in \lbrace 0,1, \dots, m-1 \rbrace$. Then $$L = \bigcap\limits_{\lambda_j \in \Lambda} {\rm Fix}(\sigma^{m-\lambda_j}).$$
\end{corollary}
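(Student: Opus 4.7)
The plan is to deduce this directly from Theorem \ref{C5.2: Theorem 1}, splitting the defining conditions into two cases according to whether the coefficient $a_k$ vanishes. By Theorem \ref{C5.2: Theorem 1}, an element $c \in D$ lies in $L$ if and only if
\[
\sigma^m(c) a_k = a_k \sigma^k(c) \quad \text{for all } k \in \{0,1,\dots,m-1\}.
\]
For each $k \notin \Lambda_f$ the coefficient $a_k$ is zero and the equation holds trivially, so only the indices $k = \lambda_j \in \Lambda_f$ impose genuine constraints.

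Fix $k = \lambda_j \in \Lambda_f$, so that $a_k \neq 0$. By hypothesis $a_k \in C$, hence $a_k$ commutes with every element of $D$; in particular $\sigma^m(c) a_k = a_k \sigma^m(c)$. Combining this with the defining relation yields
\[
a_k \sigma^m(c) = a_k \sigma^k(c),
\]
and cancelling the nonzero element $a_k$ (possible since $D$ is a division ring) gives $\sigma^m(c) = \sigma^k(c)$. Because $\sigma$ is an endomorphism of a division ring it is injective, so $\sigma^k$ is injective and we may strip it off to obtain $\sigma^{m-k}(c) = c$, i.e. $c \in {\rm Fix}(\sigma^{m-k})$. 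Since each of these implications is reversible, the defining condition at index $\lambda_j$ is equivalent to $c \in {\rm Fix}(\sigma^{m-\lambda_j})$.

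Intersecting the equivalent conditions over all $\lambda_j \in \Lambda_f$ yields
\[
L = \bigcap_{\lambda_j \in \Lambda_f} {\rm Fix}(\sigma^{m-\lambda_j}),
\]
as claimed. There is no real obstacle here; the only point requiring care is ensuring that injectivity of $\sigma^k$ (rather than surjectivity) is what is needed to pass from $\sigma^m(c) = \sigma^k(c)$ to $\sigma^{m-k}(c) = c$, which is automatic since $\sigma$ is an endomorphism of the division ring $D$.
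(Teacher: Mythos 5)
Your proof is correct and is essentially the argument the paper intends: the corollary is stated without proof, but the identical manipulation (centrality of $a_k$ to commute it past $\sigma^m(c)$, cancellation of the nonzero $a_k$, and injectivity of $\sigma^k$ to pass from $\sigma^m(c)=\sigma^k(c)$ to $\sigma^{m-\lambda_j}(c)=c$) is exactly what the paper carries out in the commutative case in Proposition~\ref{C5.2: Proposition 1}. Your explicit remark that only injectivity of $\sigma$, not surjectivity, is needed is a correct and worthwhile observation, since $\sigma$ is only assumed to be an endomorphism here.
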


In the case that $D$ is commutative (i.e. $D$ is a field and $D=C$) Petit states that $t$ lies in the right nucleus if and only if the coefficients of $f$ are all fixed by $\sigma$ \cite[(16)]{petit1966certains}. We extend this result to the case that $D$ is a (noncommutative) unital division ring:

\begin{theorem} \label{C5.2: Theorem 2}\cite[(16) {\text for $D$ commutative}]{petit1966certains}
Let $f(t) = t^m - \sum\limits_{i=0}^{m-1}a_it^i \in R.$ Then $a_i \in {\rm Fix}(\sigma)$ for all $i \in \lbrace 0,1,\dots,m-1 \rbrace$ if and only if $t \in {\rm Nuc}_r(S_f)$.
\end{theorem}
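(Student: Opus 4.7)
The plan is to treat the two implications separately: the forward direction will fall out of an earlier result in the chapter, while the converse requires an explicit coefficient analysis that uses the division ring structure of $D$ in an essential way.

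For the forward direction, suppose $a_i \in {\rm Fix}(\sigma)$ for every $i$. Since $\delta = 0$ in this section, $F_1 = {\rm Fix}(\sigma) \cap {\rm Const}(\delta) = {\rm Fix}(\sigma)$, so the hypothesis is exactly that of Theorem \ref{C5.1: Theorem 4} with $k=1$, yielding $t \in {\rm Nuc}_r(S_f)$ at once.

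For the converse, I would first translate the hypothesis $t \in {\rm Nuc}_r(S_f)$ into $ft \in Rf$ via Theorem \ref{C5.1: Theorem 2}. Since ${\rm deg}(ft) = m+1$ and ${\rm deg}(f) = m$, the right division algorithm in $R$ forces $ft = qf$ with $q$ monic linear, say $q(t) = t+b$ for a unique $b \in D$. Expanding both sides using $ta = \sigma(a)t$ and comparing the coefficients of $t^0$, of $t^k$ for $1 \le k \le m-1$, and of $t^m$, gives the system
\begin{align*}
b a_0 &= 0,\\
b a_k &= a_{k-1} - \sigma(a_{k-1}) \quad (1 \le k \le m-1),\\
b &= \sigma(a_{m-1}) - a_{m-1}.
\end{align*}

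The crux of the argument is then to show that $b=0$; once this is established, the remaining two lines of the system immediately read off $\sigma(a_i) = a_i$ for every $i$, completing the proof. I would argue by contradiction: if $b \neq 0$, then, since $D$ is a division ring, the relation $ba_0 = 0$ forces $a_0 = 0$; substituting this into the $k=1$ relation gives $b a_1 = a_0 - \sigma(a_0) = 0$, so $a_1 = 0$, and an obvious induction cascades to $a_i = 0$ for every $0 \le i \le m-1$. But then the last relation collapses to $b = \sigma(0) - 0 = 0$, contradicting $b \neq 0$.

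The main obstacle, compared with Petit's original argument over a commutative base, is precisely this noncommutative cancellation step: one cannot simply cancel $a_k$ in $b a_k = a_{k-1} - \sigma(a_{k-1})$, so the reduction to $b=0$ must be carried out via the weakest equation $ba_0 = 0$ together with the cascade it triggers through the division property of $D$. Everything else in the proof is bookkeeping with the relation $ta = \sigma(a)t$.
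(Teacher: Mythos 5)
Your proof is correct, and the converse is organised differently from the paper's. The paper stays inside $S_f$: it expands the associator $[t,t^{m-1},t]=0$ into $(\sum_i a_it^i)\circ t = t\circ(\sum_i a_it^i)$, substitutes $t^m \equiv \sum_i a_it^i \ \mathrm{mod}_r f$, and then cancels the lowest nonzero coefficient $a_k$ on the right of $a_{m-1}a_k = \sigma(a_{m-1})a_k$ to get $a_{m-1}\in{\rm Fix}(\sigma)$ before peeling off the remaining coefficients. You instead lift the hypothesis to the identity $ft=(t+b)f$ in $R$ itself, which produces the cleaner system $ba_0=0$, $ba_k = a_{k-1}-\sigma(a_{k-1})$, $b=\sigma(a_{m-1})-a_{m-1}$, and then kill $b$ by the cascade $a_0=0\Rightarrow a_1=0\Rightarrow\cdots\Rightarrow a_{m-1}=0\Rightarrow b=0$. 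I checked the coefficient comparison (with $\delta=0$, so $ta=\sigma(a)t$, the leading-coefficient argument does force $q$ monic linear) and the cascade; both are sound, and the division-ring property of $D$ is used exactly where you flag it. Your version buys a little robustness: it never needs to single out the smallest nonzero index, it handles the degenerate case $f=t^m$ without comment, and it avoids the bookkeeping caused by reducing $t^m$ modulo $f$, at the cost of introducing the auxiliary element $b$ and the short contradiction argument. Either argument is a legitimate extension of Petit's commutative statement to a division ring $D$.
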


\begin{proof}
If $a_i \in {\rm Fix}(\sigma)$ for all $i$, then $t \in {\rm Nuc}_r(S_f)$ by Theorem \ref{C5.1: Theorem 3} with $\delta =0$.
Conversely, suppose that $t \in {\rm Nuc}_r(S_f)$, then $[t,t^{m-1},t]=0$ which implies that
$$(\sum_{i=0}^{m-1} a_it^i)t = t(\sum_{i=0}^{m-1} a_it^i),$$
i.e.,
$$(a_{m-1}t^{m-1} + \sum_{i=0}^{m-2}a_it^i)t = t(a_{m-1}t^{m-1} + \sum_{i=0}^{m-2} a_it^i).$$
This yields
$$a_{m-1}t^m  + \sum_{i=0}^{m-2}a_it^{i+1} = \sigma(a_{m-1})t^m  + \sum_{i=0}^{m-2} \sigma(a_i)t^{i+1}.$$ We note that $$t^m = (\sum\limits_{i=0}^{m-1} a_it^i)\,  {\rm mod}_rf.$$
Suppose that $k$ is the smallest element of $\lbrace 0,1,\dots,m-1 \rbrace$ such that $a_k \neq 0$, i.e. $a_i = 0$ for all $i<k$. Then $$\sum_{i=k}^{m-1} a_{m-1} a_it^i + \sum_{i=k}^{m-2} a_it^{i+1} = \sum_{i=k}^{m-1} \sigma(a_{m-1}) a_it^i + \sum_{i=k}^{m-2} \sigma(a_i)t^{i+1}.$$
Equating the $t^k$ terms gives
$a_{m-1}a_k = \sigma(a_{m-1})a_k \Rightarrow a_{m-1} \in {\rm Fix}(\sigma).$
Therefore
\begin{align*}
a_{m-1}\sum_{i=k}^{m-1} a_it^i + \sum_{i=k}^{m-2} a_it^{i+1} &= a_{m-1}\sum_{i=k}^{m-1} a_it^i + \sum_{i=k}^{m-2} \sigma(a_i)t^{i+1}\\ \Rightarrow \sum_{i=k}^{m-2}a_it^{i+1} &= \sum_{i=k}^{m-2} \sigma(a_i)t^{i+1}
\end{align*}
and we have $a_i \in {\rm Fix}(\sigma)$ for all $i\in \lbrace k, k+1,\dots, m-2 \rbrace$. Since $a_i =0 $ for $i < k$, and $a_{m-1} \in {\rm Fix}(\sigma)$, we have $a_i \in {\rm Fix}(\sigma)$ for all $i \in \lbrace 0,1,\dots,m-1 \rbrace.$
\end{proof}

The following result of Brown and Pumpl{\"u}n \cite[Proposition 2]{brown2018nonassociative} is an immediate Corollary to Theorem \ref{C5.2: Theorem 2}:

\begin{corollary}\label{C5.2: Corollary 2} \cite[Proposition 2]{brown2018nonassociative}
 \label{F_1[t]/F_1[t]f(t) is a subalgebra of the right nucleus}
 Let $f(t) \in F[t]\subset R$.
 Then $F[t]/(f(t))$ is a commutative subring of ${\rm Nuc}_r(S_f),$ and a subfield of degree $m$ if $f$ is irreducible in $F[t]$.
\end{corollary}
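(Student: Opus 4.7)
The plan is to combine Theorem \ref{C5.2: Theorem 2} with the fact that $F$ is contained in the center of $S_f$, and then identify the resulting subset of $S_f$ with the ordinary quotient $F[t]/(f(t))$.

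First I would note that since $f(t) \in F[t]$, its coefficients lie in $F \subseteq {\rm Fix}(\sigma)$, so Theorem \ref{C5.2: Theorem 2} immediately yields $t \in {\rm Nuc}_r(S_f)$. Also, $F \subseteq {\rm Comm}(S_f) \cap D = {\rm Comm}(S_f) \cap {\rm Nuc}_l(S_f) \cap {\rm Nuc}_m(S_f) \cap {\rm Nuc}_r(S_f)$, because $F$ is by definition the field over which $S_f$ is an algebra. Since ${\rm Nuc}_r(S_f)$ is a subalgebra of $S_f$ closed under addition and multiplication, every element of the $F$-subspace
$$N := F \oplus Ft \oplus Ft^2 \oplus \cdots \oplus Ft^{m-1}$$
lies in ${\rm Nuc}_r(S_f)$.

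Next I would check that $N$, equipped with the multiplication $\circ_f$ inherited from $S_f$, is isomorphic as an $F$-algebra to $F[t]/(f(t))$. The key observation is that $F \subseteq {\rm Fix}(\sigma) \cap C$, so within the skew polynomial ring $R = D[t;\sigma]$ we have $ta = at$ and $ab = ba$ for all $a,b \in F$. Consequently, the natural inclusion $F[t] \hookrightarrow R$ is a homomorphism of (commutative) rings, whose image is the ordinary polynomial ring in the central variable $t$. Furthermore, the right Euclidean division of any $p \in F[t]$ by $f \in F[t]$ in $R$ coincides with ordinary polynomial division in $F[t]$ (since uniqueness of remainder in $R$ forces the unique $F[t]$-remainder to equal the unique $R$-remainder). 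Therefore, for $p,q \in N$, $p \circ_f q = pq \,{\rm mod}_r f$ equals the usual product in $F[t]/(f(t))$. This gives a ring isomorphism $N \cong F[t]/(f(t))$, and in particular $N$ is a commutative subring of ${\rm Nuc}_r(S_f)$.

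Finally, if $f$ is irreducible in $F[t]$, then $F[t]/(f(t))$ is a field extension of $F$ of degree $m$, and the isomorphism above shows that $N$ is a subfield of ${\rm Nuc}_r(S_f)$ of degree $m$ over $F$. There is no real obstacle here; the only subtlety is verifying that right division and remainder in $R$ agree with ordinary polynomial division on $F[t]$, which is immediate from the uniqueness statement of the Euclidean algorithm applied inside the commutative subring $F[t] \subset R$.
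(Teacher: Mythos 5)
Your proposal is correct and follows essentially the same route as the paper: establish $t \in {\rm Nuc}_r(S_f)$, note $F \subseteq {\rm Nuc}_r(S_f)$, and conclude that $F \oplus Ft \oplus \cdots \oplus Ft^{m-1}$ sits inside the right nucleus and coincides with $F[t]/(f(t))$. The only cosmetic difference is that you invoke Theorem \ref{C5.2: Theorem 2} (coefficients fixed by $\sigma$) to get $t \in {\rm Nuc}_r(S_f)$, whereas the paper uses the associativity of the powers of $t$ together with Petit's criterion; your added verification that right division in $R$ restricts to ordinary division in $F[t]$ is a welcome explicit detail the paper leaves implicit.
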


\begin{proof}
$S_f$ contains the commutative subring $F[t]/(f)$ which is isomorphic to the ring consisting of the
elements $\sum_{i=0}^{m-1}a_it^i$ with $a_i\in F$.
 In particular, the powers of $t$ are associative.
This implies that $t\in {\rm Nuc}_r(S_f)$ \cite[(5)]{petit1966certains}. Clearly $F\subset {\rm Nuc}_r(S_f)$, so
 $F\oplus Ft\oplus\dots\oplus Ft^{m-1}\subset  {\rm Nuc}_r(S_f)$, hence we obtain the assertion.
\end{proof}

\begin{corollary}\label{C5.2: Corollary 3}
 Suppose that $f(t)  \in F[t] \subset R$ is not right-invariant such that $f(t)\not =t$, and let $h=\hat{h}(t^n)$ be its minimal central left multiple. 
  If $f$ is irreducible and $[L : F]=n/k$, where $k$ is the number of irreducible factors of the minimal central left multiple $h$ of $f$, then ${\rm Nuc}_r(S_f)=L_f.$
  In particular, then $f$ is irreducible in $K[t;\sigma]$ if and only if $f$ is irreducible in $L[t;\sigma]$.
 \end{corollary}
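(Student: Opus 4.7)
The strategy is a dimension count over $F$: realise $L_f$ as a subalgebra of ${\rm Nuc}_r(S_f)$, then show both have the same finite $F$-dimension, forcing equality.

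First I would check that $L[t;\sigma]$ is a legitimate skew polynomial ring and that $L_f \subseteq {\rm Nuc}_r(S_f)$. Since $f \in F[t]$ has all coefficients fixed by $\sigma$, applying $\sigma$ to the semi-invariance identity of Theorem \ref{C5.2: Theorem 1} preserves it, so $\sigma$ restricts to an injective endomorphism of the division ring $L$; with $\delta = 0$ the commuting hypothesis of Theorem \ref{C5.1: Theorem 5} is vacuous, and that theorem then yields the containment $L_f \subseteq {\rm Nuc}_r(S_f)$.

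Next I would compute dimensions over $F$. Since $L_f$ is a free left $L$-module of rank $m$, we obtain $[L_f:F] = m\,[L:F] = mn/k$ by hypothesis. On the other hand, since $f$ is irreducible with minimal central left multiple $h = \hat{h}(t^n)$ having $k$ irreducible factors in $R$, Theorem \ref{C2.2'': Theorem 1}(i) (the cyclic-field specialisation of Theorem \ref{C2.2: Theorem 1}) gives $[{\rm Nuc}_r(S_f):F] = ms$ with $s = n/k$. The two $F$-dimensions agree, so the inclusion is forced to be an equality, proving ${\rm Nuc}_r(S_f) = L_f$.

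For the ``in particular'' statement I would read off the equivalence directly from the identification ${\rm Nuc}_r(S_f) = L_f$. If $f$ is irreducible in $K[t;\sigma]$, Theorem \ref{C2.2'': Theorem 1} tells us ${\rm Nuc}_r(S_f)$ is a central division algebra over $E_{\hat{h}}$; consequently $L_f = L[t;\sigma]/L[t;\sigma]f(t)$ has no nontrivial zero divisors, which forces $f$ to be irreducible in $L[t;\sigma]$. Conversely, if $f$ were reducible in $K[t;\sigma]$ with $l \geq 2$ irreducible factors, Theorem \ref{C2.2'': Theorem 2} would realise ${\rm Nuc}_r(S_f)$ as the matrix algebra $M_l(\mathcal{E}(g))$, which contains nontrivial zero divisors and so cannot coincide with $L_f$ whenever $f$ is irreducible in $L[t;\sigma]$. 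The main subtlety is confirming that $\sigma$ stabilises $L$, but this reduces to a one-line consequence of the semi-invariance identity together with $a_k \in {\rm Fix}(\sigma)$; after that the argument is pure dimension arithmetic.
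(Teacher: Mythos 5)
Your proposal is correct and takes essentially the same route as the paper: invoke Theorem \ref{C5.1: Theorem 5} (with $\delta=0$) for the containment $L_f\subseteq{\rm Nuc}_r(S_f)$, then compare $[L_f:F]=m[L:F]=mn/k$ with $[{\rm Nuc}_r(S_f):F]=ms$, $s=n/k$, from Theorem \ref{C2.2: Theorem 1} (in its field specialisation). The extra details you supply --- that $\sigma$ stabilises $L$, and that the division-algebra structure of ${\rm Nuc}_r(S_f)=L_f$ forces irreducibility of $f$ in $L[t;\sigma]$ --- are correct and merely make explicit what the paper leaves implicit.
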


\begin{proof}
We know that $ L_f=L[t;\sigma]/L[t;\sigma]f(t)$
is a subalgebra of ${\rm Nuc}_r(S_f)$ (Theorem \ref{C5.1: Theorem 5}). If $f$ is irreducible then ${\rm Nuc}_r(S_f)$ has degree $ms$ over $F$ by Theorem \ref{C2.2: Theorem 1}, therefore comparing the degrees of the field extensions we obtain the assertion.
\end{proof}

Below are two examples showing how we may compute the right nucleus of $S_f$ using the preceding theory as well as the Wedderburn theory of Chapter 2.\\
\\
\textbf{Example 1}\label{C5.2: Example 1}
Let $D=(-1,-1)_\mathbb{Q}$, $\sigma:D\longrightarrow D,$ $\sigma(d)=udu^{-1}$ with $u=i+1$ and $R=D[t;\sigma]$. Thus $\sigma$ has inner order one, ${\rm Fix}(\sigma)=\mathbb{Q}(i)$, and $C(R)=\mathbb{Q}[x]$ with $x=u^{-1}t$. Also since $\sigma^4=id$ and ${\rm Fix}(\sigma^2)={\rm Fix}(\sigma^3)={\rm Fix}(\sigma)=\mathbb{Q}(i)$,
every $f\in R$ is bounded. 
Let $f \in R$ be monic and irreducible of degree $m>1$ such that $(f,t)_r=1$. Let $h(t)=\hat{h}(u^{-1}t)$ be its minimal central left multiple, such that $\hat{h}(x)\not=x$. Then one of the following holds: 
\begin{itemize}
\item ${\rm Nuc}_r(S_f)\cong E_{\hat{h}}$ is a field extension of degree $2m$, $k=2$, and ${\rm deg}(h)=2 m$, or
\item ${\rm Nuc}_r(S_f)$ is a central division algebra
 over $E_{\hat{h}}$ of degree $2$,  $h$ is irreducible in $R$, ${\rm deg}(h)=m={\rm deg}(\hat{h})$ and
 $[{\rm Nuc}_r(S_f) :F] = 4m$ (Corollary \ref{C2.2: Corollary 2}), in which case $f$ must be right-invariant checking the dimensions.
\end{itemize}    
So assume that $f$ is not right-invariant, monic and irreducible, such that $(f,t)_r=1$. Then ${\rm Nuc}_r(S_f)\cong E_{\hat{h}}$ is a field extension of $\mathbb{Q}$ of degree $2m$. In particular,  let $f(t) = t^m - \sum_{i=0}^{m-1}a_it^i \in \mathbb{Q}(i)[t]\subset D[t;\sigma]$ then $\mathbb{Q}(i) \subset  L^{(\sigma,f)}$ (Proposition \ref{C5.1: Proposition 1}) and $t,t^2,\dots,t^{m-1}\in {\rm Nuc}_r(S_f)$.
Thus we obtain that
$$\mathbb{Q}(i)[t]/\mathbb{Q}(i)[t]f(t)\subset {\rm Nuc}_r(S_f)$$ and that
$${\rm Nuc}_r(S_f) =\mathbb{Q}(i)[t]/\mathbb{Q}(i)[t]f(t)$$
for irreducible $f\in \mathbb{Q}(i)[t]$, by comparing dimensions of the vector spaces. Let $f(t)=it^2+(k+1)t+j+k$, then $$f^*(t)=-4t^4+(4+4i)t^3-8it^2+(8-8i)t+8$$ \cite[Example 2.13]{gomez2013computing}. This means that
$$\hat{h}(x)=16x^4-16x^3+16x^2+16x+8\in \mathbb{Q}[x]$$ and  $$h(t)=t^4-(1+i)t^3+2it^2-(2-2i)t-2.$$
Scaling $f$ to make it monic does not change its minimal central left multiple, so w.l.o.g. assume $f(t)=t^2+i(k+1)t+k+ik$.
If $f$ is irreducible then the above result tells us that $${\rm Nuc}_r(S_f)\cong\mathbb{Q}[x]/(x^4-x^3+x^2+x+1/2).$$
\\
\textbf{Example 2}\label{C2.2: Example 2}
Let $D$ be a quaternion algebra over a field $F$ of characteristic not 2 and $\sigma\in{\rm Aut}_F(D)$ of order two. Then $K={\rm Fix}(\sigma)$ is a quadratic field extension in $D$.   We know that $\sigma(d)=udu^{-1}$ and $u^2\in F$. W.l.o.g. choose $u\in K^{\times}$. Then $C(R)=F[x]$ with $x=u^{-1}t$. Let $f \in R$ be monic and irreducible of degree $m>1$, such that $(f,t)_r=1$. Let $h(t)=\hat{h}(u^{-1}t)$ be its minimal central left multiple. Then either  
\begin{itemize}
\item ${\rm Nuc}_r(S_f)\cong E_{\hat{h}}$ is a field extension of degree $2m$, $k=2$, and ${\rm deg}(h)=2 m$, or
\item ${\rm Nuc}_r(S_f)$ is a central division algebra over $E_{\hat{h}}$ of degree $2$,  $h$ is irreducible in $R$, ${\rm deg}(h)=m={\rm deg}(\hat{h})$ and $[{\rm Nuc}_r(S_f) :F] = 4m$ (Corollary \ref{C2.2: Corollary 2}), in which case $f$ must be invariant checking the dimensions. 
\end{itemize}
So assume that $f$ is not right-invariant, monic and irreducible. Then ${\rm Nuc}_r(S_f)\cong E_{\hat{h}}$ is a field extension of degree $2m$. In particular, let $f(t) = t^m - \sum_{i=0}^{m-1}a_it^i \in {\rm  Fix}(\sigma)[t]\subset D[t;\sigma]$ then ${\rm Fix}(\sigma) \subset  L^{(\sigma,f)}$ (Proposition \ref{C5.1: Proposition 1}) and $t,t^2,\dots,t^{m-1}\in {\rm Nuc}_r(S_f)$.
Thus we obtain that
$${\rm  Fix}(\sigma)[t]/(f(t))\subset {\rm Nuc}_r(S_f),$$
with equality if $f$ is irreducible and not right-invariant.

\subsection{The Right Nucleus of $t^m - a \in D[t;\sigma]$} \label{RightNuc t^m-a}
In the particular case $f(t)=t^m-a \in R=D[t;\sigma]$ we obtain stronger results on the right nucleus than in the general case for any $f \in R$. Later we will see how this can be used to obtain irreducibility criteria for the polynomials of the form $t^m-a$ where $a \in F^{\times}$ where $F$ denotes the field $C \cap {\rm Fix}(\sigma)$.\\
\\
For $f(t) = t^m -a \in F[t] \subset D[t;\sigma]$ we get the reverse inclusion of Theorem \ref{C5.1: Theorem 5}, i.e. we can show that the right nucleus of $S_f$ is equal to the subalgebra $$L_f = L[t;\sigma]/L[t;\sigma]f(t):$$

\begin{theorem}\label{C5.2: Theorem 3}
Let $f(t) = t^m - a \in F[t] \subset R$.
Then $${\rm Nuc}_r(S_f) = L[t;\sigma]/L[t;\sigma]f(t).$$
\begin{proof}
If $f(t) \in F[t] \subset D[t;\sigma]$, then $$L[t;\sigma]/L[t;\sigma]f(t)$$ is a subalgebra of ${\rm Nuc}_r(S_f)$, by Theorem \ref{C5.1: Theorem 5} with $\delta = 0$.
\\
Now, we suppose that $g(t) = \sum\limits_{i=0}^{m-1} g_it^i \in {\rm Nuc}_r(S_f)$. Then $fg \in Rf$, i.e. there exists a polynomial $g^{\prime} \in D[t;\sigma]$ such that $fg = g^{\prime}f$. It is easy to see that ${\rm deg}(g) = {\rm deg}(g^{\prime})$, so we let $g^{\prime}(t) = \sum\limits_{j=0}^{m-1} g^{\prime}_jt^j$ for some $g^{\prime}_j \in D$. Then we have
\begin{align*}
(t^m-a)\sum\limits_{i=0}^{m-1} g_it^i &= (\sum\limits_{j=0}^{m-1} g^{\prime}_jt^j)(t^m-a) \\
\Rightarrow  \sum\limits_{i=0}^{m-1} \sigma^m(g_i)t^{m+i} - \sum\limits_{i=0}^{m-1} ag_it^i &= \sum\limits_{j=0}^{m-1} g^{\prime}_jt^{m+j} - \sum\limits_{j=0}^{m-1} g^{\prime}_jat^j.
\end{align*}
Comparing coefficients of $t^{m+j}$ for $j \in \lbrace 0,1,...,m-1 \rbrace$ yields $g^{\prime}_j = \sigma^m(g_j)$, and comparing coefficients of $t^j$ for $j \in \lbrace 0,1,\dots,m-1 \rbrace$ yields $g^{\prime}_ja = ag_j$. Combining these gives $$\sigma^m(g_j)a = ag_j$$ for all $j \in \lbrace 0,1,\dots,m-1 \rbrace$, that is $g_j \in L$ for all $j$. Hence $g(t) \in L[t;\sigma]/L[t;\sigma]f(t)$. The result follows immediately.
\end{proof}
\end{theorem}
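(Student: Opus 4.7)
The plan is to prove the two containments separately. The inclusion $L[t;\sigma]/L[t;\sigma]f(t) \subseteq {\rm Nuc}_r(S_f)$ comes essentially for free from Theorem \ref{C5.1: Theorem 5} applied with $\delta=0$: since $f(t)=t^m-a \in F[t]$, the coefficients of $f$ lie in $F \subseteq {\rm Fix}(\sigma) \cap {\rm Const}(\delta)$, and the commutativity hypothesis on $\sigma|_L, \delta|_L$ is vacuous (as $\delta=0$). So the work lies entirely in the reverse inclusion.

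For the reverse inclusion, I would take $g(t)=\sum_{i=0}^{m-1} g_i t^i \in {\rm Nuc}_r(S_f)$, which by definition means $fg \in Rf$, i.e.\ $fg = g'f$ for some $g' \in R$. A quick degree count forces ${\rm deg}(g')={\rm deg}(g) < m$, so we may write $g'(t)=\sum_{j=0}^{m-1} g'_j t^j$. Now I would expand both sides: on the left, using $ta=\sigma(a)t$, one obtains
\[
(t^m-a)\sum_{i} g_i t^i = \sum_{i} \sigma^m(g_i) t^{m+i} - \sum_{i} a g_i t^i,
\]
while on the right
\[
\Bigl(\sum_j g'_j t^j\Bigr)(t^m - a) = \sum_j g'_j t^{m+j} - \sum_j g'_j a t^j.
\]

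Comparing coefficients of $t^{m+j}$ gives $g'_j = \sigma^m(g_j)$, and comparing coefficients of $t^j$ gives $g'_j a = a g_j$ for every $j \in \{0,1,\dots,m-1\}$. Substituting the first relation into the second yields
\[
\sigma^m(g_j)\, a = a\, g_j \quad \text{for all } j.
\]
This is exactly the semi-invariance condition of Theorem \ref{C5.2: Theorem 1} for the polynomial $f(t) = t^m - a$ (whose only nonzero coefficient below the leading term is $a_0 = a$; the condition at $k=0$ is precisely the displayed identity, and for $k \geq 1$ it is trivial since $a_k = 0$). Hence each $g_j$ lies in $L$, so $g \in L[t;\sigma]/L[t;\sigma]f(t)$, completing the proof.

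There is no real obstacle here: the argument is a direct coefficient comparison, and the only subtlety is recognising that the output condition $\sigma^m(g_j)a = a g_j$ is precisely the characterisation of $L$ supplied by Theorem \ref{C5.2: Theorem 1} for this very simple shape of $f$. The crucial structural input that makes this work (and that would fail for a general $f \in F[t]$ with multiple nonzero coefficients) is that $f$ has only two terms, so the system of coefficient equations collapses to a single semi-invariance condition per index $j$.
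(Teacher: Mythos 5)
Your proposal is correct and follows essentially the same route as the paper's own proof: the forward inclusion via Theorem \ref{C5.1: Theorem 5} with $\delta=0$, and the reverse inclusion by expanding $fg=g'f$ and comparing coefficients to obtain $\sigma^m(g_j)a=ag_j$, i.e.\ $g_j\in L$. Your added remark identifying this condition with the characterisation of $L$ in Theorem \ref{C5.2: Theorem 1} is a helpful clarification but does not change the argument.
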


\subsection{$L^{(\sigma,f)}$ for $f \in K[t;\sigma]$ with $K$ a Cyclic Field Extension of $F$}
For the remainder of the section (Section 5.2) suppose that $K/F$ is a Galois extension of $F$ of degree $n$ with cyclic Galois group ${\rm Gal}(K/F) = \langle \sigma \rangle$, let $R = K[t;\sigma]$ and let $$f(t) = t^m - \sum\limits_{i=0}^{m-1} a_it^i \in R.$$

\begin{proposition}\label{C5.2: Proposition 1}
 $$L = \lbrace u \in K : u \in {\rm Fix}(\sigma^{m-\lambda_j}) \text{ for all } \lambda_j \in \Lambda \rbrace = \bigcap\limits_{\lambda_j \in \Lambda} {\rm Fix}(\sigma^{m-\lambda_j}).$$
\begin{proof}
  Let $$u\in L = \lbrace u \in K : \sigma^m(u)a_i = a_i\sigma^i(u) \text{ for all } i \in \lbrace 0,1,\dots,m-1 \rbrace \rbrace.$$
 Then
\begin{align*}
& \, \sigma^m(u)a_i = a_i\sigma^i(u) \text{ for each } i\\
\Leftrightarrow & \, \sigma^m(u)a_i = \sigma^i(u)a_i \text{ for each } i\\
\Leftrightarrow & \, \sigma^{m-i}(u) = u \text{ for each } i \text{ such that } a_i \neq 0\\
\Leftrightarrow & \, \sigma^{m-\lambda_j}(u) = u \text{ for each } \lambda_j \in \Lambda.
\end{align*}
 This yields the assertion.
\end{proof}
\end{proposition}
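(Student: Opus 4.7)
The plan is to derive this directly from Theorem \ref{C5.2: Theorem 1}, exploiting the fact that $K$ is commutative. Theorem \ref{C5.2: Theorem 1} already tells us that $L$ consists exactly of those $u \in D$ such that $\sigma^m(u) a_k = a_k \sigma^k(u)$ for every $k \in \{0,1,\dots,m-1\}$. Since here $D = K$ is a field, elements commute, so the condition becomes $\sigma^m(u) a_k = \sigma^k(u) a_k$ for every $k$, which is automatic whenever $a_k = 0$ and, whenever $a_k \neq 0$, is equivalent to $\sigma^m(u) = \sigma^k(u)$.

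Next I would rewrite this: since $\sigma$ is a bijection on $K$, applying $\sigma^{-k}$ to both sides of $\sigma^m(u) = \sigma^k(u)$ gives the equivalent condition $\sigma^{m-k}(u) = u$, i.e. $u \in \mathrm{Fix}(\sigma^{m-k})$. (Although the exponent $m-k$ can be read modulo $n$, for our purposes it suffices to keep $m-k$ as a positive integer.) Observing that the set of indices $k$ with $a_k \neq 0$ is exactly $\Lambda = \{\lambda_1,\dots,\lambda_r\}$, the full list of conditions collapses to
\[
u \in \mathrm{Fix}(\sigma^{m-\lambda_j}) \quad \text{for every } \lambda_j \in \Lambda,
\]
which is precisely the intersection $\bigcap_{\lambda_j \in \Lambda} \mathrm{Fix}(\sigma^{m-\lambda_j})$.

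The converse direction is immediate by reversing the same chain of equivalences: if $u$ lies in that intersection, then $\sigma^{m-\lambda_j}(u) = u$ for all $\lambda_j$, hence $\sigma^m(u) a_{\lambda_j} = \sigma^{\lambda_j}(u) a_{\lambda_j} = a_{\lambda_j} \sigma^{\lambda_j}(u)$ for the nonzero coefficients, while the condition is vacuous for the zero coefficients. Applying Theorem \ref{C5.2: Theorem 1} again, $u \in L$.

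There is essentially no obstacle here; the commutativity of $K$ collapses the semi-invariance equations of Theorem \ref{C5.2: Theorem 1} to pure fixed-field conditions, and the only minor point to verify is the harmless one that when $a_k = 0$ the $k$th equation is trivially satisfied, so only indices in $\Lambda$ contribute. I would present the argument as a single chain of equivalences, as the author has done in the proof following the statement.
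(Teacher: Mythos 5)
Your proposal is correct and follows essentially the same route as the paper: both start from the characterisation of $L$ via the semi-invariance equations $\sigma^m(u)a_i = a_i\sigma^i(u)$ (Theorem \ref{C5.2: Theorem 1}), use the commutativity of $K$ to rewrite these as $\sigma^m(u)a_i = \sigma^i(u)a_i$, cancel the nonzero coefficients, and apply injectivity of $\sigma$ to obtain $\sigma^{m-\lambda_j}(u)=u$ exactly for the indices in $\Lambda$. No gap; the argument is a clean chain of equivalences identical in substance to the one in the text.
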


\begin{corollary}\label{C5.2: Corollary 4}
If $a_{m-1} \neq 0$, then $L = F$.
\end{corollary}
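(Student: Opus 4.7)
The plan is to invoke Proposition \ref{C5.2: Proposition 1} directly. We have shown there that
\[
L \;=\; \bigcap_{\lambda_j \in \Lambda} {\rm Fix}(\sigma^{m-\lambda_j}),
\]
so it is enough to observe that the index $m-1$ lies in $\Lambda$ as soon as $a_{m-1}\neq 0$, and to track the two inclusions between $L$ and $F$.

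First I would record that $F \subseteq L$. This follows because $F$ is the base field of the algebra $S_f$, so $F$ is contained in ${\rm Nuc}_r(S_f)$, and by construction $F \subseteq K = D$; hence $F \subseteq {\rm Nuc}_r(S_f) \cap D = L$. (Alternatively, apply Proposition \ref{C5.1: Proposition 1}: if $f$ is right-invariant then $L=D\supseteq F$; if not, $L$ is a division subring of $D$ containing $F$.)

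For the reverse inclusion, the hypothesis $a_{m-1}\neq 0$ means $m-1\in\Lambda$, so the factor ${\rm Fix}(\sigma^{m-(m-1)})={\rm Fix}(\sigma)$ appears in the intersection describing $L$ in Proposition \ref{C5.2: Proposition 1}. Since ${\rm Gal}(K/F)=\langle\sigma\rangle$ gives ${\rm Fix}(\sigma)=F$, we obtain
\[
L \;\subseteq\; {\rm Fix}(\sigma) \;=\; F.
\]
Combining the two inclusions yields $L = F$, as required.

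There is no real obstacle here; the corollary is immediate once Proposition \ref{C5.2: Proposition 1} is in hand, and the only thing worth double-checking is that $F$ is indeed contained in $L$ without any non-right-invariance assumption on $f$, which follows from the general fact that the field of scalars always lies in every nucleus of a unital $F$-algebra.
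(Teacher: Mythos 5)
Your proof is correct and follows essentially the same route as the paper: the paper also deduces $L\subseteq F$ from the defining relation $\sigma^m(u)a_{m-1}=a_{m-1}\sigma^{m-1}(u)$ at the index $m-1$ (which is exactly the ${\rm Fix}(\sigma^{m-(m-1)})={\rm Fix}(\sigma)$ factor in the intersection of Proposition \ref{C5.2: Proposition 1}), and the reverse inclusion $F\subseteq L$ is immediate. Your extra remark justifying $F\subseteq L$ is fine, though it already follows from the fact that every ${\rm Fix}(\sigma^{m-\lambda_j})$ contains $F$.
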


\begin{proof}
Let $u\in L$, then $\sigma^m(u)a_{m-1} = a_{m-1}\sigma^i(u)$ yields $\sigma(u)=u$, hence $u\in F$.
This implies immediately that $L = F.$
\end{proof}

\begin{lemma} \label{C5.2: Lemma 1}
(i) Let $E = {\rm Fix}(\sigma^{u_1}) \cap {\rm Fix}(\sigma^{u_2})$ for some $u_1,u_2 \in \mathbb{Z}$ such that $1 \leq u_1,u_2 < n$. If ${\alpha}= {\rm gcd}(u_1,u_2,n)$, then ${\rm Gal}(K/E) = \langle \sigma^{\alpha} \rangle$ and $E = {\rm Fix}(\sigma^{\alpha})$.
\\ (ii) Let $E^\prime = {\rm Fix}(\sigma^{u_1}) \cap {\rm Fix}(\sigma^{u_2}) \cap \dots \cap {\rm Fix}(\sigma^{u_k})$ for some $u_1,u_2,\dots,u_k \in \mathbb{Z}$ such that $1 \leq u_1,u_2,\dots,u_k < n$. If $\alpha= {\rm gcd}(u_1,u_2,\dots,u_k,n)$, then ${\rm Gal}(K/E^\prime) = \langle \sigma^{\alpha} \rangle$, and $E^\prime = {\rm Fix}(\sigma^{\alpha})$.
\end{lemma}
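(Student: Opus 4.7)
The plan is to prove part (ii) directly, since it subsumes (i) as the special case $k=2$. The approach is standard Galois theory: identify the subgroup of $\langle \sigma \rangle$ generated by $\sigma^{u_1}, \ldots, \sigma^{u_k}$, and then invoke the Galois correspondence.

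First I would observe that, since $\langle \sigma \rangle \cong \mathbb{Z}/n\mathbb{Z}$, the subgroup $H = \langle \sigma^{u_1}, \sigma^{u_2}, \dots, \sigma^{u_k} \rangle$ corresponds to the subgroup of $\mathbb{Z}/n\mathbb{Z}$ generated by the residues $\overline{u_1}, \dots, \overline{u_k}$, which is the cyclic subgroup generated by $\overline{\alpha}$ where $\alpha = \gcd(u_1,\dots,u_k,n)$. Thus $H = \langle \sigma^{\alpha} \rangle$.

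Next I would show the equality $E' = {\rm Fix}(\sigma^{\alpha})$ by two inclusions. For the inclusion ${\rm Fix}(\sigma^{\alpha}) \subseteq E'$, note that since $\alpha \mid u_i$ for each $i$, we have $\sigma^{u_i} = (\sigma^{\alpha})^{u_i/\alpha}$, so any element fixed by $\sigma^{\alpha}$ is fixed by each $\sigma^{u_i}$. For the reverse inclusion, by B\'ezout's identity there exist integers $c_1, \dots, c_k, c$ such that $\alpha = c_1 u_1 + \cdots + c_k u_k + cn$. Then since $\sigma^n = {\rm id}_K$, we have $\sigma^{\alpha} = (\sigma^{u_1})^{c_1} \cdots (\sigma^{u_k})^{c_k}$, and so any element fixed by each $\sigma^{u_i}$ is also fixed by $\sigma^{\alpha}$.

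Finally I would invoke the Galois correspondence: since $K/F$ is a finite Galois extension with ${\rm Gal}(K/F) = \langle \sigma \rangle$, the fixed field of the subgroup $\langle \sigma^{\alpha} \rangle$ is precisely ${\rm Fix}(\sigma^{\alpha}) = E'$, and conversely ${\rm Gal}(K/E') = \langle \sigma^{\alpha} \rangle$. Part (i) then follows by taking $k=2$. There is no real obstacle here; the only mildly subtle point is remembering to include $n$ in the gcd, which is necessary because $\sigma^n = {\rm id}$ means that the subgroup of exponents is really a subgroup of $\mathbb{Z}/n\mathbb{Z}$ rather than of $\mathbb{Z}$.
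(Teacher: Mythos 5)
Your proof is correct. The core engine is the same as the paper's --- identify $\langle \sigma^{u_1},\dots,\sigma^{u_k}\rangle$ with the subgroup of $\mathbb{Z}/n\mathbb{Z}$ generated by the residues, which is $\langle \overline{\alpha}\rangle$ for $\alpha={\rm gcd}(u_1,\dots,u_k,n)$, and then apply the Galois correspondence --- but your route through it is genuinely more streamlined. The paper proves (i) first and obtains (ii) by induction; it also takes a detour, replacing each $u_i$ by $v_i={\rm gcd}(u_i,n)$, introducing the minimal exponent $\beta$ with ${\rm Gal}(K/E)=\langle\sigma^{\beta}\rangle$, citing the correspondence in the form ${\rm Gal}(K/E)=\langle\sigma^{v_1},\sigma^{v_2}\rangle$, and then running a two-way divisibility argument to conclude $\beta=\alpha$. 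You instead prove (ii) directly for all $k$ at once and establish $E'={\rm Fix}(\sigma^{\alpha})$ by an elementary double inclusion (divisibility $\alpha\mid u_i$ in one direction, B\'ezout plus $\sigma^{n}={\rm id}$ in the other), invoking the Galois correspondence only once at the end to read off ${\rm Gal}(K/E')=\langle\sigma^{\alpha}\rangle$. What your version buys is brevity and a self-contained verification of the fixed-field equality that does not lean on the lattice isomorphism for the intersection of subfields; what the paper's version buys is essentially nothing extra here, so your argument is a clean improvement in exposition while proving the same statement.
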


\begin{proof}
(i) Let $v_1 = {\rm gcd}(u_1,n)$ and $v_2 = {\rm gcd}(u_2,n)$.
Now $${\rm ord}(\sigma^{u_1}) = \frac{n}{{\rm gcd}(u_1,n)}= \frac{n}{v_1}$$
 and
 $${\rm ord}(\sigma^{v_1}) = \frac{n}{{\rm gcd}(v_1,n)} = \frac{n}{v_1}.$$
 Therefore $\langle \sigma^{u_1} \rangle = \langle \sigma^{v_1} \rangle$, and similarly $\langle \sigma^{u_2} \rangle = \langle \sigma^{v_2} \rangle$ (note that also $E= {\rm Fix}(\sigma^{v_1}) \cap {\rm Fix}(\sigma^{v_2})$).
We have that ${\rm Gal}(K/E)$ is a cyclic subgroup of $\langle \sigma \rangle$. Thus there exists a smallest integer $\beta$, $1 \leq \beta <n$, such that ${\rm Gal}(K/E) = \langle \sigma^\beta \rangle$.

 $E$ is a subfield of ${\rm Fix}(\sigma^{v_1})$, which means that $\langle \sigma^{v_1} \rangle$ is a subgroup of $\langle \sigma^\beta \rangle$, and $\beta$ divides $v_1$. Similarly, we get that $\beta$ also divides $v_2$. Therefore $\beta$ is a common divisor of $v_1$ and $v_2$.
 By \cite[Chapter VI, Section 1, Corollary 1.4]{lang2004algebra} we know that ${\rm Gal}(K/E) = \langle \sigma^{v_1} , \sigma^{v_2} \rangle$, i.e. if $\tau \in {\rm Gal}(K/E)$, then $$\tau = \sigma^{\tau_1v_1}\sigma^{\tau_2v_2} = \sigma^{\tau_1v_1 + \tau_2v_2},$$ for some $\tau_1,\tau_2 \in \mathbb{Z}$.

Now $\langle \sigma^\beta \rangle = {\rm Gal}(K/E) = \langle \sigma^{v_1}, \sigma^{v_2} \rangle$, and so $\sigma^\beta \in \langle \sigma^{v_1},\sigma^{v_2} \rangle$. Hence $\sigma^\beta = \sigma^{xv_1 + yv_2}$ for some $x,y \in \mathbb{Z}$, and so we take $\beta=xv_1+yv_2 +zn$ for some $z \in \mathbb{Z}$.
 Let $\alpha = {\rm gcd}(v_1,v_2,n)$. Then $\alpha$ divides $v_1$, $v_2$ and $n$, by definition. Therefore $\alpha$ divides $x_0 v_1 + y_0 v_2 + z_0 n$ for any $x_0,y_0,z_0 \in \mathbb{Z}$; in particular $\alpha$ divides $xv_1 + yv_2 + zn = \beta$.
 In summary, we have shown that $\beta$ is a common divisor of $v_1$, $v_2$, and $n$, and $\alpha = {\rm gcd}(v_1,v_2,n)$ divides $\beta$; so we must have that $\alpha=\beta$. Moreover,
 $$\alpha = {\rm gcd}(v_1,v_2,n) = {\rm gcd}({\rm gcd}(u_1,n),{\rm gcd}(u_2,n),n) = {\rm gcd}(u_1,u_2,n).$$
 Hence we obtain that ${\rm Gal}(K/E)=\langle \sigma^{\alpha} \rangle,$ and $E= {\rm Fix}(\sigma^{\alpha})$ with $\alpha = {\rm gcd}(u_1,u_2,n)$ as claimed.
 \\ (ii) This follows  by induction from (i).
\end{proof}

\begin{theorem}\label{C5.2: Theorem 4}
If $\alpha = {\rm gcd}(m-\lambda_1,m-\lambda_2,\dots,m-\lambda_r,n)$, then
$$L = {\rm Fix}(\sigma^\alpha)$$
 and $[L:F]  = \alpha$.
 In particular, $L = F$  if and only if $\alpha=1$.
\begin{proof}
By Proposition \ref{C5.2: Proposition 1}, we have
$$L = \bigcap\limits_{\lambda_j \in \Lambda} {\rm Fix}(\sigma^{m-\lambda_j}) = {\rm Fix}(\sigma^{m-\lambda_1}) \cap {\rm Fix}(\sigma^{m-\lambda_2}) \cap \dots \cap {\rm Fix}(\sigma^{m-\lambda_r}).$$
Therefore it follows that
 $L = {\rm Fix}(\sigma^\alpha)$ where $\alpha = {\rm gcd}(m-\lambda_1,m-\lambda_2,\dots,m-\lambda_r,n)$
 by Lemma \ref{C5.2: Lemma 1}. Obviously $L=F$ if and only if ${\rm Fix}(\sigma^\alpha) = F$, if and only if $\langle \sigma^\alpha \rangle = \langle \sigma \rangle$, which is true if and only if $\sigma^\alpha$ has order $n$. Now ${\rm ord}(\sigma^\alpha) = \frac{n}{{\rm gcd}(n,\alpha)} = n$ if and only if ${\rm gcd}(n,\alpha) =1$.
 That is, $L = F$ if and only if ${\rm gcd}(n,\alpha) =1$, if and only if $\alpha=1$.
\end{proof}
\end{theorem}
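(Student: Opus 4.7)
The plan is to combine the description of $L$ obtained in Proposition \ref{C5.2: Proposition 1} with the group-theoretic reduction of an intersection of fixed fields supplied by Lemma \ref{C5.2: Lemma 1}, and then compute the degree via the fundamental theorem of Galois theory. First I would invoke Proposition \ref{C5.2: Proposition 1} to write
$$L = \bigcap_{\lambda_j \in \Lambda} {\rm Fix}(\sigma^{m-\lambda_j}).$$
This rewrites $L$ as a finite intersection of fixed subfields of the cyclic Galois extension $K/F$, putting us exactly in the setting handled by Lemma \ref{C5.2: Lemma 1}(ii), with the exponents $u_j = m-\lambda_j$. Applying that lemma with $\alpha = \gcd(m-\lambda_1,\dots,m-\lambda_r,n)$ immediately gives $L = {\rm Fix}(\sigma^\alpha)$, which is the first assertion.

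For the degree formula, I would use the Galois correspondence: since $K/F$ is Galois with group $\langle\sigma\rangle$ of order $n$, and $\alpha$ divides $n$ by construction, the subgroup $\langle\sigma^\alpha\rangle$ has order $n/\alpha$. Hence
$$[L:F] = [{\rm Fix}(\sigma^\alpha):F] = [\langle\sigma\rangle : \langle\sigma^\alpha\rangle] = \frac{n}{n/\alpha} = \alpha.$$
The final claim is then immediate: $L=F$ if and only if $[L:F]=1$, if and only if $\alpha=1$. Alternatively, $L=F$ forces $\langle\sigma^\alpha\rangle = \langle\sigma\rangle$, which means $\sigma^\alpha$ has order $n$, equivalently $\gcd(\alpha,n)=1$; combined with $\alpha\mid n$ this forces $\alpha=1$.

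The argument is really a direct assembly of earlier pieces, so there is no genuine obstacle. The only subtle step is the application of Lemma \ref{C5.2: Lemma 1}(ii), whose subtlety is entirely encoded in that lemma's proof (the Galois correspondence converts $\bigcap_j {\rm Fix}(\sigma^{u_j})$ into the fixed field of $\langle \sigma^{u_1},\dots,\sigma^{u_r}\rangle$, and the latter is cyclic generated by $\sigma^{\gcd(u_1,\dots,u_r,n)}$ since the subgroups of the cyclic group $\langle\sigma\rangle$ correspond to divisors of $n$). Once that is in hand, the current theorem reduces to bookkeeping with the exponents $m-\lambda_j$ and one application of the index formula.
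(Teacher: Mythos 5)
Your proposal is correct and follows essentially the same route as the paper: reduce to the intersection formula of Proposition \ref{C5.2: Proposition 1}, collapse it via Lemma \ref{C5.2: Lemma 1}(ii) to ${\rm Fix}(\sigma^\alpha)$, and finish with the Galois correspondence. If anything, your explicit computation $[L:F]=[\langle\sigma\rangle:\langle\sigma^\alpha\rangle]=\alpha$ (using $\alpha\mid n$) is a small improvement, since the paper asserts the degree formula without spelling it out and argues the final equivalence only through the order of $\sigma^\alpha$.
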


\begin{theorem}\label{C5.2: Theorem 5}
(i) $L =K$ if and only if $m-\lambda_j$ is a multiple of $n$ for all $\lambda_j \in \Lambda$.
\\ (ii) Suppose that $n$ is prime. Then $L = F$ if and only if there exists $\lambda_j \in \Lambda$ such that $m-\lambda_j$ is not divisible by $n$.
\end{theorem}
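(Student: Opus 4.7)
The plan is to derive both parts directly from Theorem \ref{C5.2: Theorem 4}, which already identifies $L=\mathrm{Fix}(\sigma^\alpha)$ with $\alpha=\gcd(m-\lambda_1,\dots,m-\lambda_r,n)$ and gives $[L:F]=\alpha$. So the work is essentially translating arithmetic conditions on the exponents $m-\lambda_j$ into the equality $\alpha=n$ (for part (i)) or $\alpha=1$ (for part (ii)), and no new module-theoretic input is needed.

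For part (i), I would argue that $L=K$ is equivalent to $[L:F]=n$, hence by Theorem \ref{C5.2: Theorem 4} to $\alpha=n$. Since $\alpha$ is a divisor of $n$ (it is defined as a gcd that includes $n$), $\alpha=n$ holds if and only if $n$ divides every $m-\lambda_j$. This is precisely the stated condition. The converse direction is immediate by the same equivalence.

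For part (ii), under the additional hypothesis that $n$ is prime, the divisor $\alpha$ of $n$ can only equal $1$ or $n$. By Theorem \ref{C5.2: Theorem 4}, $L=F$ is equivalent to $\alpha=1$, and by part (i) (or directly) $\alpha=n$ is equivalent to $n\mid m-\lambda_j$ for every $\lambda_j\in\Lambda$. Therefore $L=F$ if and only if $\alpha\neq n$, i.e.\ there exists some $\lambda_j\in\Lambda$ with $n\nmid m-\lambda_j$, which is exactly the stated criterion.

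No step here is a real obstacle; the only care required is to invoke the prior result cleanly and to use that $\alpha\mid n$ so that in the prime case the dichotomy $\alpha\in\{1,n\}$ is genuine. Everything else is an immediate translation between divisibility of the differences $m-\lambda_j$ by $n$ and the value of the gcd $\alpha$.
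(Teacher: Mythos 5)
Your argument is correct and follows essentially the same route as the paper: both reduce to $L=\mathrm{Fix}(\sigma^{\alpha})$ with $\alpha=\gcd(m-\lambda_1,\dots,m-\lambda_r,n)$ from the preceding result, observe that $\alpha=n$ exactly when $n$ divides every $m-\lambda_j$, and in the prime case use the dichotomy $\alpha\in\{1,n\}$. No gaps.
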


\begin{proof}
We have $L ={\rm Fix}(\sigma^\alpha)$ where $\alpha = {\rm gcd}(m-\lambda_1,m-\lambda_2,\dots,m-\lambda_r,n)$ (Lemma \ref{C5.2: Lemma 1}).
\\ (i) $\alpha = {\rm gcd}(m-\lambda_1,m-\lambda_2,\dots,m-\lambda_r,n)=n$ if and only if $m-\lambda_j$ is a multiple of $n$ for all $\lambda_j \in \Lambda$. That is $L = {\rm Fix}(\sigma^n) = K$ if and only if $m-\lambda_j$ is a multiple of $n$ for all $\lambda_j \in \Lambda$.
\\ (ii) If $n$ is prime, then $\alpha \vert n$ $\Rightarrow$ $\alpha \in \{ 1, n\}$. The result now follows immediately from (i).
\end{proof}

Recall that $C(R) = F[x]$ with the identification $x=t^n$. We obtain the following well-known result (cf. \cite{jacobson2009finite}) rewritten in terms of the Petit algebra $S_f$:

\begin{corollary}\label{C5.2: Corollary 5}
Suppose that $f$ has the form $f(t) = ag(t^n)t^l$ for some scalar $a \in K$, some polynomial $g(t)=\hat{g}(t^n)$ such that $\hat{g} \in F[x]$ and some integer $l\geq 0$.
Then $S_f$ is associative and hence $f$ is right-invariant.
\begin{proof}
By Theorem \ref{C5.1: Theorem 5}, we know that $L[t;\sigma]/L[t;\sigma]f(t)$ is a subalgebra of the right nucleus.
Since  $m-\lambda_j$ is a multiple of $n$ for all $\lambda_j \in \Lambda$, it follows that $L = K$ by Theorem \ref{C5.2: Theorem 5} (i).
Thus $ K[t;\sigma]/K[t;\sigma]f(t)$ is a subalgebra of ${\rm Nuc}_r(S_f)$, which implies that $S_f = {\rm Nuc}_r(S_f)$. Hence $S_f$ is an associative algebra which is true if and only if $f$ in right-invariant in $R$.
\end{proof}
\end{corollary}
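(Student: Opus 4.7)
The plan is to observe that $f$ is, up to a left scalar in $K^\times$, a polynomial in $F[t]$ whose nonzero exponents are all congruent to $l\pmod n$, then compute $L=K$ from this using Theorem \ref{C5.2: Theorem 5}, and finally invoke Theorem \ref{C5.1: Theorem 5} to fit the whole algebra $S_f$ inside its own right nucleus.

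First I would reduce to the case $a=1$. Since $S_f\cong S_{df}$ for every $d\in K^\times$ (and since $f$ being right-invariant is stable under such a rescaling), multiplying by $a^{-1}$ lets me assume $f(t)=\hat g(t^n)\,t^l\in F[t]$. Then the indices $\lambda_j\in\Lambda_f$ of the nonzero coefficients of $f$ are all of the form $nk+l$; writing the degree as $m=nk_0+l$, this gives $m-\lambda_j=n(k_0-k_j)$, a multiple of $n$, for every $\lambda_j\in\Lambda_f$.

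Next, Theorem \ref{C5.2: Theorem 5}(i) applied to this divisibility immediately yields $L=L^{(\sigma,f)}=K$. Since all coefficients of $f$ lie in $F={\rm Fix}(\sigma)$ and $\delta=0$, the hypotheses of Theorem \ref{C5.1: Theorem 5} are satisfied on the nose, so $L[t;\sigma]/L[t;\sigma]f(t)=K[t;\sigma]/K[t;\sigma]f(t)$ sits inside ${\rm Nuc}_r(S_f)$ as an $F$-subalgebra.

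Finally, $K[t;\sigma]/K[t;\sigma]f(t)$ coincides with $S_f$ as a left $K$-module (both have basis $1,t,\dots,t^{m-1}$), so the inclusion forces $S_f={\rm Nuc}_r(S_f)$; that is, $S_f$ is associative, and by the basic equivalence for Petit algebras recorded in the Preliminaries this is exactly the statement that $f$ is right-invariant in $R$. The only point requiring any care is the scalar reduction needed to put $f$ in $F[t]$ so that Theorem \ref{C5.1: Theorem 5} applies verbatim; the remainder of the argument is simply splicing Theorems \ref{C5.2: Theorem 5}(i) and \ref{C5.1: Theorem 5} together and comparing dimensions.
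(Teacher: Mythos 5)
Your proof is correct and follows essentially the same route as the paper: compute that every $m-\lambda_j$ is a multiple of $n$, deduce $L=K$ from Theorem \ref{C5.2: Theorem 5}(i), and use Theorem \ref{C5.1: Theorem 5} to place $K[t;\sigma]/K[t;\sigma]f(t)=S_f$ inside ${\rm Nuc}_r(S_f)$. The only difference is that you make explicit the rescaling of $f$ by a unit to land in (monic) $F[t]$ before invoking Theorem \ref{C5.1: Theorem 5}, a step the paper leaves implicit; this is a welcome clarification rather than a deviation.
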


\subsection{The Case $n<m$.}
Let $K/F$ be a cyclic Galois extension of degree $n$ with Galois group ${\rm Gal}(K/F)=\langle \sigma \rangle$ and
$${\rm ord}(\sigma)=n < m = {\rm deg}(f).$$
 Then there exist integers $q,r$ such that $q \neq 0$, and $m=qn+r$ where $0 \leq r< n$. Moreover, $K = {\rm Fix}(\sigma^n)$ hence the coefficients of $f \in R$ always lie in ${\rm Fix}(\sigma^n)$. Applying this to Theorem \ref{C5.1: Theorem 4} yields the following:
\\
\\ 
(i) If $m=qn$, then
$$L \oplus Lt^n \oplus Lt^{2n} \oplus \dots \oplus Lt^{(q-1)n}$$
is an $F$-sub vector space of ${\rm Nuc}_r(S_f)$ of dimension $q[L:F]$ and
$$t^{qn}=t^m = \sum_{i=0}^{m-1} a_it^i \in {\rm Nuc}_r(S_f).$$
 (ii) If $m = qn + r$ for some positive integers $q,r$ with $0 < r < n$, then
$$L \oplus Lt^n \oplus Lt^{2n} \oplus \dots \oplus Lt^{qn} $$
is an $F$-sub vector space of ${\rm Nuc}_r(S_f)$ of dimension $(q+1)[L:F]$. If we assume that $n$ is either prime or ${\rm gcd}(m,n)=1$, $f$ is not right-invariant and $(f,t)_r=1$, as well as $[L:F]= n$, then $f$ is reducible.

\begin{theorem}
Suppose that $[L:F]=\rho$ where $n = b\rho$ for some $b \in \mathbb{N}$. Then $m=q\rho+r$ for some integers $q,r$ with $0\leq r < \rho$, and $f(t) = g(t^\rho)t^r$, where $g$ is a polynomial of degree $q$ in $K[t^\rho;\sigma^\rho]$.
\end{theorem}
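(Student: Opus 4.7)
The plan is to extract the combinatorial content of the hypothesis $[L:F]=\rho$ via Theorem \ref{C5.2: Theorem 4}, and then simply regroup the terms of $f$.

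First I would apply Theorem \ref{C5.2: Theorem 4}, which gives $L={\rm Fix}(\sigma^\alpha)$ and $[L:F]=\alpha$ with $\alpha={\rm gcd}(m-\lambda_1,\ldots,m-\lambda_r,n)$. Combined with the hypothesis $[L:F]=\rho$, this identifies $\alpha$ with $\rho$, so $\rho$ divides each difference $m-\lambda_j$ (and, consistently with the hypothesis, divides $n$).

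Next I would apply Euclidean division to write $m=q\rho+r$ with $0\leq r<\rho$. Since $\rho\mid m-\lambda_j$, each $\lambda_j$ is congruent to $m$, hence to $r$, modulo $\rho$, so $\lambda_j=k_j\rho+r$ for a unique non-negative integer $k_j$, and $\lambda_j<m$ forces $k_j<q$. Using $t^{a+b}=t^at^b$ in $R$, this lets me factor $t^r$ on the right of every monomial of $f$, giving
\[
f(t)=\Bigl(t^{q\rho}-\sum_{j}a_{\lambda_j}t^{k_j\rho}\Bigr)t^r.
\]
The bracketed expression is a polynomial in $t^\rho$ of degree $q$ with leading coefficient $1$, sitting inside the subring $K[t^\rho;\sigma^\rho]\subset R$ (the relation $t^\rho a=\sigma^\rho(a)t^\rho$ for $a\in K$ making this identification canonical); this is the desired $g$.

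The argument is essentially formal once Theorem \ref{C5.2: Theorem 4} is invoked, and I do not expect a real obstacle. The only point worth flagging is that the equality $\alpha=\rho$ is saying precisely that every exponent appearing in $f$, both $m$ itself and each $\lambda_j$, lies in a single arithmetic progression $r\pmod{\rho}$; this common residue is what allows $t^r$ to be pulled out on the right while leaving behind an honest polynomial in $t^\rho$.
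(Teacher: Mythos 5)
Your proof is correct and follows essentially the same route as the paper: invoke Theorem \ref{C5.2: Theorem 4} to get $\rho=\alpha={\rm gcd}(m-\lambda_1,\dots,m-\lambda_r,n)$, conclude that every exponent of $f$ is congruent to $r$ modulo $\rho$, and factor $t^r$ out on the right to leave a degree-$q$ polynomial in $t^\rho$. No gaps.
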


\begin{proof}
By Theorem \ref{C5.2: Theorem 4}, we have that $L = {\rm Fix}(\sigma^\alpha)$, where $\alpha = {\rm gcd}(m- \lambda_1,m-\lambda_2,\dots,m-\lambda_r,n)$. Now $\alpha=\rho$ if and only if $m-\lambda_j$ is a multiple of $\rho$ for all $\lambda_j \in \Lambda$.  But $m-\lambda_j$ is equal to a multiple of $\rho$ if and only if $\lambda_j = r + \rho l$ for some integer $l$ such that $0 \leq l < q$ (since $m=q\rho+r$). Therefore we obtain $\Lambda \subset  \lbrace r, r+\rho,r+2\rho,\dots,r+(q-1)\rho \rbrace.$ Thus
\begin{align*}
f(t) &= t^{q\rho +r} - a_{(q-1)\rho +r}t^{(q-1)\rho +r} - \dots - a_{r+\rho}t^{r+\rho} - a_rt^r\\
&= [(t^\rho)^{q} - a_{(q-1)\rho+r}(t^\rho)^{(q-1)} - \dots - a_{r+\rho}t^\rho - a_r]t^r\\
&= g(t^\rho)t^r
\end{align*}
where $g$ has degree $q$ in $K[t^\rho;\sigma^\rho]$.
\end{proof}

\section{The Special Case $R=D[t;\delta]$}
Throughout this section we consider the Petit algebras associated with the polynomial ring $R=D[t;\delta]$, and their nuclei. As a Corollary to Theorem \ref{C5.1: Theorem 1} we obtain:

\begin{theorem}\label{C5.3: Theorem 1}
Let $f(t) = t^m - \sum\limits_{i=0}^{m-1}a_it^i \in R$ and $c \in D$. Then $c \in L$ if and only if $$ca_k = \sum\limits_{i=k}^{m-1} a_i \binom{i}{k} \delta^{i-k}(c) - \binom{m}{k}\delta^{m-k}(c)$$ for all $k \in \lbrace 0,1,\dots,m-1 \rbrace$.
\begin{proof}
Since here $\sigma = {\rm id}$, the map $\Delta_{j,i}$ is equal to $\binom{j}{i}\delta^{j-i}$ for any $i,j \in \mathbb{Z}$ such that $0 \leq i \leq j$. The result follows immediately by Theorem \ref{C5.1: Theorem 1}.
\end{proof}
\end{theorem}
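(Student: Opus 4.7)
The plan is to derive this statement directly from Theorem \ref{C5.1: Theorem 1} by specializing to $\sigma = \mathrm{id}$. Theorem \ref{C5.1: Theorem 1} already tells us that in the general ring $R = D[t;\sigma,\delta]$, an element $c \in D$ lies in $L$ if and only if
$$\sigma^m(c)a_k = \sum_{j=k}^{m-1} a_j \Delta_{j,k}(c) - \Delta_{m,k}(c)$$
for every $k \in \{0,1,\dots,m-1\}$. So the entire task reduces to substituting the correct expressions for $\sigma^m$ and the $\Delta_{j,i}$ when $\sigma = \mathrm{id}$.

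First I would recall from the preliminaries (Section 1.3) that in the differential polynomial ring $R = D[t;\delta]$ the iterated coefficient maps satisfy $\Delta_{j,i} = \binom{j}{i}\delta^{j-i}$ for $0 \le i \le j$. Simultaneously, since $\sigma = \mathrm{id}_D$, we have $\sigma^m(c) = c$ for every $c \in D$. Substituting both specializations into the equivalence from Theorem \ref{C5.1: Theorem 1} yields
$$ca_k = \sum_{j=k}^{m-1} a_j \binom{j}{k}\delta^{j-k}(c) - \binom{m}{k}\delta^{m-k}(c)$$
for all $k \in \{0,1,\dots,m-1\}$, which (after renaming the summation index $j$ to $i$) is exactly the claimed identity. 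Conversely, if this system of equations holds for every $k$, the same substitution shows that the hypothesis of Theorem \ref{C5.1: Theorem 1} is satisfied, so $c \in L$.

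There is no real obstacle here: the content of the statement is entirely captured by the general case, and the proof is a transparent specialization. The only point worth double-checking is the form of $\Delta_{j,i}$ when $\sigma = \mathrm{id}$, but this was already spelled out at the start of Section 1.3 of the thesis and therefore may be cited without further comment.
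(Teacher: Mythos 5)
Your proposal is correct and follows exactly the same route as the paper's own proof: specialize Theorem \ref{C5.1: Theorem 1} to $\sigma = {\rm id}$, using $\Delta_{j,i} = \binom{j}{i}\delta^{j-i}$ and $\sigma^m(c) = c$. The extra detail you supply about verifying both directions of the equivalence is harmless but not needed beyond what the general theorem already provides.
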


\begin{corollary}\label{C5.3: Corollary 1}
Suppose that $D$ has prime characteristic $p$. Let $f(t) = t^p - \sum\limits_{i=0}^{p-1}a_it^i \in R$, and let $c \in D$. Then $c \in L$ if and only if
$$ca_k = \sum\limits_{i=k}^{p-1} a_i \binom{i}{k} \delta^{i-k}(c) $$ for all $k \in \lbrace 1, 2\dots,m-1 \rbrace$, and
$$ca_0 = \sum\limits_{i=0}^p a_i \delta^i(c)$$
where $a_p=-1$.
\end{corollary}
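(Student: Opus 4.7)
The plan is to obtain this as a direct specialisation of Theorem \ref{C5.3: Theorem 1} by taking $m=p$ and exploiting the vanishing of binomial coefficients in characteristic $p$. Recall that Theorem \ref{C5.3: Theorem 1} asserts $c \in L$ if and only if
$$ca_k = \sum_{i=k}^{m-1} a_i \binom{i}{k} \delta^{i-k}(c) - \binom{m}{k}\delta^{m-k}(c)$$
for every $k \in \{0,1,\dots,m-1\}$. Setting $m=p$, the only ingredient needed is the classical fact that $p$ divides $\binom{p}{k}$ whenever $1 \leq k \leq p-1$; since $D$ has characteristic $p$, this means $\binom{p}{k} \cdot 1_D = 0$ in $D$ for $k$ in that range.

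First, I would treat the cases $1 \leq k \leq p-1$. For such $k$, the term $\binom{p}{k}\delta^{p-k}(c)$ in the identity of Theorem \ref{C5.3: Theorem 1} vanishes, and the condition collapses to
$$ca_k = \sum_{i=k}^{p-1} a_i \binom{i}{k} \delta^{i-k}(c),$$
which is precisely the first family of equations in the statement. Next I would handle $k=0$ separately, where $\binom{p}{0}=1$ does not vanish: the condition of Theorem \ref{C5.3: Theorem 1} becomes
$$ca_0 = \sum_{i=0}^{p-1} a_i\, \delta^i(c) - \delta^p(c).$$
Using the convention $a_p = -1$ stated in the corollary, the $-\delta^p(c)$ term can be absorbed as $a_p \delta^p(c)$, producing $ca_0 = \sum_{i=0}^p a_i \delta^i(c)$, as required.

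There is no real obstacle here; the only thing to be careful about is making sure the two directions of the equivalence are preserved when invoking Theorem \ref{C5.3: Theorem 1}, but since the substitution $m=p$ and the identifications $\binom{p}{k}=0$ (for $1 \leq k \leq p-1$) and $a_p=-1$ are purely formal rewrites of the same set of equations, the if-and-only-if is inherited directly from the theorem.
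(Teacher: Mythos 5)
Your proof is correct and is essentially the same as the paper's: the paper also specialises Theorem \ref{C5.3: Theorem 1} to $m=p$, notes that $\binom{p}{k}$ is divisible by $p$ for $1\leq k\leq p-1$ while $\binom{p}{0}=1$, and absorbs the surviving $-\delta^p(c)$ term via the convention $a_p=-1$. Your write-up simply makes explicit the case split and the preservation of the equivalence, which the paper leaves implicit.
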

\begin{proof}
We note that $\binom{p}{k}$ is divisible by $p$ for all $k \in \{1,2,\dots,p-1\}$, and $\binom{p}{0} = 1$. The result follows immediately.
\end{proof}

We can also prove the converse to Theorem \ref{C5.1: Theorem 3}, for the special case $\sigma = {\rm id}$:

\begin{theorem}\label{C5.3: Theorem 2}
Let $f(t) = t^m - \sum_{i=0}^{m-1}a_it^i \in R = D[t;\delta].$ Then $a_i \in {\rm Const}(\delta)$ for all $i \in \lbrace 0,1,\dots,m-1 \rbrace$ if and only if $t \in {\rm Nuc}_r(S_f)$.
\end{theorem}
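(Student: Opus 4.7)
The forward direction is immediate: if every $a_i$ lies in ${\rm Const}(\delta)$, then since $\sigma = {\rm id}$ automatically fixes every element of $D$, the hypothesis of Theorem \ref{C5.1: Theorem 3} is satisfied and hence $t \in {\rm Nuc}_r(S_f)$. So the entire content of the result lies in the converse.

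For the converse, I would start by invoking Theorem \ref{C5.1: Theorem 2}, which tells us that $t \in {\rm Nuc}_r(S_f)$ is equivalent to $ft \in Rf$. Writing $ft = qf$ for some $q \in R$, a comparison of degrees forces ${\rm deg}(q) = 1$, and comparing leading coefficients (using that $f$ is monic) forces $q$ to be monic, so $q(t) = t + b$ for some $b \in D$. The main computation is then to expand both sides explicitly and compare. On the one hand,
\[
ft = t^{m+1} - \sum_{i=0}^{m-1} a_i t^{i+1}.
\]
On the other, using the commutation rule $ta = at + \delta(a)$ valid for $\sigma = {\rm id}$,
\[
qf = (t+b)\Bigl(t^m - \sum_{i=0}^{m-1} a_i t^i\Bigr) = t^{m+1} - \sum_{i=0}^{m-1} a_i t^{i+1} - \sum_{i=0}^{m-1} \delta(a_i) t^i + b t^m - \sum_{i=0}^{m-1} b a_i t^i.
\]
Setting $ft = qf$ and cancelling the common terms $t^{m+1}$ and $-\sum a_i t^{i+1}$ yields
\[
b t^m - \sum_{i=0}^{m-1} \bigl(\delta(a_i) + b a_i\bigr) t^i = 0.
\]

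The final step is to read off information by comparing coefficients in the equation above. The $t^m$ coefficient gives $b = 0$, after which the remaining identity reduces to $\sum_{i=0}^{m-1} \delta(a_i) t^i = 0$, so $\delta(a_i) = 0$ for all $i \in \{0,1,\dots,m-1\}$, i.e. each $a_i$ is a $\delta$-constant. There is no genuine obstacle here: the only thing to be careful about is the use of the Leibniz rule $ta = at + \delta(a)$ in the expansion of $qf$, which is what produces the $\delta(a_i)$ terms that get separated cleanly from the $a_i t^{i+1}$ terms thanks to the absence of $\sigma$-twists. This is precisely the differential analogue of Theorem \ref{C5.2: Theorem 2}, with the role played there by $\sigma$-fixed coefficients now played by $\delta$-constant coefficients.
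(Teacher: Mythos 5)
Your proof is correct and follows essentially the same route as the paper: both reduce the converse to Petit's criterion (Theorem \ref{C5.1: Theorem 2}) and extract $\delta(a_i)=0$ from a Leibniz-rule expansion followed by a comparison of coefficients. The only (minor) difference is that you work with the polynomial identity $ft=qf$ directly in $R$, pinning down $q=t+b$ and reading off $b=0$ and $\delta(a_i)=0$ at once, whereas the paper starts from the associator identity $[t,t^{m-1},t]=0$ in $S_f$ and then needs a short degree argument to lift the resulting congruence $\sum_{i}\delta(a_i)t^i\equiv 0 \pmod{Rf}$ to an identity in $R$; your version makes that last lifting step unnecessary.
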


\begin{proof}
By Theorem \ref{C5.1: Theorem 3} with $\sigma = {\rm id}$, if $a_i \in {\rm Const}(\delta)$ for all $i \in \lbrace 0,1,\dots,m-1\rbrace$, then $t \in {\rm Nuc}_r(S_f)$. On the other hand, if $t \in {\rm Nuc}_r(S_f)$, then $[t,t^{m-1},t]=0$ which gives $$(\sum_{i=0}^{m-1} a_it^i)t = t(\sum_{i=0}^{m-1} a_it^i),$$ that is
\begin{equation}\label{Equation 1: t in N iff a_i in C(delta)}
a_{m-1}t^m + \sum\limits_{i=0}^{m-2}a_it^{i+1} = a_{m-1}t^m + \sum\limits_{i=0}^{m-2}a_it^i + \sum\limits_{i=0}^{m-1}\delta(a_i)t^i.
\end{equation}
We note that $t^m = (\sum\limits_{i=0}^{m-1} a_i t^i) \, {\rm mod}_rf$, however, we need not explicitly substitute this into Equation \ref{Equation 1: t in N iff a_i in C(delta)}, since only $a_{m-1}t^m$ appears on each side of \ref{Equation 1: t in N iff a_i in C(delta)}, hence all terms in $t^m$ sum to $0$. We are left with
\begin{equation}\label{Equation 2: t in N iff a_i in C(delta)}
\sum\limits_{i=0}^{m-1}\delta(a_i)t^i = 0.
\end{equation}
Since Equation \ref{Equation 2: t in N iff a_i in C(delta)} is an equality in $S_f$, we have that $\delta(f(t)):= \sum\limits_{i=0}^{m-1} \delta(a_i)t^i \in Rf$, i.e that $\delta(f) = gf$ for some $g(t) \in R$. Then $${\rm deg}(gf) = {\rm deg}(g) + {\rm deg}(f) = {\rm deg}(g) + m = {\rm deg}(\delta(f)) \leq m-1,$$ which forces $g = 0$, and $\delta(f)=gf = 0$. Therefore Equation \ref{Equation 2: t in N iff a_i in C(delta)} is also true in $R$, and so $\delta(a_i) = 0$ for each $i \in \lbrace 0,1,\dots, m-1 \rbrace$. The result follows immediately.
\end{proof}

\subsection{The Right Nucleus of $t^p-a \in D[t;\delta]$ when ${\rm Char}(D)=p > 0$}
For the remainder of the chapter we suppose that $D$ has prime characteristic $p$, and we consider the polynomial $f(t) = t^p-a \in D[t;\delta]$.\\
\\
In a similar fashion to the results of Section \ref{RightNuc t^m-a}, we obtain stronger results on the right nucleus of $S_f$ for the particular polynomial $f(t) = t^p-a$ under certain conditions on the constant coefficient $a \in D^{\times}$. In particular Theorem \ref{C5.3: Theorem 3} is analogous to Theorem \ref{C5.2: Theorem 3}. Again we will make use of the following results in the coming chapters to determine irreducibility criteria for the polynomial $f=t^p-a \in D[t;\delta]$. Recall that the bracket defined by $[b,c]=bc-cb$ for $b,c \in D$ is the commutator on $D$.


\begin{lemma}\label{C5.3: Lemma 1}
$$L = \{ c \in D: [a,c]=\delta^p(c) \}.$$
In particular, if $a \in C $, then $$L = \{ c \in D: \delta^p(c) = 0 \} = {\rm Const}(\delta^p).$$
\end{lemma}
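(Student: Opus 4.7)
The plan is to apply the explicit criterion of Theorem \ref{C5.3: Theorem 1} (or Corollary \ref{C5.3: Corollary 1}) to the specific polynomial $f(t)=t^p-a$. Here $m=p$, $a_0=a$, and $a_i=0$ for $1\leq i\leq p-1$, so the system of $p$ equations indexed by $k\in\{0,1,\dots,p-1\}$ collapses dramatically. I would evaluate the $k=0$ equation and the equations for $k\geq 1$ separately.

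For $k=0$, since $\binom{i}{0}=1$ and $\delta^0={\rm id}$, the right-hand side of the criterion reads
$$\sum_{i=0}^{p-1}a_i\delta^i(c)\;-\;\binom{p}{0}\delta^p(c)\;=\;ac-\delta^p(c),$$
so the equation $ca_0=ac-\delta^p(c)$ becomes $ca=ac-\delta^p(c)$, which is exactly $[a,c]=\delta^p(c)$. For each $k\in\{1,\dots,p-1\}$, both $a_k$ and the inner coefficients $a_i$ (for $i=k,\dots,p-1$) vanish, so the criterion reduces to
$$0\;=\;-\binom{p}{k}\,\delta^{p-k}(c).$$
Since $\binom{p}{k}$ is divisible by $p$ for $1\leq k\leq p-1$ and $D$ has characteristic $p$, this identity is automatic and imposes no condition on $c$. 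Combining both cases, $c\in L$ if and only if the single equation $[a,c]=\delta^p(c)$ holds, proving the first claim.

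For the second assertion, if $a\in C$ then $ac=ca$ for every $c\in D$, so $[a,c]=0$ and the condition $[a,c]=\delta^p(c)$ reduces to $\delta^p(c)=0$, i.e. $c\in{\rm Const}(\delta^p)$. This gives $L=\{c\in D:\delta^p(c)=0\}={\rm Const}(\delta^p)$.

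There is no real obstacle here; the proof is a direct specialization of Theorem \ref{C5.3: Theorem 1}, the only subtlety being to observe the vanishing of the binomial coefficients $\binom{p}{k}$ in characteristic $p$, which is exactly the content of Corollary \ref{C5.3: Corollary 1} and is the reason all equations beyond $k=0$ become trivial.
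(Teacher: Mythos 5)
Your proof is correct and is essentially the paper's argument in a slightly repackaged form: the paper expands $(t^p-a)c=c'(t^p-a)$ directly and invokes $p\mid\binom{p}{i}$ for $0<i<p$, whereas you obtain the same coefficient equations by specializing the already-proved criterion of Theorem \ref{C5.3: Theorem 1}. Your routing through the general theorem is a legitimate (and arguably cleaner) shortcut, and the handling of the $k=0$ equation and the vanishing of the $k\geq 1$ equations matches the paper exactly.
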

\begin{proof}
Let $c \in L$. By definition, this is true if and only if $fc = c^{\prime}f$ for some $c^{\prime} \in D$, i.e. $(t^p-a)c = c^{\prime}(t^p-a)$. Computing both sides gives $$\sum\limits_{i=0}^p \binom{p}{i} \delta^{p-i}(c)t^i - ac = c^{\prime}t^p - c^{\prime}a.$$ Now, since $\binom{p}{i}$ is divisible by $p$ for all $i \neq 0,p$, this simplifies to $$ct^p + \delta^p(c) - ac = c^{\prime}t^p -c^{\prime}a.$$ This yields $c=c^{\prime}$ and $\delta^p(c)-ac=-ca$, i.e. $\delta^p(c)=[a,c]$. Therefore $L \subset \{ c \in D: [a,c]=\delta^p(c) \}$. Conversely, let $c \in \{ c \in D: [a,c]=\delta^p(c) \}$. Then $$fc = \sum\limits_{i=0}^p \binom{p}{i} \delta^{p-i}(c)t^i-ac = ct^p + \delta^p(c) - ac = ct^p-ca= cf.$$ 
Therefore $c \in L$, and $\{c \in D: [a,c]=\delta^p(c) \} \subset L$. The second part follows easily, since $[a,c]=0$ for all $c \in D$ when $a$ is in the center $C$ of $D$.
\end{proof}

\begin{theorem}\label{C5.3: Theorem 3}
If $a \in {\rm Const}(\delta)$, then $${\rm Nuc}_r(S_f) = L[t;\delta\vert_{L}]/L[t;\delta\vert_{L}]f(t).$$
\end{theorem}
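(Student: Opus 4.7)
The plan is to verify first that the candidate algebra $L[t;\delta|_L]/L[t;\delta|_L]f(t)$ is well defined, then prove the two inclusions. The containment $\supseteq$ will be an easy consequence of earlier results, while the containment $\subseteq$ will require a direct coefficient comparison exploiting characteristic $p$.

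\textbf{Step 1 (well-definedness).} I would first show $L$ is closed under $\delta$, so that $\delta|_L$ is a left derivation of the division ring $L$ and $L[t;\delta|_L]$ makes sense. Take $c \in L$, so $[a,c] = \delta^p(c)$ by Lemma \ref{C5.3: Lemma 1}. Apply $\delta$ to both sides; since $\delta(a)=0$, the Leibniz rule gives $\delta([a,c]) = [a,\delta(c)]$, and obviously $\delta(\delta^p(c)) = \delta^p(\delta(c))$. Hence $\delta(c) \in L$. Next, $a \in L$ because $[a,a]=0=\delta^p(a)$ (using $\delta(a)=0$), so $f(t) \in L[t;\delta|_L]$ and the quotient is defined.

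\textbf{Step 2 (inclusion $\supseteq$).} By definition $L \subseteq {\rm Nuc}_r(S_f)$, and the coefficients of $f(t) = t^p - a$ (namely $1$ and $-a$) all lie in ${\rm Const}(\delta)$, so $t \in {\rm Nuc}_r(S_f)$ by Theorem \ref{C5.3: Theorem 2}. Since ${\rm Nuc}_r(S_f)$ is an $F$-subalgebra of $S_f$, it contains the subspace $L \oplus Lt \oplus \cdots \oplus Lt^{p-1}$, whose multiplication (induced from $S_f$) is exactly the one on $L[t;\delta|_L]/L[t;\delta|_L]f(t)$.

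\textbf{Step 3 (inclusion $\subseteq$).} Let $g(t) = \sum_{i=0}^{p-1} g_i t^i \in {\rm Nuc}_r(S_f)$. Then $fg = g'f$ for some $g' \in R$; by comparing degrees $\deg(g') < p$, so write $g'(t) = \sum_{j=0}^{p-1} g'_j t^j$. The crucial identity is
\[
t^p c \;=\; c t^p + \delta^p(c) \qquad \text{for all } c \in D,
\]
which holds in characteristic $p$ since $\binom{p}{k} \equiv 0 \pmod p$ for $0<k<p$. Moreover $ta = at$ because $\delta(a)=0$, and by induction $t^j a = a t^j$ for all $j\geq 0$. Substituting, the equation $fg = g'f$ becomes
\begin{equation*}
\sum_{i=0}^{p-1} g_i t^{p+i} \;+\; \sum_{i=0}^{p-1}\bigl(\delta^p(g_i) - a g_i\bigr) t^i \;=\; \sum_{j=0}^{p-1} g'_j t^{p+j} \;-\; \sum_{j=0}^{p-1} g'_j a\, t^j.
\end{equation*}
Comparing coefficients of $t^{p+i}$ forces $g'_i = g_i$, and comparing coefficients of $t^i$ then yields $\delta^p(g_i) = a g_i - g_i a = [a, g_i]$, so $g_i \in L$ by Lemma \ref{C5.3: Lemma 1}. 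Therefore $g(t) \in L[t;\delta|_L]/L[t;\delta|_L]f(t)$, proving the reverse inclusion.

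The only delicate step is Step 3, but the computation is the characteristic-$p$ analogue of the proof of Theorem \ref{C5.2: Theorem 3}, with the vanishing of the middle binomial coefficients playing the role of the iterated automorphism $\sigma^m$; the assumption $a \in {\rm Const}(\delta)$ is used precisely to ensure $t^j a = a t^j$, which decouples the $t^{p+j}$ and $t^j$ coefficient equations.
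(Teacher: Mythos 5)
Your proposal is correct and follows essentially the same route as the paper: establish that $f(t)\in L[t;\delta\vert_L]$ so the quotient is defined, get the inclusion $\supseteq$ from $L\subseteq{\rm Nuc}_r(S_f)$ and $t\in{\rm Nuc}_r(S_f)$, and then prove $\subseteq$ by the identical coefficient comparison in $fg=g'f$ using $\binom{p}{j}\equiv 0$ for $0<j<p$ and $t^ja=at^j$. Your Step 1 (checking $\delta(L)\subseteq L$ directly from Lemma \ref{C5.3: Lemma 1}) is a small but worthwhile addition that the paper leaves implicit.
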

\begin{proof}
First we note that $a \in L$, since $[a,a]=\delta^p(a) = 0$ (Lemma \ref{C5.2: Lemma 1}), i.e. $f(t) \in L[t;\delta\vert_{L}]$. Then $$L_f = L[t;\delta\vert_{L}]/L[t;\delta\vert_{L}]f(t)$$ is a well-defined Petit algebra. Now since $a \in {\rm Const}(\delta)$, the algebra $L_f$ is an associative subalgebra of ${\rm Nuc}_r(S_f)$. To complete the proof we show that the right nucleus ${\rm Nuc}_r(S_f)$ is contained in the set $$L \oplus L t \oplus \cdots \oplus L t^{p-1}.$$ Let $g(t) = \sum\limits_{i=0}^{p-1} g_it^i \in {\rm Nuc}_r(S_f)$. Then $fg = g^{\prime}f$ for some $g^{\prime} \in D[t;\delta]$. Assume w.l.o.g. that $g^{\prime}(t) = \sum\limits_{i=0}^{p-1}g_i^{\prime}t^i$, then we have
\begin{align*}
& (t^p-a)\sum\limits_{i=0}^{p-1} g_it^i = \sum\limits_{i=0}^{p-1}g_i^{\prime}t^i(t^p-a)\\
\Rightarrow & \sum\limits_{i=0}^{p-1}\sum\limits_{j=0}^p \binom{p}{j}\delta^{p-j}(g_i)t^{i+j} - \sum\limits_{i=0}^{p-1}ag_it^i = \sum\limits_{i=0}^{p-1}g_i^{\prime}t^{p+i} - \sum\limits_{i=0}^{p-1}g_i^{\prime} \sum\limits_{j=0}^i \binom{i}{j}\delta^{i-j}(a)t^j
\end{align*}
Since $\binom{p}{j}$ is divisible by $p$ for all $j \neq 0,p$, and $D$ has characteristic $p$, and $\delta(a)=0$ we reduce to the following:
\begin{align*}
& \sum\limits_{i=0}^{p-1}g_it^{p+i} + \sum\limits_{i=0}^{p-1} \delta^{p}(g_i)t^{i} - \sum\limits_{i=0}^{p-1}ag_it^i = \sum\limits_{i=0}^{p-1}g_i^{\prime}t^{p+i} - \sum\limits_{i=0}^{p-1}g_i^{\prime} a t^i.
\end{align*}
This yields $g_i = g_i^{\prime}$ and $[a,g_i] = \delta^p(g_i)$ for all $i = 0,1,\dots, p-1$, that is $g_i \in L$ for each $i = 0,1,\dots,p-1$. Hence $g(t) \in L \oplus L t \oplus \cdots \oplus L t^{p-1}$, and the result follows.
\end{proof}


\chapter{The Right Nucleus of $S_f$ for Low Degree Polynomials in $K[t;\sigma]$}

In this chapter we assume that $K$ is a cyclic Galois field extension of finite degree $n$ over $F$ with ${\rm Gal}(K/F)= \langle \sigma \rangle$.
We repeatedly use that  $ [{\rm Fix}(\sigma^\rho):F]  = {\rm gcd}(n,\rho).$ We now use the previous results to explore the structure of ${\rm Nuc}_r(S_f)$ for some polynomials of low degree in $K[t;\sigma]$.
 The same arguments then apply for those of higher degree as well but of course quickly become tedious. The results of this chapter also appear in \cite{owen2019eigenspaces}.

\setcounter{section}{1}
\subsection{${\rm deg}(f)=2$}

Let $f(t)=t^2-a_1t-a_0 \in K[t;\sigma]$. Then
$L^{(\sigma,f)} = \bigcap\limits_{\lambda_j \in \Lambda}{\rm Fix}(\sigma^{2-\lambda_j})$ by Proposition \ref{C5.2: Proposition 1}.

\begin{enumerate}
\item If $f(t)=t^2-a_0$ with $a_0 \in K^\times$, then  $L^{(\sigma,f)}   = {\rm Fix}(\sigma^2)$.
\item If $f(t)=t^2-a_1t-a_0$ with $a_1\in K^\times$, then  $L^{(\sigma,f)} = F $.
\end{enumerate}

Note that if $n = [K:F]$ is even, then $\sigma^2$ has order $\frac{n}{2}$ in ${\rm Gal}(K/F)$, which means that $F \neq {\rm Fix}(\sigma^2)$. If $n$ is odd, then ${\rm gcd}(n,2)=1$, therefore ${\rm Fix}(\sigma^2) = F$.\\
\\
If we additionally assume that $f(t) \in F[t]$, then we obtain:

\begin{proposition}
Let $f(t)=t^2-a_0 \in F[t] \subset K[t;\sigma]$, $a_0\not=0$ then
$${\rm Fix}(\sigma^2)[t;\sigma]/{\rm Fix}(\sigma^2)[t;\sigma]f(t)$$
 is a subalgebra of ${\rm Nuc}_r(S_f)$ of dimension $2[{\rm Fix}(\sigma^2):F]$ over $F$.
 In particular, if $n$ is odd, $f$ not right-invariant such that $(f,t)_r=1$, and  $\hat{h}(x)\not=x$, then $f$ is reducible.
\begin{proof}
By Theorem \ref{C5.1: Theorem 5},
$L^{(\sigma,f)}[t;\sigma]/L^{(\sigma,f)}[t;\sigma]f(t)$
 is a subalgebra of ${\rm Nuc}_r(S_f)$ and
 $L^{(\sigma,f)} = {\rm Fix}(\sigma^2)$ by (1), which yields the first assertion.
The second assertion follows from the fact that the right nucleus has dimension 2 over $F$ for irreducible right-invariant $f$ under our assumptions.
\end{proof}
\end{proposition}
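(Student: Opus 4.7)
The plan is to derive the main assertion as a direct corollary of Theorem \ref{C5.1: Theorem 5} and then handle the particular case by combining the resulting dimension count with the classification of right nuclei of irreducible prime-degree polynomials from Chapter~2.

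For the main assertion, I would verify the hypotheses of Theorem \ref{C5.1: Theorem 5} with $\delta=0$. The commutation condition $\sigma|_{L}\circ\delta|_{L}=\delta|_{L}\circ\sigma|_{L}$ is vacuous since $\delta=0$, and because $f(t)=t^{2}-a_{0}\in F[t]$, its coefficients lie in $F={\rm Fix}(\sigma)\cap{\rm Const}(\delta)$. The set of indices of nonzero non-leading coefficients is $\Lambda_{f}=\{0\}$, so Proposition \ref{C5.2: Proposition 1} (equivalently, item (1) of the case analysis just above the proposition) gives
\[
L^{(\sigma,f)}\;=\;{\rm Fix}(\sigma^{2-0})\;=\;{\rm Fix}(\sigma^{2}).
\]
A short check shows $\sigma$ stabilises $L$: for $c\in{\rm Fix}(\sigma^{2})$ one has $\sigma^{2}(\sigma(c))=\sigma(\sigma^{2}(c))=\sigma(c)$. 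Theorem \ref{C5.1: Theorem 5} then delivers that $L[t;\sigma]/L[t;\sigma]f(t)$ is an associative $F$-subalgebra of ${\rm Nuc}_{r}(S_{f})$, and since this is a free left $L$-module of rank $m=2$ with basis $\{1,t\}$, its $F$-dimension is $2[L:F]=2[{\rm Fix}(\sigma^{2}):F]$, as claimed.

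For the particular case, first observe that when $n$ is odd, $[{\rm Fix}(\sigma^{2}):F]=\gcd(n,2)=1$, so the subalgebra produced above is precisely $F[t]/(f(t))$, of $F$-dimension exactly $2$. I would then argue by contradiction: assume $f$ is irreducible in $K[t;\sigma]$. Since $(f,t)_{r}=1$, Lemma \ref{C2.2: Lemma 1} gives that $\hat{h}$ is irreducible in $F[x]$. Because $m=2$ is prime, Corollary \ref{C2.2'': Corollary 3} applies to $f$; its case~(ii) would require $m\mid n$, i.e.\ $2\mid n$, contradicting our assumption that $n$ is odd, so case~(i) must hold. Hence ${\rm Nuc}_{r}(S_{f})\cong E_{\hat{h}}$ is a field extension of $F$ of degree $m=2$, matching the $F$-dimension $2$ of the already constructed $F[t]/(f(t))$.

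The main obstacle is the final step: extracting a contradiction from this dimensional coincidence. Both $F[t]/(f(t))$ and the image of $E_{\hat{h}}$ sit inside ${\rm Nuc}_{r}(S_{f})$ and have $F$-dimension $2$, so they must coincide as subalgebras of $S_{f}$; one then needs to exploit the hypothesis $\hat{h}(x)\neq x$ together with the explicit shape $f=t^{2}-a_{0}$ to rule out this configuration. I expect the intended contradiction arises by forcing the minimal central left multiple $h(t)=\hat{h}(t^{n})$ of degree $2n$ to be incompatible with the image of the subfield generated by $t$ inside $S_{f}$ when $f$ is assumed irreducible; pinning this down cleanly is the delicate point, since the naive dimension counts on the two sides are consistent and the contradiction has to come from the finer ring structure rather than from dimensions alone.
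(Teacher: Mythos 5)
Your treatment of the main assertion is correct and is exactly the paper's argument: with $\delta=0$ the commutation hypothesis of Theorem \ref{C5.1: Theorem 5} is vacuous, $\Lambda_f=\{0\}$ gives $L^{(\sigma,f)}={\rm Fix}(\sigma^{2})$ by Proposition \ref{C5.2: Proposition 1}, and the dimension count $2[{\rm Fix}(\sigma^{2}):F]$ is immediate.

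Your instinct about the ``in particular'' clause is the important point: the dimensional coincidence you arrive at genuinely cannot be upgraded to a contradiction, and the clause is in fact false as stated. The paper's own justification (``the right nucleus has dimension $2$ over $F$ for irreducible \dots\ $f$'') is nothing more than the dimension count you correctly diagnose as insufficient: for $n$ odd and $f$ irreducible one gets ${\rm Nuc}_r(S_f)\cong E_{\hat h}=F[t]/(f(t))$, a perfectly consistent state of affairs (indeed it is asserted as a non-vacuous conclusion in Corollary \ref{C2.2'': Corollary 4}). A concrete counterexample: $K=\mathbb{F}_{27}$, $F=\mathbb{F}_{3}$, $\sigma(x)=x^{3}$, $n=3$, $f(t)=t^{2}+1$. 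A factorisation $f=(t-b)(t-c)$ in $K[t;\sigma]$ would force $\sigma(c)c=c^{4}=-1$; the image of $c\mapsto c^{4}$ in $\mathbb{F}_{27}^{\times}$ is the index-two subgroup of squares and $(-1)^{13}=-1\neq 1$, so $-1$ is not a square and $f$ is irreducible in $K[t;\sigma]$, even though $n$ is odd, $(f,t)_r=1$, $f$ is not right-invariant (since $\sigma^{2}\neq{\rm id}$), and $\hat h(x)\neq x$ (its constant term is nonzero by Proposition \ref{C2.1: Proposition 2}). The statement can only be rescued with an additional hypothesis: if moreover $a_{0}$ is a square in $F$, then $F[t]/(f(t))\subseteq{\rm Nuc}_r(S_f)$ contains zero divisors, and Theorem \ref{Intro: Theorem 1} then does force $f$ to be reducible --- this is exactly step (1) of the ``algorithm'' in the Conclusion. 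So do not spend further effort hunting for the ``finer ring structure'' argument you anticipate; it does not exist.
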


\subsection{${\rm deg}(f)=3$} \label{Example m=3}

Let $f(t) \in K[t;\sigma]$ be monic of degree $3$. Then
 $L^{(\sigma,f)} = \bigcap\limits_{\lambda_j \in \Lambda}{\rm Fix}(\sigma^{3-\lambda_j}) $ by Proposition \ref{C5.2: Proposition 1}.

\begin{enumerate}
\item If $f(t) = t^3-a_0 \in K[t;\sigma]$, where $a_0 \in K^\times$, then  $L^{(\sigma,f)} =  {\rm Fix}(\sigma^3) $.
\item If $f(t) = t^3-a_1t \in K[t;\sigma]$, where $a_1 \in K^\times$, then  $L^{(\sigma,f)} =  {\rm Fix}(\sigma^2)$.
\item In all other cases,  we obtain $L^{(\sigma,f)} = F$.
\end{enumerate}

\begin{proposition}
(i) If $f(t)=t^3-a_0$ with $0\not=a_0 \in F$, then
$${\rm Fix}(\sigma^3)[t;\sigma]/{\rm Fix}(\sigma^3)[t;\sigma] f(t)$$
is a subalgebra of ${\rm Nuc}_r(S_f)$ of dimension $3[{\rm Fix}(\sigma^3):F]$ over $F$.
In particular, if $f$ is not right-invariant, $n$ is prime or not divisible by 3,  such that $(f,t)_r=1$, and  $\hat{h}(x)\not=x$ 
then $f$ is reducible.
\\ (ii) If $f(t)=t^3-a_1t $ with $0\not=a_1 \in F$, then
$${\rm Fix}(\sigma^2)[t;\sigma]/{\rm Fix}(\sigma^2)[t;\sigma]f(t)$$
is a subalgebra of ${\rm Nuc}_r(S_f)$ of dimension $3[{\rm Fix}(\sigma^2):F]$ over $F$.
\end{proposition}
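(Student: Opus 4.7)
The plan is to reduce both parts of the proposition to the general subalgebra result Theorem~\ref{C5.1: Theorem 5} (with $\delta = 0$), using the identifications of $L^{(\sigma, f)}$ worked out in the enumerated list immediately above the proposition. The dimension claim is then an immediate consequence of the definition of the quotient, and the ``in particular'' clause in (i) is a contradiction argument that plays the subalgebra dimension off against the Wedderburn-theoretic bounds of Chapter~2.

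For (i), the indices of non-zero coefficients of $f(t) = t^3 - a_0$ form $\Lambda_f = \{0\}$, so by Proposition~\ref{C5.2: Proposition 1} (equivalently, item (1) of the preceding list) we have $L := L^{(\sigma,f)} = {\rm Fix}(\sigma^3)$. Since $\delta = 0$ trivially commutes with $\sigma|_L$ and $f \in F[t] \subseteq L[t;\sigma]$, I would invoke Theorem~\ref{C5.1: Theorem 5} directly, obtaining $L[t;\sigma]/L[t;\sigma]f(t)$ as an associative subalgebra of ${\rm Nuc}_r(S_f)$. Its $F$-dimension follows from the $L$-basis $\{1,\, t,\, t^2\}$ of the quotient combined with $[L:F] = [{\rm Fix}(\sigma^3):F]$, giving $3[{\rm Fix}(\sigma^3):F]$ as claimed.

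For the ``in particular'' clause in (i), I would argue by contradiction, supposing $f$ irreducible in $R=K[t;\sigma]$ under the stated hypotheses. Splitting along the disjunction: if $n$ is prime with $3 \mid n$, then $n = 3$, so ${\rm Fix}(\sigma^3) = K$ and the subalgebra has $F$-dimension $3n = 9 = \dim_F S_f$; hence the subalgebra exhausts $S_f$, forcing ${\rm Nuc}_r(S_f) = S_f$ and $S_f$ associative, so $f$ is right-invariant, contradicting the hypothesis. If $3 \nmid n$, then $\gcd(m,n) = 1$, and Theorem~\ref{C2.2'': Theorem 1}(ii) (or Corollary~\ref{C2.2'': Corollary 3}, since $m=3$ is prime) pins down $[{\rm Nuc}_r(S_f):F] = m = 3$; a comparison of this with the subalgebra dimension, together with the extra data $(f,t)_r = 1$ and $\hat{h}(x) \neq x$, is then used to rule out irreducibility.

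Part (ii) is handled in parallel. From item (2) of the enumeration, $\Lambda_f = \{1\}$ for $f(t) = t^3 - a_1 t$, so $L = {\rm Fix}(\sigma^{3-1}) = {\rm Fix}(\sigma^2)$. Since $a_1 \in F$ gives $f \in F[t]$, Theorem~\ref{C5.1: Theorem 5} again applies and exhibits $L[t;\sigma]/L[t;\sigma]f(t)$ as a subalgebra of ${\rm Nuc}_r(S_f)$ of $F$-dimension $3[{\rm Fix}(\sigma^2):F]$.

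The main obstacle, as for the analogous $m = 2$ proposition, is the sub-case $3 \nmid n$ of the ``in particular'' clause in~(i): the subalgebra dimension matches the dimension of ${\rm Nuc}_r(S_f)$ predicted by Theorem~\ref{C2.2'': Theorem 1} for irreducible $f$, so the contradiction cannot come from a naive count and must instead exploit the structural restriction $\hat{h}(x) \neq x$ and the shape of the minimal central left multiple $h(t) = \hat{h}(t^n)$ of $f$ via the results of Section~2 of Chapter~2.
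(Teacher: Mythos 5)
Your proof of the two displayed assertions is correct and is essentially the paper's own argument: identify $L^{(\sigma,f)}$ as ${\rm Fix}(\sigma^{3})$ (resp.\ ${\rm Fix}(\sigma^{2})$) via Proposition \ref{C5.2: Proposition 1}, feed this into Theorem \ref{C5.1: Theorem 5} with $\delta=0$, and read off the $F$-dimension $3[L:F]$ from the $L$-basis $1,t,t^{2}$ of the quotient.

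The gap you flag in the ``in particular'' clause of (i) is genuine, and you are right that it cannot be closed by the dimension count. Your sub-case $n=3$ is fine (and in fact vacuous: $t^{3}-a_{0}$ with $a_{0}\in F$ is automatically right-invariant when $\sigma^{3}={\rm id}$, by Proposition \ref{Intro: Proposition 2}). But in the sub-case $3\nmid n$ the subalgebra is $F[t]/(t^{3}-a_{0})$ of dimension $3$, while Theorem \ref{C2.2'': Theorem 1}\,(ii) gives $[{\rm Nuc}_r(S_f):F]=3$ for irreducible $f$; the two numbers agree, so no contradiction is available, and the extra hypotheses $(f,t)_r=1$ and $\hat h(x)\neq x$ add nothing, since for irreducible monic $f\neq t$ with $a_{0}\neq 0$ they hold automatically (Corollary \ref{C2.1: Corollary 1}). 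Indeed the configuration ``$f\in F[t]$ irreducible, not right-invariant, $n$ prime'' is exactly the one treated in Corollary \ref{C2.2'': Corollary 4}, which concludes ${\rm Nuc}_r(S_f)\cong F[t]/(f(t))$ rather than nonexistence; and concretely $f(t)=t^{3}-2\in\mathbb{Q}(i)[t;\sigma]$ with $\sigma$ complex conjugation ($n=2$) satisfies every hypothesis of the clause yet is irreducible, because a linear right or left factor would force $N_{\mathbb{Q}(i)/\mathbb{Q}}(c)^{3}=4$ for some $c\in\mathbb{Q}(i)$. So the clause appears to be false as stated. For what it is worth, the paper's own proof of this step consists only of the phrase ``looking at the dimensions,'' so you have not overlooked an argument that the paper actually supplies; your honest identification of the obstruction is the correct assessment.
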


\begin{proof}
By  Theorem \ref{C5.1: Theorem 5}, $L^{(\sigma,f)}[t;\sigma]/L^{(\sigma,f)}[t;\sigma]f(t)\subset {\rm Nuc}_r(S_f)$.
\\ (i)  If $f(t)=t^3-a_0 \in F[t]$ with $a_0 \neq 0$, then $L^{(\sigma,f)} = {\rm Fix}(\sigma^3)$  which proves the assertion looking at the dimensions.
 \\ (ii) If $f(t)=t^3-a_1t \in F[t]$ with $a_1 \neq 0$, then $L^{(\sigma,f)} = {\rm Fix}(\sigma^2)$.
\end{proof}

\subsection{${\rm deg}(f)=4$} \label{Example m=4}

Let $f(t) \in K[t;\sigma]$ be monic of degree $m=4$. Then
 $L^{(\sigma,f)} = \bigcap\limits_{\lambda_j \in \Lambda} {\rm Fix}(\sigma^{4-\lambda_j})  $
 by Proposition \ref{C5.2: Proposition 1}.

\begin{enumerate}
\item If $f(t)=t^4-a_0$ with $a_0 \in K^\times$ then  $L^{(\sigma,f)}   = {\rm Fix}(\sigma^4)$.
\item If $f(t)=t^4-a_1t$ with $a_1 \in K^\times$ then   $L^{(\sigma,f)}
= {\rm Fix}(\sigma^3) $.
\item If $f(t)=t^4-a_2t^2$ with $a_2 \in K^\times$ then  $L^{(\sigma,f)} =
{\rm Fix}(\sigma^2) $.
\item If $f(t)=t^4-a_2t^2-a_0$ with $a_0,a_2 \in K^\times$, then  $L^{(\sigma,f)}  = {\rm Fix}(\sigma^4) \cap {\rm Fix}(\sigma^2) = {\rm Fix}(\sigma^2)$.
\item In all other cases, $L^{(\sigma,f)} =  {\rm Fix}(\sigma) = F $.
\end{enumerate}

Observe that:
\begin{itemize}
\item If $n$ is odd then ${\rm Fix}(\sigma^4)={\rm Fix}(\sigma^2)=F$.
\item If $n \equiv 0 (mod \text{ } 4)$, then $[{\rm Fix}(\sigma^4):F]=4$.
\item If $n \equiv 2 (mod \text{ } 4)$, then $[{\rm Fix}(\sigma^4):F] =2$.
\item If $n \equiv 0 (mod \text{ } 3)$, then $[{\rm Fix}(\sigma^3):F]=3$.
\item If $n \equiv 1 \text{ or } 2 (mod \text{ } 3)$, then ${\rm Fix}(\sigma^3)=F$.
\item If $n \equiv 0 (mod \text{ } 2)$ then $[{\rm Fix}(\sigma^2):F] = 2$.
\end{itemize}

The above cases for $n$ are not mutually exclusive.

\begin{proposition}
(i) If $f(t)=t^4-a_0\in F[t]$ with $0\not=a_0 $, then the subalgebra
$${\rm Fix}(\sigma^4)[t;\sigma]/{\rm Fix}(\sigma^4)[t;\sigma]f(t)$$
of the right nucleus of $S_f$
has dimension $4[{\rm Fix}(\sigma^4):F]=4 {\rm gcd}(n,4)$ over $F$.
In particular:
\\ (a) If $f$ is irreducible, not right-invariant and either $n\not=2$ is prime or ${\rm gcd}(n,4)=1$, and $f(t)\not =t$,
then
$${\rm Nuc}_r(S_f) \cong {\rm Fix}(\sigma^4)[t;\sigma]/{\rm Fix}(\sigma^4)[t;\sigma]f(t).$$
\\ (b) If $f$ is not right-invariant and $n=2$, then $f$ is reducible.
\\ (ii) If $f(t)=t^4-a_1t\in F[t;\sigma]$ with $0\not= a_1$, then the subalgebra
$${\rm Fix}(\sigma^3)[t;\sigma]/{\rm Fix}(\sigma^3)[t;\sigma]f(t)$$
of the right nucleus of $S_f$ has dimension $4[{\rm Fix}(\sigma^3):F]=4 {\rm gcd}(n,3)$ over $F$.
\\ (iii) If $f(t)=t^4-a_2t-a_0\in F[t;\sigma]$ with $0\not= a_2 $, then the subalgebra
$${\rm Fix}(\sigma^2)[t;\sigma]/{\rm Fix}(\sigma^2)[t;\sigma]f(t)$$
of the right nucleus of $S_f$ has dimension $4[{\rm Fix}(\sigma^2):F]=4 {\rm gcd}(n,2)$ over $F$.
In particular:
\\ (a) If $f$ is irreducible, not right-invariant and either $n\not=2$ is prime or ${\rm gcd}(n,4)=1$, and $f(t)\not =t$,
then
$${\rm Nuc}_r(S_f) \cong {\rm Fix}(\sigma^2)[t;\sigma]/{\rm Fix}(\sigma^2)[t;\sigma]f(t).$$
\\ (b) If $f$ is not right-invariant and $n=2$, $f(t)\not =t$, then $f$ is reducible.
\end{proposition}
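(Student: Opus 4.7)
The plan is to prove all three parts by first establishing the subalgebra inclusion and its $F$-dimension uniformly, and then handling the (a) and (b) refinements in parts (i) and (iii) via a dimension-comparison argument against Theorem \ref{C2.2'': Theorem 1}(ii).

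First, for each of (i), (ii), (iii) the polynomial $f$ lies in $F[t] \subset K[t;\sigma]$. Since $\delta = 0$ commutes trivially with $\sigma|_L$, Theorem \ref{C5.1: Theorem 5} applies and furnishes the subalgebra inclusion
$$L^{(\sigma,f)}[t;\sigma]/L^{(\sigma,f)}[t;\sigma] f(t) \subseteq {\rm Nuc}_r(S_f).$$
The identifications of $L^{(\sigma,f)}$ in cases (i), (ii), (iii) as ${\rm Fix}(\sigma^4)$, ${\rm Fix}(\sigma^3)$, ${\rm Fix}(\sigma^2)$, respectively, are precisely the itemized cases (1), (2), (4) recorded just above Proposition \ref{C5.2: Proposition 1} (specialised to $\deg f=4$). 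Since the Petit algebra $L[t;\sigma]/L[t;\sigma]f(t)$ has a free $L$-module basis $1,t,t^2,t^3$, its $F$-dimension equals $4[L:F]$, and $[{\rm Fix}(\sigma^\rho):F]={\rm gcd}(n,\rho)$ (Galois correspondence, as noted at the start of the chapter). This yields the dimensions $4\,{\rm gcd}(n,4)$, $4\,{\rm gcd}(n,3)$, $4\,{\rm gcd}(n,2)$ claimed in (i), (ii), (iii).

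For the refinements (i)(a) and (iii)(a), note that in (iii) the irreducibility of $f$ forces $a_0 \neq 0$, since otherwise $f(t)=t^2(t^2-a_2)$ factors; in (i) the condition $a_0 \neq 0$ is built in. Hence in both cases $(f,t)_r = 1$ by Corollary \ref{C2.1: Corollary 1}, and since $f$ is assumed irreducible, Lemma \ref{C2.2: Lemma 1} gives that $\hat{h}$ is irreducible in $F[x]$. The hypothesis that either $n \neq 2$ is prime or ${\rm gcd}(n,4)=1$ implies in every case that $n$ is odd, and therefore ${\rm gcd}(m,n)={\rm gcd}(4,n)=1$; Theorem \ref{C2.2'': Theorem 1}(ii) then gives $[{\rm Nuc}_r(S_f):F] = m = 4$. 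On the other hand, $[L_f:F] = 4\,{\rm gcd}(n,k) = 4$ (for $k=4$ in (i), $k=2$ in (iii)) since $n$ is odd. The two dimensions agree, forcing ${\rm Nuc}_r(S_f) = L_f$.

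For (i)(b) and (iii)(b), I argue by contradiction: suppose $f$ is irreducible. As in the (a) arguments, $a_0 \neq 0$ (in (iii) again because $t^2 \mid f$ would contradict irreducibility), so $(f,t)_r = 1$ and $\hat{h}$ is irreducible. Now $n=2$ is prime and $f$ is not right-invariant, so Theorem \ref{C2.2'': Theorem 1}(ii) yields $[{\rm Nuc}_r(S_f):F] = m = 4$. But the subalgebra $L_f$ from the first paragraph has $F$-dimension $4\,{\rm gcd}(2,k) = 8$ for $k \in \{4,2\}$, which contradicts $L_f \subseteq {\rm Nuc}_r(S_f)$. Hence $f$ is reducible.

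The whole argument is essentially dimension bookkeeping, and the only potentially delicate step is the (b) reducibility argument, where one must carefully verify the hypotheses of Theorem \ref{C2.2'': Theorem 1}(ii) — in particular, that $(f,t)_r = 1$ continues to hold under a hypothetical irreducibility assumption, which is where the side observation $a_0 \neq 0$ in case (iii) enters. Everything else reduces to applying Theorem \ref{C5.1: Theorem 5}, Proposition \ref{C5.2: Proposition 1}, and the standard fact $[{\rm Fix}(\sigma^\rho):F]={\rm gcd}(n,\rho)$.
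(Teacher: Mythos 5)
Your proof is correct and follows the same route as the paper: the containment $L^{(\sigma,f)}[t;\sigma]/L^{(\sigma,f)}[t;\sigma]f(t)\subseteq {\rm Nuc}_r(S_f)$ via Theorem \ref{C5.1: Theorem 5}, the identification of $L^{(\sigma,f)}$ from Proposition \ref{C5.2: Proposition 1}, the dimension count $[{\rm Fix}(\sigma^\rho):F]={\rm gcd}(n,\rho)$, and the comparison against $[{\rm Nuc}_r(S_f):F]=m$ from Theorem \ref{C2.2'': Theorem 1}(ii) for parts (a) and (b). In fact you supply the bookkeeping for (a) and (b) — including the check that irreducibility forces $(f,t)_r=1$ so that $\hat{h}$ is irreducible — which the paper's own three-line proof leaves implicit.
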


\begin{proof}
 $L^{(\sigma,f)}[t;\sigma]/L^{(\sigma,f)}[t;\sigma]f(t)\subset {\rm Nuc}_r(S_f)$ by Theorem \ref{C5.1: Theorem 5}.
\\ (i) Here $L^{(\sigma,f)} = {\rm Fix}(\sigma^4)$ by (1), and thus $$ {\rm Fix}(\sigma^4)[t;\sigma]/{\rm Fix}(\sigma^4)[t;\sigma]f(t)\subset {\rm Nuc}_r(S_f).$$
 (ii) We know $L^{(\sigma,f)} = {\rm Fix}(\sigma^3)$ by (2), and hence
$$ {\rm Fix}(\sigma^3)[t;\sigma]/{\rm Fix}(\sigma^3)[t;\sigma]f(t)\subset {\rm Nuc}_r(S_f).$$
 (iii) We have $L^{(\sigma,f)} = {\rm Fix}(\sigma^2)$ by (3), and so
$$ {\rm Fix}(\sigma^2)[t;\sigma]/{\rm Fix}(\sigma^2)[t;\sigma]f(t)\subset {\rm Nuc}_r(S_f).$$
\end{proof}

%
%

\section{Conclusion}\label{sec:conclusion}

Let $K/F$ be a cyclic Galois extension of degree $n$ with Galois group ${\rm Gal}(K/F)=\langle \sigma \rangle$.
We assume that $n$ is either prime or that ${\rm gcd}(m,n)=1$ for $m$ the degree of the polynomial $f$ we look at.
For certain types of skew polynomials $f(t) = t^m-\sum_{i=0}^{m-1}a_it^i\in K[t;\sigma]$ such that $\hat{h}(x)\not=x$ and $(f,t)_r=1$ which are not right-invariant, we can decide if they are reducible based on the following ``algorithm'' with output \fbox{TRUE} if $f$ is reducible and \fbox{STOP} if we cannot decide:
\begin{enumerate}
\item If $f\in F[t]$ then: if $f$ is not right-invariant and $f$ is reducible in $F[t]$, then $f$ is reducible in $K[t;\sigma]$ \fbox{TRUE}, else \fbox{STOP}.
If $f\not \in F[t]$ then go to (2).
\item Compute $L=L^{(\sigma,f)}={\rm Fix}(\sigma^d)$, where $d = {\rm gcd}(m-\lambda_1,m-\lambda_2,\dots,m-\lambda_r,n)$.
\\ If $[L:F]>m$, then $f$ is reducible \fbox{TRUE}.
\\ If $[L:F]\leq m$ then go to (3).
\item Find the smallest integer $\rho$, such that $a_i\in {\rm Fix}(\sigma^\rho)$ for all $i$, and such that ${\rm Fix}(\sigma^\rho)$
is a proper subfield of $K$.
\\ If ${\rm Fix}(\sigma^\rho)=L$ then $f$ is reducible  \fbox{TRUE}.
\\ If $m=q\rho$ and $[L:F]>\rho$, then $f$ is reducible  \fbox{TRUE}.
\\  If $m = q\rho + r$ with $0 < r < \rho$, and $[L:F]\geq \rho$ 
then $f$ is reducible \fbox{TRUE}.
\\ In all other cases, go to (4).
\item If all $a_i$ are not contained in a proper subfield of $K$, then we \emph{cannot decide} if $f$ is reducible \fbox{STOP}.
\end{enumerate}

Furthermore, if $f(t)  \in F[t]$ then we can use the fact that
$$ L^{(\sigma,f)}[t;\sigma]/L^{(\sigma,f)}[t;\sigma]f(t)$$
is a subalgebra of ${\rm Nuc}_r(S_f)$ to look for zero divisors in ${\rm Nuc}_r(S_f)$ in order to factor $f$.


\chapter{When is the Eigenring of $f$ a Central Simple Algebra over $F$?}
Let $K$ be a field of characteristic $0$ and $R=K[t;\delta]$ be the ring a differential polynomials with coefficients in $K$. In order to derive results on the structure of the left $R$-modules $R/Rf$, Amitsur studied spaces of linear differential operators via differential transformations \cite{amitsur1953noncommutative,amitsur1954differential,amitsur1955generic}. He observed that every central simple algebra $B$ over a field $F$ of characteristic $0$ that is split by an algebraically closed field extension $K$ of $F$, is isomorphic to the eigenring \footnote{Amitsur refers to the eigenring of $f$ as the invariant ring of $f$ in \cite{amitsur1954differential}} of some polynomial $f \in K[t;\delta]$,  for  a  suitable  derivation $\delta$ of $K$. This identification of a central simple algebra $B$ with a suitable differential polynomial $f \in K[t;\delta]$ he called A-polynomial also holds when $K$ has prime characteristic $p$ \cite[Section 10]{amitsur1953noncommutative}, \cite{pumpluen2018algebras}. Let $D$ be a central division algebra of degree $d$ over its center $C$, $\sigma$ an endomorphism of $D$ and $\delta$ a left $\sigma$-derivation of $D$. Our aim is to provide a partial answer to the following generalisation of Amitsur’s investigation: 
\begin{center}
“For which polynomials $f$ in a skew polynomial ring $D[t;\sigma,\delta]$ is the eigenring $\mathcal{E}(f)$ a central simple algebra over its subfield $F=C \cap {\rm Fix}(\sigma) \cap {\rm Const}(\delta)$?”
\end{center}

After a brief review of the Amitsur's theory in Section 7.1, we investigate three different setups, always assuming that $f$ has degree $m \geq 1$ and that the minimal central left multiple of $f$ is square-free. We look at generalised A-polynomials in $D[t;\sigma]$ in Section 7.2, where $\sigma$ is an automorphism of $D$ of finite inner order $n$ with $\sigma^n = \iota_u$ for some $u \in D^{\times}$. In Section 7.3, we study generalised A-polynomials in $D[t;\delta]$ where $C$ has characteristic $0$ and $\delta$ is the inner derivation $\delta_c$ in $D$ for some $c \in D$. Finally, in Section 7.4, we investigate A-polynomials in $D[t;\delta]$ where $C$ has prime characteristic $p$ and $\delta$ is an algebraic derivation of $D$ with minimum polynomial $g \in F[t]$ of degree $p^e$ such that $g(\delta)=\delta_c$ for some $c \in D$. The contents of this chapter will also appear in \cite{owen2021Apolynomials}. 

\section{Differential Transformations and Amitsur's A-polynomials}

All of the definitions and results in this section can be found in \cite{amitsur1954differential} unless stated otherwise, although some are attributed to Jacobson, and have appeared earlier in \cite{jacobson1937pseudo}.\\
\\
Let $K$ be a field of any characteristic, $\sigma$ be an automorphism of $K$, $\delta$ be a left $\sigma$-derivation of $K$, and let $F = {\rm Fix}(\sigma) \cap {\rm Const}(\delta)$.
Let $V$ be a vector space over $K$ of dimension $m$. A {\it pseudo-linear transformation} (p.l.t.) of $V$ is an additive map $T:V \longrightarrow V$ such that $$T(av) = \sigma(a)T(v) + \delta(a)v.$$
 The ring of transformations in $V$ generated by $T$ and the elements of $K$ is equal to the polynomial ring $K[T]$.
For the rest of this section we focus only on the case $\sigma = {\rm id}_K$, in which case $(K,\delta)$ is a differential field, and we call the p.l.t. $T$ a {\it differential transformation} (d.t.). The ring $K[T]$ of polynomials in $T$ is then a ring of left differential operators on $V$, and we write $V = (V,T)$ to make clear this association.\\
\\
Given a basis $(b_1, \dots ,b_m)$ of $V$ we can represent the d.t. $T$ by a matrix $B(T) = (a_{ij}) \in M_m(K)$, where the entries $a_{ij}$ are determined by the images of the basis vectors $b_i$ under $T$, i.e. $$T(b_i) = \sum\limits_{j=0}^m a_{ij}b_j.$$
If we write $B = B(T)$, then $$(V,T) \cong \frac{K[t;\delta]}{K[t;\delta]g_1(t)} \times \frac{K[t;\delta]}{K[t;\delta]g_2(t)} \times \cdots \times \frac{K[t;\delta]}{K[t;\delta]g_m(t)},$$ where $g_1,\dots,g_m \in K[t;\delta]$ are the invariant factors of the matrix  $B - tI_m$.\\
\\
Additionally, we assume from now on that $K$ has characteristic zero for the remainder of the section. In this case the matrix $(B - t I_m )$ has only one proper invariant factor \cite[p.~499]{jacobson1937pseudo} and $$(V,T) \cong \frac{K[t;\delta]}{K[t;\delta]g(t)}$$ for some $g \in K[t;\delta]$ of degree $m$. We call the polynomial $g(t)$ the {\it characteristic polynomial} of both the matrix $A$ and the differential transformation $T$, and we note that the characteristic polynomial of any differential transformation is unique up to similarity. On the other hand, for any differential polynomial $f \in R$ of degree $m$, write $V_f = K[t;\delta]/K[t;\delta]f(t).$ Then $V_f$ is a $K$-vector space of dimension $m$, and $f(t)$ is the characteristic polynomial of the differential transformation $T$ on $V_f$ defined by $p(t) \mapsto tp(t)$. Now let $V^{\prime}$ be a vector space over $K$ of dimension $m^{\prime}$ equipped with a differential transformation $T^{\prime}$. We define a differential transformation $T \times T^{\prime}$ on the tensor product space $V \otimes_K V^{\prime}$ by $$(T \times T^{\prime})(u) = (T \times T^{\prime})(\sum\limits_i v_i \otimes v_i^{\prime}) = \sum\limits_i T(v_i) \otimes v_i^{\prime} + \sum\limits_j v_j \otimes T^{\prime}(v_j^{\prime}),$$ for all $u = \sum\limits_i v_i \otimes  v_i^{\prime} \in V \otimes_K V^{\prime}$. Related to this `product' differential transformation, Amitsur also introduced the related notion for characteristic polynomials. The so-called resultant is defined in the following way:

\begin{definition}\label{resultant definition}
Let $f,g \in K[t;\delta]$ be the characteristic polynomials of the differential transformations $T$ and $T^{\prime}$ respectively. Then the {\it resultant} of $f$ and $g$, denoted by $f \times g$, is defined as the characteristic polynomial of the differential transformation $T \times T^{\prime}$ over $V \otimes_K V^{\prime}$.
\end{definition}

\begin{remark}
In the notation of Definition \ref{resultant definition}, with ${\rm deg}(f)=m$ and ${\rm deg}(g) = m^{\prime}$, the resultant $f \times g$ is a differential polynomial of degree $mm^{\prime}$ uniquely determined up to similarity.
\end{remark}

For $r \in \mathbb{N}$, we denote by $e_r(t)$ a fixed characteristic polynomial of the $r \times r$ zero matrix. 

\begin{definition}
Let $f \in R$ have degree $m$. Then $f$ is called an A-polynomial if there exists some polynomial $g \in R$, such that $f \times g \sim e_{mn}(t)$, where $n = {\rm deg}(g)$. 
\end{definition}

\begin{theorem}\label{C7.1: Theorem 1}
\begin{enumerate}
\item If $f \in K[t;\delta]$ is an A-polynomial, and $f \sim g$, then $g$ is also an A-polynomial.
\item If $f,g \in K[t;\delta]$ are A-polynomials, then the resultant $f \times g$ is an A-polynomial.
\item If $f,g \in K[t;\delta]$ are A-polynomials such that $f \sim g$, and $f \times p \sim g \times q$ for some $p,q \in K[t;\delta]$, then $p \sim q$.
\item If $f \in K[t;\delta]$ is an A-polynomial, then $f \sim g \times e_r(t)$ for some $r \in \mathbb{N}$ and some irreducible A-polynomial $g \in R$ uniquely determined up to similarity.
\item If $f \in K[t;\delta]$ is an A-polynomial and $f \sim g \times h$ or $f = gh$ for some $g,h \in K[t;\delta]$, then $g$ and $h$ are also A-polynomials.
\end{enumerate}
\end{theorem}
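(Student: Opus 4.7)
The entire theorem is most naturally argued through the dictionary between differential polynomials $f \in R = K[t;\delta]$ of degree $m$ and pairs $(V_f, T_f)$ of an $m$-dimensional $K$-vector space with a differential transformation, in which $f \sim g$ corresponds to an isomorphism of d.t.s $(V_f, T_f) \cong (V_g, T_g)$ and the resultant $f \times g$ is (up to similarity) the characteristic polynomial of $T_f \times T_g$ on $V_f \otimes_K V_g$. The central observation is that $e_r(t)$ corresponds to the d.t.\ which vanishes on a basis, so $R/Re_r \cong (R/Rt)^{\oplus r}$; since $t$ is irreducible in $R$, $R/Rt$ is a simple $R$-module and $(R/Rt)^{\oplus N}$ is semisimple, with every submodule and quotient again a direct sum of copies of $R/Rt$. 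Combined with the associativity and commutativity of the resultant, these facts will drive the entire proof.

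For (1), a conjugacy $\phi : V_f \to V_g$ tensored with ${\rm id}_{V_p}$ exhibits $T_f \times T_p$ and $T_g \times T_p$ as conjugate, so $f \times p \sim g \times p$ for every $p \in R$, and the A-polynomial witness transfers. For (2), pick $f^{\ast}, g^{\ast}$ with $f \times f^{\ast} \sim e_{m\ell}$ and $g \times g^{\ast} \sim e_{n\ell'}$; the associativity and commutativity of the resultant give $(f \times g) \times (f^{\ast} \times g^{\ast}) \sim (f \times f^{\ast}) \times (g \times g^{\ast}) \sim e_{m\ell} \times e_{n\ell'}$, and a direct basis-level calculation yields the identity $e_a \times e_b \sim e_{ab}$ (the tensor of two transformations vanishing on bases vanishes on the tensor-product basis), so $f^{\ast} \times g^{\ast}$ witnesses $f \times g$. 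For (3), fix $g^{\ast}$ with $g \times g^{\ast} \sim e_{n\ell}$; part (1) applied to $f \sim g$ gives $f \times g^{\ast} \sim e_{n\ell}$. Taking the resultant of $f \times p \sim g \times q$ with $g^{\ast}$ on the right and rearranging via associativity/commutativity yields $e_{n\ell} \times p \sim e_{n\ell} \times q$, which translates into an $R$-module isomorphism $(R/Rp)^{\oplus n\ell} \cong (R/Rq)^{\oplus n\ell}$; Krull--Schmidt for finitely generated torsion modules over the PID $R$ then forces $R/Rp \cong R/Rq$, i.e.\ $p \sim q$.

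For (5), if $f \sim g \times h$ then $V_f \cong V_g \otimes V_h$, and $V_g \otimes (V_h \otimes V_p) \cong V_f \otimes V_p \cong (R/Rt)^{\oplus mn}$ identifies $h \times p$ as a witness for $g$ (and symmetrically for $h$). If instead $f = gh$ in $R$, the inclusion $Rh \supset Rf$ yields a short exact sequence of $R$-modules (equivalently, of d.t.s) $0 \to R/Rg \to R/Rf \to R/Rh \to 0$, where the injection sends $r + Rg$ to $rh + Rf$; tensoring over $K$ with $V_p$ preserves exactness, and the middle term $V_f \otimes V_p$ is semisimple, so the submodule $V_g \otimes V_p$ and the quotient $V_h \otimes V_p$ are again direct sums of copies of $R/Rt$ of the appropriate $K$-dimension, yielding the A-polynomial witnesses for $g$ and $h$.

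Part (4) is the main obstacle. Using (5), one factors $f = f_1 \cdots f_k$ into irreducibles in $R$ and obtains that each $f_i$ is an A-polynomial. The core of the argument is to show that $V_f$ is actually semisimple and isotypic, i.e.\ $V_f \cong (R/Rg)^{\oplus k}$ for some irreducible A-polynomial $g$ similar to each $f_i$, which translates directly into $f \sim g \times e_k$. The delicate point is that an isotypic composition series does not in general yield a semisimple module, so one must exploit the A-polynomial hypothesis more fully: the fact that $V_f \otimes V_p$ is semisimple (and that $p$ is itself an A-polynomial by the symmetry of the defining condition) forces every extension class in the composition filtration of $V_f$ to vanish, for otherwise one could produce a non-semisimple direct summand of $(R/Rt)^{\oplus mn}$. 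Uniqueness of $g$ up to similarity then follows from (3), applied to putative equalities $g \times e_k \sim g' \times e_{k'}$ together with Krull--Schmidt cancellation.
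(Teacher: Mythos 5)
The paper states this theorem without proof, citing Amitsur, so there is no internal argument to compare against; your dictionary between polynomials, the modules $V_f = R/Rf$, and differential transformations is the right framework, and parts (1), (2), (3) and (5) are correct as written: the identities $V_{e_a}\otimes V_{e_b}\cong V_{e_{ab}}$ and $V_{e_N}\otimes V_p\cong V_p^{\oplus N}$, the short exact sequence $0\to R/Rg\to R/Rf\to R/Rh\to 0$ for $f=gh$, and Krull--Schmidt for finite-length modules over the PID $R$ all do what you want them to do.

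The gap is in (4), exactly at the point you flag as delicate. From a non-split extension $0\to A\to E\to B\to 0$ occurring inside $V_f$ you cannot ``produce a non-semisimple direct summand of $(R/Rt)^{\oplus mn}$'': tensoring with $V_p$ gives $0\to A\otimes V_p\to E\otimes V_p\to B\otimes V_p\to 0$, and $E\otimes V_p$, being a subquotient of the semisimple module $V_f\otimes V_p$, is automatically semisimple whether or not the original sequence splits. The functor $-\otimes_K V_p$ can simply kill extension classes, so no contradiction arises and the vanishing of the extensions in $V_f$ is not established by what you wrote. The statement is nevertheless true, and your parenthetical remark that $p$ is itself an A-polynomial (with witness $f$, by commutativity of the resultant) is precisely the missing ingredient: take $S$ a simple subquotient of $V_f$, so that $S\otimes V_p\cong (R/Rt)^{\oplus s}$ with $s=\dim_K(S)\,n$, and compute $S\otimes V_p\otimes V_f$ in two ways using associativity. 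One bracketing gives $(S\otimes V_p)\otimes V_f\cong (R/Rt)^{\oplus s}\otimes V_f\cong V_f^{\oplus s}$, the other gives $S\otimes (V_p\otimes V_f)\cong S\otimes (R/Rt)^{\oplus mn}\cong S^{\oplus mn}$. Hence $V_f^{\oplus s}\cong S^{\oplus mn}$, and Krull--Schmidt forces every indecomposable summand of $V_f$ to be isomorphic to the simple module $S$, i.e.\ $V_f\cong S^{\oplus r}\cong V_g\otimes V_{e_r}$ for $g$ the irreducible polynomial with $R/Rg\cong S$; uniqueness of $g$ up to similarity then follows from Krull--Schmidt as you say.
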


In fact, the set of all A-polynomials in $K[t;\delta]$, denoted by $\mathcal{A}(K,\delta)$, forms a semigroup under the product $\times$ (i.e. if $f,g \in \mathcal{A}(K,\delta)$, we take $f \times g$ to be their product in $\mathcal{A}(K,\delta)$) and the relation $\sim$ (similarity of polynomials in $K[t;\delta]$), with the polynomial $t$ acting as the identity in $\mathcal{A}(K,\delta)$.
In \cite{amitsur1953noncommutative} it is shown that the A-polynomials are precisely the polynomials in $K[t;\delta]$ such that $\mathcal{E}(f)$ is a central simple algebra over $F$ of degree $m = {\rm deg}(f)$, which is split by $K$. Below are the main results relating such A-polynomials to central simple algebras:

\begin{theorem}\label{C7.1: Theorem 2}
\begin{enumerate}
\item If $f,g \in K[t;\delta]$ are A-polynomials, then $h = f \times g$ is an A-polynomial, and $$\mathcal{E}(h) \cong \mathcal{E}(f) \otimes_F \mathcal{E}(g).$$
\item If $f \in K[t;\delta]$ is an A-polynomial of degree $m$, then $\mathcal{E}(f)$ is a central simple algebra over $F$ of degree $m$, which is split by $K$.
\item Every central simple algebra over $F$ of degree $m$ which is split by $K$, is isomorphic to $\mathcal{E}(f)$ for some A-polynomial $f \in K[t;\delta]$ of degree $m$.
\end{enumerate}
\end{theorem}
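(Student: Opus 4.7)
The proof is classical, due to Amitsur, and I would tackle the three parts in order, with the third being by far the hardest. For (1), the plan is to view $\mathcal{E}(f) \cong \mathrm{End}_R(V_f)$ with $V_f = R/Rf$ and $R = K[t;\delta]$; by the very definition of the resultant, $V_{f \times g}$ is isomorphic as an $R$-module to $V_f \otimes_K V_g$ with the $t$-action given by $T \times T^\prime$. I would then define the natural $F$-algebra map
$$\Phi: \mathcal{E}(f) \otimes_F \mathcal{E}(g) \longrightarrow \mathrm{End}_R(V_f \otimes_K V_g), \qquad \phi \otimes \psi \mapsto \bigl(v \otimes w \mapsto \phi(v) \otimes \psi(w)\bigr),$$
and verify it is a well-defined homomorphism (the operator on the right commutes with $T \times T^\prime$ whenever each factor commutes with its respective d.t.) and injective (via $F$-linear independence arguments applied to the factors of an element of the kernel). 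Surjectivity then comes by a dimension count once part (2) is in hand.

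For (2), the crucial preliminary computation is $\mathcal{E}(e_r(t)) \cong M_r(F)$. I would establish this by noting that the d.t. associated to the zero matrix on $K^r$ is simply componentwise $\delta$; an $R$-linear endomorphism is represented by a matrix $A \in M_r(K)$ satisfying $\delta(A_{ij}) = 0$ entrywise, forcing $A \in M_r(F)$. Now if $f$ is an A-polynomial with $f \times g \sim e_{mn}$, then part (1) (specifically, the injectivity of $\Phi$) supplies an embedding $\mathcal{E}(f) \otimes_F \mathcal{E}(g) \hookrightarrow M_{mn}(F)$. Standard structural arguments (if the tensor product of two nonzero $F$-algebras is a central simple matrix algebra over $F$, then each factor is itself central simple, with one the opposite of the other in the Brauer group) force $\mathcal{E}(f)$ to be a CSA over $F$ and $\mathcal{E}(g) \cong \mathcal{E}(f)^{\mathrm{op}}$. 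Matching dimensions then pins down $[\mathcal{E}(f):F] = m^2$, i.e., $\deg \mathcal{E}(f) = m$, and simultaneously promotes the injection $\Phi$ to the isomorphism claimed in (1). That $K$ splits $\mathcal{E}(f)$ follows by extending scalars: $\mathcal{E}(f) \otimes_F K$ embeds canonically in $\mathrm{End}_K(V_f) \cong M_m(K)$, and a dimension count forces equality.

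The real obstacle is (3). Given a CSA $B$ of degree $m$ over $F$ split by $K$, I need to produce a derivation $\delta$ of $K$ with $\mathrm{Const}(\delta) = F$ and a matrix $A \in M_m(K)$ such that the d.t. $T(v) = Av + \delta(v)$ on $V = K^m$ has commutant $\{X \in M_m(K): XT = TX\}$ equal to $B$; the characteristic polynomial $f$ of $T$ will then be an A-polynomial with $\mathcal{E}(f) \cong B$. The splitting hypothesis $B \otimes_F K \cong M_m(K)$ identifies $B$ with an $F$-subalgebra of $M_m(K)$, so the question becomes: can the pair $(\delta, A)$ be chosen so that $B$ is recovered as precisely this commutant? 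Following Amitsur, I would attack this by starting from a sufficiently generic derivation $\delta$ on $K$, using a transcendence basis argument to guarantee enough ``degrees of freedom'' to carve out exactly $B$ inside $M_m(K)$, and then producing $A$ from the Galois-cohomological data describing $B$ as a twisted form of $M_m(K)$. This descent-type construction is the technical heart of Amitsur's theorem, and is where essentially all the difficulty lies.
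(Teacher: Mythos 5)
First, a point of calibration: the paper does not prove this theorem at all. It appears in Section 7.1 as quoted background, with the section opening ``All of the definitions and results in this section can be found in \cite{amitsur1954differential} unless stated otherwise,'' so there is no in-paper argument to compare yours against; you are effectively reconstructing Amitsur's original proof. Judged on its own merits, your outline of parts (1) and (2) follows the right classical route (identify $\mathcal{E}(f)$ with $\mathrm{End}_R(V_f)$, compute $\mathcal{E}(e_r)\cong M_r(F)$ from $\mathrm{Const}(\delta)=F$, and exploit the embedding $\Phi$), but it contains a circularity: in (1) you defer surjectivity of $\Phi$ to ``a dimension count once part (2) is in hand,'' while in (2) you invoke the statement that $\mathcal{E}(f)\otimes_F\mathcal{E}(g)$ \emph{is} the central simple algebra $M_{mn}(F)$ --- which is exactly the surjectivity you postponed. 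An injection $\mathcal{E}(f)\otimes_F\mathcal{E}(g)\hookrightarrow M_{mn}(F)$ together with the bounds $\dim_F\mathcal{E}(f)\le m^2$, $\dim_F\mathcal{E}(g)\le n^2$ does not force the tensor product to equal $M_{mn}(F)$, so the ``standard structural argument'' has nothing to bite on yet. You also lean, without proving it, on the key lemma that $F$-linearly independent elements of $\mathcal{E}(f)$ remain $K$-linearly independent as operators (equivalently $\mathcal{E}(f)\otimes_F K\hookrightarrow M_m(K)$); this Wronskian-type statement about constants is genuinely where the work is in (1) and (2) and needs to be made explicit before either the injectivity of $\Phi$ or the dimension bound is available.

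For part (3) the gap is more serious: the theorem is stated for a \emph{fixed} differential field $(K,\delta)$ with $F=\mathrm{Const}(\delta)$, so you do not get to ``start from a sufficiently generic derivation $\delta$'' chosen via a transcendence basis --- the derivation is part of the data. The classical argument is more direct and avoids genericity and Galois cohomology entirely: given $B$ with $B\otimes_F K\cong M_m(K)$, embed $B$ in $M_m(K)$ and extend $\delta$ to a derivation of $M_m(K)\cong B\otimes_F K$ by letting it act through the second factor; since this extension agrees with the entrywise derivation $\delta_0$ on the scalars $K$, their difference is an inner derivation $[A,-]$ for some $A\in M_m(K)$, and $B$ is recovered as the set of constants of $\delta_0+[A,-]$, i.e.\ as the commutant of the differential transformation $T(v)=Av+\delta_0(v)$ on $K^m$. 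The characteristic polynomial of $T$ is then the required A-polynomial of degree $m$ with $\mathcal{E}(f)\cong B$. As written, your sketch of (3) does not supply the matrix $A$ or explain why the commutant is exactly $B$, so this part should be reworked along the derivation-extension lines above rather than via a descent construction.
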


Suppose instead that we take $F$ to be a field of characteristic $p \neq 0$, with $K$ a finite purely inseparable field extension of exponent 1, i.e. $K^p \subseteq F \subset K$. Then the theory of differential transformations also holds in the setting. The related theory of A-polynomials and central simple algebras holds when the A-polynomial $f \in K[t;\delta]$ involved has degree $m < [K:F] = p^e$ for some $e \in \mathbb{N} \cup \{ \infty \}$ (with slight abuse of notation writing $e = \infty$ for $[K:F] = \infty$). In particular, we note that the results of Theorem \ref{C7.1: Theorem 2} hold in this setting, for A-polynomials $f$ of degree $m < [K:F]$, and for central simple algebras over $F$ of degree $m$ which are split by $K$, satisfying $m < [K:F]$ \cite[Section 10]{amitsur1954differential}. This is the most important case that we can say more on.

\section{Generalised A-polynomials}
For $f \in D[t;\sigma,\delta]$ we are interested in answering the question:
\begin{center}
``When is $\mathcal{E}(f)$ a central simple algebra over the field $F = C \cap {\rm Fix}(\sigma) \cap {\rm Const}(\delta)$?''
\end{center} 
To this end we define a generalised A-polynomial as follows:

\begin{definition}
Let $f \in D[t;\sigma,\delta]$. We call $f$ a {\it generalised A-polynomial} if $\mathcal{E}(f)$ is a central simple algebra over $F$.
\end{definition}

If $D$ is commutative (i.e. a field), then $d=1$ and without loss of generality we can take $\sigma = {\rm id}_D$ or $\delta = 0$. Then a polynomial $f$ in $D[t;\sigma]$ (resp. $D[t;\delta]$) is a generalised A-polynomial if $\mathcal{E}(f)$ is a central simple algebra over $F = {\rm Fix}(\sigma)$ (resp. $F={\rm Const}(\delta)$).\\
\\
For each $v \in D^{\times}$, we define a map $\Omega_v: D \longrightarrow D$ by $$\Omega_v(\alpha) = \sigma(v)\alpha v^{-1} + \delta(v)v^{-1}.$$
If $D$ is commutative, we note that $\delta = 0$ yields $\Omega_v(\alpha) = \sigma(v)\alpha v^{-1}$ and $\sigma = {\rm id}$ yields $\Omega_v(\alpha) = \alpha + \delta(v)v^{-1}$ for all $v \in D^{\times}$.\\
\\
The following is an easy consequence of the definition of similarity of skew polynomials in $R$, and will prove to be very useful:

\begin{lemma}\cite[Lemma 2 for $\sigma = {\rm id}$]{amitsur1954differential}\label{C7.2: Lemma 1}
Let $\alpha,\beta \in D$. Then $(t - \alpha) \sim (t - \beta)$ in $D[t;\sigma,\delta]$ if and only if  $\Omega_v(\alpha) = \beta$ for some $v \in D^{\times}$.
\end{lemma}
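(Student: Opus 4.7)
By definition $(t-\alpha) \sim (t-\beta)$ means the left $R$-modules $R/R(t-\alpha)$ and $R/R(t-\beta)$ are isomorphic, so the plan is to translate this module isomorphism condition directly into the explicit formula $\Omega_v(\alpha) = \beta$. First I would identify each quotient with $D$ as a left $D$-module: right division by $t-\alpha$ leaves a remainder of degree $0$, so every coset in $R/R(t-\alpha)$ has a unique representative in $D$. Under this identification the action of $t$ becomes the additive map $T_\alpha : D \to D$ given by $T_\alpha(d) = \sigma(d)\alpha + \delta(d)$, since $td = \sigma(d)t + \delta(d)$ in $R$ and $t \equiv \alpha \pmod{R(t-\alpha)}$; similarly $T_\beta(d) = \sigma(d)\beta + \delta(d)$.

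Next, any left $D$-linear map $\phi : D \to D$ has the form $\phi(d) = dv$ with $v = \phi(1) \in D$, and $\phi$ is bijective iff $v \in D^\times$. Such a $\phi$ extends to an $R$-module isomorphism iff it intertwines $T_\alpha$ and $T_\beta$, i.e.\ $\phi(T_\alpha(d)) = T_\beta(\phi(d))$ for all $d \in D$. A direct expansion of both sides using the Leibniz rule for $\delta$ reduces this (take $d=1$; the remaining $d$-dependence is automatic because both sides carry the same factor $\sigma(d)$ in front) to the single scalar condition
$$\alpha v = \sigma(v)\beta + \delta(v).$$

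Finally I would convert this into $\Omega_v$-form. Solving for $\beta$ gives $\beta = \sigma(v)^{-1}\alpha v - \sigma(v)^{-1}\delta(v)$. Setting $w = v^{-1}$ and using the identity $\delta(v^{-1}) = -\sigma(v)^{-1}\delta(v)v^{-1}$ (obtained from $\delta(vv^{-1}) = 0$), one checks
$$\Omega_w(\alpha) = \sigma(w)\alpha w^{-1} + \delta(w)w^{-1} = \sigma(v)^{-1}\alpha v - \sigma(v)^{-1}\delta(v) = \beta.$$
Thus the existence of an $R$-module isomorphism is equivalent to the existence of some $w \in D^\times$ with $\Omega_w(\alpha) = \beta$. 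The converse direction is the same computation run backwards, using the bijection $v \leftrightarrow v^{-1}$ on $D^\times$ to produce the required $v$ from a given $w$.

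The only real subtlety is bookkeeping: keeping track of the placement of $\sigma$ when moving scalars past $t$, and correctly computing $\delta(v^{-1})$ so that the final algebraic identity lands on the form of $\Omega_w$ stated in the paper rather than a twisted variant. Everything else is a direct translation between module-isomorphism data and coefficient data, and the degree-one setting keeps the calculation essentially one line long.
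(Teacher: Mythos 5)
Your proof is correct. The route differs from the paper's only in packaging: the paper invokes Jacobson's characterisation of similarity for linear polynomials directly, namely that $(t-\alpha)\sim(t-\beta)$ iff $w(t-\alpha)=(t-\beta)v$ for some $v,w\in D^{\times}$, expands the right-hand side as $\sigma(v)t+\delta(v)-\beta v$, and reads off $w=\sigma(v)$ and $\sigma(v)\alpha=\beta v-\delta(v)$, which is already $\beta=\Omega_v(\alpha)$ with no further manipulation. You instead unwind the definition of similarity as an isomorphism of left $R$-modules, realise each quotient as $D$ equipped with the pseudo-linear transformation $T_\alpha(d)=\sigma(d)\alpha+\delta(d)$, and characterise the intertwiners as right multiplications $d\mapsto dv$. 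That is a perfectly sound and more self-contained derivation, but because your $v$ implements the map $R/R(t-\alpha)\to R/R(t-\beta)$ rather than the reverse, your scalar condition comes out as $\alpha v=\sigma(v)\beta+\delta(v)$, i.e.\ $\alpha=\Omega_v(\beta)$, and you must pay for this with the substitution $w=v^{-1}$ and the identity $\delta(v^{-1})=-\sigma(v)^{-1}\delta(v)v^{-1}$; your verification that $\Omega_{v^{-1}}(\alpha)=\beta$ is correct. The two arguments are computationally the same one-line coefficient comparison; the paper's choice of direction simply avoids the inversion, while yours makes explicit the module-theoretic content that the paper outsources to the citation.
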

\begin{proof}
$(t-\alpha) \sim (t-\beta)$ is equivalent to the existence of $v,w \in D^{\times}$ such that $w(t-\alpha)=(t-\beta)v$ \cite[pg.~33]{jacobson1943theory}, i.e. there exists $v,w \in D^{\times}$ such that $$w(t-\alpha) = \sigma(v)t + \delta(v) - \beta v.$$
This is the case if and only if $w=\sigma(v)$ and $w\alpha = \sigma(v)\alpha = \beta v - \delta(v)$. The result follows immediately.
\end{proof} 

\section{Generalised A-polynomials in $D[t;\sigma]$}
Let $R$ denote the twisted polynomial ring $D[t;\sigma]$ with $D$ a central division algebra over $C$ of degree $d$ and $\sigma$ an automorphism of $D$. We assume that $\sigma$ has finite inner order $n$, with $\sigma^n = \iota_u$ for some $u \in D^{\times}$.\\
\\
Recall that $R$ has center $F[u^{-1}t^n]\cong F[x]$. For the remainder of this section we suppose that $f \in R$ is a monic polynomial of degree $m \geq 1$ such that $(f,t)_r=1$. Then $f$ has a unique minimal central left multiple $h(t)=\hat{h}(u^{-1}t^n)$ for some $\hat{h} \in F[x]$ of degree at most $dm$. Since $(f,t)_r=1$, the polynomial $h(t)$ is a bound of $f$.\\
\\
We aim to provide some necessary, and some sufficient conditions for $f \in R$ to be a generalised A-polynomial.\\
\\
First suppose that $$\hat{h}(x)= \hat{\pi}_1(x)^{e_1}\hat{\pi}_2(x)^{e_2}\cdots \hat{\pi}_z(x)^{e_z}$$ for some irreducible polynomials $\hat{\pi}_1,\hat{\pi}_2,\dots \hat{\pi}_z \in F[x]$ such that $\hat{\pi}_i \neq \hat{\pi}_j$ for $i \neq j$, and some exponents $e_1,e_2,\dots,e_z \geq 1$, and let $\pi_i(t)=\hat{\pi}_i(u^{-1}t^n)$ for each $i=1,2,\dots,z$.\\
\\
We assume that the decomposition of $\hat{h}$ is square-free, i.e. that $e_1=e_2=\cdots =e_z = 1$ and $$\hat{h}(x) = \hat{\pi}_1(x)\hat{\pi}_2(x)\cdots\hat{\pi}_z(x).$$ 

By Theorem \ref{C4: Theorem 1}, $\mathcal{E}(f)$ has center $E_{\hat{h}} = F[x]/(\hat{h}(x))$, and by the Chinese Remainder Theorem for commutative rings (see for example \cite[\textsection{5}]{cohn1963noncommutative}) $$E_{\hat{h}} \cong  E_{\hat{\pi}_1} \oplus E_{\hat{\pi}_2} \oplus \cdots \oplus E_{\hat{\pi}_z},$$ where $E_{\hat{\pi}_i} = F[x]/(\hat{\pi}_i(x))$ for each $i$. Hence $\mathcal{E}(f)$ has center $F$ if and only if $E_{\hat{\pi}_1} \oplus E_{\hat{\pi}_2} \oplus \cdots \oplus E_{\hat{\pi}_z}=F$, which is true if and only if $z=1$ and $E_{\hat{\pi}_1}=F$. Therefore we can assume $\hat{h}$ is irreducible in $F[x]$ without loss of generality, and apply the results of Chapter 2.\\
\\
In particular by Theorem \ref{C2.2: Theorem 2}, for $f \in R$ with $\hat{h} \in F[x]$ irreducible, we have that $f=f_1f_2\cdots f_l$ for $f_1,f_2,\dots,f_l \in R$ irreducible polynomials, which are all similar to each other (i.e. $f_i \sim f_j$ for all $i,j$). Moreover $$\mathcal{E}(f) \cong M_l(\mathcal{E}(f_i)),$$ and $\mathcal{E}(f_i)$ is a central division algebra over $E_{\hat{h}}$ of degree $s^{\prime}=\frac{dn}{k}$, ${\rm deg}(\hat{h})=\frac{dm}{ls^{\prime}}$, and $[\mathcal{E}(f_i):F]= \frac{dms^{\prime}}{l}$ where $k$ is the number of irreducible factors of $h$ in any complete factorisation in $R$. Hence, $\mathcal{E}(f)$ is a central simple algebra over $E_{\hat{h}}$ of degree $s=ls^{\prime}$ and $[\mathcal{E}(f):F]= dms$.\\
\\
After considering the above we are left almost immediately with the following result:

\begin{theorem}\label{C7.3: Theorem 1}
Suppose that $\hat{h}(x)$ is irreducible in $F[x]$. Then $f$ is a generalised A-polynomial in $R$ if and only if $\hat{h}(x) = x-a$ for some $a \in F$ if and only if $f$ right divides $u^{-1}t^n -a $ for some $a \in F$. In particular, if $f$ is a generalised A-polynomial, then $m \leq n$.
\begin{proof}
Suppose that $f$ is a generalised A-polynomial in $R$. By the discussion preceding this result, for $f$ to be a generalised A-polynomial it is necessary that $\hat{h}(x) = x - a$ for some $a \in F$. Conversely if $\hat{h}(x) = x - a \in F[x]$, then $E_{\hat{h}} = F[x]/(x - a) = F$. Hence $\mathcal{E}(f)$
is a central simple algebra over $F$ by Theorem \ref{C2.2: Theorem 2}, i.e. $f$ is a generalised A-polynomial. It
is easy to see that $\hat{h}(x) = x - a$ is equivalent to $f$ being a right divisor of $u^{-1}t^n - a$ by definition of the minimal central left multiple. Moreover, if $f$ right divides $u^{-1}t^n - a$, then ${\rm deg}(f) \leq n$.
\end{proof}
\end{theorem}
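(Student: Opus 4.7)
The plan is to leverage Theorem \ref{C2.2: Theorem 2} (the case of irreducible $\hat{h}$) as a black box, since that result already computes the center of $\mathcal{E}(f)$ when $\hat{h}$ is irreducible. Specifically, under the hypothesis that $\hat{h}(x)$ is irreducible in $F[x]$, Theorem \ref{C2.2: Theorem 2} asserts that $\mathcal{E}(f)$ is a central simple algebra over $E_{\hat{h}} = F[x]/(\hat{h}(x))$. Since $F \subseteq E_{\hat{h}} \subseteq Z(\mathcal{E}(f)) = E_{\hat{h}}$, the algebra $\mathcal{E}(f)$ is central simple over $F$ precisely when $E_{\hat{h}} = F$ as fields. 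As $[E_{\hat{h}}:F] = \deg(\hat{h})$, this happens if and only if $\hat{h}(x) = x - a$ for some $a \in F$.

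Next, I would translate this condition on $\hat{h}$ back into a divisibility statement about $f$. By the very definition of the minimal central left multiple, $\hat{h}(x) = x - a$ corresponds to $h(t) = u^{-1}t^n - a$, and $h$ being a central left multiple of $f$ means $f$ right divides $u^{-1}t^n - a$. For the converse direction, if $f$ right divides $u^{-1}t^n - a$ for some $a \in F$, then $u^{-1}t^n - a \in C(R)$ is a central left multiple of $f$, and by minimality $h$ must right divide $u^{-1}t^n - a$; equivalently, $\hat{h}(x)$ divides $x - a$ in $F[x]$, forcing $\hat{h}(x) = x - a$ since $\hat{h}$ is monic of positive degree. (Here I would use the standing assumption $(f,t)_r = 1$ from the opening of Section 7.3 to rule out $\hat{h}(x) = x$, so $a \neq 0$ is not an issue either way.)

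Finally, the degree bound is immediate: if $f$ right divides a polynomial of degree $n$, then $m = \deg(f) \leq n$.

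There is no real obstacle here; the substantive work has already been carried out in Chapter 2. The only point requiring a little care is the unambiguous correspondence between $\hat{h} \in F[x]$ and $h \in C(R)$ under the identification $x = u^{-1}t^n$, but this is built into the definition of the minimal central left multiple given at the start of Chapter 2. The entire proof should compress into a short paragraph citing Theorem \ref{C2.2: Theorem 2} and the definition of $h$.
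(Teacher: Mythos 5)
Your proposal is correct and follows essentially the same route as the paper's own proof: both reduce the statement to Theorem \ref{C2.2: Theorem 2}, which identifies the center of $\mathcal{E}(f)$ as $E_{\hat{h}}$, so that being central simple over $F$ is equivalent to $\deg(\hat{h})=1$, and then translate $\hat{h}(x)=x-a$ into right divisibility of $u^{-1}t^n-a$ via the definition of the minimal central left multiple. Your treatment of the converse divisibility direction (minimality forcing $\hat{h}\mid(x-a)$, hence equality) is slightly more explicit than the paper's ``easy to see,'' but the argument is the same.
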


For $n$ prime we are able to provide a more concrete description of $f$:

\begin{theorem}\label{C7.3: Theorem 2}
Suppose that $\hat{h}$ is irreducible in $F[x]$. Suppose that $n$ is prime and not equal to $d$. Then $f$ is a generalised A-polynomial in $R$ if and only if one of the following holds:
\begin{enumerate}
\item There exists some $a \in F^{\times}$ such that $ua \neq \prod\limits_{j=1}^n\sigma^{n-j}(b)$ for every $b \in D$, and $$f(t)=t^n-ua.$$ In this case $f$ is an irreducible polynomial in $R$.
\item  $m \leq n$ and there exist some constants $c_1,c_2,\dots,c_{m-1},c_m,b \in D^{\times}$, with $c_m=1$, such that the product $u^{-1}\prod\limits_{j=0}^{n-1} \sigma^{n-j}(b)$ lies in $F^{\times}$, and $$f(t) = \prod\limits_{i=1}^m (t-\Omega_{c_i}(b)).$$ In this case $f$ is a reducible polynomial unless $m=1$.
\end{enumerate}
\end{theorem}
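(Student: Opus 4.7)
The plan is to combine Theorem \ref{C7.3: Theorem 1} with Theorem \ref{C2.2: Theorem 2} applied to the case $\hat{h}(x)=x-a$. By Theorem \ref{C7.3: Theorem 1}, $f$ is a generalised A-polynomial if and only if $\hat{h}(x)=x-a$ for some $a\in F$ (necessarily $a\in F^\times$, since $(f,t)_r=1$ and so $h_0\neq 0$); equivalently, the minimal central left multiple of $f$ is $h(t)=u^{-1}t^n-a$, so $f$ right-divides $t^n-ua$ in $R$. It therefore suffices to classify the monic right divisors $f$ of $t^n-ua$, $a\in F^\times$, with $(f,t)_r=1$.

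Now apply Theorem \ref{C2.2: Theorem 2} with $\hat{h}(x)=x-a$. Each irreducible factor of $f$ (equivalently of $h=t^n-ua$) has a common degree $r$, and $m=rl$, where $l$ is the number of irreducible factors of $f$ and $k=n/r$ is the number of irreducible factors of $h$. Since $n$ is prime, only two scenarios arise: either $r=n$ (so $k=l=1$, $f=h$ is irreducible of degree $n$), or $r=1$ (so $k=n$ and $f$ is a product of $m$ mutually similar linear polynomials). The hypothesis $n\neq d$ is used to rule out a coincidental collapse in Theorem \ref{C2.2: Theorem 2} forcing $s^\prime$ to take an exceptional value, ensuring the above dichotomy.

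For case (1), I would compute directly that $(t-b)$ right-divides $t^n-ua$ if and only if $ua=\prod_{j=1}^{n}\sigma^{n-j}(b)$: writing $t^n-ua=q(t)(t-b)$ with $q(t)=\sum_{i=0}^{n-1}q_it^i$, solving the resulting system yields $q_{n-1}=1$, $q_{i-1}=q_i\sigma^i(b)$, and $q_0b=ua$, from which the formula follows by induction. Since $n$ is prime, the polynomial $t^n-ua$ is either irreducible or has a linear right factor, so $t^n-ua$ is irreducible in $R$ precisely when no such $b$ exists; this gives case (1) exactly.

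For case (2), set $b:=\alpha_m$, where $f=\prod_{i=1}^m(t-\alpha_i)$. By Lemma \ref{C7.2: Lemma 1}, the similarity of the linear factors is equivalent to the existence of $c_i\in D^\times$ with $\alpha_i=\Omega_{c_i}(b)$, and one may take $c_m=1$ since $\Omega_1(b)=b$. Since the rightmost factor $(t-b)$ right-divides $t^n-ua$, the computation of case (1) forces the product $u^{-1}\prod_{j=0}^{n-1}\sigma^{n-j}(b)$ to equal $a$ (after using $\sigma^n=\iota_u$ to rewrite the product in the form displayed in the statement), hence to lie in $F^\times$. Conversely, any choice of $(c_1,\ldots,c_{m-1},c_m=1,b)$ satisfying the stated membership condition produces an $f$ of the required form that right-divides $t^n-ua$, and $f$ is reducible exactly when $m>1$. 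The main technical obstacle will be verifying carefully that the product formula matches the exact expression in the statement under the identity $\sigma^n=\iota_u$, and checking that the constructed $f$ genuinely has bound $t^n-ua$ (and not a proper divisor of it), so that Theorem \ref{C7.3: Theorem 1} truly applies.
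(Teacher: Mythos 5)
Your proposal is correct and follows essentially the same route as the paper: reduce via Theorem \ref{C7.3: Theorem 1} to $f$ right-dividing $u^{-1}t^n-a$, use the numerology of Theorem \ref{C2.2: Theorem 2} together with the primality of $n$ to force $k\in\{1,n\}$, and then handle the irreducible case and the product-of-similar-linear-factors case via Lemma \ref{C7.2: Lemma 1}. The only cosmetic differences are that you derive the linear-right-divisor criterion for $t^n-ua$ by direct division where the paper cites \cite[Corollary 3.4]{brown2018}, and that the dichotomy in fact already follows from $n$ being prime (via $k\mid n$), so the role you ascribe to the hypothesis $n\neq d$ is not actually where the argument needs it.
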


\begin{proof}
By Theorem \ref{C7.3: Theorem 1} $f$ is a generalised A-polynomial in $R$ if and only if $f$ right divides $u^{-1}t^n-a$ for some $a \in F^{\times}$.
So suppose that $f$ is a generalised A-polynomial in $R$, then there exists some $a \in F^{\times}$ and some  nonzero $g \in R$ such that
\begin{equation}\label{f right divides u^-1t^n-a}
u^{-1}t^n-a = gf
\end{equation}
In the notation of Theorem \ref{C2.2: Theorem 2}, $ldn = ks$ and since $f$ is a generalised A-polynomial ${\rm deg}(\hat{h}) = \frac{dm}{s}=1$, i.e. $dm=s$. Combining these yields $\frac{n}{k} = \frac{m}{l} \in \mathbb{N}$. That is $k$ must divide $n$, and so we must have that $k=1$ or $k=n$ as $n$ is prime. We analyse the cases $k=1$ and $k=n$ separately.\\
\\
First suppose that $k=1$, then $h(t)$ is irreducible. Therefore Equation (\ref{f right divides u^-1t^n-a}) becomes
\begin{equation}
u^{-1}t^n-a = gf(t)
\end{equation}
for some $a \in F^{\times}$ and some $g \in D^{\times}$. This yields $g=u^{-1}$ and $f(t) = t^n-ua$ for some $a \in F^{\times}$. Suppose that $f$ were reducible, then $f$ would be the product of $n$ linear factors as $n$ is prime, hence $f$ is irreducible if and only if $ua \neq \prod\limits_{j=1}^n \sigma^{n-j}(b)$ for any $b \in D$, by \cite[Corollary 3.4]{brown2018}. \\
\\
On the other hand, if $k=n$, then $h(t)$ is equal to a product of $n$ linear factors in $R$, all of which are mutually similar to one another. Also, since $\frac{n}{k}=\frac{m}{l}$ and $n=k$, we have $m=l \leq n$. Hence $f$ is the product of $m \leq n$ linear factors in $R$, all of which are similar. \\
\\
So there exist constants $b_1,b_2,\dots,b_m \in D^{\times}$ such that $(t-b_i) \sim (t-b_j)$ for all $i,j \in \{1,2,\dots,m\}$, and $$f(t) = \prod\limits_{i=1}^m (t-b_i).$$ In particular $(t-b_i) \sim (t-b_m)$ for all $i\neq m$, which is true if and only if there exist constants $c_1,c_2,\dots,c_{m-1},c_m \in D^{\times}$ such that $b_i = \Omega_{c_i}(b_m)$ for all $i$ by Lemma \ref{C7.2: Lemma 1}. Hence setting $b=b_m$ and $c_m=1$ yields $$f(t) = \prod\limits_{i=1}^m (t-\Omega_{c_i}(b)).$$ Finally, we note that $(t-b)\vert_r (t^n - ua)$ for some $a \in F^{\times}$ if and only if $u^{-1}\prod\limits_{j=0}^{n-1} \sigma^{n-j}(b) = a \in F^{\times}$, by \cite[Corollary 3.4]{brown2018}.
\end{proof}

So there are no generalised A-polynomials in $R$ of degree greater than $n$ under the assumption that $\hat{h}$ is irreducible in $F[x]$.\\
If $e_i > 1$ for at least one $i$, then it is not clear to the author when $\mathcal{E}(f)$ is a central simple algebra over the field $F$.

\subsection{Generalised A-polynomials in $K[t;\sigma]$ and $\mathbb{F}_{q^n}[t;\sigma]$}

Throughout this section we suppose that $R=K[t;\sigma]$ with $K$ a field, and that $\sigma$ an automorphism of $K$ of finite order $n$ with fixed field $F$. Now the center of $R$ is $F[t^n] \cong F[x]$. Let $f \in R$ be of degree $m \geq 1$ and satisfy $(f,t)_r=1$, and suppose that $f$ has minimal central left multiple $h(t)=\hat{h}(t^n)$ for some irreducible polynomial $\hat{h} \in F[x]$.

\begin{theorem}\label{C7.3: Theorem 3}
 $f$ is a generalised A-polynomial in $R$ if and only if $\hat{h}(x) = x - a$ for some $a \in F[x]$ if and only if $f$ right divides $t^n-a$ in $R$. In particular, if $f$ is a generalised A-polynomial in $R$, then $m\leq n$.
\end{theorem}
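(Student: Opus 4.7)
The plan is to recognise this statement as the specialisation of Theorem \ref{C7.3: Theorem 1} to the commutative setting $D=K$, where $d=1$ and $\sigma$ has finite inner order $n=n$ with $\sigma^n=\iota_1=\mathrm{id}_K$, so we may take $u=1$ and the centre of $R$ becomes $F[t^n]\cong F[x]$. Thus the minimal central left multiple has the form $h(t)=\hat{h}(t^n)$ and no further generality is lost. I would therefore structure the proof as a direct translation of the argument of Theorem \ref{C7.3: Theorem 1} to this simpler context, so that the reader can see the result standing on its own without having to trace through the inner-automorphism bookkeeping.

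First I would handle the forward implication. Assume $f$ is a generalised A-polynomial, so $\mathcal{E}(f)$ is central simple over $F$. Since $\hat{h}$ is assumed irreducible in $F[x]$, Theorem \ref{C2.2'': Theorem 2} tells us that $\mathcal{E}(f)={\rm Nuc}_r(S_f)$ is a central simple algebra over $E_{\hat{h}}=F[x]/(\hat{h}(x))$. Comparing centres forces $E_{\hat{h}}=F$, hence $\deg(\hat{h})=1$, i.e. $\hat{h}(x)=x-a$ for some $a\in F$. Conversely, if $\hat{h}(x)=x-a$, then $E_{\hat{h}}\cong F$ and Theorem \ref{C2.2'': Theorem 2} gives that $\mathcal{E}(f)$ is central simple over $E_{\hat{h}}=F$, so $f$ is a generalised A-polynomial by definition.

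Next I would observe that $\hat{h}(x)=x-a$ is equivalent to $f$ right-dividing $t^n-a$ in $R$, straight from the definition of the minimal central left multiple: indeed $h(t)=t^n-a$ is then a central left multiple of $f$ of minimal degree, so $f\mid_r t^n-a$; and conversely any monic central polynomial right-divisible by $f$ must be a multiple of $h$, so $f\mid_r t^n-a$ forces $\hat{h}(x)\mid(x-a)$ in $F[x]$, whence $\hat{h}(x)=x-a$ as $\hat{h}$ is monic and nonconstant. Finally, if $f$ right-divides $t^n-a$ then $\deg(f)\leq n$, giving the last sentence.

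I do not anticipate a main obstacle: this is essentially a rewriting of Theorem \ref{C7.3: Theorem 1} with $d=1$, and every ingredient (irreducibility of $\hat{h}$, the structure theorem for $\mathcal{E}(f)$, and the definition of the minimal central left multiple) has already been established. The only minor point to be careful about is that in this commutative setting we do not need to invoke the more intricate factorisation analysis used for Theorem \ref{C7.3: Theorem 2}; the degree bound follows immediately from right-divisibility rather than from counting linear factors.
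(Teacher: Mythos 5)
Your proof is correct and takes essentially the same route as the paper, which simply derives this statement as the specialisation of Theorem~\ref{C7.3: Theorem 1} to the commutative case $d=1$, $u=1$ (the paper's citation of Theorem~\ref{C7.3: Theorem 2} at this point appears to be a slip for Theorem~\ref{C7.3: Theorem 1}); you have merely written out explicitly the deduction the paper leaves implicit.
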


This follows from Theorem \ref{C7.3: Theorem 2}. In the case that $n$ is prime, we obtain the following as an immediate corollary to both Theorem \ref{C7.3: Theorem 2} and Theorem \ref{C7.3: Theorem 3}:

\begin{theorem}\label{C7.3: Theorem 4}
Let $n$ be prime. Then $f$ is a generalised A-polynomial in $R$ if and only if one of the following holds:
\begin{enumerate}
\item There exists some $a \in F^{\times}$ such that $a \neq N_{K/F}(b)$ for any $b \in K$, and $$f(t)=t^n-a.$$ In this case $f$ is an irreducible polynomial in $R$.
\item  $m \leq n$ and there exist some constants $c_1,c_2,\dots,c_{m-1},c_m,b \in K^{\times}$ with $c_m=1$, such that $$f(t) = \prod\limits_{i=1}^m (t-\Omega_{c_i}(b)).$$
\end{enumerate}
\end{theorem}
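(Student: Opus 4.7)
The plan is to derive Theorem \ref{C7.3: Theorem 4} as the specialization of Theorem \ref{C7.3: Theorem 2} to the case $D = K$, $d = 1$, $u = 1$, so that $R = K[t;\sigma]$ has center $F[t^n]$ and $\Omega_v(\alpha) = \sigma(v)\alpha v^{-1}$. Most of the content is already present in Theorem \ref{C7.3: Theorem 2}; the real task is to verify that the ``twisted norm'' condition $u^{-1}\prod_{j=0}^{n-1}\sigma^{n-j}(b) \in F^{\times}$ appearing there is automatic in the present Galois setting and can therefore be dropped from the statement.

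First I would invoke Theorem \ref{C7.3: Theorem 3} to reduce the problem to classifying the monic right divisors $f$ (with $(f,t)_r=1$) of polynomials of the form $t^n - a$, $a \in F^{\times}$. Setting $h=t^n-a$ and letting $k$ be the number of irreducible factors of $h$ in $R$, the identity $\ell n = k s$ together with $m = s$ from Theorem \ref{C2.2: Theorem 2} gives $n/k = m/\ell$; since $n$ is prime, this forces $k \in \{1,n\}$.

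Next I would split into these two cases. If $k=1$, then $h$ is irreducible in $R$, so $f$ (a monic positive-degree right divisor of $h$) must equal $h = t^n - a$; a standard criterion, e.g.\ \cite[Corollary 3.4]{brown2018} applied with $d=1$, says that $t^n - a$ is irreducible in $K[t;\sigma]$ for prime $n$ precisely when $a \neq N_{K/F}(b)$ for every $b \in K$, which yields case (1). If $k = n$, then $h$ factors as $n$ mutually similar linear polynomials in $R$ by Theorem \ref{C2.2: Theorem 2}, and $m = \ell \leq n$, so $f = \prod_{i=1}^m (t - b_i)$ for some $b_i \in K^{\times}$ (nonzero by $(f,t)_r = 1$) with each $(t - b_i) \sim (t - b_m)$. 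Setting $b = b_m$ and applying Lemma \ref{C7.2: Lemma 1} produces $c_1, \dots, c_{m-1} \in K^{\times}$ with $b_i = \Omega_{c_i}(b)$; normalising by $c_m = 1$ gives exactly case (2).

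For the converse I need to show that any $f$ of the form in (1) or (2) right divides some $t^n - a$ with $a \in F^{\times}$, so that Theorem \ref{C7.3: Theorem 3} makes it a generalised A-polynomial. Case (1) is immediate. In case (2) it suffices, by \cite[Corollary 3.4]{brown2018} again, that $\prod_{j=0}^{n-1}\sigma^{n-j}(b) \in F^{\times}$; but this product is precisely the field norm $N_{K/F}(b)$ because $K/F$ is Galois with $\mathrm{Gal}(K/F) = \langle \sigma \rangle$, so it automatically lies in $F^{\times}$. The one potentially tricky point is thus this very step---recognising that the twisted norm condition required in Theorem \ref{C7.3: Theorem 2} collapses to the ordinary field norm in the commutative Galois setting---and this is precisely why the condition is absent from the statement of Theorem \ref{C7.3: Theorem 4}.
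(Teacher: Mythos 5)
Your proposal is correct and follows essentially the same route as the paper, which proves the theorem by specialising Theorem \ref{C7.3: Theorem 2} to $d=u=1$ and then observing that the twisted norm condition $\prod_{j=0}^{n-1}\sigma^{n-j}(b)\in F^{\times}$ becomes the ordinary field norm $N_{K/F}(b)$ and is therefore automatic. You simply spell out the $k\in\{1,n\}$ case analysis that the paper leaves implicit by reference; the key observation you flag as the ``tricky point'' is exactly the one remark the paper adds to its proof.
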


\begin{proof}
The proof is identical to the proof of Theorem \ref{C7.3: Theorem 2} with $d=u=1$. We also dropped the condition that $\prod\limits_{j=0}^{n-1}\sigma^j(b)$ lie in $F^{\times}$ since this product is equal to the field norm $N_{K/F}(b)$ which is always in $F^{\times}$ for $b \in K^{\times}$.
\end{proof}

In particular, let $K = \mathbb{F}_{q^n}$, where $q=p^e$ for some prime $p$ and exponent $e \geq 1$, and where $\sigma:K\longrightarrow K$, $a \mapsto a^q$, is the Frobenius automorphism of order $n$, with fixed field $F=\mathbb{F}_q$. In this case the only central division algebra over the field $\mathbb{F}_q$ is $\mathbb{F}_q$ itself. The following result is therefore an easy consequence of Theorem \ref{C7.3: Theorem 3}:

\begin{theorem}\label{C7.3: Theorem 5}
Suppose that $f \in \mathbb{F}_{q^n}[t;\sigma]$ satisfies $(f,t)_r=1$, and has minimal central left multiple $h(t) = \hat{h}(t^n)$ for some irreducible polynomial $\hat{h} \in \mathbb{F}_q[x]$. Then $f$ is an A-polynomial if and only if $m \leq n$ and there exist some constants $c_1,c_2,\dots,c_{m-1},c_m,b \in K^{\times}$ with $c_m=1$, such that $$f(t) = \prod\limits_{i=1}^m (t-\Omega_{c_i}(b)).$$ In particular, $f$ is a reducible polynomial in $\mathbb{F}_{q^n}[t;\sigma]$ unless $m=1$.

\end{theorem}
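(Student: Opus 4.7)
The plan is to derive this as a corollary of Theorem \ref{C7.3: Theorem 3}, exploiting the extreme simplicity of the Brauer theory over the finite field $\mathbb{F}_q$. First I would apply Theorem \ref{C7.3: Theorem 3} directly: since $\hat{h}$ is irreducible in $\mathbb{F}_q[x]$, the condition that $f$ be a (generalised) A-polynomial is equivalent to $\hat{h}(x) = x - a$ for some $a \in \mathbb{F}_q^{\times}$, or equivalently, that $f$ right divides $t^n - a$ in $K[t;\sigma]$. This already delivers the bound $m \leq n$ and identifies $h(t) = t^n - a$, so $f$ is a product of irreducible right factors of $h$.

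The finite field input enters next. Because the norm map $N_{K/F}\colon \mathbb{F}_{q^n}^{\times} \to \mathbb{F}_q^{\times}$ is surjective (its kernel has order $(q^n-1)/(q-1)$, forcing the image to exhaust $\mathbb{F}_q^{\times}$), for every such $a$ there is $b \in K^{\times}$ with $a = N_{K/F}(b) = \prod_{j=0}^{n-1}\sigma^j(b)$. By the criterion cited in the proof of Theorem \ref{C7.3: Theorem 2} (namely \cite[Corollary 3.4]{brown2018}), this is precisely the statement that $t - b$ is a right divisor of $t^n - a$. Hence $t^n - a$ always admits at least one linear right factor over $K[t;\sigma]$, which rules out the "irreducible $t^n - a$" possibility that would otherwise appear (it is the analogue of case (1) in Theorem \ref{C7.3: Theorem 4}).

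Since $\hat{h}$ is irreducible and $h$ is therefore a two-sided maximal element of $R$, all irreducible factors of $h$ are mutually similar (as in Theorem \ref{C4: Theorem 1}, via \cite[Theorem 1.2.19]{jacobson2009finite}); having a single linear factor then forces every irreducible factor of $h$, and consequently every irreducible factor of $f$, to be linear. Writing $f(t)=\prod_{i=1}^{m}(t-b_i)$ with $(t-b_i) \sim (t-b)$ for all $i$, I would invoke Lemma \ref{C7.2: Lemma 1} to express the similarities as $b_i = \Omega_{c_i}(b)$ for suitable $c_i \in K^{\times}$, setting $b = b_m$ and $c_m = 1$ to obtain the stated form. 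The converse is immediate from Theorem \ref{C7.3: Theorem 3}: the product $\prod_{i=1}^{m}(t-\Omega_{c_i}(b))$ right divides $t^n - N_{K/F}(b)$, which is a central polynomial with coefficient in $\mathbb{F}_q^{\times}$.

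The only real obstacle is a small bookkeeping point: Theorem \ref{C7.3: Theorem 4} assumes $n$ prime and is therefore not directly applicable, so the argument must proceed via Theorem \ref{C7.3: Theorem 3}. It is also worth remarking explicitly that "A-polynomial" and "generalised A-polynomial" coincide in the finite field setting, since the Brauer group of $\mathbb{F}_q$ is trivial and every central simple algebra over $\mathbb{F}_q$ is a matrix ring over $\mathbb{F}_q$; no degree comparison is then lost when invoking the earlier results. The reducibility assertion for $m > 1$ is automatic from the factorisation just obtained.
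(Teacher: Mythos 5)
Your argument is correct and lands in the same place as the paper's proof --- every irreducible factor of $f$ is linear and all are mutually similar, after which Lemma \ref{C7.2: Lemma 1} produces the $\Omega_{c_i}$ normal form --- but the route through the middle is genuinely different. The paper never exhibits a linear factor directly: it notes that $E_{\hat{h}}$ is a finite field, so the division-algebra component $\mathcal{E}(g)$ in $R/Rh \cong M_k(\mathcal{E}(g))$ must be $E_{\hat{h}}$ itself (trivial Brauer group), extracts $k=n$ from the dimension count of Theorem \ref{C2.2'': Theorem 2}, and then re-enters the $k=n$ branch of the proof of Theorem \ref{C7.3: Theorem 2}, observing that this branch does not use primality of $n$. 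You instead invoke Theorem \ref{C7.3: Theorem 3} to get $h=t^n-a$, use surjectivity of $N_{K/F}\colon \mathbb{F}_{q^n}^{\times}\to\mathbb{F}_q^{\times}$ to produce an explicit linear right factor $t-b$ of $h$, and let the mutual similarity of the irreducible factors of a two-sided maximal element (hence equality of their degrees) propagate linearity to every irreducible factor of $f$. The two finite-field inputs --- triviality of the Brauer group versus norm surjectivity --- are equivalent facts, but your version is more constructive and makes explicit why the ``irreducible $t^n-ua$'' alternative of Theorem \ref{C7.3: Theorem 2} cannot occur here, which the paper records only implicitly through $k=n>1$.

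One step should be tightened. In the converse you assert that $\prod_{i=1}^{m}(t-\Omega_{c_i}(b))$ right divides $t^n-N_{K/F}(b)$. That is not automatic for a product of mutually similar linear factors: for instance $(t-b)^2$ with $n=2$ right divides $t^2-N_{K/F}(b)$ only when $\sigma(b)=-b$, and in general its minimal central left multiple has $\hat{h}(x)=(x-N_{K/F}(b))^2$, which is not irreducible. The divisibility you want does hold, but only because of the theorem's standing hypothesis that $\hat{h}$ is irreducible: each factor $t-\Omega_{c_i}(b)$ has minimal central left multiple $t^n-N_{K/F}(b)$ (note $N_{K/F}(\Omega_{c_i}(b))=N_{K/F}(b)$), so $\hat{h}$ divides $(x-N_{K/F}(b))^m$ in $\mathbb{F}_q[x]$, and irreducibility forces $\hat{h}(x)=x-N_{K/F}(b)$, at which point Theorem \ref{C7.3: Theorem 3} applies. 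So the conclusion of your converse is right, but the justification should pass through the irreducibility of $\hat{h}$ rather than an unconditional divisibility claim.
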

\begin{proof}
First we note that $E_{\hat{h}}= \frac{\mathbb{F}_q[x]}{(\hat{h}(x))}$ is a field extension of $\mathbb{F}_q$ of finite degree, hence it is also a finite field. In the notation of Theorem \ref{C2.2'': Theorem 2}, $\mathcal{E}(f) \cong M_l(E_{\hat{h}})$ since the only central division algebra over the finite field $E_{\hat{h}}$ is $E_{\hat{h}}$ itself. Hence $\mathcal{E}(f)$ is a central simple algebra over $E_{\hat{h}}$ of degree $s=l$. By Theorem \ref{C2.2'': Theorem 2}, ${\rm deg}(h) =\frac{mn}{s}= \frac{mn}{l}$. On the other hand, we know that ${\rm deg}(h) = k{\rm deg}(f_i)$ where $f_i$ is any irreducible divisor of $f$ in $R$, and ${\rm deg}(f_i) = \frac{m}{l}$. Combining the expressions for ${\rm deg}(h)$ yields $n = k$.\\
The rest of the proof can be taken verbatim to be the proof of Theorem \ref{C7.3: Theorem 2} from the point that we examine the $n=k$ case. We note that the part of the proof of Theorem \ref{C7.3: Theorem 2} that we invoke here does not rely on $n$ being a prime.
\end{proof}

\section{Generalised A-polynomials in $D[t;\delta]$}
This section is dedicated to finding necessary and sufficient conditions for certain polynomials $f \in D[t;\delta]$. To do this, we focus on the cases ${\rm Char}(D)=0$ and ${\rm Char}(D) = p \neq 0$, separately. Again, it can be seen that all of the results in this section follow from a similar argument to those use throughout the previous sections of this chapter.

\subsection{${\rm Char}(D)=0$}
In this section we impose that $D$ has characteristic $0$, and that $\delta$ is the inner derivation defined by some element $c \in D^{\times}$, that is $\delta(a) = [c,a] = ca-ac$ for all $a \in D$. \\
\\
Recall that $R$ has center $F[t-c] \cong F[x]$ where $F = C \cap {\rm Const}(\delta)$, and that all polynomials $f \in R = D[t;\delta]$ have a unique minimal central left multiple $h(t) = \hat{h}(t-c)$ for some $\hat{h}\in F[x]$, which is equal to a bound of $f$.

\begin{remark}
As in the search for generalised A-polynomials in a twisted polynomial ring $D[t;\sigma]$, we restrict ourselves to the case $\hat{h}$ is square-free in $F[x]$, otherwise the author was unable to determine when the eigenring of $f \in R$ is indeed a central simple algebra over $F$. Then by a similar argument to Section 3 (pg.~74), we lose no generality in the search for A-polynomials in $R$, if we take $\hat{h}$ to be an irreducible polynomial in $F[x]$.
\end{remark}

By Theorem \ref{C3.1: Theorem 2}, for any $f \in R$ of degree $m \geq 1$ with $\hat{h} \in F[x]$ irreducible, we have that $f=f_1f_2\cdots f_l$ for $f_1,\dots,f_l$ irreducible polynomials in $R$, which are all mutually similar to each other, $$\mathcal{E}(f) \cong M_l(\mathcal{E}(f_i)),$$ and $\mathcal{E}(f_i)$ is a central division algebra over $E_{\hat{h}}$ of degree $s^{\prime}=\frac{d}{k}$, where $k$ is the number of irreducible factors of $h$ in $R$. Moreover, ${\rm deg}(\hat{h})=\frac{dm}{ls^{\prime}}$, and $[\mathcal{E}(f_i):F]= \frac{dms^{\prime}}{l}$. Hence, $\mathcal{E}(f)$ is a central simple algebra over $E_{\hat{h}}$ of degree $s=ls^{\prime}$ and $[\mathcal{E}(f):F]= dms$.\\
\\
This yields the following result almost immediately:
\begin{theorem}\label{C7.3: Theorem 6}
Suppose that $f \in R$ has degree $m$, and that $f$ has minimal central left multiple $h(t)=\hat{h}(t-c)$ for some irreducible polynomial $\hat{h} \in F[x]$. Then $f$ is a generalised A-polynomial in $R$ if and only if $f(t)=t-(c+a)$ for some $a \in F$.
\end{theorem}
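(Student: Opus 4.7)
The proof proceeds by matching the center of the eigenring against the requirement that it coincide with $F$, and then reading off the consequence for $\hat{h}$ and for $f$ itself.

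For the forward direction, I suppose $f$ is a generalised A-polynomial, so $\mathcal{E}(f)$ is a central simple algebra over $F$. Under the standing hypothesis that $\hat h$ is irreducible in $F[x]$, I invoke Theorem \ref{C3.1: Theorem 2}: the eigenring $\mathcal{E}(f)$ is isomorphic to $M_l(\mathcal{E}(g))$ for any irreducible divisor $g$ of $h$, and in particular it is a central simple algebra over the field $E_{\hat h} = F[x]/(\hat h(x))$. (For the case ${\rm deg}(f)=1$, one directly verifies by the standard identification $\mathcal{E}(f)\cong{\rm End}_R(R/Rf)$ and Lemma \ref{Intro: Lemma 4} applied to the two-sided ideal $Rh\subseteq{\rm Ann}_R(R/Rf)$ that $\mathcal{E}(f)$ is an $E_{\hat h}$-algebra, so the same center computation applies.) Thus the center of $\mathcal{E}(f)$ equals $E_{\hat h}$, and equating this with $F$ forces $[E_{\hat h}:F]=1$, i.e.\ $\hat h(x)=x-a$ for some $a\in F$.

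Next I translate this back to $f$. Since $\hat h(x)=x-a$, the minimal central left multiple is $h(t)=\hat h(t-c)=t-(c+a)$, a polynomial of degree one. Because $f$ right divides $h$ in $R$ by definition of the minimal central left multiple, we have ${\rm deg}(f)\leq 1$; combined with $m\geq 1$ and the monicity of both $f$ and $h$, this yields $f(t)=h(t)=t-(c+a)$.

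For the converse, I take $f(t)=t-(c+a)$ with $a\in F$ and compute the eigenring directly. Recall that for a linear polynomial $f=t-b$ we have $\mathcal{E}(f)=\{y\in D:\delta(y)=[b,y]\}$, as noted in the remark after Theorem \ref{Intro: Theorem 3}. Here $b=c+a$ and $a\in F=C$, so
$$[b,y]=[c+a,y]=[c,y]+[a,y]=\delta(y)+0=\delta(y)$$
for every $y\in D$. Hence $\mathcal{E}(f)=D$, which is a central simple algebra over its center $C$. Since we are in the inner-derivation setting of Section~7.4.1, we have $F=C\cap{\rm Const}(\delta)=C$, so $D=\mathcal{E}(f)$ is indeed a central simple algebra over $F$, and $f$ is a generalised A-polynomial. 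The only genuinely delicate point is asserting $Z(\mathcal{E}(f))=E_{\hat h}$ in the degree-one case, which I handle by the module-endomorphism identification above; once that is in place, the rest of the argument is a short calculation.
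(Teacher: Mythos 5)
Your proof is correct and, in the forward direction, follows essentially the same route as the paper: identify the center of $\mathcal{E}(f)$ with $E_{\hat h}$, force $E_{\hat h}=F$, conclude ${\rm deg}(\hat h)=1$, hence $h(t)=t-(c+a)$, and then $f=h$ because $f$ is monic of degree $m\geq 1$ and right divides the monic degree-one polynomial $h$ (the paper leaves this last step implicit). Where you genuinely diverge is in the converse and in the treatment of the degree-one case. The paper disposes of the converse inside a single ``if and only if'' chain resting on Theorem \ref{C3.1: Theorem 2}; but that theorem sits under Chapter 3's standing assumption $m>1$, whereas the conclusion of Theorem \ref{C7.3: Theorem 6} forces $m=1$, so the structure theorem is being invoked outside its stated hypotheses exactly where it matters. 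Your direct computation --- for $f(t)=t-(c+a)$ with $a\in F=C$ one has $[c+a,y]=[c,y]=\delta(y)$ for all $y\in D$, hence $\mathcal{E}(f)=D$, a central division algebra over $C=F$ --- closes the converse cleanly, and your observation that $E_{\hat h}=C(R/Rh)$ acts centrally on ${\rm End}_{R/Rh}(R/Rf)\cong\mathcal{E}(f)$ via Lemma \ref{Intro: Lemma 4} supplies the center identification that the forward direction needs even when ${\rm deg}(f)=1$. In short, your argument buys a complete and self-contained treatment of the boundary case that the paper's proof elides, at the modest cost of one extra explicit calculation.
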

\begin{proof}
Suppose that $f$ is a generalised A-polynomial in $R$, that is $\mathcal{E}(f)$ is a central simple algebra over $F$ of degree $dm$. This is true if and only if $E_{\hat{h}} = F$, i.e. if and only if ${\rm deg}(\hat{h})= \frac{md}{s} = 1$. Having $\hat{h}$ of degree one is equivalent to having $h(t)=t-(c+a)$ for some $a \in F$, and by definition $h(t) = g(t)f(t)$ for some $g \in R$.
\end{proof}

\begin{remark}
The above result shows that there are in fact no generalised A-polynomials in $R$ with a square-free minimal central left multiple, other than the central element $t-c$ up to a possible shift by some element of $F$ and a possible scalar multiplication by some element of $D^{\times}$.
\end{remark}

\subsection{${\rm Char}(D) = p > 0$}
From now on let $R = D[t; \delta]$ where $D$ is a central division algebra of degree $d$ over $C$. Assume that $C$ has prime characteristic $p$, and that $\delta$ is an algebraic derivation of $D$ with minimum polynomial $g(t) = t^{p^e} + \gamma_1t^{p^{e-1}} + \cdots + \gamma_et \in F[t]$, such that $g(\delta)(a) = [c, a] = ca-ac$ for some nonzero $c \in D$ and for all $a \in D$. Here, $F = C \cap {\rm Const}(\delta)$ ($D = K$ is a field is included here as special case). Then $R$ has center $F[g(t)-c] \cong F[x]$. For every $f \in R$, the minimal central left multiple of $f$ in $R$ is the unique polynomial of minimal degree
$h \in C(R) = F[x]$ such that $h = gf$ for some $g \in R$, and such that $h(t) = \hat{h}(g(t)-c)$ for some monic $\hat{h} \in F[x]$. All $f \in R = D[t; \delta]$ have a unique minimal central left multiple, which is a bound of $f$. Again we can restrict our investigation to the case $\hat{h}$ is square-free in $F[x]$, and note that it is necessary that $\hat{h}$ be irreducible in $F[x]$ for $f$ to be a generalised A-polynomial in $R$.\\
\\
By Theorem \ref{C3.2: Theorem 4}, for any $f \in R$ of degree $m \geq 1$ with $\hat{h} \in F[x]$ irreducible, we have that $f=f_1f_2\cdots f_l$ for $f_1,\dots,f_l$ irreducible polynomials in $R$, which are all mutually similar to each other, $$\mathcal{E}(f) \cong M_l(\mathcal{E}(f_i)),$$ and $\mathcal{E}(f_i)$ is a central division algebra over $E_{\hat{h}}$ of degree $s^{\prime}=\frac{dp^e}{k}$, where $k$ is the number of irreducible factors of $h$ in $R$. Moreover, ${\rm deg}(\hat{h})=\frac{dm}{ls^{\prime}}$, and $[\mathcal{E}(f_i):F]= \frac{dms^{\prime}}{l}$. Hence, $\mathcal{E}(f)$ is a central simple algebra over $E_{\hat{h}}$ of degree $s=ls^{\prime}$ and $[\mathcal{E}(f):F]= dms$. We obtain the following:

\begin{theorem}\label{C7.3: Theorem 7}
$f$ is a generalised A-polynomial in $R$ if and only if $f$ right divides $g(t)-(b+c)$ for some $b \in F$. In particular, ${\rm deg}(f) \leq p^e$.
\end{theorem}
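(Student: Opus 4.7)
The plan is to adapt the proof strategy of Theorem \ref{C7.3: Theorem 6} to the prime characteristic setting, relying on the structural result Theorem \ref{C3.2: Theorem 4} which describes $\mathcal{E}(f)$ when $\hat{h}$ is irreducible in $F[x]$. As in the earlier cases, I would first reduce to the assumption that $\hat{h}$ is irreducible in $F[x]$: if $\hat{h}$ decomposes into distinct irreducible factors $\hat{\pi}_1, \ldots, \hat{\pi}_z$, then by Theorem \ref{C4: Theorem 1} the center of $\mathcal{E}(f)$ is $E_{\hat{h}} \cong E_{\hat{\pi}_1}\oplus\cdots\oplus E_{\hat{\pi}_z}$, which is a field only when $z=1$; and if $\hat{h}$ has a repeated factor we set the case aside (consistent with the standing assumption throughout Section 7.4 that $\hat{h}$ is square-free). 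So we may assume $\hat{h}$ is irreducible in $F[x]$.

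Under this assumption, Theorem \ref{C3.2: Theorem 4} gives $\mathcal{E}(f) \cong M_l(\mathcal{E}(f_i))$, where $\mathcal{E}(f_i)$ is a central division algebra over $E_{\hat{h}}=F[x]/(\hat{h}(x))$. Hence $\mathcal{E}(f)$ is central simple over $E_{\hat{h}}$, and it is central simple over $F$ if and only if $E_{\hat{h}}=F$, i.e.\ if and only if $\deg(\hat{h})=1$. Writing $\hat{h}(x)=x-b$ with $b\in F$, the minimal central left multiple becomes
\[
h(t) = \hat{h}(g(t)-c) = g(t) - (b+c).
\]
Since $h=qf$ for some $q\in R$ by the definition of the minimal central left multiple, $f$ right divides $g(t)-(b+c)$ in $R$, proving the forward direction.

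For the converse, suppose $f$ right divides $g(t)-(b+c)$ for some $b\in F$. Then $g(t)-(b+c)=\hat{h}(g(t)-c)$ with $\hat{h}(x)=x-b\in F[x]$, so $g(t)-(b+c)$ is a central left multiple of $f$. Any central left multiple of $f$ has degree at least $\deg(h)$, and since $\deg(g(t)-(b+c))=p^e$ is already the smallest possible positive degree of a non-constant element of $F[g(t)-c]$, we conclude $h(t)=g(t)-(b+c)$ (up to normalisation) and hence $\hat{h}$ is linear. Then $E_{\hat{h}}=F$, and Theorem \ref{C3.2: Theorem 4} yields that $\mathcal{E}(f)$ is central simple over $F$, i.e.\ $f$ is a generalised A-polynomial. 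Finally, the degree bound $\deg(f)\leq p^e$ is immediate: $f$ right divides a polynomial of degree $\deg(g)=p^e$.

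The only subtle point — and the potential obstacle — is the argument that $\hat{h}$ linear truly forces $h$ to be (a scalar multiple of) the minimal central left multiple rather than just \emph{some} central left multiple; I would handle this by invoking the uniqueness of the minimal central left multiple and comparing $\hat{h}$-degrees in the chain of central multiples sitting inside $F[g(t)-c]$. The rest of the proof is a direct transcription of the characteristic-zero argument of Theorem \ref{C7.3: Theorem 6}, with $t-c$ replaced by $g(t)-c$ throughout.
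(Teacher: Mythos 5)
Your proposal is correct and follows essentially the same route as the paper: reduce to $\hat{h}$ irreducible, use Theorem \ref{C3.2: Theorem 4} to see that $\mathcal{E}(f)$ is central simple over $E_{\hat{h}}$, and observe that $E_{\hat{h}}=F$ exactly when $\deg(\hat{h})=1$, i.e.\ when $h(t)=g(t)-(b+c)$. Your converse direction is in fact spelled out more carefully than the paper's (which compresses it into an ``if and only if'' chain), and your minimality argument for identifying $g(t)-(b+c)$ as the minimal central left multiple is sound.
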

\begin{proof}
Suppose that $f$ is a generalised A-polynomial in $R$, that is $\mathcal{E}(f)$ is a central simple algebra over $F$ of degree $dm$. This is true if and only if $E_{\hat{h}} = F$, i.e. if and only if ${\rm deg}(\hat{h})= \frac{md}{s} = 1$. Having $\hat{h}$ of degree one is equivalent to having $h(t)=g(t)-(b+c)$ for some $b \in F$, and by definition $h(t) = r(t)f(t)$ for some $r \in R$.
\end{proof}

In $D[t;\delta]$ 
\begin{equation}\label{delta identity for right dividing lin factor 1}
(t-\alpha)^p = t^p - V_p(\alpha),\; V_p(\alpha) = \alpha^p + \delta^{p-1}(\alpha) + \nabla_\alpha
\end{equation} for all $\alpha \in D$, where $\nabla_\alpha$ is a sum of commutators of $\alpha,\delta(\alpha),\delta^2(\alpha),\dots,\delta^{p-2}(\alpha)$ \cite[pg.~17-18]{jacobson2009finite}. In particular, if $D$ is commutative, then  $\nabla_\alpha = 0$ and 
\begin{equation}
V_p(\alpha) = \alpha^p + \delta^{p-1}(\alpha)
\end{equation}
for all $\alpha \in D$. Using the identities $t^p = (t-\alpha)^p + V_p(\alpha)$ and $t = (t-\alpha) + \alpha$ for all $\alpha \in D$, we arrive at:

\begin{lemma}\cite[Proposition 1.3.25 ($e=1$)]{jacobson2009finite}\label{C7.3: Lemma 1}
Let $f(t)=t^p-\gamma_1t-\gamma \in D[t;\delta]$ and $\alpha \in D$. Then $(t-\alpha)\vert_r f(t)$ if and only if $V_p(\alpha)-\gamma_1\alpha-\gamma = 0.$
\end{lemma}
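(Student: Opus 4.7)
The plan is to exhibit an explicit right Euclidean division of $f(t)$ by $(t-\alpha)$ and read off the remainder. Since $\gamma_1 \in F \subseteq {\rm Const}(\delta) \cap C$, the element $\gamma_1$ commutes with both $t$ and with every element of $D$; in particular $\gamma_1(t-\alpha) = (t-\alpha)\gamma_1$. This is the key observation that lets us factor $(t-\alpha)$ out on the right.

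Using the identity $t^p = (t-\alpha)^p + V_p(\alpha)$ from equation \eqref{delta identity for right dividing lin factor 1} together with $t = (t-\alpha) + \alpha$, I would rewrite
\begin{align*}
f(t) &= t^p - \gamma_1 t - \gamma \\
     &= (t-\alpha)^p + V_p(\alpha) - \gamma_1\bigl((t-\alpha)+\alpha\bigr) - \gamma \\
     &= (t-\alpha)^p - \gamma_1(t-\alpha) + \bigl(V_p(\alpha) - \gamma_1\alpha - \gamma\bigr).
\end{align*}
The first two terms visibly share a right factor of $(t-\alpha)$: using the centrality of $\gamma_1$ to pull it across,
$$
(t-\alpha)^p - \gamma_1(t-\alpha) = \bigl[(t-\alpha)^{p-1} - \gamma_1\bigr](t-\alpha).
$$
Hence
$$
f(t) = \bigl[(t-\alpha)^{p-1} - \gamma_1\bigr](t-\alpha) + \bigl(V_p(\alpha) - \gamma_1\alpha - \gamma\bigr),
$$
and the expression in parentheses on the right is a constant in $D$, i.e. has degree strictly less than ${\rm deg}(t-\alpha) = 1$.

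By uniqueness of the right Euclidean division in $R = D[t;\delta]$, this is \emph{the} division of $f$ by $(t-\alpha)$, with remainder $V_p(\alpha) - \gamma_1\alpha - \gamma$. Therefore $(t-\alpha) \mid_r f(t)$ if and only if $V_p(\alpha) - \gamma_1\alpha - \gamma = 0$, which is what we wanted to prove. There is no serious obstacle: the only subtlety is verifying that $\gamma_1$ commutes with $(t-\alpha)$ in $R$, which follows from $\gamma_1 \in F = C \cap {\rm Const}(\delta)$ (so $t\gamma_1 = \gamma_1 t + \delta(\gamma_1) = \gamma_1 t$ and $\alpha\gamma_1 = \gamma_1\alpha$).
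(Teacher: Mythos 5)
Your proof is correct and is exactly the argument the paper intends: it invokes the same two identities $t^p=(t-\alpha)^p+V_p(\alpha)$ and $t=(t-\alpha)+\alpha$ and reads off the remainder of the right division by $t-\alpha$, which the paper only sketches in the sentence preceding the lemma. (One small remark: the commutation of $\gamma_1$ with $t-\alpha$ is not actually needed, since $\gamma_1(t-\alpha)$ already has $t-\alpha$ as a right factor by distributivity; so your division is valid even for $\gamma_1\in D$.)
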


The identity (\ref{delta identity for right dividing lin factor 1}) can be iterated to obtain 
\begin{equation}
(t-\alpha)^{p^i} = t^{p^i}-V_{p^i}(\alpha)
\end{equation}
for all $\alpha \in D$ and all integer exponents $i \geq 1$, where $$V_{p^i}(\alpha) = V_p^i(\alpha) = \underbrace{(V_p \circ V_p \circ \cdots \circ V_p)}_{\text{i terms}}(\alpha).$$ In a similar fashion to Lemma \ref{C7.3: Lemma 1} the identities $t^{p^i}= (t-\alpha)^{p^i}+V_{p^i}(\alpha)$ and $t=(t-\alpha)+\alpha$ may be used to obtain:

\begin{lemma}\cite[Proposition 1.3.25]{jacobson2009finite}\label{C7.3: Lemma 2}
Let $f(t)= t^{p^e} + \gamma_1 t^{p^{e-1}} + \cdots + \gamma_e t - \gamma \in D[t;\delta]$ and $\alpha \in D$. Then $(t-\alpha) \vert_r f(t)$ if and only if $$V_{p^e}(\alpha) + \gamma_1 V_{p^{e-1}} + \cdots + \gamma_e \alpha - \gamma = 0.$$
\end{lemma}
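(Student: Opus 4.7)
The plan is to imitate the proof of Lemma \ref{C7.3: Lemma 1}, but using the iterated identity $(t-\alpha)^{p^i} = t^{p^i} - V_{p^i}(\alpha)$ for each $i \in \{1, 2, \ldots, e\}$ in place of the single identity used there. First I would rewrite each monomial $t^{p^i}$ appearing in $f(t)$ as $(t-\alpha)^{p^i} + V_{p^i}(\alpha)$, and the monomial $t$ as $(t-\alpha) + \alpha$, substituting throughout the expression
\[
f(t) = t^{p^e} + \gamma_1 t^{p^{e-1}} + \cdots + \gamma_{e-1} t^p + \gamma_e t - \gamma.
\]

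Next I would rearrange the resulting expression to separate a piece that is manifestly right-divisible by $(t-\alpha)$ from a constant remainder in $D$. The key observation is that each $\gamma_i \in F = C \cap {\rm Const}(\delta)$ lies in the center of $R$: being in $C$ it commutes with every element of $D$, and being a $\delta$-constant it satisfies $t\gamma_i = \gamma_i t$, so $\gamma_i$ commutes with every polynomial in $R$, and in particular with every power of $(t-\alpha)$. Hence I may freely move each $\gamma_i$ past $(t-\alpha)^{p^{e-i}}$ to collect
\[
f(t) = \bigl[(t-\alpha)^{p^e} + \gamma_1 (t-\alpha)^{p^{e-1}} + \cdots + \gamma_e (t-\alpha)\bigr] + \bigl[V_{p^e}(\alpha) + \gamma_1 V_{p^{e-1}}(\alpha) + \cdots + \gamma_e \alpha - \gamma\bigr].
\]
The bracketed polynomial on the left is clearly right-divisible by $(t-\alpha)$, while the second bracket is an element of $D$, hence has degree strictly less than $\deg(t-\alpha) = 1$.

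To finish, I would invoke the uniqueness of the right division algorithm in $R$: the second bracket is therefore the unique remainder of $f(t)$ upon right division by $(t-\alpha)$, so $(t-\alpha)$ right-divides $f(t)$ if and only if
\[
V_{p^e}(\alpha) + \gamma_1 V_{p^{e-1}}(\alpha) + \cdots + \gamma_e \alpha - \gamma = 0,
\]
as claimed. There is no real obstacle here once the iterated identity and the centrality of the $\gamma_i$ are in hand; the mild point worth flagging is to verify once, cleanly, that $\gamma_i$ really does commute with $(t-\alpha)^{p^{e-i}}$, since outside of $F$ this would fail and the rearrangement would no longer produce a constant remainder.
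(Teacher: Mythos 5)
Your proof is correct and is exactly the argument the paper intends: the paper only sketches this lemma by pointing to the identities $t^{p^i}=(t-\alpha)^{p^i}+V_{p^i}(\alpha)$ and $t=(t-\alpha)+\alpha$ and citing Jacobson, and your substitution followed by uniqueness of the right remainder fills that in. One small remark: the caveat about needing the $\gamma_i$ to commute with $(t-\alpha)^{p^{e-i}}$ is superfluous, since $R(t-\alpha)$ is a left ideal, so $\gamma_i(t-\alpha)^{p^{e-i}}$ is right-divisible by $(t-\alpha)$ for any $\gamma_i\in D$, central or not.
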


If $e=1$ (i.e. $\delta$ is an algebraic derivation of $D$ of degree $p$), we can determine necessary and sufficient conditions for $f$ to be an A-polynomial in $R$:

\begin{theorem}\label{C7.3: Theorem 8}
Suppose that $\delta$ has minimum polynomial $g(t)=t^p-\gamma_1t$ and that $\hat{h}$ is irreducible in $F[x]$. Then $f$ is a generalised A-polynomial in $R$ if and only if one of the following holds:
\begin{enumerate}
\item $f(t) = h(t) = t^p - \gamma_1t - (\gamma + c)$ for some $\gamma \in F$, and $$V_p(\alpha) - \gamma_1\alpha - (\gamma + c) \neq 0$$ for all $\alpha \in D$. In this case $f$ is irreducible in $R$.
\item $h(t) = t^p -\gamma_1t - (\gamma + c)$ for some $\gamma \in F$, $m \leq p$ and $$f(t) = \prod\limits_{i=1}^m (t - \Omega_{c_i}(\alpha))$$ for some $c_1,c_2,\dots,c_m,\alpha \in D^{\times}$ with $c_m = 1$ (w.l.o.g.) such that $$V_p(\alpha) - \gamma_1\alpha - (\gamma + c) = 0.$$
In particular, $f$ is a reducible polynomial in $R$ unless $m=1$. 
\end{enumerate}
\end{theorem}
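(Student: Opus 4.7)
The plan is to convert the problem into one of divisibility, via Theorem \ref{C7.3: Theorem 7}, and then use the factorisation structure imposed by $\hat{h}$ being irreducible together with the linear divisibility criteria of Lemmas \ref{C7.3: Lemma 1} and \ref{C7.2: Lemma 1}. Since $e=1$, Theorem \ref{C7.3: Theorem 7} tells us that $f$ is a generalised A-polynomial if and only if $f$ right divides $g(t)-(\gamma+c)=t^p-\gamma_1 t-(\gamma+c)$ for some $\gamma\in F$. Because $\hat{h}$ is irreducible by hypothesis, the minimal central left multiple of $f$ is $h(t)=\hat{h}(g(t)-c)$ with $\hat{h}$ of degree one (this is forced by the requirement $E_{\hat{h}}=F$, equivalently ${\rm deg}(\hat{h})=1$, coming from $\mathcal{E}(f)$ being a central simple algebra over $F$). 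Hence $h(t)=t^p-\gamma_1 t-(\gamma+c)$ for some $\gamma\in F$, and $f$ right divides $h$.

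I would then split into the two cases depending on whether $f=h$ or $f$ is a proper right divisor of $h$. If ${\rm deg}(f)=p$, then since both $f$ and $h$ are monic of degree $p$ and $f$ right divides $h$, we must have $f=h=t^p-\gamma_1 t-(\gamma+c)$, yielding case (1). Because $\hat{h}$ is irreducible, all irreducible factors of $h$ are mutually similar and share a common degree $r$ satisfying $rk=p$; primality of $p$ forces $r\in\{1,p\}$. Therefore either $f=h$ is irreducible, or $f$ factors into $p$ similar linear polynomials, and by Lemma \ref{C7.3: Lemma 1} the latter happens precisely when there exists $\alpha\in D$ with $V_p(\alpha)-\gamma_1\alpha-(\gamma+c)=0$, giving the stated dichotomy in (1).

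If ${\rm deg}(f)=m<p$, then $f$ is a proper right divisor of $h$, so $h$ cannot itself be irreducible and the common degree $r$ of the irreducible factors of $h$ must equal $1$. Thus $h$ is a product of $p$ mutually similar linear factors in $R$, and $f$ is a product of $m$ of them. Writing one such factor as $t-\alpha$, Lemma \ref{C7.2: Lemma 1} says every similar linear polynomial takes the form $t-\Omega_{c_i}(\alpha)$ for some $c_i\in D^\times$, so (after normalising $c_m=1$) $f(t)=\prod_{i=1}^m(t-\Omega_{c_i}(\alpha))$, and Lemma \ref{C7.3: Lemma 1} applied to $(t-\alpha)|_r h$ gives $V_p(\alpha)-\gamma_1\alpha-(\gamma+c)=0$. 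This is case (2). For the converse, in case (1) $f=h$ is central and $\hat{h}(x)=x-\gamma$ is linear, so $E_{\hat{h}}=F$ and $\mathcal{E}(f)\cong R/Rh$ is a central simple algebra over $F$ by Theorem \ref{C3.2: Theorem 4}; in case (2), the same Lemmas guarantee that each linear factor right divides $h$, hence the product $f$ right divides $h$, and Theorem \ref{C7.3: Theorem 7} applies to make $f$ a generalised A-polynomial.

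The main technical hurdle I anticipate is justifying cleanly that in case (2) the factors can be arranged so that $c_m=1$ without loss of generality, and checking that the product of $m$ linear factors of the form $t-\Omega_{c_i}(\alpha)$ (for varying $c_i$) indeed right divides $h$, not merely that each individual factor does. This needs a short argument using that all these linear factors are mutually similar and that $h$ is a product of all $p$ of them, so any ordered subproduct of $m\le p$ of them yields a right divisor of $h$ after suitable re-bracketing; this should follow by iterating the similarity criterion of Lemma \ref{C7.2: Lemma 1} together with the fact that $h$ is two-sided maximal (so any reordering of its irreducible factors yields an equivalent factorisation, as in Remark \ref{C4: Remark 1}).
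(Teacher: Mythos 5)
Your proposal is correct and follows essentially the same route as the paper: reduce via Theorem \ref{C7.3: Theorem 7} to $f$ right dividing $t^p-\gamma_1t-(\gamma+c)$, use primality of $p$ to force the dichotomy between $h$ irreducible and $h$ a product of mutually similar linear factors, and apply Lemmas \ref{C7.2: Lemma 1} and \ref{C7.3: Lemma 1} to obtain the explicit forms (the paper derives the dichotomy from the numerical relation $p/k=m/l$ rather than from $rk=p$, but this is the same fact). The ``technical hurdle'' you flag at the end is moot: case (2) explicitly hypothesises that the minimal central left multiple of $f$ \emph{is} $t^p-\gamma_1t-(\gamma+c)$, so $f\vert_r h$ and ${\rm deg}(\hat{h})=1$ are already given, and the converse follows immediately from $E_{\hat{h}}=F$ without any subproduct re-bracketing argument.
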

\begin{proof}
By Theorem \ref{C7.3: Theorem 7}, $f$ is a generalised A-polynomial in $R$ if and only if $f$ right divides $t^p - \gamma_1t - (\gamma + c)$ for some $\gamma \in F$. So suppose that $f$ is a generalised A-polynomial in $R$, then
there exists some $\gamma \in F$ and some nonzero $f^{\prime} \in R$ such that $t^p-\gamma_1t-(\gamma+c) = f^{\prime}f.$ In the notation of Theorem \ref{C3.2: Theorem 4}, $ldp = ks$ and since $f$ is a generalised A-polynomial, ${\rm deg}(\hat{h}) =
\frac{dm}{s} = 1$, i.e. $dm = s$. Combining these yields $\frac{p}{k} = \frac{m}{l} \in \mathbb{N}$. That is $k$ must divide $p$, and so we must have that $k = 1$ or $k = p$ as $p$ is prime.\\ 
First suppose that $k = 1$, then $h(t)$ is irreducible in $R$ and $t^p - \gamma_1t - (\gamma + c) = f^{\prime}f$ yields $f^{\prime} = 1$, i.e. $f(t) = t^p - \gamma_1t - (\gamma + c)$. Suppose that $f$ were reducible, then $f$ would be the product of $p$ linear factors as $p$ is prime, hence $f$ is irreducible if and only if $V_p(\alpha) - \gamma_1\alpha - (\gamma + c) \neq 0$ for
any $\alpha \in D$, by Lemma \ref{C7.3: Lemma 1}.\\
On the other hand, if $k = p$, then $h(t)$ is equal to a product of $p$ linear factors in $R$, all of which are similar to one another. Also, since $\frac{p}{k} = \frac{m}{l}$
and $p = k$, we have $m = l \leq p$. Hence
$f$ is the product of $m \leq p$ linear factors in R, all of which are mutually similar to each other.
So there exist constants $\alpha_1, \alpha_2, \dots , \alpha_m \in D^{\times}$ such that $f(t) = \prod\limits_{i=1}^m (t - \alpha_i)$, and $(t - \alpha_i) \sim (t - \alpha_j)$ for all $i,j \in \{1, 2, \dots , m\}$. In particular $(t - \alpha_i) \sim (t  \alpha_m)$ for all $i \neq m$, which is true if and only if there exist constants $c_1, c_2, \dots , c_{m-1}, c_m \in D^{\times}$ such that $\alpha_i = \Omega_{c_i}(\alpha_m)$ for
all $i$ by Lemma \ref{C7.2: Lemma 1}. Hence setting $\alpha = \alpha_m$ and $c_m = 1$ yields $f(t) = \prod\limits_{i=1}^m (t - \Omega_{c_i}(\alpha))$. Finally, we note that $(t - \alpha)$ right divides $t^p - \gamma_1t - (\gamma + c)$ if and only if $V_p(\alpha) - \gamma_1\alpha - (\gamma + c) = 0$ by
Lemma \ref{C7.3: Lemma 1}. 
\end{proof}

In the following, $\mathbb{K}$ denotes a finite field of characteristic $p$ and $\delta$ is an algebraic derivation of $\mathbb{K}$ with fixed field $\mathbb{F}$ and minimum polynomial $g(t) = t^{p^e} + \gamma_1t^{p^{e-1}} + \cdots + \gamma_et \in \mathbb{F}[t]$ such that $g(\delta) = 0$. Then $\mathbb{K}[t;\delta]$ has centre $\mathbb{F}[g(t)] \cong \mathbb{F}[x]$. In this particular case we obtain a more detailed description on the generalised A-polynomials in $\mathbb{K}[t;\delta]$.

\begin{theorem}
Suppose that $f \in \mathbb{K}[t;\delta]$ has degree $m \geq 1$ and minimal central left multiple $h(t) = \hat{h}(g(t))$ for some irreducible polynomial $\hat{h} \in \mathbb{F}[x]$. Then $f$ is a generalised A-polynomial if and only if $h(t)=g(t)-\gamma$ for some $\gamma \in \mathbb{F}$, $m \leq p^e$, and $$f(t) = \prod\limits_{i=1}^m (t-\Omega_{c_i}(\alpha)),$$ for some $c_1,c_2,\dots,c_{m-1},c_m,\alpha \in \mathbb{K}^{\times}$ with $c_m=1$ (w.l.o.g.), such that $$V_{p^e}(\alpha) + \gamma_1 V_{p^{e-1}}(\alpha) + \cdots + \gamma_e \alpha - \gamma = 0.$$ In particular, $f$ is a reducible polynomial in $\mathbb{K}[t;\delta]$ unless $m=1$.
\end{theorem}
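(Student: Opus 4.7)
The plan is to derive this result as a specialisation of the general machinery of Sections 7.2--7.4, using two finite field facts: $d = 1$ since $\mathbb{K}$ is a field, and, by Wedderburn's little theorem, any finite-dimensional central division algebra over a finite field coincides with that field. Since $g(\delta) = 0$ in this setting, the element ``$c$'' of Theorem \ref{C7.3: Theorem 7} vanishes, and that theorem reduces to: $f$ is a generalised A-polynomial if and only if $f$ right-divides $g(t) - \gamma$ for some $\gamma \in \mathbb{F}$. Combined with the standing hypothesis that $\hat{h}$ is irreducible in $\mathbb{F}[x]$, this forces $\hat{h}(x) = x - \gamma$ (the only monic irreducible divisor of $x - \gamma$ in $\mathbb{F}[x]$), so $h = g(t) - \gamma$ and $m \leq p^e$.

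For the forward direction, the conclusion $\hat{h}$ linear means $E_{\hat{h}} = \mathbb{F}$ is a finite field. Applying Theorem \ref{C3.2': Theorem 2}, $f = f_1 \cdots f_l$ for mutually similar irreducibles $f_i$ with $\mathcal{E}(f_i)$ a central division algebra over $E_{\hat{h}} = \mathbb{F}$; Wedderburn's little theorem now forces $\mathcal{E}(f_i) \cong \mathbb{F}$ and hence $s' = 1$. Feeding $d = 1$, $s' = 1$ and ${\rm deg}(\hat{h}) = 1$ into the degree identities of Theorem \ref{C3.2': Theorem 2} gives $s = m$, $l = m$ and ${\rm deg}(f_i) = r = 1$, so $f$ factors into $m$ mutually similar linear polynomials. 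Writing $f_i = t - \alpha_i$ and setting $\alpha = \alpha_m$, Lemma \ref{C7.2: Lemma 1} applied to the similarity $(t - \alpha_i) \sim (t - \alpha)$ produces $\alpha_i = \Omega_{c_i}(\alpha)$ for some $c_i \in \mathbb{K}^\times$ with $c_m = 1$; Lemma \ref{C7.3: Lemma 2}, applied to the right-divisibility $(t - \alpha) \mid h = t^{p^e} + \gamma_1 t^{p^{e-1}} + \cdots + \gamma_e t - \gamma$, yields the displayed polynomial equation on $\alpha$.

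For the converse, suppose $\alpha \in \mathbb{K}^\times$ satisfies the equation, so by Lemma \ref{C7.3: Lemma 2} $(t - \alpha) \mid g(t) - \gamma$; each $t - \Omega_{c_i}(\alpha)$ is similar to $t - \alpha$ (Lemma \ref{C7.2: Lemma 1}) and so, because similar polynomials share the same bound, each factor also has bound $g(t) - \gamma$. The bound of the product $f = \prod_{i=1}^m (t - \Omega_{c_i}(\alpha))$ is then a central polynomial whose associated $\hat{h}_f \in \mathbb{F}[x]$ divides $(x - \gamma)^m$, but the standing hypothesis that $\hat{h}_f$ is irreducible in $\mathbb{F}[x]$ pins down $\hat{h}_f(x) = x - \gamma$; Theorem \ref{C7.3: Theorem 7} therefore identifies $f$ as a generalised A-polynomial. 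The main delicate point I anticipate is the pedestrian bookkeeping of the degree identities to extract $l = m$ and $r = 1$, together with the reversal step identifying $\hat{h}_f$ as $x - \gamma$ itself rather than a proper power of it; the irreducibility hypothesis on $\hat{h}$ is exactly what makes this reversal go through. The final reducibility assertion for $m > 1$ is immediate from the non-trivial linear factorisation.
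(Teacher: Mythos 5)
Your proposal is correct and follows essentially the same route as the paper's own proof: reduce via Theorem \ref{C7.3: Theorem 7} to $\hat{h}(x)=x-\gamma$, use finiteness of $E_{\hat{h}}$ (Wedderburn) to kill the division-algebra part so that the degree identities force $l=m$ and $r=1$, and then convert the resulting factorisation into $m$ mutually similar linear factors via Lemma \ref{C7.2: Lemma 1} and Lemma \ref{C7.3: Lemma 2}. The only difference is cosmetic: you spell out the converse direction (that the stated product form, under the standing irreducibility hypothesis on its minimal central left multiple, really does right-divide $g(t)-\gamma$), which the paper leaves implicit in its chain of equivalences.
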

\begin{proof}
First we note that $E_{\hat{h}}= \frac{\mathbb{F}[x]}{(\hat{h}(x))}$ is a field extension of $\mathbb{F}$ of finite degree, hence it is also a finite field. In the notation of Theorem \ref{C3.2: Theorem 4}, $\mathcal{E}(f) \cong M_l(E_{\hat{h}})$ since the only central division algebra over the finite field $E_{\hat{h}}$ is $E_{\hat{h}}$ itself. Hence $\mathcal{E}(f)$ is a central simple algebra over $E_{\hat{h}}$ of degree $s=l$. By Theorem \ref{C3.2: Theorem 4}, ${\rm deg}(h) =\frac{mp^e}{s}= \frac{mp^e}{l}$. On the other hand, we know that ${\rm deg}(h) = k{\rm deg}(f_i)$ where $f_i$ is any irreducible divisor of $f$ in $R$, and ${\rm deg}(f_i) = \frac{m}{l}$. Combining the expressions for ${\rm deg}(h)$ yields $p^e = k$.\\
Now by Theorem \ref{C7.3: Theorem 7}, $f$ is a generalised A-polynomial in $R$ if and only if  $f \vert_r (g(t)-\gamma)$ for some $\gamma \in \mathbb{F}$ which is true if and only if $h(t) = g(t)-\gamma$ for some $\gamma \in \mathbb{F}$, i.e. ${\rm deg}(\hat{h})=1$. Since ${\rm deg}(\hat{h})=\frac{m}{s}=\frac{m}{l}$ we have that $m=l$, that is $f$ is the product of $m$ irreducible polynomials in $R$ that are all similar to each other. Hence there exist elements $\alpha_1,\alpha_2,\dots,\alpha_m \in \mathbb{K}$ such that $$f(t)= \prod\limits_{i=1}^m (t-\alpha_i)$$ and elements $c_1,c_2,\dots,c_m \in \mathbb{K}^{\times}$ such that $\Omega_{c_i}(\alpha_m) = \alpha_i$ for each $i$ by Lemma \ref{C7.2: Lemma 1}. We use the notation $\alpha = \alpha_m$ and without loss of generality we may take $c_m=1$. Finally we note that $(t-\alpha)$ right divides $h(t)=g(t)-\gamma$ if and only if $V_{p^e}(\alpha) + \gamma_1 V_{p^{e-1}}(\alpha) + \cdots + \gamma_e \alpha - \gamma = 0$ by Lemma \ref{C7.3: Lemma 2}. 
\end{proof}.


\bibliography{ThesisDraftJan21Owen}

\begin{thebibliography}{GTLN13}

\bibitem[Alb52]{albert1952nonassociative}
A.~A. Albert.
\newblock On nonassociative division algebras.
\newblock {\em Transactions of the American Mathematical Society},
  72(2):296--309, 1952.

\bibitem[Ami53]{amitsur1953noncommutative}
S.~A. Amitsur.
\newblock Noncommutative cyclic fields.
\newblock In {\em Bulletin of the American Matematical Society}, volume~59,
  pages 522--522. American Mathematical Society 201 Charles St, Providence, RI
  02940-2213, 1953.

\bibitem[Ami54]{amitsur1954differential}
S.~A. Amitsur.
\newblock Differential polynomials and division algebras.
\newblock {\em Annals of mathematics}, pages 245--278, 1954.

\bibitem[Ami55]{amitsur1955generic}
S.~A. Amitsur.
\newblock Generic splitting fields of central simple algebras.
\newblock {\em Annals of mathematics}, pages 8--43, 1955.

\bibitem[AW12]{adkins2012algebra}
William~A. Adkins and Steven~H. Weintraub.
\newblock {\em Algebra: an approach via module theory}, volume 136.
\newblock Springer Science \& Business Media, 2012.

\bibitem[BO13]{berhuy2013introduction}
Gr{\'e}gory Berhuy and Fr{\'e}d{\'e}rique Oggier.
\newblock {\em An introduction to central simple algebras and their
  applications to wireless communication}, volume 191.
\newblock American Mathematical Soc., 2013.

\bibitem[Bou03]{bourbaki2003elements}
Nicolas Bourbaki.
\newblock {\em Elements of mathematics: Algebra}.
\newblock Springer, 2003.

\bibitem[BP18]{brown2018nonassociative}
Christian Brown and Susanne Pumpl{\"u}n.
\newblock How a nonassociative algebra reflects the properties of a skew
  polynomial.
\newblock {\em arXiv preprint arXiv:1806.04537}, 2018.

\bibitem[Bro18]{brown2018}
Christian Brown.
\newblock Petit algebras and their automorphisms, 2018.

\bibitem[Car69]{carcanague1969quelques}
Jean Carcanague.
\newblock Quelques r{\'e}sultats sur les anneaux de ore.
\newblock {\em CR Acad. Sci. Paris Sr. AB}, 269:A749--A752, 1969.

\bibitem[Coh63]{cohn1963noncommutative}
Paul~M. Cohn.
\newblock Noncommutative unique factorization domains.
\newblock {\em Transactions of the American Mathematical Society},
  109(2):313--331, 1963.

\bibitem[Dic06]{dickson1906linear}
Leonard~Eugene Dickson.
\newblock Linear algebras in which division is always uniquely possible.
\newblock {\em Transactions of the American Mathematical Society},
  7(3):370--390, 1906.

\bibitem[DO15a]{ducoat2015lattice}
J{\'e}r{\^o}me Ducoat and Fr{\'e}d{\'e}rique Oggier.
\newblock Lattice encoding of cyclic codes from skew-polynomial rings.
\newblock In {\em Coding Theory and Applications}, pages 161--167. Springer,
  2015.

\bibitem[DO15b]{ducoat2015skew}
J{\'e}r{\^o}me Ducoat and Fr{\'e}d{\'e}rique Oggier.
\newblock On skew polynomial codes and lattices from quotients of cyclic
  division algebras.
\newblock {\em arXiv preprint arXiv:1506.06079}, 2015.

\bibitem[Gie98]{giesbrecht1998factoring}
Mark Giesbrecht.
\newblock Factoring in skew-polynomial rings over finite fields.
\newblock {\em Journal of Symbolic Computation}, 26(4):463--486, 1998.

\bibitem[GT12]{gomez2012basic}
Jos{\'e} G{\'o}mez-Torrecillas.
\newblock Basic module theory over non-commutative rings with computational
  aspects of operator algebras.
\newblock In {\em International Meeting on Algebraic and Algorithmic Aspects of
  Differential and Integral Operators}, pages 23--82. Springer, 2012.

\bibitem[GTLN13]{gomez2013computing}
Jos{\'e} G{\'o}mez-Torrecillas, F.~J. Lobillo, and Gabriel Navarro.
\newblock Computing the bound of an ore polynomial. applications to
  factorization.
\newblock {\em arXiv preprint arXiv:1307.5529}, 2013.

\bibitem[GWJ04]{goodearl2004introduction}
Kenneth~R. Goodearl and Robert~Breckenridge Warfield~Jr.
\newblock {\em An introduction to noncommutative Noetherian rings}, volume~61.
\newblock Cambridge university press, 2004.

\bibitem[Jac37]{jacobson1937pseudo}
Nathan Jacobson.
\newblock Pseudo-linear transformations.
\newblock {\em Annals of Mathematics}, pages 484--507, 1937.

\bibitem[Jac43]{jacobson1943theory}
Nathan Jacobson.
\newblock {\em The theory of rings}.
\newblock Number~2. American Mathematical Soc., 1943.

\bibitem[Jac09]{jacobson2009finite}
Nathan Jacobson.
\newblock {\em Finite-dimensional division algebras over fields}.
\newblock Springer Science \& Business Media, 2009.

\bibitem[Lam06]{lam2006exercises}
T.~Y. Lam.
\newblock {\em Exercises in classical ring theory}.
\newblock Springer Science \& Business Media, 2006.

\bibitem[Lan04]{lang2004algebra}
Serge Lang.
\newblock Algebra, volume 211 of.
\newblock {\em Graduate Texts in Mathematics}, pages 29--30, 2004.

\bibitem[LLLM89]{lam1989invariant}
T.~Y. Lam, A.~Leroy, K.~H. Leung, and J.~Matczuk.
\newblock Invariant and semi-invariant polynomials in skew polynomial rings,
  israel mathematics conference proceedings, 1 (1989), 247-261.
\newblock {\em MR1029317 (90k: 16004)}, 1989.

\bibitem[LS13]{lavrauw2013semifields}
Michel Lavrauw and John Sheekey.
\newblock Semifields from skew polynomial rings.
\newblock {\em Advances in Geometry}, 13(4):583--604, 2013.

\bibitem[MRS01]{mcconnell2001noncommutative}
John~C. McConnell, James~Christopher Robson, and Lance~W. Small.
\newblock {\em Noncommutative noetherian rings}, volume~30.
\newblock American Mathematical Soc., 2001.

\bibitem[OP19]{owen2019eigenspaces}
Adam Owen and Susanne Pumpl{\"u}n.
\newblock The eigenspaces of twisted polynomials over cyclic field extensions.
\newblock {\em arXiv preprint arXiv:1909.07728}, 2019.

\bibitem[OP21]{owen2021Apolynomials}
Adam Owen and Susanne Pumplün.
\newblock A generalisation of amitsur’s a-polynomials.
\newblock {\em Communications in Mathematics}, volume 29:pages 281--289, 2021.

\bibitem[Ore33]{ore1933theory}
Oystein Ore.
\newblock Theory of non-commutative polynomials.
\newblock {\em Annals of mathematics}, pages 480--508, 1933.

\bibitem[Ore52]{ore1952general}
Oystein Ore.
\newblock The general chinese remainder theorem.
\newblock {\em The American Mathematical Monthly}, 59(6):365--370, 1952.

\bibitem[OS12]{oggier2012quotients}
Fr{\'e}d{\'e}rique Oggier and B.~A. Sethuraman.
\newblock Quotients of orders in cyclic algebras and space-time codes.
\newblock {\em arXiv preprint arXiv:1210.7044}, 2012.

\bibitem[Pet66]{petit1966certains}
Jean-Claude Petit.
\newblock Sur certains quasi-corps g{\'e}n{\'e}ralisant un type
  d'anneau-quotient.
\newblock {\em S{\'e}minaire Dubreil. Alg{\`e}bre et th{\'e}orie des nombres},
  20(2):1--18, 1966.

\bibitem[PS15a]{pumpluen2015fast}
Susanne Pumpl{\"u}n and Andrew Steele.
\newblock Fast--decodable mido codes from non--associative algebras.
\newblock {\em International Journal of Information and Coding Theory},
  3(1):15--38, 2015.

\bibitem[PS15b]{pumplun2015nonassociative}
Susanne Pumpl{\"u}n and Andrew Steele.
\newblock The nonassociative algebras used to build fast-decodable space-time
  block codes.
\newblock {\em arXiv preprint arXiv:1504.00182}, 2015.

\bibitem[Pum15a]{pumplun2015finite}
Susanne Pumpl{\"u}n.
\newblock Finite nonassociative algebras obtained from skew polynomials and
  possible applications to $(f,\sigma,\delta) $-codes.
\newblock {\em arXiv preprint arXiv:1507.01491}, 2015.

\bibitem[Pum15b]{pumpluen2015note}
Susanne Pumpl{\"u}n.
\newblock A note on linear codes and nonassociative algebras obtained from
  skew-polynomial rings.
\newblock {\em arXiv preprint arXiv:1504.00190}, 2015.

\bibitem[Pum18]{pumpluen2018algebras}
Susanne Pumpl{\"u}n.
\newblock Algebras whose right nucleus is a central simple algebra.
\newblock {\em Journal of Pure and Applied Algebra}, 222(9):2773--2783, 2018.

\bibitem[SH04]{sullivan2004}
Tim~J. Sullivan and Charudatta Hajarnavis.
\newblock Rings and modules, 2004.

\bibitem[She18]{sheekey2018new}
John Sheekey.
\newblock New semifields and new mrd codes from skew polynomial rings.
\newblock {\em arXiv preprint arXiv:1806.00251}, 2018.

\bibitem[SPO12]{steele2012mido}
Andrew Steele, Susanne Pumpl{\"u}n, and Fr{\'e}d{\'e}rique Oggier.
\newblock Mido space-time codes from associative and nonassociative cyclic
  algebras.
\newblock In {\em 2012 IEEE Information Theory Workshop}, pages 192--196. IEEE,
  2012.

\end{thebibliography}
\bibliographystyle{alpha}

\end{document}